\author{Josua Schott\thanks{The author was supported by SNF-Grant $200021\_ 207335$.\\ E-Mail: josua.schott@unibe.ch}  }
\affil{Departement Mathematik, Universität Bern, Sidlerstrasse 5, CH--3012 Bern, Switzerland}
\title{Holomorphic Factorization of Mappings into the Symplectic Group}
\date{June 23, 2022}
\newcommand{\Cm}{\mathbb{C}^{\frac{n(n+1)}{2}}}
\newcommand{\C}[1]{\mathbb{C}^{#1}}
\newcommand{\SpC}{\mathrm{Sp}_{2n}(\mathbb{C})}
\newcommand{\Eu}[1]{\begin{pmatrix}
I_n&#1\\0&I_n
\end{pmatrix}}
\newcommand{\El}[1]{\begin{pmatrix}
I_n&0\\#1&I_n
\end{pmatrix}}
\newcommand{\CwO}{\mathbb{C}^{2n}\setminus \{0\}}
\newcommand{\kfiber}{\mathcal{F}^K_{a,b}}
\newcommand{\fiber}{\mathcal{F}^K_y}
\newcommand{\complete}{\mathcal{VC}_{hol}(M)}
\newcommand{\deriv}[1]{\frac{\partial}{\partial {#1}}}
\newcommand{\Deriv}[2]{\deriv{{#1}_{#2}}}
\newcommand{\spanning}{\nameref{theorem:spanTangentSpace}}
\newcommand{\Oset}[2]{\mathcal{N}^{#1}(#2)}
\newcommand{\Wk}[1]{\mathcal{W}_{#1}}
\newcommand{\threefiber}{\mathcal{F}^3_y}
\newcommand{\Qk}[1]{\mathcal{Q}_{#1}}
\newcommand{\rel}{\sim_x}
\newcommand{\PK}{\tilde{P}^K}
\newcommand{\PKM}{\tilde{P}^{K-1}}
\newcommand{\PKMM}{\tilde{P}^{K-2}}
\newcommand{\PKMMM}{\tilde{P}^{K-3}}
\newcommand{\subjclass}[2][1991]{%
  \let\@oldtitle\@title%
  \gdef\@title{\@oldtitle\footnotetext{#1 \emph{Mathematics subject classification.} #2}}%
}
\newcommand{\keywords}[1]{%
  \let\@@oldtitle\@title%
  \gdef\@title{\@@oldtitle\footnotetext{\emph{Key words and phrases.} #1.}}%
}
\newtheorem{theorem}{Theorem}[section]
\newtheorem{lemma}[theorem]{Lemma}
\newtheorem{cor}[theorem]{Corollary}
\newtheorem{prop}[theorem]{Proposition}
\newtheorem{defi}[theorem]{Definition}
\newtheorem{Rem}[theorem]{Remark}
\newtheorem{Exa}[theorem]{Example}
\subjclass[2020]{Primary 32Q56; Secondary 32Q28, 15A54 32A17, 20H25}
\keywords{Oka principle, Stein spaces, symplectic matrices, unipotent factorization, elementary symplectic matrices, rings of holomorphic functions}
\begin{document}
%
% ---- Title and abstract----
%
\maketitle
\begin{abstract}
It is shown that any symplectic $2n\times 2n$-matrix, whose entries are complex holomorphic functions on a reduced Stein space, can be decomposed into a finite product of elementary symplectic matrices if and only if it is null-homotopic. Moreover, if this is the case, the number of factors can be bounded by a constant depending only on $n$ and the dimension of the space. 
\end{abstract}
\tableofcontents
%
% --- Section 1: Introduction ----
%
\section{Introduction}
It is well known that over any field, in particular over the field of complex numbers, a matrix in the Special Linear Group $\mathrm{SL}_n(\mathbb{C})$ is a product of elementary matrices. The proof is usually an  application of a Gauss or a Gauss-Jordan process. The same question for $\mathrm{SL}_n(R)$ over a commutative ring $R$ is much more difficult and has been studied a lot. For the ring $R=\mathbb{C}[z]$ of polynomials in one variable, it is true since $R$ is an Euclidean ring. For $n=2$ and the ring $R=\mathbb{C}[z,w]$, Cohn \cite{Cohn:1966tz} found the following counterexample: the matrix
$$ \begin{pmatrix}
1 - zw & z^2\\ -w^2& 1+zw
\end{pmatrix}\in \mathrm{SL}_2(R)$$
does not decompose as a finite product of unipotent matrices. However, it turns out that the solvability of the factorization problem with respect to a polynomial ring $\mathbb{C}[z_1,...,z_m]$ in several variables depends on the matrix size $n$. In fact, Suslin \cite{Suslin:1977us} gave a positive answer in the case $\mathrm{SL}_n(\mathbb{C}[z_1,...,z_m])$, for $n\geq 3$ and $m\geq 1$ in the 1970s.
For the ring $R$ of complex-valued continuous functions on a normal topological space,  Vaserstein \cite{Vaserstein:1988td} showed, that a matrix in $\mathrm{SL}_n(R)$ decomposes into a product of elementary matrices if and only it is null-homotopic. In Gromov's seminal paper on the Oka principle, the starting point of modern Oka theory, he asks this question for the ring of complex-valued holomorphic functions (he calls it the \textit{Vaserstein problem}, see \cite[p. 886]{Gromov:1989aa}). In 2012, Ivarsson and Kutzschebauch \cite{Ivarsson:2012aa} were able to give a positive answer to this problem in full generality. 

The same question for the symplectic group $\mathrm{Sp}_{2n}(R)$ hasn't been studied to the same degree. Again, there is a positive answer for the ring $\mathbb{C}[z]$, since this is an Euclidean ring. For $n\geq 2$, Kopeiko \cite{Kopeiko:1978ub} proved it for the polynomial ring $R=k[z_1,...,z_m]$; and Grunewald, Mennicke and Vaserstein \cite{Grunewald:1991ww} for the ring $R=\mathbb{Z}[z_1,...,z_m]$. In \cite{Bjorn-Ivarson:2020aa}, Ivarsson, Kutzschebauch and Løw prove it for every commutative ring $R$ with identity and bass stable rank $sr(R)=1$. Furthermore, they show that, for the ring $R=C(X)$ of complex-valued continuous functions on a normal topological space, a matrix in $\mathrm{Sp}_{2n}(C(X))$ can be decomposed into a product of elementary matrices if and only it is null-homotopic; they call it the \textit{Continuous Vaserstein problem for symplectic matrices}. The same authors \cite{Ivarson:2020aa} also  tackle what they call the \textit{Holomorphic Vaserstein problem for symplectic matrices}, that is, they ask for a similar result in the case of $\mathrm{Sp}_{2n}(\mathcal{O}(X))$, where $\mathcal{O}(X)$ denotes the ring of holomorphic functions on a reduced Stein space $X$. In fact, they are able to give a positive answer for $\mathrm{Sp}_4(\mathcal{O}(X))$. However, their proof does only apply for $n=2$.

In this article, we'll see another strategy to handle this factorization problem and even more, we give a solution of the Holomorphic Vaserstein problem for symplectic matrices in full generality. In order to formulate our result let us introduce some terms. \iffalse
One of the biggest challenges is the classification of globally-integrable, fiber-preserving algebraic vector fields. In this article we develop a sufficient condition which, in the \textit{right} setting, greatly simplifies the decision whether a given algebraic vector field is globally integrable. Based on this progress, a strategy was finally discovered to solve the Holomorphic Vaserstein problem for symplectic matrices in full generality.
\fi
We call a $(2n\times 2n)$-matrix $M$ \textit{symplectic} if it satisfies $M^T\Omega M=\Omega$ with respect to the skew-symmetric matrix
$$ \Omega=\begin{pmatrix}
0&I_n\\ -I_n & 0
\end{pmatrix},$$
where $I_n$ denotes the $n\times n$-identity matrix and $0$ the $n\times n$-zero matrix. For symmetric matrices $A\in \C{n\times n}$, i.e. $A^T=A$, matrices of the form
$$ \Eu{A} \quad \text{and} \quad \El{A}$$ are symplectic and 
we call them \textit{elementary symplectic matrices}. For simplicity reasons, let's identify the space of symmetric matrices $\mathrm{Sym}_n(\mathbb{C})=\{A\in\C{n\times n}: A^T=A\}$ with $\Cm$. 

\begin{theorem}[Holomorphic Vaserstein problem for symplectic matrices]\label{MainTheorem}
There exists a natural number $K=K(n,d)$ such that given any finite dimensional reduced Stein space $X$ of dimension $d$ and any null-homotopic holomorphic mapping $f:X\rightarrow \textnormal{Sp}_{2n}(\mathbb{C})$ there exist a holomorphic mapping $$G=(G_1,...,G_K):X\rightarrow (\Cm)^K$$
such that
$$f(x)=\El{G_1 (x) }\Eu{G_2 (x) }\cdots \El{G_{K-1}(x)}\Eu{G_K(x)}, \quad x\in X.$$
\end{theorem}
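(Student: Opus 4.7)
The strategy I would pursue is the Oka-theoretic one developed by Ivarsson–Kutzschebauch for $\SlC$, suitably adapted to the symplectic setting. Introduce the factorization map
\begin{equation*}
\Phi_K: (\Cm)^K \to \SpC, \quad (A_1,\ldots,A_K) \mapsto \El{A_1}\Eu{A_2}\cdots\El{A_{K-1}}\Eu{A_K},
\end{equation*}
so that the theorem becomes the assertion that every null-homotopic holomorphic $f: X \to \SpC$ admits a holomorphic lift $G = (G_1, \ldots, G_K): X \to (\Cm)^K$ with $\Phi_K \circ G = f$, where $K$ depends only on $n$ and $d = \dim X$. A continuous lift $G^{\mathrm{top}}$ through $\Phi_K$ is already provided by the continuous Vaserstein theorem for symplectic matrices of Ivarsson, Kutzschebauch and Løw \cite{Bjorn-Ivarson:2020aa}; this is precisely where the null-homotopy hypothesis enters. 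The remaining content of the theorem is the upgrade from a continuous lift to a holomorphic one, which is a statement of Oka type about the sections of $\Phi_K$.

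The geometric core of the proof is to show that $\Phi_K$ is a stratified holomorphic submersion whose fibers are elliptic in the sense of Gromov, i.e.\ admit enough complete holomorphic vector fields to span the tangent space at every point. I would stratify the target $\SpC$ by the vanishing loci of a finite family of polynomials associated with a Gauss-type decomposition of a symplectic matrix; on the open stratum, each fiber is a smooth connected subvariety of $(\Cm)^K$ and can be equipped with sprays coming from the following kind of moves: two adjacent factors of the same type can be modified by absorbing a compensating symmetric matrix inserted as a conjugating pair, producing a translation action of $\mathrm{Sym}_n(\mathbb{C})$ on the intermediate factors while leaving the product unchanged. Iterating and combining such moves yields a family of complete holomorphic vector fields on the fiber, and a spanning theorem of the kind indicated in the paper's preliminaries shows that their values span the tangent space everywhere on the open stratum. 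Ellipticity then allows Forstnerič's relative parametric Oka principle to convert $G^{\mathrm{top}}$ into a holomorphic lift away from the preimage of the degenerate loci.

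The main obstacle, and the source of the dependence $K = K(n,d)$, is propagating the holomorphic lift across the non-generic strata. Each crossing consumes additional elementary factors in order to perform a symplectic analogue of holomorphic Gauss–Jordan elimination, and the total number of strata (bounded in terms of $n$), together with the dimension $d$ of $X$ which controls how deeply the preimages of the bad loci can nest as analytic subvarieties, determines the final bound. The symmetry constraint, namely that each $A_i$ lies in $\mathrm{Sym}_n(\mathbb{C})$ rather than in $\C{n\times n}$, reduces the number of free parameters per factor in comparison to the $\SlC$ case and forces one to chain several factors together to realize certain infinitesimal moves. Verifying that the resulting vector fields still span the tangent spaces of the fibers and that their flows preserve the symmetry constraint is the most delicate technical point, and I expect it to be the heart of the paper, with the previously introduced objects $\complete$, the generic fibers $\threefiber$ and the sets $\oset{\cdot}$ playing the role of intermediate building blocks in the induction on $K$.
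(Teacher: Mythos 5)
Your high-level orientation is correct — the continuous Vaserstein theorem of Ivarsson--Kutzschebauch--L\o w supplies the continuous lift, and the goal is to upgrade it via Gromov--Forstneri\v{c} Oka theory by exhibiting a stratified elliptic submersion with fiber-preserving complete vector fields. However, there is a genuine gap at the very first structural step. You propose to prove that the full factorization map (which you call $\Phi_K$, the paper's $\Psi_K$)
$$\Psi_K:(\Cm)^K\to \SpC, \quad (Z_1,\ldots,Z_K)\mapsto M_1(Z_1)\cdots M_K(Z_K)$$
is itself a stratified elliptic submersion onto $\SpC$ and to lift $f$ directly through it. This fails: $\Psi_K$ is \emph{not} a submersion, and its fibers are intractable. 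The paper explicitly flags this obstruction and replaces $\Psi_K$ by the composition $\Phi_K := \pi_{2n}\circ \Psi_K : (\Cm)^K\to \CwO$ with the projection to the last row. Only this reduced map is shown to be a stratified elliptic submersion (on an open submanifold $E_K$), and only $\pi_{2n}\circ f$ gets lifted holomorphically.

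What your proposal is therefore missing is the mechanism by which a holomorphic lift of the \emph{last row} of $f$ is promoted to a factorization of $f$ itself. This is the paper's induction on $n$: once $F_1$ is a holomorphic lift of $\pi_{2n}\circ f$, the matrix $\Psi_L(F_1)f^{-1}$ stabilizes $e_{2n}$ and, by the symplectic constraints, has a block form whose $(n-1)\times(n-1)$-core is a null-homotopic holomorphic map $\tilde f:X\to \mathrm{Sp}_{2n-2}(\mathbb{C})$. The induction hypothesis factors $\psi(\tilde f^{-1})$, one more elementary factor clears the remaining $B$-block, and Whitehead's lemma (for $K_{ij}(a)$) disposes of the residual $n$-th row/column entries. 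Your sketch has no analogue of this reduction: "propagating the lift across the non-generic strata" and "consuming additional elementary factors" will not, on their own, produce a factorization of the full symplectic matrix from a lift that only controls one row, nor will they explain the induction on the matrix size. The dependence $K(n,d)$ likewise does not come from crossing strata, but from the constant in the continuous theorem, the overhead of passing from $\Psi_L$ to $\Phi_L$ (the factor $(I_n,0,G_1-I_n,G_2,\ldots)$ trick), the $n$ induction steps, and a final compactness/contradiction argument that converts a non-uniform bound into a uniform $K(n,d)$.

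Your remarks on the spray construction (complete fiber-preserving vector fields, a spanning theorem on a generic stratum, the difficulty caused by the symmetry constraint on the $Z_i$'s) are consistent with what the paper does in Section~3, but they apply to $\Phi_K$ with target $\CwO$, not to $\Psi_K$ with target $\SpC$. You should also note that in the parametric Oka principle one actually needs the statement with a Euclidean compact parameter space $P$, so the paper proves a parametric version of the main theorem and then specializes.
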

It is immediate that any product of elementary symplectic matrices is connected by a path to the constant identity matrix $I_{2n}$, by multiplying the off-diagonal entries by $t\in [0,1]$, i.e. 
$$ \Eu{tA} \quad \text{or} \quad \El{tA}.$$
Therefore the requirement of null-homotopy of the map $f$ is neccessary.

\subsection{Strategy of proof}
The proof relies heavily on an application of the Oka principle. Put very loosely, this application allows us to deform the continuous solution provided by the \textit{Continuous Vaserstein problem for symplectic matrices} (see Theorem \ref{t:mainthmtoprestate}) into a holomorphic solution. The most obvious strategy to check the sufficient conditions of the principle requires a deep understanding of complete holomorphic vector fields. And although their classification is already being studied in the case $n=2$ in \cite{Ivarson:2020aa}, the case of general $n$ is still very hard. 

To explain the idea of the proof in a little more detail, let's introduce the \textit{elementary symplectic matrix mapping} ${M_k:\Cm \to \SpC}$ by
$$ M_k(Z) = \begin{cases} \Eu{Z} & \text{if }k=2l\\
\El{Z} & \text{if } k=2l+1
\end{cases}$$
and then define the \textit{product mapping} $\Psi_K:(\Cm)^K\to \SpC$ by
$$ \Psi_K(Z_1,...,Z_K) = M_1(Z_1)M_2(Z_2)\cdots M_K(Z_K).$$

We now consider a holomorphic mapping $f:X\to \SpC$, where $X$ denotes a finite-dimensional Stein space of dimension $d$. According to the Continuous Vaserstein problem for symplectic matrices, there exists a natural number $K=K(n,d)$ and a continuous lift $F:X\to (\Cm)^K$ which leads to the following commuting diagram:
\begin{center}
\begin{tikzcd}
 & (\Cm)^K \arrow[d,"\Psi_K"] \\ X \arrow[ru,"F"] \arrow[r,"f" '] & \SpC.
\end{tikzcd}
\end{center}

The analysis of the fibers of $\Psi_K$ is too difficult, so let's first consider $\Phi_K:=\pi_{2n}\circ \Psi_K$, where $\pi_{2n}$ denotes the projection of a $(2n\times 2n)$-matrix to its last row. We obtain the commutative diagram
\begin{center}
\begin{tikzcd}
 & (\Cm)^K \arrow[d,"\Phi_K=\pi_{2n}\circ \Psi_K"] \\ X \arrow[ru,"F"] \arrow[r,"\pi_{2n}\circ f" '] & \CwO.
\end{tikzcd}
\end{center}
The mapping $\Phi_K$ is surjective for $K\geq 3$ (see Theorem \ref{theorem:PhiSurjective}) and it is submersive outside some set of singularities $S_K\subset (\Cm)^K$ (see Theorem \ref{theorem:singularitySet}). We will find an open submanifold $E_K \subset (\Cm)^K\setminus S_K$ such that
\begin{enumerate}[label=(\roman*)]
\item $\Phi_K|_{E_K}:E_K\to \CwO$ is a stratified elliptic submersion (see section 3),
\item the image of $F:X\to (\Cm)^K$ is contained in $E_K$, i.e. $\mathrm{Im}(F)\subset E_K$.
\end{enumerate}
In other words, we will find $E_K$ such that
\begin{center}
\begin{tikzcd}
 & E_K \arrow[d,"\Phi_K"] \\ Y \arrow[ru,"F"] \arrow[r,"\pi_{2n}\circ f" '] & \CwO
\end{tikzcd}
\end{center}
commutes (see Theorem \ref{theorem:holomorphicSection}) and $\Phi_K$ satisfies the conditions to apply the Oka principle. Thus it is possible to homotopically deform $F$ into a holomorphic mapping, such that the above diagram commutes \textit{holomorphically}.

Now we still need to recover the information lost through the projection $\pi_{2n}$, which can be done by induction on the size of the matrices $2n$ and a $K$-theoretic argument (see section \ref{subsection:mainproof}). 

The organisation of this paper is as follows. In section 2 we focus on the factorization problem. First we introduce the most important terms regarding the Oka principle (see subsection \ref{section:Okaprinciple}). In subsection \ref{subsection:holomorphicVaserstein} we apply the Oka principle in more detail on the one hand and we see a trick on the other hand how we can control the abstract continuous lift $F:X\to (\Cm)^K$ in an appropriate way (cf. Theorem \ref{theorem:holomorphicSection}). And finally, in subsection \ref{subsection:mainproof}, we inductively prove the factorization result.

Section 3 is dedicated to the existence of the manifold $E_K$ and the construction of \textit{stratified sprays}. In subsections 3.1 and 3.2, basic properties of the mapping $\Phi_K$ are analyzed. In subsection 3.3 we introduce a family of fiber-preserving holomorphic vector fields. Unfortunately, not all vector fields in this family are complete; the (partial) classification of the complete vector fields is the subject of subsection 3.4. Theorem \ref{lemma:classifyCompleteFields} provides a useful sufficient condition for deciding whether a given vector field is complete. In subsection 3.6, a strategy is developed to enlarge the collection of complete holomorphic vector fields to such an extend that stratified sprays can be constructed. And finally, in subsection 3.7, all the details are checked. 

\iffalse
In section 3, it remains to be shown that $\Phi_K:E_K\to \CwO$ is \textit{stratified elliptic} so that the assumptions of the Oka principle are met. However, it turns out to be the most difficult part of the article because at first glance it seems like we don't have enough complete holomorphic vector fields to define a spray. Luckily, we are able to develop some rudimentary machinery to overcome this hurdle. 
\fi

\subsection{Acknowledgements}
First of all, I would like to thank my supervisor Frank Kutzschebauch for the opportunity to work on such an interesting problem and for the many exciting and supportive discussions. 
I am also very thankful to Dr. George Ionita for proofreading and his helpful feedback. Last but not least, I would like to thank my family, especially my wife Anna, who always supported me and believed in me. 

%
% --- Section 2: Holomorphic Vaserstein problem ---
%
\section{Holomorphic Vaserstein problem for symplectic matrices}
Let's start with the so-called \textit{Continuous Vaserstein problem} for symplectic matrices. This has been proved by Ivarsson, Kutzschebauch and Løw (\cite[Theorem 1.3]{Bjorn-Ivarson:2020aa}) and is one of the key ingredients. 
\begin{theorem}[Continuous Vaserstein problem for symplectic matrices]\label{t:mainthmtoprestate}
There exists a natural number $K(n,d)$ such that given any  finite dimensional normal topological space $X$ of (covering) dimension $d$ and any null-homotopic continuous mapping $M\colon X\to \operatorname{Sp}_{2n}(\mathbb{C})$ there exist $K$ continuous mappings \[G_1,\dots, G_{K}\colon X\to \mathbb{C}^{n(n+1)/2}\] such that \[M(x)=M_{1}(G_1(x))\dots M_{K}(G_{K}(x)).\]
\end{theorem}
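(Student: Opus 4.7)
The plan is to proceed by induction on $n$, using the known continuous Vaserstein theorem for $\SlC$ (Vaserstein \cite{Vaserstein:1988td}) as the main external input and exploiting the symplectic structure to cut the size of the group in half at each inductive step. The base case $n=1$ is immediate since $\mathrm{Sp}_2(\mathbb{C}) = \mathrm{SL}_2(\mathbb{C})$ and the elementary symplectic matrices coincide with the standard elementary matrices for $\mathrm{SL}_2$; Vaserstein's theorem then produces the constant $K(1,d)$ directly.

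For the inductive step, suppose the theorem holds for $n-1$ with bound $K(n-1,d)$. Given null-homotopic $M\colon X \to \SpC$, the strategy is to reduce $M$ to a matrix lying pointwise in the standard embedded copy of $\mathrm{Sp}_{2(n-1)}(\mathbb{C}) \hookrightarrow \SpC$ (fixing the $n$-th and $2n$-th coordinates) by left-multiplying with a uniformly bounded number $C(n,d)$ of elementary symplectic factors, and then to apply the inductive hypothesis. This yields $K(n,d) \leq K(n-1,d) + C(n,d)$. The reduction itself decomposes into two substeps:
\begin{enumerate}[leftmargin=*]
\item Normalize the last column $v\colon X \to \CwO$ of $M$ to the constant vector $e_{2n}$. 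Since $M$ is null-homotopic and elementary symplectic multiplication acts transitively enough on $\CwO$, $v$ is null-homotopic in $\CwO$; using the continuous Vaserstein result for $\SlC$ combined with Whitehead-style identities that rewrite certain products of ordinary elementary matrices as products of elementary symplectic ones, one obtains a bounded-length factorization carrying $v$ to $e_{2n}$.
\item Once the last column is $e_{2n}$, the relation $M^T J M = J$ forces the last row of $M$ to take the form $(c^T\ |\ 0\ |\ d^T\ |\ 1)$ with further rigid constraints on $c$ and $d$ coming from the remaining symplectic equations; a controlled number of further elementary symplectic multiplications (on both sides) clears these entries and places $M$ in the $\mathrm{Sp}_{2(n-1)}(\mathbb{C})$ block.
\end{enumerate}

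The main obstacle I anticipate is substep (1). The elementary symplectic matrices are severely constrained, since only symmetric $n\times n$ blocks are allowed in the off-diagonal position, and this rules out running a naive column-reduction on $v$ via a direct Gauss-Jordan procedure inside $\SlC$: most of the column operations one would like to perform are simply not available through $\Eu{A}$ and $\El{A}$ with $A^T=A$. The resolution is to borrow the factorization from the $\SlC$-Vaserstein theorem applied to an auxiliary lift of $v$ and then to re-express the resulting product, term by term, as a bounded-length product of honest elementary symplectic factors via algebraic Suslin-type identities. The bookkeeping that keeps the total number of factors uniformly bounded in $n$ and $d$ — and, crucially, only depends on $d$ through Vaserstein's constant for $\SlC$ — is the delicate part of the argument.
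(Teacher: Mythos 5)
The paper does not prove this theorem; it imports it as Theorem 1.3 of Ivarsson, Kutzschebauch and L{\o}w \cite{Bjorn-Ivarson:2020aa}, where it is obtained from a more general factorization result for symplectic matrices over commutative rings of Bass stable rank one, via direct symplectic column reduction. So there is no argument in the present paper to compare yours against, and the route taken in the cited reference is genuinely different from the one you sketch.

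The real gap is in your substep (1). Vaserstein's theorem applied to a lift of $v$ into $\mathrm{SL}_{2n}(\mathbb{C})$ (for instance $M$ itself, viewed as null-homotopic into $\mathrm{SL}_{2n}(\mathbb{C})$) produces factors of the form $I_{2n}+tE_{ij}$, none of which is symplectic, so there is no meaningful ``term-by-term'' conversion to elementary symplectic factors; and rewriting a product of $\mathrm{SL}_{2n}$-elementary matrices whose total lies in $\SpC$ as a product of elementary symplectic matrices is, up to minor trimming, exactly the Vaserstein problem for $\SpC$ that you set out to prove. The Whitehead identities of Lemma~\ref{lemma:whitehead} only handle the very specific monomial symplectic matrices $K_{ij}(a)$ and give nothing close to a general conversion. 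What substep (1) actually requires is that the last column $v\colon X\to\CwO$ be carried to $e_{2n}$ continuously by a \emph{uniformly bounded} number of elementary symplectic column operations. The algebraic ingredient making this conceivable is that $Z\mapsto Zw$ maps symmetric matrices onto $\mathbb{C}^n$ whenever $w\neq 0$ (Lemma~\ref{lemma:symmetricSurjective}); promoting that to a continuous bounded reduction over $C(X)$ is precisely the stable-rank argument carried out in the cited reference, and your sketch neither supplies it nor reduces it to $\SlC$. A smaller slip, worth flagging because it affects the clearing step: normalizing the last column of $M$ to $e_{2n}$ forces, via $M^TJM=J$, the $n$-th row of $M$ to be $e_n^T$, not the $2n$-th row as you state.
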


\subsection{Oka principle}\label{section:Okaprinciple}
The Oka principle is a powerful tool. Roughly speaking, it allows us to homotopically deform a continuous mapping into a holomorphic one in certain situations. We want to make this more precise in this section and therefore introduce some definitions and terminologies.

Let's start with the notion of a \textit{stratified elliptic submersion} $h:Z\to X$ from a complex space $Z$ onto a complex space $X$, following  \cite{Gromov:1989aa} and \cite{Forstneric:2010aa}.

\begin{defi}
Let $X$ and $Z$ be complex spaces. A holomorphic map $h\colon Z \to X$ is said to be a submersion if for each point $z_0\in Z$ it is locally equivalent via a fiber-preserving biholomorphic map to a projection $p\colon U\times V \to U$, where $U\subset X$ is an open set containing $h(z_0)$ and $V$ is an open set in some $\mathbb{C}^d$.  
\end{defi}

Let $h\colon Z \to X$ be a holomorphic submersion of a complex manifold $Z$ onto a complex manifold $X$. For any $x\in X$ the fiber over $x$ of this submersion will be denoted by $Z_x$. At each point $z\in Z$ the tangent space $T_zZ$ contains {\it the vertical tangent space} $VT_zZ=\ker Dh$. For holomorphic vector bundles $p\colon E \to Z$ we denote the zero element in the fiber $E_z$ by $0_z$.

\begin{defi} \label{definspray}
Let $h\colon Z \to X$ be a holomorphic submersion of a complex manifold $Z$ onto a complex manifold $X$. A spray on $Z$ associated with $h$ is a triple $(E,p,s)$, where $p\colon E\to Z$ is a holomorphic vector bundle and $s\colon E\to Z$ is a holomorphic map such that for each $z\in Z$ we have 
\begin{itemize}
\item[(i)]{$s(E_z)\subset Z_{h(z)}$,}
\item[(ii)]{$s(0_z)=z$, and}
\item[(iii)]{the derivative $Ds(0_z)\colon T_{0_z}E\to T_zZ$ maps the subspace $E_z\subset T_{0_z}E$ surjectively onto the vertical tangent space $VT_zZ$.}
\end{itemize} 
\end{defi}

\begin{Rem}
We will also say that the submersion admits a spray. A spray associated with a holomorphic submersion is sometimes called a (fiber) dominating spray.
\end{Rem}

One way of constructing dominating sprays, as pointed out by \textsc{Gromov}, is to find finitely many $\mathbb{C}$-complete vector fields that are tangent to the fibers and span the tangent space of the fibers at all points in $Z$. One can then use the flows $\varphi_j^t$ of these vector fields $V_j$ to define $s\colon Z\times \mathbb{C}^N\to Z$ via $s(z,t_1,\dots, t_N)=\varphi_1^{t_1}\circ \dots \circ \varphi_N^{t_N}(z)$ which gives a spray. 

\begin{defi} 
\label{definstratifiedspray}
We say that a submersion $h\colon Z\to X$ is \textit{stratified elliptic} if there is a descending chain of closed complex subspaces $X=X_m\supset \cdots \supset X_0$ such that each stratum $Y_k = X_k\setminus X_{k-1}$ is regular and the restricted submersion $h\colon Z|_{Y_k}\to Y_k$ admits a spray over a small neighborhood of any point $x\in Y_k$.  
\end{defi}

\begin{Rem}
We say that the submersion admits stratified sprays and that the stratification $X=X_m\supset \cdots \supset X_0$ is associated with the stratified spray.
\end{Rem}

Let's consider the following diagram
\begin{center}
\begin{tikzcd}
P_0\times X \arrow[d,"\mathrm{incl}" '] \arrow[r,"F"]  & E \arrow[d,"\pi"]\\ P\times X \arrow[ru, "F"] \arrow[r, "f"] & B
\end{tikzcd}
\end{center}
Here $\pi:E\to B$ is a holomorphic submersion of a complex space $E$ onto a complex space $B$, $X$ is a Stein space, $P_0\subset P$ are compact Hausdorff spaces (the parameter spaces), and $f:P\times X\to B$ is an $X$-holomorphic map, meaning that $f(p,\cdot):X\to B$ is holomorphic on $X$ for every fixed $p\in P$. A map $F:P\times X\to E$ such that $\pi\circ F = f$ is said to be a \textit{lifting} of $f$; such $F$ is $X$-holomorphic on $P_0$ if $F(p,\cdot)$ is holomorphic for every $p\in P_0$. 

\begin{defi}
A holomorphic map $\pi:E\to B$ between reduced complex spaces enjoys the Parametric Oka Property (POP) if for any collection $(X,X',K,P,P_0,f,F_0)$ where $X$ is a reduced Stein space, $X'$ is a closed complex subvariety of $X$, $P_0\subset P$ are compact Hausdorff spaces, $f:P\times X\to B$ is an $X$-holomorphic map, and $F_0:P\times X\to E$ is a continuous map such that $\pi\circ F=f$, the map $F_0(p,\cdot)$ is holomorphic on $X$ for all $p\in P_0$ and is holomorphic on $K\cup X'$ for all $p\in P$, there exists a homotopy $F_t:P\times X\to E$ such that the following hold for all $t\in [0,1]$:
\begin{enumerate}[label=(\roman*)]
\item $\pi\circ F_t = f$,
\item $F_t=F_0$ on $(P_0\times X)\cup (P\times X')$,
\item $F_t$ is $X$-holomorphic on $K$ and uniformly close to $F_0$ on $P\times K$, and
\item the map $F_1:P\times X\to E$ is $X$-holomorphic.
\end{enumerate}
\end{defi}

An important cornerstone in the proof of holomorphic factorization is the following very general version of the Oka principle.
\begin{theorem}[Oka principle]\label{theorem:OkaPrincipal}
Every stratified elliptic submersion enjoys POP.
\end{theorem}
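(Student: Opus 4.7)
The plan is to follow the spray-theoretic proof scheme for Oka principles developed by Gromov and systematized by Forstneric; the statement is in fact \cite[Corollary 6.14.4 (i)]{Forstneric:2011wk}, so in the paper itself one would simply cite that, and here I only sketch its architecture. The overall strategy combines a downward induction on the stratification $B = X_m \supset \cdots \supset X_0$ of the base of $\pi \colon E \to B$ with an upward induction on a normal Stein exhaustion $K_1 \Subset K_2 \Subset \cdots$ of the source $X$ by $\mathcal{O}(X)$-convex compacts. The continuous lift $F_0$ is deformed to a holomorphic one via a sequence of small perturbations, each extending the current $X$-holomorphic lift from $K_j$ to $K_{j+1}$ while preserving the holomorphy already achieved on $P_0 \times X$ and on $P \times X'$ and remaining uniformly close to the previous stage on $K_j$. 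The parameter $p \in P$ is handled, using compactness of $P$, by a finite cover of $P$ by small open sets on which the lifting problem carries only an extra finite-dimensional continuous parameter.

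The heart of the proof is the \emph{Cartan-type gluing of local holomorphic lifts} enabled by the dominating spray. To pass from $K_j$ to $K_{j+1}$, one chooses a Cartan pair $(A,B)$ with $A \cup B \supset K_{j+1}$, constructs $X$-holomorphic lifts $F_A$ over $A$ and $F_B$ over $B$ by the inductive hypothesis, and compares them on the overlap $A \cap B$: since they are close there, the dominating spray $s \colon E \to Z$ from Definition \ref{definspray} yields, via the implicit function theorem applied at the zero section, a holomorphic section $\xi$ of a pulled-back vector bundle over the overlap such that $F_B = s \circ \xi$ relative to $F_A$. Cartan's splitting lemma for coherent sheaves on Stein compacts then writes $\xi = \xi_A - \xi_B$ with $\xi_A, \xi_B$ holomorphic on $A$ and $B$ respectively, and applying the spray to $F_A$ via $\xi_A$ and to $F_B$ via $\xi_B$ produces a single $X$-holomorphic lift on $A \cup B$. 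The stratification enters here: because ellipticity is only assumed over each stratum $Y_k = X_k \setminus X_{k-1}$, the induction is first carried out over the deepest stratum and then extended upward by absorbing the already-solved $X_{k-1}$ into the interpolation datum $X'$, which in turn is handled via the coherent ideal sheaf of $X_{k-1}$ in $X_k$.

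The interpolation constraints along $X'$ and along $P_0$ are incorporated into the same splitting step, via coherent ideal sheaves and a Tietze-type extension in the parameter variable; the four conclusions of POP are then obtained as a single joint homotopy by parameterizing the iterative perturbations by $t \in [0,1]$ and inserting the initial data at $t = 0$.

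The main obstacle is precisely this Cartan-type gluing: one has to simultaneously control uniform approximation on the inner compact $K_j$, interpolation on $X'$ and on the lower strata $X_{k-1}$, and continuous dependence on $(p,t) \in P \times [0,1]$, all within a single induction whose convergence is delicate. The full machinery of Cartan's Theorems A and B, the existence and fiber-dominating property of sprays, and Oka--Weil approximation on Stein compacts are all needed to carry it through; this is why the argument fills a chapter of Forstneric's monograph rather than being reproducible here, and why I would ultimately defer to the cited reference.
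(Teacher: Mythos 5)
The paper does not prove this theorem at all; it simply cites \cite[Corollary 6.14.4 (i)]{Forstneric:2011wk}, exactly as you note. Your sketch of the underlying Gromov--Forstneri\v{c} argument (Stein exhaustion by $\mathcal{O}(X)$-convex compacts, Cartan pairs, spray-enabled gluing via the implicit function theorem and Cartan's splitting lemma, downward induction on the strata, interpolation via coherent ideal sheaves) is a faithful and accurate summary of that machinery, and your judgment that a full reproduction is out of scope and that one should defer to the reference matches the paper's own treatment.
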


\begin{Rem}
Forstneric proved this result for Euclidean compact parameter spaces $P$ (see \cite[Corollary 6.14.4 (i)]{Forstneric:2011wk}) and Kusakabe generalized it to the case where the parameter space $P$ is a compact Hausdorff space (see \cite{Kusakabe:2021tv}).
\end{Rem}

\subsection{Holomorphic Vaserstein problem for symplectic matrices}\label{subsection:holomorphicVaserstein}
Recall the mapping $\Psi_K:(\Cm)^K\to \SpC$ given by $ \Psi_K(Z_1,...,Z_K)=M_1(Z_1)\cdots M_K(Z_K)$, where $M_k:\Cm\to \SpC$ is defined by
$$ M_k(Z) = \begin{cases} \Eu{Z} & \text{if }k=2l\\
\El{Z} & \text{if } k=2l+1.
\end{cases}$$
Further, recall the map $\Phi_K=\pi_{2n}\circ \Psi_K:(\Cm)^K\to \CwO$, where $\pi_{2n}:\C{2n\times 2n}\to \C{2n}$ denotes the projection of a $2n\times 2n$-matrix to its last row. 

Define the set
$$\Wk{K}:=\left\lbrace (Z_1,...,Z_K)\in (\Cm)^K: Z_{2i-1}e_n\neq 0 \text{ for some } 1\leq i\leq \lceil\tfrac{K-1}{2}\rceil \right\rbrace$$
and let $S_K\subset (\Cm)^K$ denote the set of points, where the mapping $\Phi_K$ is not submersive. 

The following is another cornerstone in the proof of the main theorem and we will prove it in the next section. 
\begin{theorem}\label{theorem:PhiHasPop}
For $K\geq 3$, there exists an open submanifold $E_K\subset (\Cm)^K$ satisfying
\begin{enumerate}[label=(\roman*)]
\item $\Wk{K}\subset E_K\subset (\Cm)^K\setminus S_K$
\item the restriction $\Phi_K|_{E_K}:E_K\to \CwO$ is a stratified elliptic submersion.
\end{enumerate}
\end{theorem}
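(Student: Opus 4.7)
My plan is to define $E_K$ explicitly as an open subset of $(\Cm)^K\setminus S_K$ on which a sufficient supply of $\mathbb{C}$-complete holomorphic vector fields tangent to the fibers of $\Phi_K$ can be constructed, then to stratify $\CwO$ so that these vector fields span the vertical tangent space on each stratum. The first step is to make the differential explicit. A variation $Z_k\mapsto Z_k+\Delta$ transforms $\Psi_K$ by $A_k\,E_\Delta\,B_k$, where $A_k=M_1(Z_1)\cdots M_{k-1}(Z_{k-1})$, $B_k=M_{k+1}(Z_{k+1})\cdots M_K(Z_K)$, and $E_\Delta$ is a $(2n\times 2n)$-matrix with a single non-trivial symmetric block depending on the parity of $k$. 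Reading off the last row of this expression shows that it vanishes exactly when $\Delta$ satisfies a linear equation of the form $\Delta\cdot v_k(Z)=0$, for a vector $v_k(Z)\in\Cn$ determined by the last row of $A_k$. Starting from the explicit description of $S_K$ promised by Theorem \ref{theorem:singularitySet}, I would verify that the hypothesis $Z_{2i-1}e_n\neq 0$ for some odd $i$ forces the relevant partial product to be non-degenerate, giving the inclusion $\Wk{K}\subset (\Cm)^K\setminus S_K$. I would then take $E_K$ to be the open subset of $(\Cm)^K\setminus S_K$ on which the vector fields constructed in the next step are well-defined and tangent to the fibers.

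The second step is the construction of $\mathbb{C}$-complete vector fields tangent to the fibers of $\Phi_K$. For each factor index $k$, the orthogonal complement of $v_k(Z)$ in the space of symmetric matrices provides an $\tfrac{n(n-1)}{2}$-dimensional family of fiber-tangent directions. Choosing a holomorphic frame for this complement on the open set where $v_k$ does not vanish, one obtains genuine vector fields; when the dependence on $Z$ remains polynomial of low degree, the flows are polynomial in the time parameter and hence $\mathbb{C}$-complete. Additional complete vector fields arise from coordinated multi-factor modifications that cancel the leading-order variations through the group-theoretic identities satisfied by elementary symplectic matrices, such as $M_k(A)M_k(B)=M_k(A+B)$ for factors of the same parity.

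The final step is the stratification of $\CwO$. I would stratify by the vanishing pattern of the coordinates of $y\in\CwO$, e.g.\ by taking $X_j=\{y\in\CwO:y_{i_1}=\cdots=y_{i_j}=0\}$ for a suitable ordering of coordinates, so that each $Y_k=X_k\setminus X_{k-1}$ is smooth, and then verify the spanning condition for the restricted submersion $\Phi_K|_{\Phi_K^{-1}(Y_k)\cap E_K}$ stratum by stratum. This verification is the main obstacle, and the reason the author characterizes Section 3 as the most difficult part of the article: a direct count shows that the single-factor families described above produce only $K\cdot\tfrac{n(n-1)}{2}$ directions, falling short of the required vertical dimension $K\cdot\tfrac{n(n+1)}{2}-2n$ by exactly $(K-2)n$, so more vector fields must be extracted from the ``rudimentary machinery'' foreshadowed in the introduction. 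On each lower stratum one exploits the additional vanishing conditions defining $Y_k$ to produce auxiliary complete vector fields, tangent only to $\Phi_K^{-1}(Y_k)\cap E_K$, that compensate for the missing directions. Once spanning is established on every stratum, composing the flows of the chosen vector fields yields the required stratified spray, and therefore the stratified ellipticity of $\Phi_K|_{E_K}:E_K\to\CwO$.
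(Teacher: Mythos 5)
Your high-level plan — build $\mathbb{C}$-complete fiber-tangent vector fields, stratify the base, and verify spanning stratum by stratum — is the right framework, and your dimension count $(K-2)n$ is correct. But the proposal misses the key technical ideas that make the paper's proof actually go through, and some of the mechanisms you propose would not work.

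The central gap is your definition of $E_K$. You propose to take $E_K$ as ``the open subset of $(\Cm)^K\setminus S_K$ on which the vector fields ... are well-defined and tangent to the fibers.'' This conflates two different issues: the fields the paper constructs (via determinant formulas $\partial_x^K$ and their lifts $\varphi_{x,j^*}^K$, $\gamma^K_{ij,j^*}$) are holomorphic and fiber-preserving on all of $(\Cm)^K$, but they do \emph{not} span the vertical tangent space everywhere, not even on all of $\mathcal{W}_K$. The crucial machinery is the closure construction: the paper sets $E_K = C_{A_K}(\mathcal{W}_K)$, the orbit of $\mathcal{W}_K$ under the group generated by the flows of a finite collection $A_K$, and then proves (Lemma \ref{lemma:noInvariantSetInClosure} and Corollary \ref{cor:spanningFields}, via a descending chain of analytic subsets and pullbacks of fields under flow maps) that the collection can be enlarged inside $\Gamma(A_K)$ so as to span on $C_{A_K}(\mathcal{W}_K)$. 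Without this idea there is no reason why ``the set where the fields span'' should be $\Phi_K$-saturated, open, or even contain $\mathcal{W}_K$.

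Two further points would fail as stated. First, the identity $M_k(A)M_k(B) = M_k(A+B)$ that you invoke acts on a \emph{single} factor; the vector field it produces is a translation of $Z_k$, which is tangent to the fiber of $\Phi_K$ only if the last row of the downstream partial product is unaffected, i.e.\ under a genuine linear constraint — it is not a source of the extra $(K-2)n$ directions. The extra directions in the paper come from the $\varphi^K_{x,j^*}$ fields, which are push-forwards under the local biholomorphism $\psi_{j^*}$ of determinant-style fields $\partial_x^{K-1}$ on lower factors; the hard part is that most such $\partial_x^{K-1}$ are \emph{not} complete (Example \ref{example:incomplete}), and the paper builds a criterion (Lemma \ref{lemma:classifyCompleteFields}, based on linearity of the resulting ODE) together with a catalogue of eight families of complete tuples (Proposition \ref{lemma:completeFields}). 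Your heuristic ``polynomial of low degree, hence $\mathbb{C}$-complete'' is exactly the wrong intuition here. Second, your fine stratification by vanishing patterns of all $2n$ coordinates of $y$ is unnecessary and unmotivated; the paper uses only the two-stratum chain $\CwO \supset Y^K_{ng}$ where $Y^K_{ng} = \{\pi_K(y)=0\}$. This works because over the non-generic stratum the fiber degenerates to a product $\mathcal{F}^{K-1}_y\times\Cm$ (Lemma \ref{lemma:nonGenericFibers}), so one inherits the spanning from $K-1$ and trivially supplements the last factor. Your finer stratification would force you to re-prove spanning on many more strata, with no corresponding product structure to exploit.
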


The \nameref{theorem:OkaPrincipal} tells us, that $\Phi_K|_{E_K}$ has the Parametric Oka Property, i.e. we get the following
\begin{cor}[Application of the Oka principle]\label{corollary:ApplicationOkaPrinciple}
Let $P$ be a compact Hausdorff space, $X$ a finite dimensional reduced Stein space and $f:P\times X\to \SpC$ a null-homotopic, continuous $X$-holomorphic mapping. Assume there is a natural number $K$ and a continuous map $F:P\times X\to E_K$ such that
\begin{center}
\begin{tikzcd}
& E_K \arrow[d, "\Phi_K"] \\ P\times X \arrow[ru, "F"] \arrow[r, "\pi_{2n}\circ f"] & \CwO
\end{tikzcd}
\end{center}
is commutative. Then there exists a continuous homotopy $F_t:P\times X\to E_K$ with $F_0=F$, $\pi_{2n}\circ f = \Phi_K\circ F_t$ and such that $F_1:P\times X\to E_K$ is $X$-holomorphic.
\end{cor}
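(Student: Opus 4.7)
The plan is a direct application of the Oka principle. By Theorem \ref{theorem:PhiHasPop}(ii), the restriction $\Phi_K|_{E_K}: E_K \to \CwO$ is a stratified elliptic submersion between reduced complex spaces ($E_K$ is an open submanifold of $(\Cm)^K$, hence a complex manifold, and $\CwO$ is a complex manifold). By Theorem \ref{theorem:OkaPrincipal} it therefore enjoys the Parametric Oka Property.

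To invoke POP I would match the data of the corollary with the data in the definition. Concretely, I would take the submersion to be $\pi := \Phi_K|_{E_K}: E_K \to \CwO$, the Stein space to be $X$ as given, the compact parameter space $P$ as given, and set the remaining auxiliary data $P_0 := \emptyset$, $X' := \emptyset$, and the compact subset of $X$ (playing the role of the letter $K$ in the POP definition, which clashes with the number of factors $K$ here) to be empty as well. The base map $\pi_{2n}\circ f$ is $X$-holomorphic by assumption on $f$, and the initial lift $F_0 := F$ is continuous with $\Phi_K \circ F_0 = \pi_{2n}\circ f$ by hypothesis. The additional holomorphy conditions imposed on $F_0$ in the definition of POP are vacuous with these choices, since they only concern the empty sets $P_0$, $X'$ and the empty compact.

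Invoking POP then produces a continuous homotopy $F_t: P\times X \to E_K$ with $F_0 = F$, $\Phi_K \circ F_t = \pi_{2n}\circ f$ for every $t\in[0,1]$, and $F_1$ being $X$-holomorphic, which is precisely the statement of the corollary. I do not anticipate any serious obstacle: the only point that requires attention is recognising that all the optional hypotheses in the POP definition can be trivialised by the above empty choices, so that the corollary reduces to a one-line application of Theorems \ref{theorem:PhiHasPop} and \ref{theorem:OkaPrincipal}. In particular, the null-homotopy assumption on $f$ plays no role in this step; it is merely the standing hypothesis under which the continuous lift $F$ is known to exist.
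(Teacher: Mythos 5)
The proposal is correct and takes essentially the same route the paper does: the paper presents the corollary as an immediate consequence of Theorem \ref{theorem:PhiHasPop} (stratified ellipticity of $\Phi_K|_{E_K}$) combined with the Oka principle (Theorem \ref{theorem:OkaPrincipal}), and you simply make explicit the trivial choices $P_0 = X' = \emptyset$ and empty compact set that reduce the POP hypotheses to exactly the data given in the corollary. Your observation that the null-homotopy of $f$ is not used in this step, but only in guaranteeing the existence of the continuous lift $F$ (Theorem \ref{theorem:holomorphicSection}), is also accurate.
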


In a next step, we prove the existence of a natural number $K$ and a continuous lifting $F:P\times X\to E_K$ such that the diagram in the corollary commutes.
\begin{theorem}\label{theorem:holomorphicSection}
There exists a natural number $L(n,d)$ such that given any compact Hausdorff space $P$, any finite dimensional reduced Stein space $X$, such that $P\times X$ has covering dimension $d$, and any null-homotopic, continuous $X$-holomorphic mapping $f:P\times X\to \SpC$, there exists a continuous lifting $F:P\times X\to E_L$ of $\pi_{2n}\circ f$. In particular, there exists a continuous homotopy $F_t:P\times X\to E_L$ of liftings of $\pi_{2n}\circ f$, such that $F_0=F$ and $F_1$ is $X$-holomorphic.
\end{theorem}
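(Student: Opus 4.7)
The plan is to use the Continuous Vaserstein problem (Theorem \ref{t:mainthmtoprestate}) to produce a continuous factorization of $f$, then modify it by a constant appendix so that it becomes a continuous lift of $\pi_{2n} \circ f$ taking values in $E_L$, and finally to invoke Corollary \ref{corollary:ApplicationOkaPrinciple} to obtain the $X$-holomorphic homotopy.

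First, the product $P \times X$ is a normal topological space of covering dimension $d$, and $f$ is null-homotopic by hypothesis, so Theorem \ref{t:mainthmtoprestate} produces a natural number $K = K(n,d)$ and continuous maps $G_1, \dots, G_K : P \times X \to \Cm$ such that $f = M_1(G_1) M_2(G_2) \cdots M_K(G_K)$. Writing $G := (G_1, \dots, G_K)$, we have $\Psi_K \circ G = f$ and hence $\Phi_K \circ G = \pi_{2n} \circ f$, so $G$ is already a continuous lift of $\pi_{2n} \circ f$ through $\Phi_K$. The only defect is that $G$ need not take values in $E_K$, so it cannot yet be fed into Corollary \ref{corollary:ApplicationOkaPrinciple}.

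The key step is to append a constant $4$-tuple of symmetric matrices to $G$ so as to land inside $\mathcal{W}_L \subset E_L$. Fix $A \in \Cm$ symmetric with $A e_n \neq 0$, for instance $A = I_n$. If $K$ is even, set $(C_1, C_2, C_3, C_4) := (A, 0, -A, 0)$; one checks that $\El{A}\Eu{0}\El{-A}\Eu{0} = I_{2n}$, and $A$ sits at the odd position $K+1$ of the enlarged tuple. If $K$ is odd, set $(C_1, C_2, C_3, C_4) := (0, A, 0, -A)$; one checks $\Eu{0}\El{A}\Eu{0}\El{-A} = I_{2n}$, and $A$ sits at the odd position $K+2$. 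Let $L := K + 4$ and $F := (G_1, \dots, G_K, C_1, C_2, C_3, C_4) : P \times X \to (\Cm)^L$. Then $F$ is continuous, $\Psi_L \circ F = f$ (so $\Phi_L \circ F = \pi_{2n} \circ f$), and by construction $F(p,x) \in \mathcal{W}_L \subset E_L$ for every $(p,x)$ by Theorem \ref{theorem:PhiHasPop}(i). Setting $L(n,d) := K(n,d) + 4$ gives the required constant depending only on $n$ and $d$.

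Finally, Corollary \ref{corollary:ApplicationOkaPrinciple} applied to $F : P \times X \to E_L$ delivers a continuous homotopy $F_t : P \times X \to E_L$ with $F_0 = F$, $\Phi_L \circ F_t = \pi_{2n} \circ f$ for all $t \in [0,1]$, and $F_1$ $X$-holomorphic, which is exactly the conclusion. I do not anticipate any serious technical obstacle in this plan: the continuous Vaserstein theorem supplies the rough lift, the appendix is an entirely algebraic move exploiting the commutativity $\El{A}\El{-A} = I_{2n}$ (respectively $\Eu{A}\Eu{-A} = I_{2n}$), and the analytic heavy lifting is already encapsulated in Corollary \ref{corollary:ApplicationOkaPrinciple}, which in turn rests on Theorem \ref{theorem:PhiHasPop} to be proved in the next section.
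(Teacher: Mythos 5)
Your proof is correct and follows the same strategy as the paper: lift via the Continuous Vaserstein theorem, pad with trivially cancelling factors so that the lift lands in $\mathcal{W}_L \subset E_L$, and invoke Corollary~\ref{corollary:ApplicationOkaPrinciple}. The paper pads more economically by prepending $(I_n,0)$ and replacing $G_1$ with $G_1 - I_n$, using $\El{G_1} = \El{I_n}\Eu{0}\El{G_1 - I_n}$, which achieves $L = K+2$ with no parity case-split, whereas your appendix gives $L = K+4$ and requires distinguishing $K$ even from $K$ odd; both constants depend only on $n$ and $d$, so the argument goes through either way.
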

\begin{proof}
The \nameref{t:mainthmtoprestate} provides us with a natural number $K=K(n,d)$ such that given any normal topological space $Y$ of dimension $d$ and any null-homotopic continuous mapping $f:Y\to \SpC$ there exists a continuous lifting $G=(G_1,...,G_K):Y\to (\Cm)^{K}$ with $f=\Psi_{K}\circ G$. For $L=K+2$, we define the mapping $F:Y\to (\Cm)^L$ by
$$ F = (I_n,0,G_1-I_n,G_2,...,G_K)$$
where $I_n$ denotes the $n\times n$-identity matrix and $0$ the $n\times n$-zero matrix. Observe that
$$ \El{G_1} = \El{I_n}\Eu{0}\El{G_1-I_n}$$
and therefore $F$ is a lifting of $f$, more precisely, $f=\Psi_L\circ F$. The image of $F$ is contained in $\Wk{L}$, since $I_ne_n\neq 0$. By Theorem \ref{theorem:PhiHasPop}, we have $\Wk{L}\subset E_L$ which means that $F$ is a mapping $F:Y\to E_L$ and we obtain a commuting diagram
\begin{center}
\begin{tikzcd}
& E_L \arrow[d, "\Psi_L"] \arrow[rd, "\Phi_L"] & \\ Y \arrow[ru, "F"] \arrow[r, "f"] & \SpC \arrow[r,"\pi_{2n}"] & \CwO.
\end{tikzcd}
\end{center}
Choose $Y=P\times X$, where $P$ is a compact Hausdorff space and $X$ a reduced Stein space such that $Y$ has dimension $d$. An application of Corollary \ref{corollary:ApplicationOkaPrinciple} completes the proof.
\end{proof}

Before we come to the proof of the main theorem, we need one more result also known as \textit{Whitehead's lemma}. For some commutative ring $R$ with identity, the symplectic group $\mathrm{Sp}_{2n}(R)$ is a subgroup of $\mathrm{SL}_{2n}(R)$. We shall write matrices with block notation
$$\begin{pmatrix}
A&B\\C&D
\end{pmatrix}$$
where $A,B,C$ and $D$ are $n\times n$ matrices with entries in $R$ satisfying the symplectic conditions
\begin{align}
A^TC = C^TA\\
B^TD = D^TB\\
A^TD-C^TB = I_n
\end{align}
where $I_n$ is the $n\times n$ identity matrix.

An \textit{elementary symplectic matrix} is either of the form
$$ \begin{pmatrix}
I_n & B \\ 0 & I_n
\end{pmatrix}$$
where $B$ is symmetric ($B^T=B$) or of the form 
$$ \El{C}$$
where $C$ is symmetric. Products of matrices of the first type are additive in $B$ and of the second type in $C$. Special cases are the matrices $E_{ij}(a)$ when $B$ is the matrix with $a$ in position $ij$ and $ji$ and otherwise zero. For $F_{ij}(a)$ the roles of $B$ and $C$ are changed. Clearly any elementary matrix of the first type is a product of matrices $E_{ij}(b_{ij})$ for $i\leq j$ and similarly for the second type.

We also introduce the symplectic matrices $K_{ij}(a)$ defined by $B=C=0$ and $A=I_n$ except in position $ij$ where there is an $a$. Finally, $D=(A^T)^{-1}$. This equals $I$ except in position $ji$ where there is $-a$ if $i\neq j$ and $a^{-1}$ if $i=j$ (this requires $a\in R^*$).

\begin{lemma}[Whitehead's lemma]\label{lemma:whitehead}
There holds
$$ K_{ii}(a) = E_{ii}(a-1)F_{ii}(1)E_{ii}(a^{-1}-1)F_{ii}(-a),\quad a\in R^*,$$
and if $i\neq j$, then
$$ K_{ij}(a) = F_{jj}(-a)E_{ij}(1)F_{jj}(a)E_{ii}(a)E_{ij}(-1), \quad a\in R.$$
\end{lemma}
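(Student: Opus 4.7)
Both identities are statements about explicit products of elementary symplectic matrices, and I would prove them by direct verification. The key observation is that each factor on the right-hand side differs from the identity only in a small block of rows and columns, so each identity reduces to a low-dimensional matrix computation that does not interact with the remaining rows. For the first identity, all factors $E_{ii}(\cdot)$, $F_{ii}(\cdot)$ and $K_{ii}(a)$ act as the identity outside the pair of indices $\{i, n+i\}$, so the claim reduces to the $2\times 2$ identity
\[
\begin{pmatrix} a & 0 \\ 0 & a^{-1} \end{pmatrix} = \begin{pmatrix} 1 & a-1 \\ 0 & 1 \end{pmatrix} \begin{pmatrix} 1 & 0 \\ 1 & 1 \end{pmatrix} \begin{pmatrix} 1 & a^{-1}-1 \\ 0 & 1 \end{pmatrix} \begin{pmatrix} 1 & 0 \\ -a & 1 \end{pmatrix}.
\]
I would verify this by left-to-right multiplication: after two factors one obtains $\left(\begin{smallmatrix} a & a-1 \\ 1 & 1 \end{smallmatrix}\right)$; after three, using $a\cdot(a^{-1}-1)+(a-1)=0$, one obtains $\left(\begin{smallmatrix} a & 0 \\ 1 & a^{-1} \end{smallmatrix}\right)$; and after four, using $1-a\cdot a^{-1}=0$, one obtains $\mathrm{diag}(a, a^{-1})$. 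The hypothesis $a\in R^{*}$ enters exactly through the factor $E_{ii}(a^{-1}-1)$.

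For the second identity, I would work at the level of the block form. Writing $\eta=e_{ij}+e_{ji}$ and exploiting that $i\neq j$, first compute
\[
E_{ii}(a)E_{ij}(-1)=\begin{pmatrix} I_n & a e_{ii}-\eta \\ 0 & I_n \end{pmatrix},
\]
then multiply by $F_{jj}(a)$ from the left and use $e_{jj}e_{ii}=e_{jj}e_{ij}=0$ and $e_{jj}e_{ji}=e_{ji}$ to reduce the bottom-right block to $I_n-ae_{ji}$. Next, multiplying by $E_{ij}(1)$ from the left, the identities $\eta e_{jj}=e_{ij}$ and $\eta e_{ji}=e_{ii}$ collapse the upper-right $B$-block to zero and turn the upper-left $A$-block into $I_n+ae_{ij}$. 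Finally, multiplying by $F_{jj}(-a)$ from the left, together with $e_{jj}e_{ij}=0$, kills the lower-left $C$-block, leaving exactly $\mathrm{diag}(I_n+ae_{ij},\, I_n-ae_{ji})=K_{ij}(a)$.

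There is no conceptual obstacle here; the only difficulty is careful bookkeeping. The main point to keep in mind is the symmetry condition $B^T=B$, which forces the off-diagonal generators $E_{ij}(b)$ and $F_{ij}(b)$ to carry $b(e_{ij}+e_{ji})$ rather than $be_{ij}$ alone, and this is precisely why the collapsing identities $\eta e_{jj}=e_{ij}$ and $\eta e_{ji}=e_{ii}$ provide exactly the right cancellations to recover $K_{ij}(a)$.
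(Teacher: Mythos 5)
The paper states Whitehead's lemma without giving a proof (it is treated as a known identity). Your proof by direct verification is correct: the $2\times 2$ reduction for $K_{ii}(a)$ is exactly right, and the step-by-step block computation for $K_{ij}(a)$, using $e_{ab}e_{cd}=\delta_{bc}e_{ad}$ together with the crucial identities $e_{jj}\eta = e_{ji}$, $\eta e_{jj}=e_{ij}$, $\eta e_{ji}=e_{ii}$ and $e_{jj}e_{ij}=0$, collapses the product to $\operatorname{diag}(I_n+ae_{ij},\,I_n-ae_{ji})=K_{ij}(a)$ as claimed. Your remark about the symmetry constraint $B^T=B$ forcing $\eta=e_{ij}+e_{ji}$ (rather than $e_{ij}$ alone) and this being exactly what makes the cancellations work is a nice conceptual observation. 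This direct computation is the natural argument, and it correctly supplies the proof the paper omits.
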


\subsection{Proof of the \nameref{MainTheorem}}\label{subsection:mainproof}
%We will prove the theorem for a single map. The existence of a uniform bound $N(d)$ follows as in the proof of Theorem \ref{t:mainthmtoprestate}.
The proof is by induction on $n$. Note that the theorem is true for $n=1$, since $\mathrm{Sp}_2(\mathbb{C}) = \mathrm{SL}_2(\mathbb{C})$ (see \cite{Ivarsson:2012aa}), that is, the base case is fine. 

For the induction step, we first observe that, according to Theorem \ref{theorem:holomorphicSection}, there exists a natural number $L(n,d)$ such that given any compact Hausdorff space $P$, any finite dimensional Stein space $X$, such that $P\times X$ has covering dimension $d$, and any null-homotopic $X$-holomorphic mapping $f:P\times X\to \SpC$, there exists a continuous homotopy $F_t:P\times X\to E_L$ of liftings of $\pi_{2n}\circ f$ (i.e. $\pi_{2n}\circ f = \Phi_L\circ F_t$ for all $0\leq t\leq 1$), such that $F_1$ is $X$-holomorphic. In particular, this implies that $\Phi_L\circ F_t$ doesn't depend on $t$, hence we get
$$ \Psi_L(F_t(p,x))f(p,x)^{-1} = \begin{pmatrix} A_{1,t}(p,x) & a_{2,t}(p,x) & B_{1,t}(p,x) & b_{2,t}(p,x) \\
a_{3,t}(p,x) & a_{4,t}(p,x) & b_{3,t}(p,x) & b_{4,t}(p,x)\\
C_{1,t}(p,x) & c_{2,t}(p,x) & D_{1,t}(p,x) & d_{1,t}(p,x)\\
0 & 0 & 0 & 1
\end{pmatrix},$$
where $A_{1,t}(p,x), B_{1,t}(p,x), C_{1,t}(p,x)$ and $D_{1,t}(p,x)$ are $(n-1)\times (n-1)$ matrices, and the remaining mappings are vectors of appropriate dimension. 
Since $\Psi_L(F_t(p,x))f(p,x)^{-1}$ are symplectic matrices for all $0\leq t \leq 1$, there follows $a_{2,t}(p,x)\equiv 0, a_{4,t}(p,x)\equiv 1$ and $c_{2,t}(p,x)\equiv 0$, so that
$$ \Psi_L(F_t(p,x))f(p,x)^{-1} = \begin{pmatrix} A_{1,t}(p,x) & 0 & B_{1,t}(p,x) & b_{2,t}(p,x) \\
a_{3,t}(p,x) & 1 & b_{3,t}(p,x) & b_{4,t}(p,x)\\
C_{1,t}(p,x) & 0 & D_{1,t}(p,x) & d_{1,t}(p,x)\\
0 & 0 & 0 & 1
\end{pmatrix}$$
and in addition
$$ \tilde{f}_t(p,x) = \begin{pmatrix}
A_{1,t}(p,x) & B_{1,t}(p,x) \\ C_{1,t}(p,x) & D_{1,t}(p,x)
\end{pmatrix} \in \mathrm{Sp}_{2n-2}(\mathbb{C}).$$
Since $\Psi_L(F_0(p,x))=f(p,x)$, there holds $\tilde{f}_0 = I_{2n-2}$ and thus the holomorphic map\newline ${\tilde{f}:=\tilde{f}_1:P\times X\to \mathrm{Sp}_{2n-2}(\mathbb{C})}$ is null-homotopic. Let $\psi$ be the standard inclusion of $\mathrm{Sp}_{2n-2}$ into $\mathrm{Sp}_{2n}$ given by
$$ \psi\begin{pmatrix}
A & B \\ C & D
\end{pmatrix} = \begin{pmatrix}
A & 0 & B & 0 \\ 0 & 1 & 0 & 0 \\ C & 0 & D & 0 \\ 0 & 0 & 0 & 1
\end{pmatrix}.$$
 By the induction hypothesis,
$$\psi(\tilde{f}(p,x)^{-1}) = \begin{pmatrix}
\tilde{A}(p,x) & 0 & \tilde{B}(p,x) & 0 \\
0 & 1& 0& 0 \\
\tilde{C}(p,x) & 0 & \tilde{D}(p,x) & 0 \\
0 & 0 & 0 & 1
\end{pmatrix}$$
is a finite product of holomorphic elementary symplectic matrices. Then the matrix \newline ${M(p,x):=\Psi_L(F_1(p,x))f(p,x)^{-1} \psi(\tilde{f}(p,x)^{-1})}$ is given by
\begin{align*}
 M(p,x)= \begin{pmatrix}
I_{n-1} & 0 & 0 & b_{2,1}(p,x) \\
a_{3,1}(p,x)\tilde{A}(p,x)+ b_{3,1}(p,x)\tilde{C}(p,x) & 1 & a_{3,1}(p,x)\tilde{B}(p,x) + b_{3,1}(p,x)\tilde{D}(p,x) & b_{4,1}(p,x)\\
0 & 0 & I_{n-1} & d_{2,1}(p,x) \\
0 & 0 & 0 & 1
\end{pmatrix}.
\end{align*}
Since $M(p,x)$ is a symplectic matrix, there holds
$$ a_{3,1}(p,x)\tilde{A}(p,x)+ b_{3,1}(p,x)\tilde{C}(p,x) = -d_{2,1}(p,x)^T$$
and
$$ a_{3,1}(p,x)\tilde{B}(p,x) + b_{3,1}(p,x)\tilde{D}(p,x) = b_{2,1}(p,x)^T,$$
so that
$$ \Psi_L(F_1(p,x))f(p,x)^{-1} \psi(\tilde{f}(p,x)^{-1}) = \begin{pmatrix}
I_{n-1} & 0 & 0 & b_{2,1}(p,x) \\
-d_{2,1}(p,x)^T & 1 & b_{2,1}(p,x)^T & b_{4,1}(p,x)\\
0 & 0 & I_{n-1} & d_{2,1}(p,x) \\
0 & 0 & 0 & 1
\end{pmatrix}.$$ Then
$$  \Psi_L(F_1(p,x))f(p,x)^{-1} \psi(\tilde{f}(p,x)^{-1}) \Eu{-B(p,x)} = \begin{pmatrix}
I_{n-1} & 0 & 0 & 0 \\ -d_{2,1}(p,x)^T & 1 & 0 & 0 \\ 0 & 0 & I_{n-1} & d_{2,1}(p,x) \\ 0 & 0 & 0 & 1
\end{pmatrix},$$
where $$B(p,x) = \begin{pmatrix}
0 & -b_{2,1}(p,x) \\ -b_{2,1}(p,x)^T & -b_{4,1}(p,x)-d_{2,1}(p,x)^Tb_{2,1}(p,x)
\end{pmatrix}$$ is holomorphic. By making the substitution $y:=-d_{2,1}$, the matrix on the right hand side equals the product 
$$ K_{n1}(y_1(p,x))K_{n2}(y_2(p,x))\cdots K_{n,n-1}(y_{n-1}(p,x))$$
which is a finite product of elementary symplectic matrices due to \nameref{lemma:whitehead}. In summary, this proves that $f(p,x)$ is indeed a finite product of elementary symplectic matrices.

So far, the number of factors depends on the mapping $f:P\times X\to \SpC$. Assume there is no uniform bound $K(n,d)$, that is, for each natural number $i$, there is an Euclidean compact $P_i$ and a reduced Stein space $X_i$, such that $Y_i=P_i\times X_i$ has dimension $d$, and a null-homotopic $X$-holomorphic mapping $f_i:Y_i\to \SpC$ which does not factor into less than $i$ elementary matrix factors. Set $Y=\bigcup_{i} Y_i$ and let $F:Y\to \SpC$ be the null-homotopic mapping, which equals $f_i$ on $Y_i$. We just proved the existence of a constant $K$ which bounds the number of elementary factors in which $F$ decomposes. But then $K$ is an upper bound for each $f_i$ which contradicts the assumption. Hence there is a uniform bound $K(n,d)$ of factors and this complectes the proof.

Actually, we've shown a generalized version of the \nameref{MainTheorem}.
\begin{theorem}[Generalized version of main theorem]
There is a natural number $K=K(n,d)$ such that given any compact Hausdorff space $P$, any finite dimensional reduced Stein space $X$, such that $P\times X$ has covering dimension $d$, and any null-homotopic $X$-holomorphic mapping $f:P\times X\to \SpC$ there exist $K$ $X$-holomorphic mappings
$$ G_1,...,G_K:P\times X\to \Cm$$
with
$$ f(p,x) = M_1(G_1(p,x))M_2(G_2(p,x))\cdots M_K(G_K(p,x)).$$
\end{theorem}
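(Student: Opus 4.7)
The plan is to run the induction-on-$n$ scheme of the Main Theorem in its explicitly parametric form; no new ideas are required, but one must verify at each step that the parameter $p\in P$ is carried along and that the objects produced are $X$-holomorphic rather than merely holomorphic. The base case $n=1$ reduces to $\mathrm{Sp}_2(\mathbb{C})=\SlC$ and invokes the parametric version of Ivarsson--Kutzschebauch's holomorphic Vaserstein theorem for $\mathrm{SL}_n$, which already delivers the factorization with a bound depending only on $\dim(P\times X)$.

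For the induction step, I would invoke Theorem \ref{theorem:holomorphicSection} to obtain a natural number $L=L(n,d)$ and a continuous homotopy $F_t:P\times X\to E_L$ of liftings of $\pi_{2n}\circ f$, with $F_1$ being $X$-holomorphic. Since $\Phi_L\circ F_t=\pi_{2n}\circ f$ is independent of $t$, the last row of the $X$-holomorphic matrix $M(p,x):=\Psi_L(F_1(p,x))f(p,x)^{-1}$ is identically $(0,\ldots,0,1)$, and the symplectic conditions force the full block-triangular shape written out in the proof of the Main Theorem. Extracting the four $(n-1)\times(n-1)$ corner blocks then yields an $X$-holomorphic map $\tilde f:P\times X\to \mathrm{Sp}_{2n-2}(\mathbb{C})$, which is null-homotopic via the homotopy $\tilde f_t$ connecting $\tilde f_1=\tilde f$ to $\tilde f_0=I_{2n-2}$.

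Applying the induction hypothesis to $\tilde f^{-1}$ produces a product of at most $K(n-1,d)$ elementary symplectic factors; pushing it through the standard inclusion $\psi:\mathrm{Sp}_{2n-2}\hookrightarrow \mathrm{Sp}_{2n}$ and multiplying against $M$ kills the $(n-1)$-corner blocks of $M$, leaving a matrix whose off-diagonal $b$-entries can be cleared by a single additional factor $\Eu{-B(p,x)}$, with $B$ read off from $M$ via the symplectic relations. The remaining residual factor has the shape $K_{n,1}(y_1)\cdots K_{n,n-1}(y_{n-1})$, and Whitehead's Lemma \ref{lemma:whitehead} decomposes each $K_{n,i}(y_i)$ as a product of five elementary symplectic matrices. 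Hence the total number of factors is bounded by $L(n,d)+K(n-1,d)+1+5(n-1)$, which defines $K(n,d)$ recursively in $n$ for fixed $d$.

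The only genuine subtlety is obtaining a bound that is uniform in $f$ rather than merely existing for each $f$. This is handled by the standard ``disjoint union of counterexamples'' argument: if no uniform bound existed, assemble putative counterexamples $f_i:P_i\times X_i\to \SpC$ (with $\dim(P_i\times X_i)=d$ and factorization length unbounded) into a single null-homotopic $X$-holomorphic map on the disjoint union, and apply the bound just produced to obtain a bound for every $f_i$, a contradiction. The main obstacle in the plan is therefore not any single step but careful parametric bookkeeping throughout; however, since Theorem \ref{theorem:holomorphicSection}, Corollary \ref{corollary:ApplicationOkaPrinciple}, and Lemma \ref{lemma:whitehead} are all already stated in the form required, this bookkeeping is essentially routine.
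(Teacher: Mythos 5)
Your proposal reproduces the paper's own argument essentially verbatim: the same induction on $n$, the same reduction $\mathrm{Sp}_2(\mathbb{C})=\mathrm{SL}_2(\mathbb{C})$ for the base case, the same use of Theorem \ref{theorem:holomorphicSection} to produce the homotopy $F_t$, the same algebraic reduction to a block-triangular matrix, the same extraction of $\tilde f:P\times X\to\mathrm{Sp}_{2n-2}(\mathbb{C})$, the same clearing moves via $\Eu{-B}$ and Whitehead's Lemma \ref{lemma:whitehead}, and the same disjoint-union argument for uniformity. The only genuine difference is that you additionally write down an explicit recursive bound $L(n,d)+K(n-1,d)+1+5(n-1)$; the paper instead only asserts finiteness of the product and then secures uniformity solely through the contradiction argument. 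Your explicit bound is (modulo the bookkeeping needed to force strict alternation of $\El{}$ and $\Eu{}$ factors, which at worst doubles the count) already uniform in $f$, $P$, and $X$ because both $L(n,d)$ from Theorem \ref{theorem:holomorphicSection} and $K(n-1,d)$ from the induction hypothesis are uniform; so strictly speaking you do not need the disjoint-union step once the recursion is in place, and including both makes the closing paragraph slightly redundant rather than wrong.
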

%
% --- Section 3: Stratified ellipticity of the submersion Phi_K ---
%
\section{Stratified ellipticity of the mapping \texorpdfstring{$\Phi_K$}{}}
Recall that we identified $\mathrm{Sym}_n(\mathbb{C})=\{Z\in \C{n\times n}: Z^T=Z\}$ with $\Cm$ for simplicity. Furthermore, the \textit{elementary symplectic matrix mapping} ${M_K:\Cm \to \SpC}$ we defined by
$$ M_K(Z) = \begin{cases} \Eu{Z} & \text{if }K=2k+1\\
\El{Z} & \text{if } K=2k.
\end{cases}$$
Let's write $\vec{Z}_K:=(Z_1,...,Z_K) \in (\Cm)^K$. Then the mapping $\Phi_K:(\Cm)^K\to \CwO$ is defined by
$$ \Phi_K(\vec{Z}_K) := M_K(Z_K)\cdots M_1(Z_1)e_{2n}.$$
\begin{Rem}
The attentive reader may have noticed that we now define $\Phi_K$ in a transposed way compared to the previous sections. This is purely for aesthetic reasons.
\end{Rem}
The following recursive formula can be derived immediately from the definition.
\begin{cor}[Recursive formula of $\Phi_K$] For $K\geq 1$, the mapping $\Phi_K:(\Cm)^K\to \CwO$ satisfies
\begin{align}\label{RecursiveFormula}
\Phi_K(\vec{Z}_K)=M_K(Z_K)\Phi_{K-1}(\vec{Z}_{K-1}),
\end{align}
with the convention $\Phi_0:=e_{2n}$. 
\end{cor}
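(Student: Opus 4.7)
The statement is essentially a bookkeeping observation that comes directly from unpacking the definition of $\Phi_K$, so my plan is short and entirely calculational, with no serious obstacle to overcome. The single substantive point is to verify that the base case convention $\Phi_0 := e_{2n}$ has been chosen precisely so that the recursion holds at $K=1$.

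First, I would write out the definition verbatim: by construction, $\Phi_K(\vec{Z}_K) = M_K(Z_K) M_{K-1}(Z_{K-1}) \cdots M_1(Z_1) e_{2n}$, interpreted as a sequence of $2n\times 2n$ matrices acting on the vector $e_{2n}\in \mathbb{C}^{2n}$. Since matrix multiplication is associative and the action on $e_{2n}$ is just a further multiplication, I may group the product as
\[
\Phi_K(\vec{Z}_K) = M_K(Z_K) \cdot \bigl( M_{K-1}(Z_{K-1}) \cdots M_1(Z_1) e_{2n} \bigr).
\]

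Second, for $K\geq 2$ the inner parenthesized factor is by definition equal to $\Phi_{K-1}(\vec{Z}_{K-1})$, which immediately gives the claimed recursion \eqref{RecursiveFormula}. For the base case $K=1$, the convention $\Phi_0 := e_{2n}$ yields $M_1(Z_1)\Phi_0 = M_1(Z_1) e_{2n}$, and this agrees with the direct definition $\Phi_1(Z_1) = M_1(Z_1) e_{2n}$. Hence the formula is valid for all $K\geq 1$.

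There is genuinely no difficult step: the only thing worth flagging is that the convention is not arbitrary but is forced on us by the desire to have a uniform recursion starting at $K=1$. I would not expand any routine matrix calculation, since the statement is literally a reassociation of the defining product.
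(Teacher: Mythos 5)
Your proof is correct and matches the paper exactly: the paper states this corollary ``can be derived immediately from the definition'' without further argument, and your unpacking of the defining product, reassociation, and verification of the $K=1$ base case is precisely that immediate derivation made explicit.
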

The main goal of this section is to prove
\begin{theorem} For $K\geq 3$, there is an open submanifold $E_K$ of $(\Cm)^K$ such that $$\Phi_K|_{E_K}:E_K\to \CwO$$ is a stratified elliptic submersion.
\end{theorem}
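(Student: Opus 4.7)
The plan is to prove the theorem in three stages: construct the open manifold $E_K$ explicitly, introduce a suitable stratification of the target $\CwO$, and produce over each stratum a dominating spray obtained from flows of finitely many complete holomorphic vector fields tangent to the fibres of $\Phi_K$.

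First, I would define $E_K$ so that the two containments in Theorem \ref{theorem:PhiHasPop} hold simultaneously. The natural candidate is to take the submersive locus $(\Cm)^K \setminus S_K$ and intersect it with an additional open condition that rules out degenerate configurations (for instance, those where too many of the intermediate partial products $\Phi_j(\vec Z_j)$ lie in a coordinate subspace). The containment $\Wk{K} \subset E_K$ then reduces to showing that points of $\Wk{K}$ are submersive for $\Phi_K$, which should follow from the explicit description of $S_K$ supplied by Theorem \ref{theorem:singularitySet}. Openness and the manifold structure of $E_K$ are automatic.

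Next, I would stratify $\CwO = X_m \supset \cdots \supset X_0$ according to the vanishing behaviour of the coordinates of $y \in \CwO$ split into the two blocks $(y',y'') \in \mathbb C^n \times \mathbb C^n$. The reason for this choice is the recursive formula $\Phi_K(\vec Z_K) = M_K(Z_K)\Phi_{K-1}(\vec Z_{K-1})$: whether we can solve for $Z_K$ in terms of $y$ and $\Phi_{K-1}(\vec Z_{K-1})$, and the codimension of the solution set, is controlled precisely by the rank of the appropriate block of $y$. With such a stratification each stratum $Y_k = X_k \setminus X_{k-1}$ is smooth, and the fibres of $\Phi_K|_{E_K}$ have constant dimension over each $Y_k$.

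The main step, and the principal difficulty, is to construct for each stratum a finite collection of $\mathbb C$-complete holomorphic vector fields on $E_K \cap \Phi_K^{-1}(U)$, for $U$ a neighbourhood in the stratum, that are tangent to the fibres of $\Phi_K$ and jointly span the vertical tangent space at every point. If this is achieved, composing the time-$t$ flows gives a dominating spray $s \colon E_K \times \mathbb C^N \to E_K$ as in Definition \ref{definspray}, and stratified ellipticity in the sense of Definition \ref{definstratifiedspray} follows. The natural starting point is the translations $Z_j \mapsto Z_j + tA$, which are globally complete on $(\Cm)^K$, combined with compensating shifts of neighbouring factors using the identities $M_k(Z)M_k(W) = M_k(Z+W)$ and the conjugation formulas between $\Eu{\cdot}$ and $\El{\cdot}$. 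The hard part, as the author flags, is that the naive compensations — moving perturbations across a factor of the opposite parity via conjugation — produce vector fields whose flows are polynomial but no longer translations, and whose global integrability in closed form is not obvious. My expectation is that this is circumvented by exploiting the condition $K \geq 3$ (and the freedom it gives in redistributing a perturbation between two factors of the same parity separated by a single factor of the opposite parity), so that one can write the compensation field as a sum of pieces whose flows are explicitly integrable, and by stratifying $\CwO$ finely enough that on each stratum the compensation can be chosen in a form whose spanning property is visible. Verifying completeness of these compensated vector fields, and checking that the finite family constructed in this way actually spans the vertical tangent space over every point of $E_K$ lying over $Y_k$, is the genuinely technical heart of the argument.
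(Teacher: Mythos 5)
Your high-level scaffolding (define $E_K$, stratify $\CwO$, build a stratified spray from flows of complete fiber-preserving fields) matches the paper's, but the two load-bearing steps are either absent or pointed in the wrong direction, and the proposal does not close the gap it itself identifies.

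The first problem is your definition of $E_K$. You propose to take the submersive locus $(\Cm)^K \setminus S_K$ intersected with an ad hoc open condition, and you treat the two containments $\Wk{K}\subset E_K\subset(\Cm)^K\setminus S_K$ as constraints to be verified. But a spray built from flows of complete vector fields is only a map $E_K\times\mathbb{C}^N\to E_K$ if $E_K$ is \emph{invariant} under those flows; a set cut out by an arbitrary open condition has no reason to be. The paper's resolution is to make invariance definitional: one first finds a finite collection $A_K$ of complete fiber-preserving fields spanning the vertical tangent bundle on $\Wk{K}$ (the Spanning theorem), and then sets $E_K := C_{A_K}(\Wk{K})$, the $G_{A_K}$-closure of $\Wk{K}$ under the group generated by the flows. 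Invariance is then built in, $\Wk{K}\subset E_K$ is automatic, and Lemma \ref{lemma:noInvariantSetInClosure} together with Lemma \ref{cor:spanningFields} upgrades the spanning property from $\Wk{K}$ to all of $E_K$. Your proposal does not contain this closure idea, and without it there is no mechanism to make the spray land back in $E_K$.

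The second problem is the construction of the complete fields themselves, which you flag as ``the genuinely technical heart'' and then leave as a hope: that compensated translation fields can be written as sums of explicitly integrable pieces, aided by a fine rank stratification of $\CwO$. The paper does neither of these things. Its stratification of $\CwO$ is only two-step (generic stratum $\pi_K(y)\neq 0$ versus non-generic stratum $\pi_K(y)=0$), not a rank stratification, and the non-generic case is handled by showing the fiber splits off a trivial $\Cm$-factor (Lemma \ref{lemma:nonGenericFibers}). The complete fields are not compensated translations: they are the determinant-type fields $\partial_x^{K}=D_x(\tilde P^{K})$ (Lemma \ref{lemma:fiberPreservingVectorFields}), pushed forward through the local biholomorphisms of Lemma \ref{lemma:genericFibers} and cleared of denominators to yield $\varphi^K_{x,j^*}$ and $\gamma^K_{ij,j^*}$ (Lemma \ref{lemma:liftingVectorFields}), with completeness detected by the second-derivative criterion of Lemma \ref{lemma:classifyCompleteFields}. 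Because these do not span the vertical tangent space everywhere on $\Wk{K}$ (Example \ref{example:incomplete} exhibits an incomplete field), the spanning statement is proved only on a subset $\mathcal{U}_K\subset\Wk{K}$ and then propagated by the closure machinery using Lemma \ref{lemma:leavingASubset}, with the complementary bases theorem supplying the non-vanishing minors needed to ``leave'' the bad locus. None of this is reconstructible from the compensated-translation picture in your proposal, so as written the proof does not go through.
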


In subsection 3.1 we will classify the points for which $\Phi_K$ is not submersive. We will also show that $\Phi_K$ is surjective. 

The subsequent subsections are then devoted to the task of finding stratified sprays. In subsection 3.2, we stratify $\CwO$ suitably. In 3.3, we find formulas of holomorphic vector fields which are fiber-preserving for $\Phi_K$. Unfortunately, some of those fields aren't $\mathbb{C}$-complete. We therefore classify some complete vector fields in subsection 3.4. In subsection 3.5, we analyze the fibers of $\Phi_K$ from a topological point of view. In subsection 3.6 we lay the mathematical basis for the construction of the sprays. And finally we carry out all the necessary calculations in 3.7.
\subsection{Notations and basic properties}\label{subsection:notationsAndBasicProperties}
Let's start with some notations. Let $E_{ij}$ be the $n\times n$ matrix having a $1$ at entry $(i,j)$ and is zero elsewhere. Then $\tilde{E}_{ij}=\frac{1}{1+\delta_{ij}}(E_{ij}+E_{ji})$ is an \textit{elementary symmetric matrix}.

In the following we will make the identification of $\mathrm{Sym}_n(\mathbb{C})$ and $\Cm$ more precise. The set $\{\tilde{E}_{ij}:1\leq i \leq j \leq n\}$ forms a basis of $\mathrm{Sym}_n(\mathbb{C})$. The sets $ I:=\{i\in \mathbb{N}: 1\leq i \leq \frac{n(n+1)}{2}\}$ and $J:=\{(i,j)\in \mathbb{N}^2: 1\leq i\leq j\leq n\}$ have the same order. Hence there is a bijection $\alpha: I\to J$, which induces an isomorphism $S:\Cm\to \mathrm{Sym}_n(\mathbb{C})$ by defining $S(e_i)=\tilde{E}_{\alpha(i)}$ for all $1\leq i\leq \tfrac{n(n+1)}{2}$.
%$$S\left(\sum_{i=1}^{\frac{n(n+1)}{2}} a_ie_i\right):=\sum_{i=1}^{\frac{n(n+1)}{2}} a_i\tilde{E}_{\alpha(i)}.$$
By an abuse of notation, $Z\in \Cm$ denotes both, the vector and the corresponding symmetric matrix $S(Z)$, depending on the corresponding context, of course. 

In this very first subsection, we will compute the set of points where $\Phi_K$ is \textit{not} submersive. Also we'll give a proof for the surjectivity of $\Phi_K$. But let's spell that out in more detail first.

We denote the set of points in $(\Cm)^K$ where $\Phi_K$ is not submersive by $S_K$.  We also define the open set
$$\Wk{K}:=\left\lbrace \vec{Z}_K\in (\Cm)^K: Z_{2i-1}e_n\neq 0 \text{ for some } 1\leq i\leq \lceil\tfrac{K-1}{2} \rceil \right\rbrace$$
and $\Wk{K}^c$ denotes its complement in $(\Cm)^K$. 
\begin{theorem}[Singularity set of $\Phi_K$]\label{theorem:singularitySet}
For $K\geq 2$, the set $S_K$ is given by
$$ S_K=\left\lbrace \vec{Z}_K\in \Wk{K}^c: \mathrm{rank}(W_K(\vec{Z}_K))<n\right\rbrace$$
where $W_K(\vec{Z}_K)$ is the augmented matrix $(Z_2|Z_4|\cdots | Z_{2k})$ for $k=\lfloor \tfrac{K-1}{2} \rfloor$.
\end{theorem}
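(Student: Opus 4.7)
The plan is to analyze $\mathrm{Im}(D\Phi_K)$ separately on $\Wk{K}$ and on its complement. Writing $\Phi_j=\binom{u_j}{v_j}$ and using that $\Eu{Z}$ leaves the lower block invariant while $\El{Z}$ leaves the upper one invariant, the defining hypothesis of $\Wk{K}^c$, namely $Z_{2i-1}e_n=0$ for all $i\le\lceil(K-1)/2\rceil$, forces by induction $u_{2i-1}=0$ and $v_{2i}=e_n$ throughout the relevant range. Setting $R_j:=M_K(Z_K)\cdots M_{j+1}(Z_{j+1})$, the chain rule gives
$$\frac{\partial\Phi_K}{\partial Z_j}\cdot\delta Z_j \;=\; R_j\cdot DM_j(\delta Z_j)\cdot\Phi_{j-1},$$
which for odd $j$ equals $R_j\binom{\delta Z_j\,e_n}{0}$ and sweeps out the $n$-plane $U_j:=R_j\binom{\C{n}}{0}$ (using that symmetric matrices carry the nonzero vector $e_n$ onto all of $\C{n}$), while for even $j$ it vanishes identically because $u_{j-1}=0$. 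Therefore
$$\mathrm{Im}(D\Phi_K)\;=\;\sum_{j\text{ odd},\,j\le K}U_j.$$

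For the dimension count I would exploit the identity
$$R_j\binom{w}{0}\;=\;R_{j+2}\binom{(I_n+Z_{j+2}Z_{j+1})w}{0}\;+\;R_{j+2}\binom{0}{Z_{j+1}w},$$
which splits $U_j$ into a piece of $U_{j+2}$ and a new part $R_{j+2}\binom{0}{Z_{j+1}w}$. With $\Sigma_j:=\sum_{j'\ge j,\;j'\text{ odd}}U_{j'}$, the key lemma is $R_{j+2}\binom{0}{y}\equiv\binom{0}{y}\pmod{\Sigma_{j+2}}$, proved by induction with decreasing odd $j$: the factor $M_{j+3}=\El{Z_{j+3}}$ fixes $\binom{0}{y}$, and $M_{j+4}\binom{0}{y}=\binom{Z_{j+4}y}{y}$ contributes an upper part that is absorbed into $U_{j+4}\subset\Sigma_{j+2}$ once the remaining product $R_{j+4}$ is applied. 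Iterating the lemma gives $\Sigma_j=\Sigma_{j+2}+\binom{0}{\mathrm{Im}(Z_{j+1})}$, while the base $\Sigma_K=\binom{\C{n}}{0}$ (for $K$ odd) or $\Sigma_{K-1}=\{\binom{w}{Z_Kw}:w\in\C{n}\}$ (for $K$ even) is an $n$-plane $A$ meeting $\{0\}\times\C{n}$ trivially. Telescoping yields $\mathrm{Im}(D\Phi_K)=A\oplus\binom{0}{\mathrm{Im}(W_K)}$ of dimension $n+\mathrm{rank}(W_K)$, so $\Phi_K$ is submersive at $\vec{Z}_K\in\Wk{K}^c$ if and only if $\mathrm{rank}(W_K)=n$.

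For $\vec{Z}_K\in\Wk{K}$, I would pick the least $i_0$ with $Z_{2i_0-1}e_n\neq0$; applying the previous induction for $i<i_0$ yields $u_{2i_0-1}=Z_{2i_0-1}e_n\neq0$. The partial derivative of $\Phi_{2i_0}$ in $Z_{2i_0-1}$ then covers $\El{Z_{2i_0}}\binom{\C{n}}{0}=\{\binom{w}{Z_{2i_0}w}:w\in\C{n}\}$, and the partial in $Z_{2i_0}$ covers $\binom{0}{\mathrm{Sym}_n(\mathbb{C})\cdot u_{2i_0-1}}=\binom{0}{\C{n}}$ (since symmetric matrices act transitively on nonzero vectors). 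These two $n$-planes already span $\C{2n}$, so $D\Phi_{2i_0}$ is surjective at $\vec{Z}_{2i_0}$; composing with the invertible factor $M_K(Z_K)\cdots M_{2i_0+1}(Z_{2i_0+1})$ preserves surjectivity and delivers the claim for $D\Phi_K$.

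The main obstacle is the descending induction establishing $R_{j+2}\binom{0}{y}\equiv\binom{0}{y}\pmod{\Sigma_{j+2}}$. Each individual step is a routine calculation relying only on the fact that $\Eu{\cdot}$ and $\El{\cdot}$ each leave one block invariant, but carefully tracking how the accumulated upper-block cross-terms produced by the intermediate $R_{j'}$-factors are absorbed into the higher-index $U_{j'}$'s is the technical heart of the argument.
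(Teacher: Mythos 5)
Your proof is correct, and it takes a genuinely different route from the paper's. Both arguments share the same foundation: on $\Wk{K}^c$ one has $\Phi_j=e_{2n}$ up through $j=2\lceil(K-1)/2\rceil$ (the paper's Lemma~\ref{lemma:propertyOne}), which makes the even partials vanish and the odd partials sweep the full upper block $\binom{\C{n}}{0}$ before the remaining factors are applied. After that, the paper tracks the Jacobian matrix $J\Phi_K$ explicitly, writes $C_K, D_K$ for its upper and lower $n$ rows, and proves the block recursion $\mathrm{im}(D_{2k+2})=\mathrm{im}(D_{2k}\,|\,Z_{2k+2})$ by a direct matrix computation using the operator $F(\cdot)$; surjectivity of $J\Phi_{2k+1}$ then reduces to $\mathrm{rank}(D_{2k})=n$ because the $F(e_n)$ block already takes care of the upper rows. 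You instead work intrinsically with the image subspaces $U_j=R_j\binom{\C{n}}{0}\subset\C{2n}$ and prove a descending-induction congruence $R_{j+2}\binom{0}{y}\equiv\binom{0}{y}\pmod{\Sigma_{j+2}}$, which lets you telescope $\Sigma_j=\Sigma_{j+2}+\binom{0}{\mathrm{Im}(Z_{j+1})}$ down to the clean formula $\mathrm{Im}(D\Phi_K)=A\oplus\binom{0}{\mathrm{Im}(W_K)}$. The recursion picking up $\mathrm{Im}(Z_{2i})$ at each even step is the same underlying phenomenon, but your presentation is more geometric: it gives the image of the derivative explicitly as a direct sum, rather than only its rank, and avoids the Jacobian block bookkeeping and the auxiliary map $F$ entirely. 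One small stylistic remark: you state the induction yields $u_{2i-1}=0$ and $v_{2i}=e_n$, but in fact $u_j=0$ and $v_j=e_n$ for every $j$ in range; what you wrote suffices for the argument, though a reader might pause over whether $v_{j-1}$ for odd $j$ (an even index) is covered, which it is. On the $\Wk{K}$ side your argument is likewise correct and is essentially the same as the paper's Lemma~\ref{lemma:phisASubmersion}, just phrased via the two spanning $n$-planes rather than via the $F(P_s^{L-1})$ and $F(P_f^L)$ blocks of the Jacobian.
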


\begin{theorem}\label{theorem:PhiSurjective}
For $K\geq 3$, the mapping $\Phi_K|_{\Wk{K}}:\Wk{K}\to \CwO$ is surjective.
\end{theorem}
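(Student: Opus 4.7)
The plan is to reduce the surjectivity claim to the case $K=3$ and then establish this case by a direct construction. Since $M_k(0)=I_{2n}$ for every $k$, padding a triple with zeros yields $\Phi_K(Z_1,Z_2,Z_3,0,\ldots,0)=\Phi_3(Z_1,Z_2,Z_3)$, and the condition $Z_1 e_n\neq 0$ by itself already places the tuple $(Z_1,Z_2,Z_3,0,\ldots,0)$ in $\Wk{K}$. Hence it is enough to show that $\Phi_3|_{\Wk{3}}:\Wk{3}\to\CwO$ is surjective.

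The key linear-algebraic input I would use is that for every nonzero $v\in\mathbb{C}^n$ and every $w\in\mathbb{C}^n$ there exists a symmetric matrix $Z\in\mathrm{Sym}_n(\mathbb{C})$ with $Zv=w$. Explicitly, since $v\neq 0$ I can pick some $x\in\mathbb{C}^n$ with $x^T v=1$, and then
$$Z:=wx^T+xw^T-(v^T w)\,xx^T$$
is symmetric and satisfies $Zv=w$; the formula is valid uniformly, in particular in the isotropic case $v^T v=0$.

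For $K=3$ the recursive formula gives
$$\Phi_3(Z_1,Z_2,Z_3)=\begin{pmatrix}Z_1 e_n+Z_3(Z_2 Z_1 e_n+e_n)\\ Z_2 Z_1 e_n+e_n\end{pmatrix},$$
and given any target $(a,b)^T\in\CwO$ I would split into two cases. If $b\neq 0$, I pick any nonzero $v\in\mathbb{C}^n$ (for instance $v=e_n$) and apply the lemma three times to obtain symmetric matrices $Z_1,Z_2,Z_3$ with $Z_1 e_n=v$, $Z_2 v=b-e_n$, and $Z_3 b=a-v$; substitution confirms $\Phi_3(Z_1,Z_2,Z_3)=(a,b)^T$. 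If $b=0$, then necessarily $a\neq 0$, and I set $Z_3=0$ and choose symmetric $Z_1,Z_2$ with $Z_1 e_n=a$ and $Z_2 a=-e_n$; this again yields $\Phi_3(Z_1,Z_2,Z_3)=(a,0)^T$. In both cases $Z_1 e_n\neq 0$, so the tuple lies in $\Wk{3}$.

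The only step that is more than bookkeeping is the linear-algebraic lemma on symmetric matrices, and even that admits the direct two-line proof indicated above; the bulk of the argument is a short case distinction, so I do not anticipate any serious obstacle.
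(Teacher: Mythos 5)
Your proof is correct and follows essentially the same strategy as the paper: reduce to $K=3$ by padding with zero matrices, and produce the three symmetric matrices one by one via the observation (the paper's Lemma~\ref{lemma:symmetricSurjective}) that $Z\mapsto Zv$ is surjective onto $\mathbb{C}^n$ for fixed $v\neq 0$. The only differences are cosmetic: you fix $Z_1e_n=v$ up front and solve outward for $Z_2$ and $Z_3$, which sidesteps the paper's small intermediate step of choosing $Z_3$ so that $a-Z_3b\neq 0$, and you give an explicit formula for the symmetric matrix where the paper argues via the auxiliary linear map $F$.
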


A direct consequence of these two statements is
\begin{cor}
For $K\geq 3$ and for any open submanifold $E$ in $(\Cm)^K$ with $$\Wk{K}\subset E\subset (\Cm)^K\setminus S_K,$$ the mapping $\Phi_K|_{E}:E\to \CwO$ is a surjective submersion.
\end{cor}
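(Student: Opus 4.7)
The statement is essentially an immediate consequence of the two preceding theorems, so my proof plan is short and just combines them carefully.

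The plan is to verify surjectivity and submersivity separately. For submersivity, I would invoke Theorem \ref{theorem:singularitySet}, which identifies exactly the set $S_K \subset (\Cm)^K$ where $\Phi_K$ fails to be a submersion. Since by hypothesis $E \subset (\Cm)^K \setminus S_K$, every point of $E$ is a regular point of $\Phi_K$, and because $E$ is an open submanifold, the restriction $\Phi_K|_E \colon E \to \CwO$ is a holomorphic submersion. For surjectivity, I would simply use the inclusion $\Wk{K} \subset E$ together with Theorem \ref{theorem:PhiSurjective}: since $\Phi_K(\Wk{K}) = \CwO$, we get $\CwO = \Phi_K(\Wk{K}) \subset \Phi_K(E) \subset \CwO$, so equality holds.

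There is really no obstacle here — this corollary is a bookkeeping statement that packages the two previous theorems into the exact form that will be needed later (namely, a submanifold sandwiched between $\Wk{K}$ and $(\Cm)^K\setminus S_K$ on which $\Phi_K$ is a surjective submersion, thereby making it a candidate for admitting the stratified spray structure proved in the remainder of Section 3). The only thing worth being mildly careful about is that openness of $E$ (rather than merely being a subset) is what ensures the differential of $\Phi_K|_E$ at a point $\vec Z_K \in E$ coincides with that of $\Phi_K$ at $\vec Z_K$, so that membership in $(\Cm)^K \setminus S_K$ immediately transfers to submersivity of the restriction.
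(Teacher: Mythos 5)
Your argument is exactly the one the paper intends: the paper states the corollary as ``a direct consequence'' of Theorem \ref{theorem:singularitySet} and Theorem \ref{theorem:PhiSurjective} without further elaboration, and your two-part verification (submersivity from $E\subset(\Cm)^K\setminus S_K$, surjectivity from $\Wk{K}\subset E$ and $\Phi_K(\Wk{K})=\CwO$) is precisely that deduction. Correct, and same approach.
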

\subsubsection{Singularity set of \texorpdfstring{$\Phi_K$}{Phi}}
In order to compute the singularity set $S_K$, we need to know the Jacobian of $\Phi_K$, denoted by $J\Phi_K$. For the computations, we need some auxiliary tools. For a fixed $1\leq i \leq n$, let's define the mapping $F_i:\C{n}\to \C{n\times n}$ by 
$$ F_i(v):=\left[ \tilde{E}_{i1}v \cdots \tilde{E}_{in}v\right] = \begin{pmatrix}
v_i & & & & \\ & \ddots &&& \\ v_1&\cdots& v_i &\cdots & v_n \\ &&&\ddots & \\ &&&&v_i
\end{pmatrix}.$$ Furthermore, we define $F:\C{n}\to \C{n\times \frac{n(n+1)}{2}}$ by
$$F(v) := \left[\tilde{E}_{\alpha(1)}v \cdots \tilde{E}_{\alpha(\frac{n(n+1)}{2})}v\right].$$ Observe that the matrix $F_i(v)$ is a submatrix of $F(v)$, for every $1\leq i\leq n$. This implies that $F(v)$ is surjective if and only if $v\neq 0$. And if $F(v)$ is not surjective, then we even have $F(v)=0$.

\begin{cor}\label{cor:JacobianElementaryMatrix}
For $u,v \in\C{n}$ such that $\begin{pmatrix}
u\\ v
\end{pmatrix}\in \CwO$, the Jacobian of the mapping ${\Cm\to \CwO},{ Z\mapsto M_K(Z)\begin{pmatrix}
u\\ v
\end{pmatrix}}$ is given by
$$ A_K(u,v):=\begin{cases}
\begin{pmatrix}
F(v)\\0
\end{pmatrix} &\text{if }K=2k+1\\
\begin{pmatrix}
0\\ F(u)
\end{pmatrix} & \text{if }K=2k.
\end{cases}$$
In particular, there holds $M_K(Z)A_K(u,v)=A_K(u,v)$ for all $Z\in \Cm$.
\end{cor}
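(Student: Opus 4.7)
The claim reduces to a direct computation, the only subtlety being the identification of $\Cm$ with $\mathrm{Sym}_n(\mathbb{C})$ via $S(e_i) = \tilde{E}_{\alpha(i)}$. My plan is to split into the two cases $K = 2k+1$ and $K = 2k$ and differentiate explicitly in these coordinates.

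In the odd case $K = 2k+1$, we have $M_K(Z)\binom{u}{v} = \binom{u + S(Z)v}{v}$, which is affine in $Z \in \Cm$; hence the Jacobian equals its linear part. Writing $Z = \sum_{i=1}^{n(n+1)/2} z_i e_i$, I would observe that $S(Z)v = \sum_i z_i \tilde{E}_{\alpha(i)} v$, so
\[
\frac{\partial}{\partial z_i}\bigl(u + S(Z)v\bigr) = \tilde{E}_{\alpha(i)} v, \qquad \frac{\partial}{\partial z_i} v = 0.
\]
Assembling these columns gives the Jacobian $\binom{F(v)}{0} = A_K(u,v)$ by definition of $F$. The even case $K = 2k$ is completely analogous: now $M_K(Z)\binom{u}{v} = \binom{u}{S(Z)u + v}$, the top block contributes $0$, and the bottom block contributes $F(u)$, yielding $A_K(u,v) = \binom{0}{F(u)}$.

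For the final identity $M_K(Z) A_K(u,v) = A_K(u,v)$, I would just multiply the block matrices out. In the odd case the upper-right block of $M_K(Z)$ is $S(Z)$, but it acts on the zero lower block of $A_K(u,v)$, so it disappears; the identity blocks then pass $F(v)$ through unchanged. The even case is symmetric, with the lower-left block $S(Z)$ killed by the zero upper block of $A_K(u,v)$.

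I do not foresee a genuine obstacle here; the statement is essentially a bookkeeping lemma preparing the computation of the singular locus $S_K$ in the following subsection. The only place one must be careful is the bijection $\alpha$ and the normalization factor $\tfrac{1}{1 + \delta_{ij}}$ in $\tilde{E}_{ij}$, which is exactly what makes $\{\tilde{E}_{\alpha(i)}\}$ the image of the standard basis under $S$ and therefore makes the partial derivatives line up as the columns of $F(v)$ (resp.\ $F(u)$) without any extra scalar factors.
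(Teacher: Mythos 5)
Your proof is correct and follows essentially the same route as the paper: both parameterize $\mathrm{Sym}_n(\mathbb{C})$ by the basis $\{\tilde{E}_{\alpha(i)}\}$, differentiate $M_K(Z)\binom{u}{v}$ coordinate-by-coordinate to get $\tilde{E}_{\alpha(i)}v$ (resp.\ $\tilde{E}_{\alpha(i)}u$) as the columns, and recognize the result as $F(v)$ (resp.\ $F(u)$). The only cosmetic difference is that you spell out the block multiplication for the final identity $M_K(Z)A_K(u,v)=A_K(u,v)$, which the paper leaves implicit.
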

\begin{proof}
Note that each symmetric matrix $Z\in \Cm$ can be written as a sum $Z=\sum_{1\leq i\leq j\leq n} z_{ij}\tilde{E}_{ij}$. Hence $\Deriv{z}{ij}Z = \tilde{E}_{ij}$. Further, note 
$$ \Deriv{z}{ij}M_{2k+1}(Z)\begin{pmatrix}
u\\v
\end{pmatrix} = 
\begin{pmatrix}
0& \tilde{E}_{ij} \\ 0 & 0 
\end{pmatrix} \begin{pmatrix}
u\\v
\end{pmatrix} = \begin{pmatrix}
\tilde{E}_{ij}v\\ 0
\end{pmatrix}$$
and
$$ \Deriv{z}{ij}M_{2k}(Z)\begin{pmatrix}
u\\v
\end{pmatrix} = 
\begin{pmatrix}0&0\\
 \tilde{E}_{ij}  & 0 
\end{pmatrix} \begin{pmatrix}
u\\v
\end{pmatrix} = \begin{pmatrix}0\\
\tilde{E}_{ij}u
\end{pmatrix},$$
respectively. From here, the claim follows by definition of the mapping $F$.
\end{proof}

We are now ready to compute the Jacobian of $\Phi_K$.  By the recursive formula (\ref{RecursiveFormula}), the product rule and the previous corollary, we obtain
\begin{cor}[Jacobian of $\Phi_K$]\label{cor:JacobianRecursive}
 The Jacobian $J\Phi_1$ is given by $A_1(e_{2n})$. For $K\geq 2$, the Jacobian $J\Phi_K$ of $\Phi_K$ is given by
$$ J\Phi_K(\vec{Z}_K) = \begin{pmatrix} M_K(Z_K)
J\Phi_{K-1}(\vec{Z}_{K-1}) &|& A_K(\Phi_{K-1}(\vec{Z}_{K-1}))
\end{pmatrix}.$$
\end{cor}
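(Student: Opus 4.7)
The plan is a direct induction on $K$, built on the recursive formula $\Phi_K(\vec{Z}_K)=M_K(Z_K)\Phi_{K-1}(\vec{Z}_{K-1})$ from equation (\ref{RecursiveFormula}) together with Corollary \ref{cor:JacobianElementaryMatrix}, which already handles the Jacobian of a single elementary factor acting on a fixed vector.

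\textbf{Base case} $K=1$. Since the convention $\Phi_0:=e_{2n}$ makes $\Phi_1(Z_1)=M_1(Z_1)e_{2n}$, I would simply apply Corollary \ref{cor:JacobianElementaryMatrix} with $(u,v)$ equal to the upper and lower $n$-blocks of $e_{2n}$ (using the odd case $K=1$), obtaining $J\Phi_1 = A_1(e_{2n})$.

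\textbf{Inductive step} $K\geq 2$. Assuming the formula for $K-1$, I would decompose $J\Phi_K$ according to the partition of variables into $(Z_1,\dots,Z_{K-1})$ and $Z_K$. For the first block, the factor $M_K(Z_K)$ is constant with respect to each $Z_i$ with $i<K$, so it pulls out of the partial derivative, giving $\partial_{Z_i}\Phi_K = M_K(Z_K)\,\partial_{Z_i}\Phi_{K-1}$; collecting these columns for $i=1,\dots,K-1$ and invoking the inductive hypothesis yields the left block $M_K(Z_K)J\Phi_{K-1}(\vec{Z}_{K-1})$. For the second block, $\Phi_{K-1}(\vec{Z}_{K-1})$ now plays the role of the fixed vector in Corollary \ref{cor:JacobianElementaryMatrix}, which produces exactly the right block $A_K(\Phi_{K-1}(\vec{Z}_{K-1}))$. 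Concatenating the two blocks gives the claimed formula.

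There is no genuine obstacle here: the whole argument is the product rule plus Corollary \ref{cor:JacobianElementaryMatrix}, and the only bookkeeping concern is to verify that the two flavours of partial derivative line up correctly as the left and right blocks of $J\Phi_K$. The one hypothesis to be checked for Corollary \ref{cor:JacobianElementaryMatrix} to apply is that $\Phi_{K-1}(\vec{Z}_{K-1})\in \CwO$, i.e.\ that it is nonzero; but this is automatic, since $\Phi_{K-1}(\vec{Z}_{K-1})$ is the image of the nonzero vector $e_{2n}$ under an invertible symplectic matrix.
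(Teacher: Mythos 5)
Your proposal is correct and follows precisely the route the paper intends: the recursive formula $\Phi_K = M_K(Z_K)\Phi_{K-1}$, the product rule, and Corollary~\ref{cor:JacobianElementaryMatrix} (the paper itself gives only a one-sentence indication, which your write-up simply makes explicit). The only stylistic quibble is that no actual induction is needed — the recursive formula for $J\Phi_K$ is a direct computation in terms of $J\Phi_{K-1}$, which is $J\Phi_{K-1}$ by definition rather than by an inductive hypothesis — and your check that $\Phi_{K-1}(\vec{Z}_{K-1})\in\CwO$ is a nice touch, though the Jacobian formula in Corollary~\ref{cor:JacobianElementaryMatrix} holds regardless of that nonvanishing condition.
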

If the Jacobian $J\Phi_{K-1}$ is surjective, then so is $J\Phi_K$, since $M_K(Z_K)$ is a regular matrix. Or, equivalently, if $\vec{Z}_K=(\vec{Z}_{K-1},Z_K)\in S_K$ is a singular point for $\Phi_K$, then $\vec{Z}_{K-1}\in S_{K-1}$ is a singular point for $\Phi_{K-1}$. This observation suggests that we will compute $S_K$ recursively. 
\begin{lemma}\label{lemma:propertyOne}
Let $\vec{Z}_K\in (\Cm)^K$ and assume that there is $1\leq k\leq \big \lceil \frac{K-1}{2}\big\rceil$ such that ${Z_{2i-1}e_n=0}$ for all $1\leq i\leq k$. Then $\Phi_j(\vec{Z}_j)=e_{2n}$ for all $1\leq j \leq 2k$.
\end{lemma}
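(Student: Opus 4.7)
The plan is a straightforward induction on $k$, driven entirely by the recursive formula \eqref{RecursiveFormula} together with two elementary observations about how the elementary symplectic matrices act on $e_{2n}$. First I would record that $\El{Z}\,e_{2n} = e_{2n}$ for every symmetric $Z$, while
$$ \Eu{Z}\,e_{2n} = \begin{pmatrix} Z e_n \\ e_n \end{pmatrix}, $$
so that $\Eu{Z}\,e_{2n} = e_{2n}$ precisely when $Z e_n = 0$. Both identities are immediate from the block decomposition of $\El{Z}$ and $\Eu{Z}$.

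With these in hand, the induction is essentially automatic. For the base case $k=1$, the identity $M_1(Z_1) = \Eu{Z_1}$ together with the hypothesis $Z_1 e_n = 0$ gives $\Phi_1(Z_1) = \Eu{Z_1}\,e_{2n} = e_{2n}$, and then the recursion combined with $M_2(Z_2) = \El{Z_2}$ fixing $e_{2n}$ yields $\Phi_2(\vec{Z}_2) = e_{2n}$ regardless of $Z_2$. For the inductive step, I would assume the conclusion with $k$ replaced by $k-1$, so that in particular $\Phi_{2(k-1)}(\vec{Z}_{2(k-1)}) = e_{2n}$. Applying the recursion once and using the additional hypothesis $Z_{2k-1} e_n = 0$ with $M_{2k-1} = \Eu{\cdot}$ gives $\Phi_{2k-1}(\vec{Z}_{2k-1}) = e_{2n}$; a second application of the recursion absorbs $M_{2k}(Z_{2k}) = \El{Z_{2k}}$ without altering the vector, which closes the induction. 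The condition $k \le \lceil (K-1)/2 \rceil$ in the hypothesis is exactly what ensures $Z_{2k-1}$ lies inside the tuple $\vec{Z}_K$ and that $\Phi_{2k}$ is defined, so it plays no role beyond well-posedness.

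There is no real obstacle here; the lemma is a bookkeeping statement that records how vanishing of the blocks $Z_{2i-1} e_n$ propagates trivially through $\Phi_K$. Its role in the sequel is structural: it isolates a distinguished stratum of "collapsed" parameter tuples that will reappear in the classification of the singularity set $S_K$ and in the topological analysis of the fibres of $\Phi_K$.
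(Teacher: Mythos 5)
Your proof is correct and uses essentially the same approach as the paper: an induction driven by the recursive formula \eqref{RecursiveFormula} and the block action of $\Eu{Z}$ and $\El{Z}$ on $e_{2n}$. The only cosmetic difference is that you induct on $k$ with steps of size two, whereas the paper inducts on $j$ from $1$ to $2k$ with steps of size one, splitting into cases for $j$ even and $j$ odd.
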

\begin{proof} We prove this by induction on $j$. For the base step, observe that
$$\Phi_1(\vec{Z}_1)=M_1(Z_1)e_{2n} = \begin{pmatrix}
Z_1e_n\\ e_n
\end{pmatrix}$$ by definition. Since we assume $Z_1e_n=0$, $\Phi_1(\vec{Z}_1)=e_{2n}$ follows immediately.

For the induction step, let $1<j\leq 2k$. There holds $\Phi_{j-1}(\vec{Z}_{j-1}) =e_{2n},$ by the induction hypothesis. Then we obtain $$\Phi_j(\vec{Z}_j) = M_j(Z_j)\Phi_{j-1}(\vec{Z}_{j-1}) = M_j(Z_j)e_{2n}$$ by the recursive formula (\ref{RecursiveFormula}). If $j$ is even, then we're done, by definition of $M_j$. Let's assume that $j=2l-1$ for some integer $l$. Observe that $l\leq k$ is satisfied, hence $Z_je_n=0$ by assumption. This implies $$\Phi_j(\vec{Z}_j)= \Eu{Z_j}\begin{pmatrix}
0\\ e_n 
\end{pmatrix} = \begin{pmatrix}
Z_je_n\\ e_n
\end{pmatrix} = e_{2n}.$$ This completes the proof.
\end{proof}
\begin{lemma}\label{lemma:phisASubmersion}
For $K\geq 2$, there holds $\Wk{K}\subset (\Cm)^K\setminus S_K$, that is, the restriction $\Phi_K|_{\Wk{K}}$ is a submersion.
\end{lemma}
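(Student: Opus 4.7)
The natural approach is induction on $K\geq 2$, applied to the recursive formula
\[ J\Phi_K(\vec Z_K) = \bigl( M_K(Z_K)\,J\Phi_{K-1}(\vec Z_{K-1}) \;\big|\; A_K(\Phi_{K-1}(\vec Z_{K-1})) \bigr) \]
from Corollary \ref{cor:JacobianRecursive}. Given $\vec Z_K\in \Wk{K}$, I would split into two cases depending on whether $\vec Z_{K-1}\in \Wk{K-1}$ or not. In the first case the inductive hypothesis gives surjectivity of $J\Phi_{K-1}$; multiplication by the invertible matrix $M_K(Z_K)$ preserves this, so $J\Phi_K$ is surjective.

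For the second case, comparing $\lceil (K-1)/2\rceil$ with $\lceil (K-2)/2\rceil$ in the definition of $\Wk{}$ shows that $\vec Z_K\in \Wk{K}$ together with $\vec Z_{K-1}\notin \Wk{K-1}$ can only occur when $K=2m$ is even, $Z_{2m-1}e_n\neq 0$, and $Z_{2i-1}e_n = 0$ for all $i\leq m-1$ (the base case $K=2$ lives here, since $\Wk{1}=\emptyset$). Under these hypotheses Lemma \ref{lemma:propertyOne} yields $\Phi_j(\vec Z_j) = e_{2n}$ for all $j\leq 2m-2$, whence $\Phi_{2m-1}(\vec Z_{2m-1}) = (Z_{2m-1}e_n, e_n)^T$. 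I would then restrict attention to the last two column-blocks of $J\Phi_K$, obtained by unfolding the recursion one more step in the left factor. The final block is $A_K(\Phi_{K-1}) = (0,F(Z_{2m-1}e_n))^T$, with image $\{0\}\times\C{n}$ since $F(v)$ has rank $n$ whenever $v\neq 0$. The preceding one comes from $A_{2m-1}(\Phi_{2m-2}) = A_{2m-1}(e_{2n}) = (F(e_n),0)^T$; pushing it through $M_K(Z_K) = \El{Z_K}$ produces $(F(e_n), Z_K F(e_n))^T$, with image $\{(w, Z_K w): w\in \C{n}\}$. These two rank-$n$ subspaces obviously span $\C{2n}$, so $J\Phi_K$ has rank $2n$.

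The main obstacle I anticipate is purely combinatorial, namely the parity bookkeeping that confirms the above dichotomy and isolates the unique problematic configuration. No deeper analysis of the long product $M_K\cdots M_2$ is required: the trailing block $A_K$ already contributes the full ``even-parity'' direction, and a single push of $A_{2m-1}$ through $M_K$ supplies the complementary ``odd-parity'' direction, which together suffice to hit all of $\C{2n}$.
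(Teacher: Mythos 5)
Your proof is correct and takes essentially the same route as the paper's: both locate the first odd index $2k-1$ at which $Z_{2k-1}e_n\neq 0$, invoke Lemma \ref{lemma:propertyOne} to compute $\Phi$ up to that point, and then show the last two column blocks of $J\Phi_{2k}$ already span $\C{2n}$ before pushing forward to $J\Phi_K$ via invertibility of the $M_i(Z_i)$. Your inductive framing and the explicit identification of the image of $M_K(Z_K)A_{2m-1}(e_{2n})$ as the graph $\{(w,Z_Kw):w\in\C{n}\}$ are slightly more careful than the paper's displayed block form, but the substance is identical.
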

\begin{proof}
Consider $\vec{Z}_K\in \mathcal{W}_K$. There is a smallest index $1\leq k\leq \big\lceil \frac{K-1}{2}\big\rceil$ such that $Z_{2k-1}e_n\neq 0$ and $Z_{2i-1}e_n=0$ for all $1\leq i<k$. Setting $L:=2k-1$, the Jacobian $J\Phi_L$ is of the form
$$ \begin{pmatrix}
* & F(P_s^{L-1}) \\ * & 0
\end{pmatrix}$$
and 
$$ J\Phi_{L+1} = \begin{pmatrix}
M_{L+1}(Z_{L+1})J\Phi_L & A_{L+1}(\Phi_L)
\end{pmatrix} = \begin{pmatrix}
* & F(P_s^{L-1}) & 0 \\ * & 0 & F(P_f^L)
\end{pmatrix}.$$
By definition of $F$, the Jacobian $J\Phi_{L+1}$ has full rank, if $P_s^{L-1}\neq 0$ and $P_f^L\neq 0$. By Lemma \ref{lemma:propertyOne}, there holds $\Phi_{L-1}=e_{2n}$. Hence
$$ P_s^{L-1} = \begin{pmatrix}
0 & I_n
\end{pmatrix} \Phi_{L-1} = e_n \neq 0.$$
Furthermore,
$$ P_f^L = \begin{pmatrix}
I_n & 0
\end{pmatrix} \Phi_L = \begin{pmatrix}
I_n & 0
\end{pmatrix} M_L(Z_L) \Phi_{L-1} = \begin{pmatrix}
I_n & Z_L
\end{pmatrix} e_{2n} = Z_Le_n \neq 0.$$
This showes that the Jacobian $J\Phi_{L+1}$ has full rank. By construction, there holds $L+1\leq K$. By the recursive formula of the Jacobian and regularity of $M_i(Z_i)$, $1\leq i\leq K$, we conclude that $J\Phi_K$ has full rank, too.
\end{proof}
Let's write $C_K:=\begin{pmatrix}
I_n & 0
\end{pmatrix} J\Phi_K$ and $D_K:=\begin{pmatrix}
0 & I_n
\end{pmatrix} J\Phi_K$. 
\begin{lemma}
For a point $\vec{Z}_{2k+2}=(\vec{Z}_{2k+1},Z_{2k+2})\in \mathcal{W}_{2k+2}^c$ the following statements are equivalent.
\begin{enumerate}[label=(\roman*)]
\item The Jacobian $J\Phi_{2k+2}(\vec{Z}_{2k+2})$ is surjective in $\vec{Z}_{2k+2}$.
\item The Jacobian $J\Phi_{2k+1}(\vec{Z}_{2k+1})$ is surjective in $\vec{Z}_{2k+1}$.
\item $\mathrm{rank}(D_{2k+1}) =\mathrm{rank}(D_{2k}) =n.$
\end{enumerate}
\end{lemma}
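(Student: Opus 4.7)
The plan is to reduce both equivalences to an explicit block description of $J\Phi_{2k+1}$ at the given point. The key preliminary observation is that $\vec{Z}_{2k+2}\in \mathcal{W}_{2k+2}^c$ forces $Z_{2i-1}e_n=0$ for every $1\leq i\leq k+1$, so Lemma \ref{lemma:propertyOne}, applied with parameter $k+1$ (which is allowed since $\lceil(2k+1)/2\rceil=k+1$), yields $\Phi_j(\vec{Z}_j)=e_{2n}$ for all $1\leq j\leq 2k+2$. In what follows I may therefore substitute $\Phi_{2k}=\Phi_{2k+1}=e_{2n}=\binom{0}{e_n}$.

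For (i)$\Leftrightarrow$(ii) I would apply the recursive Jacobian formula of Corollary \ref{cor:JacobianRecursive}:
\[
J\Phi_{2k+2}=\bigl(M_{2k+2}(Z_{2k+2})\,J\Phi_{2k+1}\,\big|\, A_{2k+2}(\Phi_{2k+1})\bigr).
\]
Since $2k+2$ is even, Corollary \ref{cor:JacobianElementaryMatrix} gives $A_{2k+2}(u,v)=\binom{0}{F(u)}$; evaluated at $(u,v)=(0,e_n)$ this block vanishes because $F(0)=0$. Invertibility of $M_{2k+2}(Z_{2k+2})$ then forces $\mathrm{rank}(J\Phi_{2k+2})=\mathrm{rank}(J\Phi_{2k+1})$, giving the equivalence.

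For (ii)$\Leftrightarrow$(iii) I would apply Corollary \ref{cor:JacobianRecursive} to $\Phi_{2k+1}$ instead. Now the leading matrix is $M_{2k+1}(Z_{2k+1})=\Eu{Z_{2k+1}}$, and the new column block is $A_{2k+1}(\Phi_{2k})=\binom{F(e_n)}{0}$ because $2k+1$ is odd and $\Phi_{2k}=\binom{0}{e_n}$. Multiplying out yields
\[
J\Phi_{2k+1}=\begin{pmatrix} C_{2k}+Z_{2k+1}D_{2k} & F(e_n)\\ D_{2k} & 0\end{pmatrix},
\]
so in particular $D_{2k+1}=(D_{2k}\mid 0)$, whence $\mathrm{rank}(D_{2k+1})=\mathrm{rank}(D_{2k})$ and (iii) reduces to the single condition $\mathrm{rank}(D_{2k})=n$. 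Because $e_n\neq 0$, $F(e_n)$ is surjective onto $\C{n}$, so the top $n$ rows of $J\Phi_{2k+1}$ are automatically linearly independent. A short block-rank argument (cancel the right-hand $F(e_n)$ block by taking combinations of the top rows, then count the bottom rows) then shows that $J\Phi_{2k+1}$ has full rank $2n$ exactly when $(D_{2k}\mid 0)$ contributes $n$ independent rows, i.e.\ when $\mathrm{rank}(D_{2k})=n$.

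The only real subtlety is to spot that on $\mathcal{W}_{2k+2}^c$ the newly appended column block $A_{2k+2}(\Phi_{2k+1})$ is forced to vanish (the even case of $A_K$ involves $F(u)$ with $u=0$, not $F(v)$ with $v=e_n$); once this is flagged, everything else is standard block linear algebra and the correct bookkeeping of the section~3 parity convention for $M_K$.
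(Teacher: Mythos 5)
Your proof is correct and follows essentially the same route as the paper: you invoke Lemma \ref{lemma:propertyOne} to reduce everything to $\Phi_{2k}=\Phi_{2k+1}=e_{2n}$, observe $A_{2k+2}(e_{2n})=0$ so the new columns of $J\Phi_{2k+2}$ vanish (giving (i)$\Leftrightarrow$(ii) by regularity of $M_{2k+2}$), and then compute the same block form $\bigl(\begin{smallmatrix}C_{2k}+Z_{2k+1}D_{2k}&F(e_n)\\ D_{2k}&0\end{smallmatrix}\bigr)$ of $J\Phi_{2k+1}$ and exploit surjectivity of $F(e_n)$ for (ii)$\Leftrightarrow$(iii). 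The only small addition you make is spelling out $D_{2k+1}=(D_{2k}\mid 0)$, which the paper leaves implicit in ``the claim follows immediately.''
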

\begin{proof}
Note that $\mathcal{W}_K^c\subset \mathcal{W}_{K-1}^c\times \Cm$ by definition. Moreover, there holds
$$ A_{2k+1}(e_{2n}) = \begin{pmatrix}
F(e_n)\\ 0
\end{pmatrix} \quad \text{and} \quad A_{2k+2}(e_{2n})=0,$$
for a point $\vec{Z}_{2k+2}\in \mathcal{W}_{2k+2}^c$, by Lemma \ref{lemma:propertyOne} and definition of $F$. We conclude
$$ J\Phi_{2k+2}(\vec{Z}_{2k+2}) = M_{2k+2}(Z_{2k+2})\begin{pmatrix}
J\Phi_{2k+1}(\vec{Z}_{2k+1}) & 0
\end{pmatrix}$$
and this shows equivalence of $(i)$ and $(ii)$, since $M_{2k+2}(Z_{2k+2})$ is a regular matrix.

To show equivalence of $(ii)$ and $(iii)$ observe that
\begin{align*}
 J\Phi_{2k+1}(\vec{Z}_{2k+1}) = \begin{pmatrix}
C_{2k}+Z_{2k+1}D_{2k} & F(e_n) \\ D_{2k} & 0
\end{pmatrix}.
\end{align*}
Since $F(e_n)$ is surjective by definition of $F$, the claim follows immediately.
\end{proof}
In order to prove Theorem \ref{theorem:singularitySet}, it remains to show the next lemma.
\begin{lemma}\label{lemma:singularitySet}
For a point $\vec{Z}_{2k+2}\in \mathcal{W}_{2k+2}^c$ there holds
\begin{equation}
\mathrm{im}(D_{2k+2}) = \mathrm{im}(D_{2k}|Z_{2k+2}).\label{equation:augmentedMatrix} \tag{$\star$}
\end{equation}
In particular, the singularity set of $\Phi_K$, $K\geq 2$, is given by
$$ S_K:=\{\vec{Z}_K\in \mathcal{W}_K^c: \mathrm{rank}(W_K(\vec{Z}_K))<n\},$$
where $W_K$ is an augmented matrix $ W_K(\vec{Z}_K):=(Z_2|Z_4|\cdots |Z_{2k}),$ for $k=\lfloor \frac{K-1}{2} \rfloor$.
\end{lemma}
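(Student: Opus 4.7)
The plan is to prove the image identity \eqref{equation:augmentedMatrix} first, and then iterate it together with the preceding lemma to obtain the characterization of $S_K$. The structural input that makes the calculation tractable is Lemma \ref{lemma:propertyOne}: on the stratum $\Wk{2k+2}^c$ one has $Z_1 e_n = Z_3 e_n = \cdots = Z_{2k+1} e_n = 0$, so $\Phi_j = e_{2n}$ for every $j \leq 2k+1$. In particular $\Ps{2k} = e_n$ and $\Pf{2k+1} = 0$, which are precisely the values that pin down the Jacobian blocks explicitly.

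To prove \eqref{equation:augmentedMatrix} I will expand $J\Phi_{2k+2}$ step by step using Corollary \ref{cor:JacobianRecursive}. One first computes
\[
J\Phi_{2k+1} = \begin{pmatrix} C_{2k} + Z_{2k+1} D_{2k} & F(e_n) \\ D_{2k} & 0 \end{pmatrix},
\]
and then, since $A_{2k+2}(\Phi_{2k+1}) = \binom{0}{F(0)} = 0$,
\[
D_{2k+2} = \bigl(\, Z_{2k+2}(C_{2k} + Z_{2k+1} D_{2k}) + D_{2k} \;\big|\; Z_{2k+2}\,F(e_n) \;\big|\; 0 \,\bigr).
\]
Because $F(e_n)\colon\Cm\to\C{n}$ is surjective (recorded right after the definition of $F$), the middle block contributes exactly $\mathrm{im}(Z_{2k+2})$. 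For the first block, every column equals $D_{2k} w$ plus a term in $\mathrm{im}(Z_{2k+2})$, and conversely every $D_{2k} w$ arises as such a column minus a term in $\mathrm{im}(Z_{2k+2})$; hence the first block contributes exactly $\mathrm{im}(D_{2k})$ modulo $\mathrm{im}(Z_{2k+2})$. Summing the three contributions yields $\mathrm{im}(D_{2k+2}) = \mathrm{im}(D_{2k}) + \mathrm{im}(Z_{2k+2}) = \mathrm{im}((D_{2k} \mid Z_{2k+2}))$.

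Iterating \eqref{equation:augmentedMatrix} from $2k$ down to $2$, together with the direct computation $\mathrm{im}(D_2) = \mathrm{im}(Z_2)$ (again from the recursion in $\Wk{2}^c$, using surjectivity of $F(e_n)$), gives $\mathrm{im}(D_{2k}) = \mathrm{im}((Z_2 \mid Z_4 \mid \cdots \mid Z_{2k}))$. For even $K = 2k+2$, the preceding lemma says $\vec Z_K \notin S_K$ iff $\mathrm{rank}(D_{2k}) = n$ (noting that $D_{2k+1} = (D_{2k}\mid 0)$ automatically has the same rank as $D_{2k}$); for odd $K = 2k+1$ I argue directly from the block form of $J\Phi_{2k+1}$ displayed above, where the top-right block $F(e_n)$ already places $\C{n} \times \{0\}$ in the image, so $J\Phi_{2k+1}$ is surjective iff its bottom block $(D_{2k}\mid 0)$ is, i.e.\ iff $\mathrm{rank}(D_{2k}) = n$. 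Combined with Lemma \ref{lemma:phisASubmersion}, which forces $S_K \subset \Wk{K}^c$ for all $K \geq 2$, this identifies $S_K$ with $\{\vec Z_K \in \Wk{K}^c : \mathrm{rank}(W_K) < n\}$. The main bookkeeping obstacle is the first-block analysis in \eqref{equation:augmentedMatrix}, but it reduces to the clean observation that adding or subtracting any term of the form $Z_{2k+2}(\cdot)$ leaves both sides of the claimed identity invariant.
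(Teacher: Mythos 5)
Your proof is correct and follows essentially the same route as the paper: you expand $J\Phi_{2k+2}$ via the recursion, use $A_{2k+2}(\Phi_{2k+1})=0$ and the surjectivity of $F(e_n)$ to collapse the middle block to $\mathrm{im}(Z_{2k+2})$, and then read off the augmented-matrix identity; the ``in particular'' part is obtained, as in the paper, by iterating $(\star)$ down to $\mathrm{im}(D_2)=\mathrm{im}(Z_2)$ and invoking the preceding equivalence lemma together with Lemma \ref{lemma:phisASubmersion}. The only cosmetic difference is that you re-derive the odd-$K$ surjectivity criterion directly from the displayed block form rather than citing item (ii)~$\Leftrightarrow$~(iii) of the previous lemma, which is a harmless redundancy.
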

\begin{proof}
By the previous lemma, it is enough to show \eqref{equation:augmentedMatrix}. There holds
\begin{align*}
J\Phi_{2k+2}(\vec{Z}_{2k+2}) &= \begin{pmatrix}
C_{2k+1} & 0 \\ D_{2k+1} + Z_{2k+2}C_{2k+1} & 0
\end{pmatrix} = \begin{pmatrix}
C_{2k}+Z_{2k+1}D_{2k} & F(e_n) & 0 \\ D_{2k}+Z_{2k+2}(C_{2k}+Z_{2k+1}D_{2k}) & Z_{2k+2}F(e_n) & 0
\end{pmatrix}.
\end{align*}
Since $F(e_n)$ is surjective, we get $\mathrm{im}(Z_{2k+2})=\mathrm{im}(Z_{2k+2}F(e_n))$ and thus $\mathrm{im}(D_{2k+2}) = \mathrm{im}(D_{2k}|Z_{2k+2})$.
\end{proof}
\subsubsection{Surjectivity of \texorpdfstring{$\Phi_K$}{Phi} }
The proof of surjectivity is strongly based on this lemma.
\begin{lemma}\label{lemma:symmetricSurjective}
For $a\in \C{n}\setminus \{0\}$ fixed, the mapping $\varphi_a:\mathrm{Sym}_n(\mathbb{C})\to \C{n}, Z\mapsto Za$ is surjective.
\end{lemma}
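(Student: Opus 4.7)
The approach I would take is a direct, explicit construction rather than an abstract dimension-counting argument. Given an arbitrary target $b \in \mathbb{C}^n$, I will exhibit a symmetric matrix $Z$ such that $\varphi_a(Z) = Za = b$. Since $a \neq 0$, I first fix an index $k \in \{1,\ldots,n\}$ with $a_k \neq 0$; this single nonzero coordinate will carry the whole argument.

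The plan is to concentrate all nonzero entries of $Z$ into its $k$-th row and $k$-th column (which are forced to coincide under the symmetry requirement) and to set every other entry to zero. Concretely, I define $z_{ij}:=0$ whenever both $i\neq k$ and $j\neq k$, and $z_{ik}:=z_{ki}:=b_i/a_k$ for $i\neq k$. The diagonal entry $z_{kk}$ is then determined by the $k$-th equation of $Za=b$: after substituting the already-defined entries, it reduces to the single scalar equation $z_{kk}\,a_k = b_k - \sum_{j\neq k} z_{kj}\,a_j$, which has a unique solution because $a_k\neq 0$. Symmetry of $Z$ is built into the definition.

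The verification that $Za=b$ for the remaining indices is then immediate: for $i\neq k$, only the term $z_{ik}\,a_k$ survives in the sum $(Za)_i=\sum_j z_{ij}\,a_j$, giving $(Za)_i=(b_i/a_k)\,a_k=b_i$ as required. Since $b\in\mathbb{C}^n$ was arbitrary, $\varphi_a$ is surjective. There is essentially no real obstacle: the only subtlety worth flagging is ensuring that the symmetry constraint $z_{ij}=z_{ji}$ does not clash with the linear system $Za=b$, and the above ansatz sidesteps this precisely because the $n-1$ equations indexed by $i\neq k$ each involve exactly one unknown $z_{ik}$, leaving the diagonal parameter $z_{kk}$ free to satisfy the last equation.
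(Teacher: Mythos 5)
Your argument is correct, and it takes a genuinely different route from the paper. The paper reuses the map $F$ (defined just before Corollary \ref{cor:JacobianElementaryMatrix}, with the previously noted property that $F(v)$ is surjective exactly when $v \neq 0$) and shows $\varphi_a = F(a)\circ S^{-1}$ via the identity $F(a)v = S(v)a$, so surjectivity is inherited from $F(a)$. You instead produce an explicit preimage for an arbitrary target $b$: fix $k$ with $a_k\neq 0$, set $z_{ij}=0$ when both $i,j\neq k$, set $z_{ik}=z_{ki}=b_i/a_k$ for $i\neq k$, and solve the remaining scalar equation for $z_{kk}$; one checks $(Za)_i=b_i$ for all $i$ and symmetry holds by construction. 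The paper's approach is economical within its framework, since $F$ is already in play and is used again for Jacobian computations; yours is self-contained, elementary, and arguably more transparent, at the small cost of not connecting to the $F$-machinery the rest of the section builds on. Both are complete and correct.
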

\begin{proof}
Let $a\neq 0$ being fixed. Then the linear mapping $F(a): \Cm\to \C{n}$ is surjective, by definition of $F$. Therefore it is enough to show that the following diagram 
\begin{center}
\begin{tikzcd}
& \Cm \arrow[d,"F(a)"]\\
\mathrm{Sym}_n(\mathbb{C}) \arrow[ru, "S^{-1}"] \arrow[r,"\varphi_a" '] & \C{n}.
\end{tikzcd}
\end{center}
is commutative. This is the case if and only if $F(a)v= S(v)a$ for all $v\in \Cm$. Write $v=\sum_{i=1}^{\frac{n(n+1)}{2}} v_ie_i$. By definition of $F$ and $S$, we get
$$ F(a)v = \sum_{i=1}^{\frac{n(n+1)}{2}} v_i F(a)e_i = \sum_{i=1}^{\frac{n(n+1)}{2}} v_i \tilde{E}_{\alpha(i)}a = \left( \sum_{i=1}^{\frac{n(n+1)}{2}} v_i \tilde{E}_{\alpha(i)}\right)a = S(v)a.$$
This completes the proof.
\end{proof}
We are now ready for the
\begin{proof}[Proof of Theorem \ref{theorem:PhiSurjective}]
In a first step, let $K=3$ and consider $\begin{pmatrix}
a\\b
\end{pmatrix}\in \CwO$. Observe that $M_K(-Z)$ is the inverse of $M_K(Z)$ for every $K$ and every $Z$. Then $\Phi_3(\vec{Z}_3)=\begin{pmatrix}
a\\b
\end{pmatrix}$ if and only if
$$ M_2(Z_2)M_1(Z_1)e_{2n} = M_3(-Z_3)\begin{pmatrix}
a\\ b
\end{pmatrix}.$$
The left-hand-side is given by
$$ \El{Z_2}\begin{pmatrix}
Z_1e_n\\e_n
\end{pmatrix} = \begin{pmatrix}
Z_1e_n\\ (I_n+Z_2Z_1)e_n
\end{pmatrix}$$
and the right-hand-side by
$$ \Eu{-Z_3}\begin{pmatrix}
a\\ b
\end{pmatrix} = \begin{pmatrix}
a-Z_3b\\ b
\end{pmatrix}.$$
The symmetric matrix $Z_3$ can be chosen such that $a-Z_3b\neq 0$. To see this, observe that it is automatically satisfied if $b=0$, since $a$ and $b$ aren't simultaneously zero. On the other hand, if $b\neq 0$, then an application of Lemma \ref{lemma:symmetricSurjective} provides the existence of such $Z_3$.

The vector $e_n$ is obviously non-zero, hence another application of Lemma \ref{lemma:symmetricSurjective} yields the existence of a symmetric matrix $Z_1$ such that
$$ Z_1e_n = a-Z_3b.$$
We have $Z_1e_n\neq 0$ by construction, which enables a third application of Lemma \ref{lemma:symmetricSurjective} and proves the existence of a symmetric matrix $Z_2$ such that $(I_n+Z_2Z_1)e_n = b$. Thus we've found $\vec{Z}_3\in (\Cm)^3$ such that $$\Phi_3(\vec{Z}_3)=\begin{pmatrix}
a\\ b
\end{pmatrix}.$$
Moreover, since $Z_1e_n\neq 0$, we even have $\vec{Z}_3\in \Wk{3}$, which completes the proof for $K=3$.

For $K>3$ and $x\in \CwO$ we find $\vec{Z}_3\in \Wk{3}$ such that $\Phi_3(\vec{Z}_3)=x$. Now, we set $Z_K:=(\vec{Z}_3,0,...,0)\in (\Cm)^K$. Then we have $\Phi_K(\vec{Z}_K)=\Phi_3(\vec{Z}_3)=x$ and, moreover, $\vec{Z}_K\in \Wk{K}$ by definition. This completes the proof.
\end{proof}

\subsection{Stratification of \texorpdfstring{$\CwO$}{}}\label{subsection:stratification}
Consider a fixed point $y:=(a,b)\in \CwO$ and let $\mathcal{F}^K_y:=\kfiber:=\Phi_K^{-1}(y)$ denote the fiber of $\Phi_K$ over $y$. By the recursive formula (\ref{RecursiveFormula}) of $\Phi_K$ we can write the $K$-fiber $\mathcal{F}^K_y$ as a union of $(K-1)$-fibers
$$ \mathcal{F}^K_y = \bigcup_{Z\in \Cm} \mathcal{F}^{K-1}_{M_K(Z)y}$$

For an ``appropriate'' stratification of $\CwO$, we will use the projection $\pi_K:\C{n}\times \C{n}\to \C{n}$ given by
$$ \pi_K(u,v):=\begin{cases} u &\text{if } K=2k\\
v &\text{if } K=2k+1.
\end{cases}$$
The following stratification turns out to be a natural one.
Let $${Y^K_{ng} = \{y\in \CwO: \pi_K(y)=0\}}$$ denote the \textit{non-generic} stratum and its complement $Y^K_g = (\CwO)\setminus Y^K_{ng}$ the \textit{generic} stratum.

\begin{lemma}\label{lemma:nonGenericFibers}
For a point $y\in Y_{ng}^K$ in the non-generic stratum, the corresponding non-generic fiber $\mathcal{F}^K_y$ satisfies
$$ \mathcal{F}^K_y = \mathcal{F}^{K-1}_y\times \Cm,$$
where $\mathcal{F}^{K-1}_y$ is a generic $(K-1)$-fiber.
\end{lemma}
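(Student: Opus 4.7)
The plan is to use the recursive formula $\Phi_K(\vec{Z}_K) = M_K(Z_K)\Phi_{K-1}(\vec{Z}_{K-1})$ and observe that when $y$ lies in the non-generic stratum, multiplication by $M_K(Z_K)^{-1}=M_K(-Z_K)$ fixes $y$ for every choice of $Z_K$. This decouples the last coordinate $Z_K$ from the remaining ones.

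More concretely, I would split into two cases according to the parity of $K$. If $K=2k$ is even, then $y\in Y_{ng}^K$ means $y=(0,b)$ with $b\neq 0$, and
\[
M_K(-Z_K)\begin{pmatrix}0\\b\end{pmatrix}
=\begin{pmatrix}I_n & 0\\-Z_K & I_n\end{pmatrix}\begin{pmatrix}0\\b\end{pmatrix}
=\begin{pmatrix}0\\b\end{pmatrix}=y
\]
for every $Z_K\in \Cm$. Symmetrically, if $K=2k+1$ is odd, then $y=(a,0)$ with $a\neq 0$ and $M_K(-Z_K)y=y$ for every $Z_K$. In both cases, the equation $\Phi_K(\vec{Z}_K)=y$ is equivalent, by the recursive formula, to $\Phi_{K-1}(\vec{Z}_{K-1})=M_K(-Z_K)y=y$, a condition that does not involve $Z_K$ at all.

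Consequently,
\[
\kfiber=\bigl\{(\vec{Z}_{K-1},Z_K):\Phi_{K-1}(\vec{Z}_{K-1})=y,\ Z_K\in\Cm\bigr\}
=\mathcal{F}^{K-1}_y\times\Cm,
\]
which gives the desired product description.

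Finally, I would verify that $\mathcal{F}^{K-1}_y$ is indeed a generic $(K-1)$-fiber. Since $\pi_{K-1}$ and $\pi_K$ project onto complementary blocks, the condition $\pi_K(y)=0$ combined with $y\neq 0$ forces $\pi_{K-1}(y)\neq 0$: explicitly, for $K$ even, $\pi_{K-1}(y)=b\neq 0$, and for $K$ odd, $\pi_{K-1}(y)=a\neq 0$. Hence $y\in Y_g^{K-1}$, so $\mathcal{F}^{K-1}_y$ is a generic fiber. There is no real obstacle here; the only point requiring slight care is the bookkeeping of parities of $K$ versus $K-1$ in the definition of $\pi_K$.
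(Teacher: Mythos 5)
Your proof is correct and follows essentially the same approach as the paper: use the recursive formula $\Phi_K(\vec{Z}_K)=M_K(Z_K)\Phi_{K-1}(\vec{Z}_{K-1})$, observe that $M_K(-Z_K)$ fixes any $y$ with $\pi_K(y)=0$, and conclude that the defining equations become independent of $Z_K$. The only cosmetic difference is that you spell out both parities, whereas the paper treats only $K$ odd and appeals to symmetry for $K$ even.
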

\begin{proof}
We carry out the proof only for $K=2k+1$, since it applies equally to $K=2k$ for reasons of symmetry. Let $y=(a,b)\in Y^K_{ng}$, that is, $\pi_K(y)=0$. Observe that $y=(\pi_{K-1}(y),\pi_K(y))$. On the one hand, this means $b=\pi_K(y)=0$ and on the other $a=\pi_{K-1}(y)\neq 0$, since $y\neq 0$ by definition. This implies that $\mathcal{F}^{K-1}_y$ is a generic $(K-1)$-fiber.

The non-generic $K$-fiber $\mathcal{F}^K_y$ is given by the defining equations $\Phi_K(\vec{Z}_K)=y$. By the recursive formula (\ref{RecursiveFormula}) of $\Phi_K$ this system of equations is equivalent to $\Phi_{K-1}(\vec{Z}_{K-1})=M_K(-Z_K)y$. But there holds
$$ M_K(-Z_K)y = \Eu{-Z_K}\begin{pmatrix}
a\\ 0
\end{pmatrix} = \begin{pmatrix}
a\\ 0
\end{pmatrix} = y,$$
which means that the defining equations are independent of the matrix $Z_K\in \Cm$. In fact, we obtain
$$\mathcal{F}^K_y = \mathcal{F}^{K-1}_y\times \Cm.$$
\end{proof}

Informally, the next statement tells us that for fibers in the generic stratum we can reduce the number of defining equations from $2n$ to $n$. 

First we introduce the following convention. Let $\pi:\C{k}\to \C{l}$ be the standard projection $(z_1,...,z_l,...,z_k)\mapsto (z_1,...,z_l)$. For a continuous mapping $f:\C{l}\to \C{m}$ its pullback $\pi^*f$ is a mapping $\pi^*f:\C{k}\to \C{m}$ and by an abuse of notation, we just write $f$ instead of $\pi^*f$.

Also define $\tilde{P}^K:=\pi_{K+1}\circ \Phi_K$. Then, $\tilde{P}^K_j$ and $\pi_K(y)_j$ denote the $j$-th component of $\tilde{P}^K$ and $\pi_K(y)$, respectively.

\begin{lemma}\label{lemma:genericFibers}
Set $X:=(\Cm)^{K-1}\times \C{\frac{n(n-1)}{2}}$ and define the variety
$$ \mathcal{G}_{\pi_K(y)} :=\{\vec{Z}\in X: \tilde{P}^{K-1}(\vec{Z})=\pi_K(y)\}.$$
Then there are $n$ meromorphic mappings $\psi_j^K:X\to X\times \C{n}$, $1\leq j\leq n$, such that each generic fiber $\mathcal{F}^K_y$ is biholomorphic to $\mathcal{G}_{\pi_K(y)}$ via $\psi_j^K$ for some $1\leq j\leq n$.
\end{lemma}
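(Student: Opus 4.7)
The plan is to reduce the fiber equation $\Phi_K(\vec Z_K)=y=(a,b)^T$ to a smaller system plus a single linear equation in the last matrix $Z_K$, which can be solved explicitly once we fix one column of $Z_K$. Without loss of generality assume $K=2k+1$ is odd; the even case is treated symmetrically by exchanging the roles of the two $\mathbb{C}^n$-blocks. By the recursive formula $\Phi_K(\vec Z_K)=\Eu{Z_K}\Phi_{K-1}(\vec Z_{K-1})$, and writing $\Phi_{K-1}(\vec Z_{K-1})=(u(\vec Z_{K-1}),v(\vec Z_{K-1}))^T$, the fiber equation decomposes as
\[
v(\vec Z_{K-1})=b,\qquad u(\vec Z_{K-1})+Z_K b=a.
\]
Since for odd $K$ the projection $\pi_K$ selects the lower block, $\tilde P^{K-1}=v$, and the first equation is exactly the defining equation of $\mathcal G_{\pi_K(y)}$.

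Next, for each $j\in\{1,\dots,n\}$ I would split the symmetric matrix $Z_K\in\Cm$ into its $j$-th row/column vector $r:=((Z_K)_{lj})_{l=1}^n\in\C{n}$ and the remaining symmetric entries $w\in\C{n(n-1)/2}$, obtaining a linear isomorphism $\Cm\cong\C{n(n-1)/2}\oplus\C{n}$ and therefore $(\Cm)^K\cong X\times\C{n}$. Provided $b_j\neq 0$, the second fiber equation $Z_K b=a-u$ can then be solved uniquely for $r$ in terms of $w$, $b$, and $a-u$: for $l\neq j$, the equation gives $r_l=b_j^{-1}\bigl((a-u)_l-\sum_{k\neq j}(Z_K)_{lk}b_k\bigr)$, and substituting these back into the $l=j$ equation (using the symmetry $(Z_K)_{jk}=(Z_K)_{kj}=r_k$) yields $r_j$ as a rational expression with denominator a power of $b_j$. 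Using $b=\tilde P^{K-1}(\vec Z_{K-1})$ in these formulas produces the desired meromorphic map $\psi_j^K:X\to X\times\C{n}$, whose polar set is $\{\tilde P^{K-1}_j=0\}$; in particular $\psi_j^K$ is holomorphic on $\mathcal G_{\pi_K(y)}$ whenever $\pi_K(y)_j\neq 0$.

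Finally, I would establish the biholomorphism $\psi_j^K\colon\mathcal G_{\pi_K(y)}\to\mathcal F^K_y$ by exhibiting its inverse as the natural projection $\mathcal F^K_y\to\mathcal G_{\pi_K(y)}$ that forgets the $j$-th row/column of $Z_K$; both composites are the identity by construction, and both maps are regular on the relevant loci. The main obstacle will be the self-coupling in the $l=j$ equation forced by the symmetry of $Z_K$: the $r_k$ for $k\neq j$ which were just solved reappear in this equation via $(Z_K)_{jk}=r_k$, which raises the power of $b_j$ in the denominator and requires careful bookkeeping of the symmetric indices when separating $Z_K=w\oplus r$. Once this explicit formula is established, meromorphy and the biholomorphism property are routine.
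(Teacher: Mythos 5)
Your proof is correct and follows essentially the same approach as the paper. You decompose the fiber equation via the recursive formula $\Phi_K=\Eu{Z_K}\Phi_{K-1}$ (in the odd case) into the $\mathcal{G}_{\pi_K(y)}$-constraint plus the linear equation $u+Z_Kb=a$, split $Z_K$ into its $j$-th column $r$ and the complementary entries $w$, solve the off-diagonal components of $r$ explicitly using $b_j\ne 0$, and then substitute back into the $j$-th component (the self-coupling via symmetry) to get the diagonal entry — this is exactly the paper's construction of $f_j$ and $\psi_j$, just written in $u,v,a,b$ instead of the $\tilde P$ notation. Your observation that the polar locus of $\psi_j^K$ is $\{\tilde P^{K-1}_j=0\}$ and that the inverse is the forgetful projection is also in line with what the paper's ``by construction'' asserts.

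One remark, which applies equally to the paper's own proof: the formulas for $f_{ij}$ (and your $r_l$) involve the target value $a=\pi_{K+1}(y)$, so strictly speaking the resulting maps $\psi_j^K$ depend on $\pi_{K+1}(y)$ as a parameter, even though the lemma statement presents them as fixed meromorphic maps $X\to X\times\C{n}$. You leave this parameter dependence implicit (``using $b=\tilde P^{K-1}$ in these formulas'' while keeping $a$ as a constant), just as the paper does when it writes $\tilde P^K_i$ for the target value on the fiber. This is a harmless notational imprecision shared with the source, not a gap in your argument.
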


\begin{proof}
Let $y\in Y^K_g$ be a point in the generic stratum, i.e. $\pi_K(y)_j\neq 0$ for some $1\leq j\leq n$. From the definition of $\pi_K$ and $\tilde{P}^K$ on the one hand and the recursive formula (\ref{RecursiveFormula}) on the other hand, it follows that the fiber $\mathcal{F}^K_y$ is given by
$$ \mathcal{F}^K_y = \{\vec{Z}_K\in (\Cm)^K: \tilde{P}^K(\vec{Z}_K)=\pi_{K+1}(y), \tilde{P}^{K-1}(\vec{Z}_K)=\pi_K(y)\}$$
and further there holds
\begin{align}\label{RecursiveFormulaGeneralized}
\tilde{P}^K=\tilde{P}^{K-2}+Z_K\tilde{P}^{K-1}.
\end{align}
Hence $\PK_j\neq 0$ is satisfied on the fiber $\mathcal{F}^K_y$ and the latter equation can be rearranged to
$$ Z_Ke_j = \frac{1}{\PKM_j}\left(\PK-\PKMM - \sum_{k=1,k\neq j}^n \PKM_k Z_Ke_k\right).$$
We obtain
\begin{align}
f_{ij}:=z_{K,ij} &= \frac{1}{\PKM_j}\left( \PK_i - \PKMM_i - \sum_{k=1,k\neq j}^n \PKM_kz_{K,ik}\right) \quad 1\leq i\leq n, i\neq j,\\
f_{jj}:=z_{K,jj} &= \frac{1}{\PKM_j}\left( \PK_j - \PKMM_j - \frac{1}{\PKM_j} \sum_{k=1,k\neq j}^n b_k \left( \PK_k - \PKMM_k - \sum_{l=1,l\neq j}^n \PKM_l z_{K,kl}\right) \right).
\end{align}
Set $f_j:=(f_{1j},...,f_{nj}):X\to \C{n}$ and $\psi_j: X \to X\times \C{n}$, $\Psi_j(x)=(x,f_j(x))$. By construction, the variety $\mathcal{G}_{\pi_K(y)}$ is mapped biholomorphically onto $\mathcal{F}^K_y$ by $\psi_j$. 
\end{proof}

\subsubsection{On the singularities of the fibers}\label{subsubsection:singularities}
In this short section we will classify the fibers; we distinguish between smooth and singular fibers. In fact, most of the fibers $\mathcal{F}^K_y$ are completely contained in $\mathcal{W}_K$ and therefore smooth, by Lemma \ref{lemma:phisASubmersion}.
\begin{lemma}\label{lemma:singularFibers}
A fiber $\mathcal{F}^K_y$ contains singularities if and only if $\pi_1(y)=e_n$.
\end{lemma}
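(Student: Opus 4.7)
The strategy is to couple Theorem \ref{theorem:singularitySet}, which characterises $S_K$ as the locus inside $\Wk{K}^c$ where $W_K$ drops rank, with Lemma \ref{lemma:propertyOne}, which shows that once $Z_{2i-1}e_n=0$ for enough $i$'s the iterated image $\Phi_j$ is frozen at $e_{2n}$. Together these impose a very rigid structure on any singular fiber.

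For the forward direction I would start with any $\vec Z_K\in \mathcal{F}^K_y\cap S_K$. Since $\vec Z_K\in \Wk{K}^c$, each $Z_{2i-1}$ with $1\le i\le k:=\lceil(K-1)/2\rceil$ annihilates $e_n$, so Lemma \ref{lemma:propertyOne} yields $\Phi_{2k}(\vec Z_{2k})=e_{2n}$. A final application of the recursive formula (\ref{RecursiveFormula}) then computes $y=\Phi_K(\vec Z_K)$: for $K=2k$ this equals $e_{2n}$ directly, while for $K=2k+1$ one reads off $y=\Eu{Z_K}e_{2n}=(Z_Ke_n,e_n)^T$. In both parities the bottom $n$-block of $y$ equals $e_n$, i.e.\ $\pi_1(y)=e_n$.

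For the converse, write $y=(u,e_n)$ and construct an explicit $\vec Z_K\in\mathcal{F}^K_y\cap S_K$ by zeroing out everything except the last slot. Put $Z_1=\cdots=Z_{K-1}=0$ and, using Lemma \ref{lemma:symmetricSurjective}, pick a symmetric $Z_K\in\Cm$ solving $Z_Ke_n=u$. Lemma \ref{lemma:propertyOne} gives $\Phi_{K-1}(\vec Z_{K-1})=e_{2n}$, after which (\ref{RecursiveFormula}) produces $\Phi_K(\vec Z_K)=M_K(Z_K)e_{2n}=(Z_Ke_n,e_n)^T=y$ when $K$ is odd (and $=e_{2n}$ when $K$ is even, matching the forward direction's conclusion that $u=0$ is forced in the even case). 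By construction $\vec Z_K\in\Wk{K}^c$, and $W_K(\vec Z_K)$ is the zero matrix, hence has rank $0<n$, so $\vec Z_K\in S_K$ and $\mathcal{F}^K_y$ is singular at $\vec Z_K$.

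There is no deep obstacle here: the whole proof is a clean unfolding of (\ref{RecursiveFormula}) on top of Lemma \ref{lemma:propertyOne}, combined with the surjectivity of symmetric matrices on a nonzero vector provided by Lemma \ref{lemma:symmetricSurjective}. The only point requiring care is the parity split between $K$ even and $K$ odd and, in the even case, the observation that the conclusion $\pi_1(y)=e_n$ is automatically strengthened by the forward direction to the stronger statement $y=e_{2n}$.
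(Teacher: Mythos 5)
Your proposal is correct and follows essentially the same route as the paper: necessity via $\vec Z_K\in\Wk{K}^c$, Lemma \ref{lemma:propertyOne} to freeze $\Phi_{2k}$ at $e_{2n}$, and a final pass through the recursive formula (\ref{RecursiveFormula}); sufficiency via an explicit singular point with all but the last slot zeroed out and the rank criterion of Theorem \ref{theorem:singularitySet}. There are two small points worth naming. First, your converse is actually cleaner than the paper's: you explicitly invoke Lemma \ref{lemma:symmetricSurjective} to solve $Z_Ke_n=u$ when $K$ is odd and thereby verify that the constructed point really lies in $\mathcal F^K_y$, whereas the paper's phrasing (``Let $\vec{Z}_K\in \mathcal{F}^K_y$ ... by assumption'') silently conflates choosing a point in the fiber with imposing the vanishing conditions $Z_{2i-1}e_n=0$ and never records how $Z_K$ is determined. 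Second, you correctly observe the parity asymmetry: for $K$ even, every singular point maps to $e_{2n}$, so the only singular fiber is $\mathcal F^K_{e_{2n}}$, and the ``if'' direction as literally stated (any $y$ with $\pi_1(y)=e_n$) is vacuous unless $u=0$. The paper's statement glosses over this as well (it is used later only in the refined form ``$\mathcal F^{2k}_{0,e_n^T}$ is the only singular fiber''), so your aside does not indicate a gap in your argument but rather a genuine looseness in the lemma's phrasing that you have identified.
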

\begin{proof}
We start with the case $K=2k$. Suppose there is a singularity $\vec{Z}_K\in \mathcal{F}^K_y\cap S_K$. Then there holds $Z_{2i-1}e_n=0$ for all $1\leq i \leq k$, by Lemma \ref{lemma:singularitySet}. Lemma \ref{lemma:propertyOne} implies $\Phi_K(\vec{Z}_K)=e_{2n}$ and thus $\pi_1(y)=\pi_1(\Phi_K(\vec{Z}_K))=\pi_1(e_{2n}) = e_n$.

Now let $K=2k+1$ and suppose again there is a singularity $\vec{Z}_K\in \mathcal{F}^K_y\cap S_K$. Again there holds $Z_{2i-1}e_n=0$ for all $1\leq i \leq k$, by Lemma \ref{lemma:singularitySet}. From the even case, we know that $\Phi_{K-1}(\vec{Z}_{K-1})=e_{2n}$. The recursive formula (\ref{RecursiveFormula}) of $\Phi_K$ implies 
$$ y=\Phi_K(\vec{Z}_K) = M_K(Z_K)\Phi_{K-1}(\vec{Z}_{K-1}) = \Eu{Z_K}\begin{pmatrix}
0\\ e_n
\end{pmatrix} = \begin{pmatrix}
Z_Ke_n \\ e_n
\end{pmatrix},$$
and therefore $\pi_1(y)=e_n$. This completes the proof of necessary condition.

For the proof of the sufficient condition, we consider a fiber $\mathcal{F}^K_y$ with $\pi_1(y)=e_n$ and set $k:=\big\lceil \frac{K-1}{2}\big\rceil$. Let $\vec{Z}_K\in \mathcal{F}^K_y$ and recall that $\vec{Z}_K=(Z_1,Z_2,...,Z_K)\in (\Cm)^K$. The matrices $Z_2,Z_4,...,Z_{2k}\in \Cm$ can take any value, since $Z_1e_n=Z_3e_n=...=Z_{2k-1}e_n=0$ by assumption and therefore $\Phi_i(\vec{Z}_i)=e_{2n}$ for all $1\leq i\leq 2k$ by Lemma \ref{lemma:propertyOne}. Hence we set $Z_2=Z_4=...=Z_{2k}=0$ and then $\vec{Z}_K\in \mathcal{F}^K_y\cap S_K$ is a singularity by Lemma \ref{lemma:singularitySet}.
\end{proof}

\subsection{Holomorphic vector fields tangential to the fibers}
We will construct stratified sprays using $\mathbb{C}$-complete holomorphic vector fields. 
The main goal of this subsection is therefore to find enough vector fields that are holomorphic on $(\Cm)^K$, complete and, in particular, tangential to the fibers $\mathcal{F}^K_y$. 

\subsubsection{Fiber-preserving vector fields}
Let $X$ be some Stein manifold and $f=(f_1,...,f_n):X\to \C{n}$ a holomorphic mapping.
A holomorphic vector field ${V:X\to TX}, {x\mapsto (x,V_x)}$ is \textit{fiber-preserving} for $f$, if it is tangential to the fibers of $f$. This is the case if and only if $V$ is in the kernel of the tangent map $df$, that is, $df_x(V_x)=0$. This is equivalent to say that the Lie derivative $\mathcal{L}_{V_x}(f_i)= V_x(f_i)=(df_i)_x(V_x)=0$ vanishes for all $1\leq i\leq n$.
\begin{lemma}\label{lemma:fiberPreservingVectorFields}
For $N>n$, let $P\in \mathbb{C}[z_1,...,z_N]^n$ be a polynomial mapping $P:\C{N}\to \C{n}$ and let $x=(z_{\alpha_0},...,z_{\alpha_n})$, with $1\leq \alpha_0<...<\alpha_n\leq N$. Let's write $\deriv{z}P:=(\deriv{z}P_1,...,\deriv{z}P_n)^T$. Then
$$ D_x(P):= \det \begin{pmatrix}
\Deriv{z}{\alpha_0} & \cdots & \Deriv{z}{\alpha_n}\\
\Deriv{z}{\alpha_0}P & \cdots & \Deriv{z}{\alpha_n}P
\end{pmatrix}$$
is a holomorphic vector field on $\C{N}$ which is fiber-preserving for $P$.
\end{lemma}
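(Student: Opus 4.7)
The plan is to treat the displayed determinant as a formal Laplace expansion along its first row, which is the unique row whose entries are differential operators rather than functions. Concretely, for each $i = 0, \ldots, n$ let $M_i$ be the $n \times n$ matrix obtained from the full $(n+1)\times(n+1)$ matrix by removing the first row and the column containing $\Deriv{z}{\alpha_i}$. Then
$$D_x(P) \;=\; \sum_{i=0}^{n} (-1)^{i}\, \det(M_i)\, \Deriv{z}{\alpha_i}.$$
Every entry of every $M_i$ is a partial derivative $\partial P_k/\partial z_{\alpha_\ell}$, hence a polynomial on $\mathbb{C}^N$, so each coefficient $(-1)^{i}\det(M_i)$ is a polynomial. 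This already shows that $D_x(P)$ is a polynomial -- in particular holomorphic -- vector field on $\mathbb{C}^N$.

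For the fiber-preservation claim it suffices, as noted in the paragraph preceding the lemma, to verify that the Lie derivative $D_x(P)(P_j)$ vanishes identically for every $j = 1, \ldots, n$. Applying the expansion above to the scalar function $P_j$ gives
$$D_x(P)(P_j) \;=\; \sum_{i=0}^{n} (-1)^{i}\, \det(M_i)\, \frac{\partial P_j}{\partial z_{\alpha_i}},$$
which is precisely the cofactor expansion along the first row of the $(n+1)\times(n+1)$ scalar matrix obtained from the defining matrix of $D_x(P)$ by replacing the operator row with the numerical row $\bigl(\partial P_j/\partial z_{\alpha_0},\dots,\partial P_j/\partial z_{\alpha_n}\bigr)$. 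However, this substituted row already occurs among the remaining $n$ rows: the $(j{+}1)$-th row of the original matrix is exactly $(\partial P_j/\partial z_{\alpha_0},\dots,\partial P_j/\partial z_{\alpha_n})$, since the $j$-th component of the column vector $\partial P/\partial z_{\alpha_i}$ is $\partial P_j/\partial z_{\alpha_i}$. A determinant with two identical rows vanishes, so $D_x(P)(P_j) \equiv 0$ and $D_x(P)$ is tangent to every fiber of $P$.

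No real obstacle arises; the argument is the classical trick that a Laplace expansion along a formal operator-valued row produces a differential operator that annihilates every scalar row of the same matrix. The only point requiring a bit of bookkeeping is to distinguish the symbolic determinant defining $D_x(P)$ (interpreted via cofactor expansion along the first row) from the genuine scalar determinants $\det(M_i)$ that arise as its coefficients. Once this distinction is in place, both assertions follow from the single observation that substituting $P_j$ into $D_x(P)$ yields the determinant of a matrix with a repeated row.
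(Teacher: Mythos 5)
Your proof is correct and uses the same argument as the paper: the paper's one-line justification ("the first and the $(i+1)$-th row of $D_x(P)(P_i)$ are the same") is exactly the repeated-row observation you spell out after the Laplace expansion. You simply make the cofactor expansion and the holomorphy of the coefficients explicit, which the paper leaves implicit.
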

\begin{proof}
The Lie derivative $\mathcal{L}_{D_{x}(P)}(P_i)= D_{x}(P)(P_i)=0$ vanishes for each $1\leq i\leq n$, since the first and the $(i+1)$-th row of $D_{x}(P)(P_i)$ are the same.
\end{proof}

We now introduce a few more notations. For a fixed natural number $K$, the mapping $\tilde{P}^K$ (defined before Lemma \ref{lemma:genericFibers}) is a polynomial mapping in $\mathbb{C}[z_1,...,z_{N_K}]^n$ with $N_K:=K\tfrac{n(n+1)}{2}$. Since we'll only be interested in $K>1$, the constraint $N_K>n$ is given. 

There are $\binom{N_K}{n+1}$ possibilities to choose $(n+1)$ of the $N_K$ variables. Let $\mathcal{T}_K$ denote the set of all such possible choices. Recall that we see $\C{N_K}$ as a product of $K$ copies of $\Cm$ and we write $\vec{Z}_K=(Z_1,...,Z_K)\in (\Cm)^K$. With this convention, the set $\mathcal{T}_K$ can be given by
$$ \mathcal{T}_{K} := \{x=(z_{i_0,j_0k_0},...,z_{i_n,j_nk_n}): 1\leq i_0\leq ... \leq i_n\leq K, 1\leq j_r\leq k_r\leq n, 0\leq r\leq n\}.$$
The vector fields $\partial_x^{K}:=D_x(\tilde{P}^{K}), x\in \mathcal{T}_{K}$, are fiber-preserving for $\tilde{P}^{K}$, by Lemma \ref{lemma:fiberPreservingVectorFields}; or, equivalently, they're tangential to the variety $\mathcal{G}_{\pi_{K+1}(y)}$ by construction (c.f. Lemma \ref{lemma:genericFibers}).

The following lemma collects some interesting examples of fiber-preserving vector fields for $\Phi_K$. In fact, they'll play a cruical role in the construction of a dominating spray (see Theorem \ref{theorem:spanTangentSpace})

%----------------------------------------
\begin{lemma}\label{lemma:liftingVectorFields}
Let $1\leq j^*\leq n$, $x\in \mathcal{T}_{K-1}$ and $u := (\partial_x^{K-1}(\PKMM_1),...,\partial_x^{K-1}(\PKMM_n))^T$. Then the vector field 
\begin{align*}
\varphi^K_{x,j^*} &= (\PKM_{j^*})^2\partial_x^{K-1} - \PKM_{j^*} \sum_{\substack{i=1\\ i\neq j^*}}^n u_i \Deriv{z}{K,j^*i} + \left( \sum_{\substack{i=1\\i\neq j^*}}^n \PKM_iu_i-\PKM_{j^*}u_{j^*}\right) \Deriv{z}{K,j^*j^*}
\end{align*}
is holomorphic on $(\Cm)^K$ and fiber-preserving for $\Phi_K$. Moreover, $\varphi^K_{x,j^*}$ is complete if and only if $\partial_x^{K-1}$ is complete.

For $1\leq i\leq j\leq  n, i\neq j^*,j\neq j^*$, the vector field
\begin{align*}
\gamma_{ij,j^*}^K &= (\tilde{P}^{K-1}_{j^*})^2\Deriv{z}{K,ij} + \frac{1}{1+\delta_{ij}}\left( 2\PKM_i\PKM_j \Deriv{z}{K,j*j*} - \PKM_i\PKM_{j^*} \Deriv{z}{K,j^*j} - \PKM_j\PKM_{j^*} \Deriv{z}{K,j^*i}\right)
\end{align*}
is complete, holomorphic on $(\Cm)^K$ and fiber-preserving for $\Phi_K$.
\end{lemma}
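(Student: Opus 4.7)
My plan is to verify, in order, holomorphy, fiber-preservation for $\Phi_K$, and $\mathbb{C}$-completeness (unconditional for $\gamma$, conditional on $\partial_x^{K-1}$ being complete for $\varphi$). Holomorphy on $(\Cm)^K$ is immediate for both fields, since every coefficient in the displayed formulas is a polynomial in the coordinates.

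For fiber-preservation, I first observe that the pair $(\PKM, \PK)$ recovers $\Phi_K$ (up to a reordering of the two blocks of $n$ components dictated by the parity of $K$), so a vector field on $(\Cm)^K$ is fiber-preserving for $\Phi_K$ if and only if it annihilates every component $\PKM_l$ and $\PK_l$, $1 \le l \le n$. Both $\varphi^K_{x,j^*}$ and $\gamma^K_{ij,j^*}$ kill $\PKM$ for free: the $\Deriv{z}{K,ab}$ components involve only $Z_K$-derivatives and $\PKM$ is independent of $Z_K$, while $\partial_x^{K-1}$ (present only in $\varphi$) annihilates $\PKM$ by Lemma \ref{lemma:fiberPreservingVectorFields}. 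The substance of the proof is to apply each field to $\PK_l = \PKMM_l + \sum_m (Z_K)_{lm}\PKM_m$, which is the component form of the recursion \eqref{RecursiveFormulaGeneralized}. Using $\partial_x^{K-1}(\PKMM_l) = u_l$ together with the elementary entrywise formulas for $\Deriv{z}{K,pq}((Z_K)_{lm})$, I would split into the cases $l = j^*$ versus $l \ne j^*$ for $\varphi^K_{x,j^*}$, and $l \in \{i,j,j^*\}$ versus the complementary case for $\gamma^K_{ij,j^*}$, and check that the three or four contributions cancel algebraically in each case. The coefficients $(\PKM_{j^*})^2$, $\PKM_{j^*}\PKM_i$, and the $\tfrac{1}{1+\delta_{ij}}$ normalization are engineered precisely for this cancellation; the main obstacle is purely bookkeeping of the symmetric-matrix derivatives of $Z_K$.

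Finally, for completeness, note that $\gamma^K_{ij,j^*}$ has all its components along $\Deriv{z}{K,ab}$ directions with coefficients depending only on $Z_1,\dots,Z_{K-1}$; hence along any trajectory the first $K-1$ factors are constant and $Z_K$ evolves linearly in time, so the flow is defined on all of $\mathbb{C}$. For $\varphi^K_{x,j^*}$, I decompose it as $(\PKM_{j^*})^2 \partial_x^{K-1} + R$, where $R$ has only $\Deriv{z}{K,ab}$ components with $Z_K$-independent coefficients. The projection to the first $K-1$ factors is $(\PKM_{j^*})^2 \partial_x^{K-1}$, and since $\partial_x^{K-1}$ is fiber-preserving for $\PKM$ by Lemma \ref{lemma:fiberPreservingVectorFields}, the scalar $\PKM_{j^*}$ is a first integral for $\partial_x^{K-1}$, so the scaled field has the same maximal flow domain as $\partial_x^{K-1}$. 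Once $Z_1(t),\dots,Z_{K-1}(t)$ are known globally, the $Z_K$-component of the flow is recovered by a single complex antiderivative of the resulting time-dependent vector, which proves $\varphi^K_{x,j^*}$ is complete if and only if $\partial_x^{K-1}$ is.
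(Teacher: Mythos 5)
Your proposal is correct in substance but proceeds along a genuinely different route from the paper. The paper works \emph{top-down}: it fixes a generic fiber (WLOG $\PKM_1\neq 0$), takes a tangent vector field $V$ on the model variety $\mathcal{G}_{\pi_K(y)}$ from Lemma~\ref{lemma:genericFibers}, computes its push-forward $W=(\psi_1)_*V = V + \sum_i V(f_{i1})\Deriv{z}{K,i1}$, solves $u + W(Z_K)\PKM = 0$ for the $W(f_{i1})$ via an explicit $n\times n$ matrix inversion, and reads off $\varphi^K_{x,1}=(\PKM_1)^2W$ and $\gamma^K_{ij,1}=(\PKM_1)^2W$ as the denominator-cleared push-forwards, which are then fiber-preserving by construction. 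You instead go \emph{bottom-up}: you take the displayed formulas as given and verify directly that they annihilate $\PKM_l$ and $\PK_l$. Both are valid, and your observation that fiber-preservation for $\Phi_K$ is equivalent to annihilating the $2n$ functions $\PKM_l$ and $\PK_l$ is exactly the right reformulation. The paper's derivation explains where the coefficients come from; yours confirms they do the job. I have checked that the case split you outline (on $l=j^*$ versus $l\ne j^*$ for $\varphi$, and on $l\in\{i,j,j^*\}$ for $\gamma$, using $\Deriv{z}{K,ab}\bigl((Z_K)_{lm}\bigr)$) does cancel as you predict, but you should be aware that you have only \emph{asserted} this cancellation rather than carried it out; if you intend this as a complete proof, those few lines of bookkeeping need to be written.

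Your completeness arguments are also different from the paper's and, for $\gamma$, arguably cleaner. The paper appeals to the fact that $\Deriv{z}{K,ij}$ is complete on $\mathcal{G}_{\pi_K(y)}$ and that push-forward by a biholomorphism preserves completeness; you observe directly that $\gamma^K_{ij,j^*}$ lies entirely in the $Z_K$-directions with coefficients depending only on $Z_1,\dots,Z_{K-1}$, so the flow is affine in $Z_K$ with time-constant coefficients. For $\varphi$, you use the triangular structure of the ODE and the fact that $\PKM_{j^*}$ is a first integral of $\partial_x^{K-1}$ (which follows from Lemma~\ref{lemma:fiberPreservingVectorFields}); the paper uses the same first-integral observation, phrased as ``$\PKM$ is in the kernel of $W$.'' One small caution: your phrase ``has the same maximal flow domain as $\partial_x^{K-1}$'' is slightly too strong, since the time-rescaling factor $(\PKM_{j^*})^2$ vanishes on the non-generic locus, where the rescaled field is constant regardless of $\partial_x^{K-1}$. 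This affects only the ``only if'' direction of the equivalence; the paper's proof has the same implicit subtlety, and since the lemma is invoked in the sequel only in the ``if'' direction (via the set $\mathcal{T}_{K-1}^C$), it does not cause trouble downstream.
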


%-----------------------------------------------------------
%			Hier wäre ein alternativer Beweis für das Lemma.
\begin{proof}
Observe that $\tilde{P}^{K-1}\equiv 0$ on non-generic fibers $\mathcal{F}^K_y$. Hence the above fields are trivial and there is nothing to show. We therefore consider $y\in Y_g^K$ in the generic stratum, i.e. $\pi_K(y)\neq 0$. Without loss of generality assume $\pi_K(y)_1\neq 0$. According to Lemma \ref{lemma:genericFibers}, the mapping $\psi_1^K$ is defined by $\Psi_1(x)=(x,f_1(x))$ for some meromorphic map $f_1=(f_{11},...,f_{n1})$ and it maps $\mathcal{G}_{\pi_K(y)}$ biholomorphically onto the generic fiber $\mathcal{F}^K_y$.

Consider a vector field $V$ tangential to $\mathcal{G}_{\pi_K(y)}$. Then the push-forward $W:=(\Psi_1)_*(V)$ is given by
$$ W = V + \sum_{i=1}^n V(f_{i1})\Deriv{z}{K,i1}.$$
On the one hand $W$ is tangential to the fiber $\mathcal{F}^K_y$ and on the other hand it is complete if and only if $V$ is complete. 

Let's write $W(\PK):=(W(\PK_1),...,W(\PK_n))^T$ and $W(Z_K):=\sum_{1\leq i\leq j\leq n} W(z_{K,ij})\tilde{E}_{ij}$. By the recursive formula (\ref{RecursiveFormulaGeneralized}) and since $W(\PKM)=0$, we get
\begin{align*}
0 &= W(\PK) = W(\PKMM) + W(Z_K)\PKM = u + W(Z_K)\PKM.
\end{align*}
%where $F$ and $S$ are compatible matricization and vectorization, respectively (see subsection \ref{section:notations} for more details).

In the special case, where $V=\partial_x^{K-1}$ for some $x \in \mathcal{T}_{K-1}$, there holds
\begin{align*}
W(Z_K)\PKM &= \left(\sum_{i=1}^n V(f_{i1})\tilde{E}_{i1}\right) \PKM = \begin{pmatrix}
\sum_{i=1}^n V(f_{i1})\PKM_i\\ \PKM_1 V(f_{21}) \\ \vdots \\ \PKM_1 V(f_{n1})
\end{pmatrix} \\&= \underbrace{\begin{pmatrix}
\PKM_1 & \cdots & \PKM_n \\ & \ddots & \\ && \PKM_1
\end{pmatrix}}_{=:A}\underbrace{\begin{pmatrix}
V(f_{11})\\ \vdots \\ V(f_{n1})
\end{pmatrix}}_{=:b},
\end{align*}
where $A$ is a regular matrix, since $\PKM_1\neq 0$. Therefore we obtain $b = -A^{-1}u$ with
$$ A^{-1} = \frac{1}{(\PKM_1)^2}\begin{pmatrix}
\PKM_1 & -\PKM_2 & -\PKM_3 & \cdots & -\PKM_n \\ & \PKM_1 & && \\ && \ddots & & \\
&&& \ddots & \\ &&&& \PKM_1
\end{pmatrix}.$$
The vector field $\varphi_{x,1}^K:=(\PKM_1)^2W$ is holomorphic on $(\Cm)^K$ and fiber-preserving for $\Phi_K$. Moreover, it is complete if and only if $W$ is complete, since $\PKM$ is in the kernel of $W$. This proves the first part of the lemma.

In the special case, where $V=\Deriv{z}{K,ij}, 1<i\leq j\leq n$, there holds $u=W(\PKMM)=0$ and
$$ W(Z_K)\PKM = \tilde{E}_{ij}\PKM + \left(\sum_{i=1}^n V(f_{i1})\tilde{E}_{i1}\right) \PKM = \tilde{E}_{ij}\PKM + Ab. $$
Therefore
\begin{align*}
b &= -A^{-1}\tilde{E}_{ij}\PKM = -\frac{1}{1+\delta_{ij}}A^{-1}(\PKM_ie_j + \PKM_je_i)\\
&= \frac{1}{1+\delta_{ij}}\frac{1}{(\PKM_1)^2}\left(2\PKM_i\PKM_j e_1 - \PKM_1\PKM_i e_j-\PKM_1\PKM_j e_i\right).
\end{align*}
As before, we multiply $W$ by $(\PKM_1)^2$ to obtain a fiber-preserving vector field $\gamma^K_{ij,1}:=(\PKM_1)^2W$ which is holomorphic on $(\Cm)^K$. Note that $\Deriv{z}{K,ij}$ is complete on $\mathcal{G}_{\pi_K(y)}$ by definition, hence $\gamma_{ij,1}^K$ is complete. This proves the second part of the lemma.
\end{proof} 
% -----------------------------------------------------------
\subsection{Complete holomorphic vector fields tangent to the fibers}
In the previous subsection, we've constructed vector fields tangent to the fibers. Unfortunately, some of those fields aren't complete (see Example \ref{example:incomplete} below). In addition, it is quite laborious to decide whether a given field is complete. The goal of this subsection is to build machinery that will make this decision easier. Furthermore, we will list the most important examples of complete fields.

For $K>L$, let $f_{K,L}:(\Cm)^K\to (\Cm)^L, (Z_1,...,Z_K)\mapsto (Z_1,...,Z_L)$ be the standard projection. Given a vector field $V$ on $(\Cm)^L$ tangential to the fibers $\mathcal{F}^L_y$, its pullback $f_{K,L}^*V$ is a vector field on $(\Cm)^K$ tangential to the fibers $\mathcal{F}^K_{y'}$, by the recursive formula (\ref{RecursiveFormula}). By an abuse of notation, we just write $V$ instead of $f_{K,L}^*V$. Furthermore, set
$$ \mathcal{T}_{K}^C:=\{x\in \mathcal{T}_{K}: \partial_x^{K} \text{ is a complete vector field}\}.$$

\begin{defi}\label{def:liftedVectorFields} For $K\geq 3$, define the collection
$$\mathcal{V}_K:=\bigcup_{L=3}^K \left(\bigcup_{j=1}^n \left\lbrace \varphi_{x,j}^L: x\in \mathcal{T}_{L-1}^C \right\rbrace \cup \left\lbrace \gamma^L_{rs,j}: 1\leq r,s \leq n \right\rbrace\right)$$
of \textbf{principal vector fields} for $\Phi_K$.
\end{defi}

\begin{Rem}
We can consider $\mathcal{V}_{K-1}$ as a subset of $\mathcal{V}_K$ using the convention introduces just before the definition.
\end{Rem}

We are now working on a machinery that should make it easier for us to decide whether a tuple $x$ corresponds to a $\mathbb{C}$-complete vector field $\partial_x^K$. Let $x=(x_1,...,x_m)\in \C{m}$ and $P:\C{m}\to\C{m-1}$ be a polynomial mapping. We define an equivalence relation on $\C{l}$ in the following way
$$ u,v\in \C{l}, u\rel v \quad  :\Leftrightarrow \quad  u_i-v_i\in \bigcap_{k=1}^m \ker\left(\frac{\partial}{\partial x_k}\right), \text{ for all } 1\leq i \leq l.$$
A vector $v\in \C{l}$ is called \textit{constant in $x$} if $v\rel 0$.

\begin{theorem}\label{lemma:classifyCompleteFields}
Let $x=(x_1,...,x_m)\in \C{m}$ and $P:\C{m}\to\C{m-1}, x\mapsto P(x)$ be a polynomial mapping. Assume there exists $v\in \C{m-1}$, $v \rel 0$, and $\lambda_{ij}\in \mathbb{C}$, $\lambda_{ij}\rel 0$, for all $1\leq i,j\leq m$, such that
\begin{equation}
\Deriv{x}{i} \Deriv{x}{j} P(x) = \lambda_{ij}v.
\end{equation}
Then $V=D_x(P)$ is $\mathbb{C}$-complete.
\end{theorem}
\begin{proof}
We consider the vector field $V=D_x(P)=\sum_{j=1}^m V_j\Deriv{x}{j}$, where $V_j$ is given by
$$ V_j = \det\left( \frac{\partial P}{\partial x_1},...,\frac{\partial P}{\partial x_{j-1}},\frac{\partial P}{\partial x_{j+1}},...,\frac{\partial P}{\partial x_m}\right).$$
For $1\leq j\leq m$, define $f_j(x)=\sum_{k=1}^m \lambda_{kj}x_k$. Then $\Deriv{x}{j}P(x) \rel f_j(x)v$ by construction, this means
$$ \Deriv{x}{j}P(x) = c_j + f_j(x)v,$$
for some $c_j\in \mathbb{C}, c_j\rel 0$. 
Obviously, $f_k(x)v$ and $f_l(x)v$ are linearly dependent, therefore
\begin{align*}
V_j&= \det \left( c_1+f_1v,...,c_{j-1}+f_{j-1}v,c_{j+1}+f_{j+1}v,...,c_m+f_mv\right)\\
&= \underbrace{\det(c_1,...,c_{j-1},c_{j+1},...,c_m)}_{=:\alpha_{j0}} + f_1\underbrace{\det(v,c_2,...,c_{j-1},c_{j+1},...,c_m)}_{=:\alpha_{j1}} +\\
&+\cdots + f_n\underbrace{\det(c_1,...,c_{j-1},c_{j+1},...,c_{m-1},v)}_{=:\alpha_{jm}}.
\end{align*}
With the convention $\alpha_{jj}:=0$, we obtain
\begin{align*}
V_j &= \alpha_{j0} + \sum_{k=1}^m \alpha_{jk}f_k = \alpha_{j0}+ \sum_{k=1}^m \alpha_{jk}\sum_{l=1}^m \lambda_{lk}x_l = \alpha_{j0} + \sum_{l=1}^m x_l \underbrace{\sum_{k=1}^m \lambda_{lk}\alpha_{jk}}_{=:a_{jl}}.
\end{align*}
Set $b:=(\alpha_{10},...,\alpha_{m0})^T$. Then we just proved that $V(x)=Ax+b$, where $A=(a_{jl})_{1\leq j,l\leq m}$ is a $m\times m$-matrix with $a_{jl}\rel 0$.

Let $\gamma$ be a flow curve, i.e. a holomorphic map $\gamma:\mathbb{C}\to \C{m}$ with $\frac{d}{dt}\gamma(t)=V(\gamma(t))$. This leads to the system
$$ \frac{d}{dt}\gamma(t) = A\gamma(t) + b,$$
which implies that $\gamma$ exitsts for all time $t\in \mathbb{C}$.
\end{proof}

The mapping $\tilde{P}^K:(\Cm)^K\to \C{n}$ does not a priori fit into the setting of the previous lemma, but this problem can be solved with a simple trick. By fixing all but $(n+1)$ of the $\frac{n(n+1)}{2}K$ variables, we may interpret $\tilde{P}^K$ as a polynomial mapping $\C{n+1}\to \C{n}$. More precisely, each $(n+1)$-tupel $x\in \mathcal{T}_K$ corresponds to a natural inclusion map $i_x:\C{n+1}\to (\Cm)^K$. Then $\tilde{P}^K\circ i_x$ is a polynomial mapping $\C{n+1}\to \C{n}$.
\begin{prop}\label{lemma:completeFields} \textbf{(List of complete vector fields)}

\begin{enumerate}[align=left,labelwidth=0.85cm,label=(\textbf{Type \arabic*}),ref=\textit{Type \arabic*}]
\item \label{typeOne} For $1\leq m\leq n$, $x=(z_{k-1,mm},z_{k,11},...,z_{k,nn}) \in \mathcal{T}_K^C,\quad K\geq k.$
\item \label{typeTwo} For $l\neq m$, $x=(z_{k-1,mm},z_{k,l1},...,z_{k,ln}) \in \mathcal{T}_K^C,\quad K\geq k.$
\item \label{typeThree} For $(n+1)$ distinct pairs of indices $(i_0,j_0),...,(i_n,j_n)$,
${x=(z_{k,i_0j_0},...,z_{k,i_nj_n}) \in \mathcal{T}_K^C},\  {K\geq k}.$
\item \label{typeFour} For $1\leq i^*\leq n$, $x=(z_{1,ni^*},z_{2,11},...,z_{2,nn})\in \mathcal{T}_K^C, K\geq 2.$
\item \label{typeFive} Let $k<l$, $1\leq j^*\leq n$ and let $(i_1,j_1),...,(i_n,j_n)$ be $n$ distinct pairs of indices. Then
$$ x=(z_{k,i_1j_1},...,z_{k,i_nj_n}, z_{l,j^*j^*}) \in \mathcal{T}_K^C, K\geq l.$$
\item \label{typeSix} For $1\leq r\leq n$ consider the partition $\{1,...,n\}=\{i_1,...,i_r\}\dot{\cup}\{j_1,...,j_{n-r}\}$. Let $i^*\in \{i_1,...,i_r\}$ and $j^*,j'\in \{j_1,...,j_{n-r}\}$. Then $$x=(z_{k-1,j^*j_1},...,z_{k-1,j^*j_{n-r}},z_{k,i^*i_1},...,z_{k,i^*i_r},z_{k,i^*j'})\in \mathcal{T}_K^C, K\geq k.$$
\item \label{typeSeven} For $0\leq r\leq n$ consider the partition $\{1,...,n\}=\{i_1,...,i_r\}\dot{\cup}\{j_1,...,j_{n-r}\}$. Let $i^*\in \{i_1,...,i_r\}$ and $j^*,j'\in \{j_1,...,j_{n-r}\}$. Then $$x=(z_{k,j^*j_1},...,z_{k,j^*j_{n-r}},z_{k+1,i^*i_1},...,z_{k+1,i^*i_r},z_{k+2,j'j'})\in \mathcal{T}_K^C, K\geq k+2.$$
\item \label{typeEight} For $1\leq i\leq n, i\neq j$, $x=(z_{1,in},z_{2,j1},...,z_{2,jn})\in \mathcal{T}_K^C, K\geq 2.$
\end{enumerate}
\end{prop}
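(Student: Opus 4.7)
The plan is to apply Lemma \ref{lemma:classifyCompleteFields} to each of the eight tuples. For a tuple $x = (x_1, \ldots, x_{n+1}) \in \mathcal{T}_K$, we fix the remaining matrix entries and regard $\tilde{P}^K$ as a polynomial map $P \colon \C{n+1} \to \C{n}$ via the inclusion $i_x$. Completeness of $\partial_x^K = D_x(\tilde{P}^K)$ then reduces to checking that every mixed second partial $\partial_{x_i}\partial_{x_j} P$ has the form $\lambda_{ij} v$ for a single vector $v \in \C{n}$ and scalars $\lambda_{ij} \in \mathbb{C}$, all constant in the variables of $x$.

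The decisive structural observation is multilinearity: from the recursion $\tilde{P}^K = \tilde{P}^{K-2} + Z_K \tilde{P}^{K-1}$, a straightforward induction on $K$ shows that each entry of $\tilde{P}^K$ is linear in the entries of each individual block $Z_j$ separately. Consequently $\partial_{x_i}\partial_{x_j} P = 0$ whenever $x_i$ and $x_j$ belong to the same block. This alone handles \ref{typeThree}, where all $n+1$ variables lie in $Z_k$: every mixed second partial vanishes and the hypothesis of Lemma \ref{lemma:classifyCompleteFields} is met trivially with $v = 0$.

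For the remaining types the non-vanishing mixed partials come exclusively from the bilinear (and, in \ref{typeSeven}, trilinear) parts of $\tilde{P}^K$ in the two or three relevant blocks. These are computed from the recursion together with the product rule for $\tilde{E}_{\alpha}\tilde{E}_{\beta}$, whose sparseness is what drives the argument. For \ref{typeOne}, \ref{typeTwo}, \ref{typeFour}, and \ref{typeEight}, the tuple pairs a single ``diagonal-like'' entry from one block with $n$ entries forming a row or the diagonal of an adjacent block; sparseness of $\tilde{E}_{\alpha}\tilde{E}_{\beta}$ then either forces a single non-zero mixed partial, or forces all non-zero mixed partials to be scalar multiples of one common vector $v$ depending only on the fixed blocks. \ref{typeFive} is a variant where the two relevant blocks are non-adjacent, but the intervening fixed blocks contribute only a constant prefactor that does not disrupt the argument.

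The main technical obstacle is \ref{typeSeven}, which involves three consecutive blocks $Z_k, Z_{k+1}, Z_{k+2}$ together with a partition $\{1,\ldots,n\} = \{i_1,\ldots,i_r\} \dot{\cup} \{j_1,\ldots,j_{n-r}\}$ coordinating indices across all three levels. Three families of mixed partials contribute simultaneously --- between $Z_k$ and $Z_{k+1}$, between $Z_{k+1}$ and $Z_{k+2}$, and a genuinely trilinear contribution --- and the partition hypothesis is engineered precisely so that, after expansion via the identities for $\tilde{E}_{ij}$, all three families line up along a single common vector $v$. \ref{typeSix} is a simpler version of the same combinatorial matching, without the trilinear term. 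Once the hypothesis of Lemma \ref{lemma:classifyCompleteFields} is verified in each of the eight cases, completeness of $\partial_x^K$ follows and the proposition is proved.
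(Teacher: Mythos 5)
Your plan matches the paper's approach in outline: reduce completeness of $\partial_x^K$ to the hypothesis of Lemma~\ref{lemma:classifyCompleteFields}, using block-wise linearity of the relevant polynomial (which disposes of \ref{typeThree} and eliminates same-block mixed partials in all other types) and the sparseness of the products $\tilde E_\alpha\tilde E_\beta$. This is exactly how the paper proceeds, writing the relevant polynomial as $\begin{pmatrix}A&B\end{pmatrix}M(Z_\bullet)\cdots\begin{pmatrix}c\\d\end{pmatrix}$ with $A,B,c,d$ constant in $x$.

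Your description of \ref{typeSeven}, however, mischaracterizes the argument in a way that hides a genuine gap. You claim that three families of mixed partials contribute simultaneously and are coordinated to line up along a single vector $v$. What the paper actually shows, invoking the same $\tilde E_{i^*i}\tilde E_{j^*j}=0$ identity that drives \ref{typeSix}, is that the mixed partials between $Z_k$ and $Z_{k+1}$ and between $Z_{k+1}$ and $Z_{k+2}$ vanish outright. Only the $Z_k$--$Z_{k+2}$ family survives, namely
$$\Deriv{z}{k+2,j'j'}\Deriv{z}{k,j^*j}P = \left(e_{j'}^TZ_{k+1}\tilde E_{j^*j}\,d\right)Ae_{j'},$$
and here lies the subtlety your proposal does not address: this scalar coefficient involves $Z_{k+1}$, a block that contributes the variables $z_{k+1,i^*i_1},\ldots,z_{k+1,i^*i_r}$ to $x$, so constancy in $x$ is not automatic. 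It holds only because $j'\in\{j_1,\ldots,j_{n-r}\}$ is disjoint from $\{i_1,\ldots,i_r\}$, and hence by symmetry of $Z_{k+1}$ the row $e_{j'}^TZ_{k+1}$ contains none of the variables of $x$. Without this observation the hypothesis $\lambda_{ij}\rel 0$ of Lemma~\ref{lemma:classifyCompleteFields} is not verified for \ref{typeSeven}. The remaining types are sketched consistently with the paper, but each still requires carrying out the explicit computation to identify the common vector $v$; the proposal as written is a plan rather than a completed verification.
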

\begin{proof}
For the proof of (\ref{typeOne}), we consider the mapping $P:\C{n+1}\to \C{n}$ given by
 $$x:=(z_{k-1,mm},z_{k,11},...,z_{k,nn})\mapsto \begin{pmatrix}
A&B
\end{pmatrix}\Eu{Z_{k}}\El{Z_{k-1}}\begin{pmatrix}
c\\d
\end{pmatrix},$$ where $A$ and $B$ are arbitrary $n\times n$-matrices, both constant in $x$; whereas $c$ and $d$ are arbitrary vectors in $\C{n}$ both constant in $x$. Observe that
$$ \begin{pmatrix}
A&B
\end{pmatrix}\Eu{Z_k}\El{Z_{k-1}}\begin{pmatrix}
c\\d
\end{pmatrix} = \begin{pmatrix}
B&A
\end{pmatrix}\El{Z_k}\Eu{Z_{k-1}}\begin{pmatrix}
d\\c
\end{pmatrix}.$$
Thanks to this symmetry condition, we don't need to make a case distinction between even and odd $K$. In fact, it is enough to prove that $D_x(P)$ is a $\mathbb{C}$-complete vector field. At first, note that 
$$ \frac{\partial^2}{\partial z_{k-1,mm}^2}P(x)\equiv 0,\quad \Deriv{z}{k,ii}\Deriv{z}{k,jj}P(x)\equiv 0, \quad 1\leq i,j\leq n.$$
Hence most of the $\lambda$'s in Lemma \ref{lemma:classifyCompleteFields} can be chosen to be zero. It remains to consider $\Deriv{z}{k,ii}\Deriv{z}{k-1,mm}P(x)$, $1\leq i\leq n$.
We get
\begin{align*}
\Deriv{z}{k,ii}\Deriv{z}{k-1,mm}P(x) &= \begin{pmatrix}
A&B
\end{pmatrix}\begin{pmatrix}
0&\tilde{E}_{ii}\\0&0
\end{pmatrix}\begin{pmatrix}
0&0\\ \tilde{E}_{mm} & 0
\end{pmatrix}\begin{pmatrix}
c\\d
\end{pmatrix}\\&= \begin{pmatrix}
0&A\tilde{E}_{ii}
\end{pmatrix}\begin{pmatrix}
0\\ \tilde{E}_{mm}c
\end{pmatrix}\\ &= A\tilde{E}_{ii}\tilde{E}_{mm}c = c_m\delta_{mi}Ae_m,
\end{align*}
for all $1\leq i\leq n$. Since $v:=Ae_m$ is independent of $i$, the conditions of Lemma \ref{lemma:classifyCompleteFields} are satisfied. Therefore $D_x(P)$ is a complete field and we conclude that vector fields $\partial_x^K$, $K\geq k$, of (\ref{typeOne}) are complete.

For the proof of (\ref{typeTwo}) we choose $P$ as before. Again we have $\frac{\partial^2}{\partial z_{k-1,mm}^2}P\equiv 0$ and $\Deriv{z}{k,li}\Deriv{z}{k,lj}P\equiv 0$, $1\leq i,j\leq n$. Further, we compute
\begin{align*}
\Deriv{z}{k,li}\Deriv{z}{k-1,mm} P(x) =\begin{pmatrix}
A&B
\end{pmatrix}\begin{pmatrix}
0&\tilde{E}_{li}\\0&0
\end{pmatrix}\begin{pmatrix}
0&0\\ \tilde{E}_{mm} & 0
\end{pmatrix}\begin{pmatrix}
c\\d
\end{pmatrix}= A\tilde{E}_{li}\tilde{E}_{mm}c = c_mA\tilde{E}_{li}e_m.
\end{align*}
Observe that $\tilde{E}_{li}e_m = \delta_{im}e_l$, since we assume $l\neq m$. Therefore we obtain $$\Deriv{z}{k,li}\Deriv{z}{k-1,mm} P(x)= \delta_{im}c_mAe_l$$ for all $1\leq i\leq n$. This proves that the conditions of Lemma \ref{lemma:classifyCompleteFields} are satisfied and hence the field $D_x(P)$ is complete. We conclude that the fields $\partial_x^K$, $K\geq k$, of (\ref{typeTwo}) are complete.

For the proof of (\ref{typeThree}), we set $P(x)=\begin{pmatrix}
A&B
\end{pmatrix}\Eu{Z_k}\begin{pmatrix}
c\\d
\end{pmatrix}$. Observe that ${\Deriv{z}{k,i_rj_r}P(x)\rel 0}$ for all $r=0,...,n$. Hence the conditions of Lemma \ref{lemma:classifyCompleteFields} are trivially satisfied and we conclude that the vector fields $\partial_x^K$ of (\ref{typeThree}) are complete.

For the proof of (\ref{typeFour}) we set $P(x)=\begin{pmatrix}
A&B
\end{pmatrix}\El{Z_2}\Eu{Z_1}e_{2n}$. Note that $\frac{\partial^2}{\partial z_{1,ni^*}^2}P\equiv 0$ and $\Deriv{z}{2,ii}\Deriv{z}{2,jj}P\equiv 0$, $1\leq i,j\leq n$. Furthermore, we compute
$$ \Deriv{z}{2,jj}\Deriv{z}{1,ni^*}P(x) = \begin{pmatrix}
A & B
\end{pmatrix} \begin{pmatrix}
0&0\\ \tilde{E}_{jj} & 0
\end{pmatrix} \begin{pmatrix}
0& \tilde{E}_{ni^*} \\ 0&0
\end{pmatrix} \begin{pmatrix}
0\\ e_n
\end{pmatrix} = B\tilde{E}_{jj}\underbrace{\tilde{E}_{ni^*}e_n}_{=e_{i^*}} = \delta_{ji^*}Be_{i^*}.$$
Hence we can apply Lemma \ref{lemma:classifyCompleteFields} and conclude that vector fields $\partial_x^K$, $K\geq 2$, of (\ref{typeFour}) are complete.

For the proof of (\ref{typeFive}) we first consider
$$ P(x)= \begin{pmatrix}
A&B
\end{pmatrix}\El{Z_{l}}\begin{pmatrix}
U & V \\ W & X
\end{pmatrix}\Eu{Z_k}\begin{pmatrix}
c\\d
\end{pmatrix},$$
where $U,V,W$ and $X$ are arbitrary $n\times n$-matrices constant in $x$. As in the previous cases, we have $\frac{\partial^2}{\partial z_{l,j^*j^*}^2}P\equiv 0$ and $\Deriv{z}{k,i_rj_r}\Deriv{z}{k,i_sj_s}P\equiv 0$, $1\leq r,s\leq n$. Further, let's compute
\begin{align*}
\Deriv{z}{l,j^*j^*}\Deriv{z}{k,ij}P(x) &= \begin{pmatrix}
A&B
\end{pmatrix} \begin{pmatrix}
0&0\\ \tilde{E}_{j^*j^*}&0
\end{pmatrix} \begin{pmatrix}
U&V\\ W& X
\end{pmatrix} \begin{pmatrix}
0& \tilde{E}_{ij} \\ 0& 0
\end{pmatrix}\begin{pmatrix}
c\\d
\end{pmatrix}\\ &= B\tilde{E}_{j^*j^*} U \tilde{E}_{ij} d = \underbrace{(e_{j^*}^TU\tilde{E}_{ij}d)}_{\rel 0}Be_{j^*}.
\end{align*}
If we replace $\El{Z_l}$ by $\Eu{Z_l}$ in $P$, we obtain
$$ \Deriv{z}{l,j^*j^*}\Deriv{z}{k,ij}P(x) = A\tilde{E}_{j^*j^*}W\tilde{E}_{ij}d = (e_{j^*}^TW\tilde{E}_{ij}d)Ae_{j^*}.$$
In both cases, Lemma \ref{lemma:classifyCompleteFields} implies that $D_x(P)$ is complete and in conclusion, the vector fields $\partial_x^K$, $K\geq l$, of (\ref{typeFive}) are complete.

For the proof of (\ref{typeSix}), we set $P$ as for (\ref{typeOne}). Observe that $\tilde{E}_{i^*i}\tilde{E}_{j^*j}=0$ for $i^*,i\in\{i_1,...,i_r\}$ and $j^*,j\in \{j_1,...,j_{n-r}\}$. Then we get 
\begin{align*}
\Deriv{z}{k-1,j^*j}\Deriv{z}{k,i^*i} P(x) &= \begin{pmatrix}
A&B
\end{pmatrix} \begin{pmatrix} 0&\tilde{E}_{i^*i}\\0&0
\end{pmatrix} \begin{pmatrix}
0&0\\\tilde{E}_{j^*j}&0
\end{pmatrix}\begin{pmatrix}
c\\d
\end{pmatrix}\\
&=A\tilde{E}_{i^*i}\tilde{E}_{j^*j}c =0,
\end{align*}
for all $i\in \{i_1,...,i_r\}$ and $j\in \{j_1,...,j_{n-r}\}$. Furthermore, there holds
\begin{align*}
\Deriv{z}{k-1,j^*j}\Deriv{z}{k,i^*j'}P(x)&=A\tilde{E}_{i^*j'}\tilde{E}_{j^*j}c \rel 0,
\end{align*}
and hence the vector fields $\partial_x^K$, $K\geq k$, of (\ref{typeSix}) are complete, by Lemma \ref{lemma:classifyCompleteFields}.

For the proof of (\ref{typeSeven}) we set $P(x)=\begin{pmatrix}
A&B
\end{pmatrix}\Eu{Z_{k+2}}\El{Z_{k+1}}\Eu{Z_k}\begin{pmatrix}
c\\d
\end{pmatrix}$. By the same argument as in (\ref{typeSix}) we obtain $ \Deriv{z}{k+2,j'j'}\Deriv{z}{k+1,i^*i}P(x)=0$ for all $i\in \{i_1,...,i_r\}$ and $\Deriv{z}{k+1,i^*i}\Deriv{z}{k,j^*j}P(x)=0$ for all $i\in \{i_1,...,i_r\}$, $j\in \{j_1,...,j_{n-r}\}$. It remains to compute
\begin{align*}
\Deriv{z}{k+2,j'j'}\Deriv{z}{k,j^*j}P(x)&=\begin{pmatrix}
A&B
\end{pmatrix} \begin{pmatrix}
0&\tilde{E}_{j'j'} \\ 0&0
\end{pmatrix} \El{Z_{k+1}} \begin{pmatrix}
0&\tilde{E}_{j^*j}\\0&0
\end{pmatrix}\begin{pmatrix}
c\\d
\end{pmatrix}\\
&= A\tilde{E}_{j'j'}\begin{pmatrix}
Z_{k+1}&I_n
\end{pmatrix}\begin{pmatrix}
\tilde{E}_{j^*j}d\\0
\end{pmatrix}\\
&= A\tilde{E}_{j'j'}(Z_{k+1}\tilde{E}_{j^*j}d) = (e_{j'}^TZ_{k+1}\tilde{E}_{j^*j}d)Ae_{j'}.
\end{align*}
Observe that $e_{j'}^TZ_{k+1}$ is constant in $x$, since we assume $j'\in\{j_1,...,j_{n-r}\}$. Hence the conditions of Lemma \ref{lemma:classifyCompleteFields} are satisfied and we conclude that the fields $\partial_x^K$, $K\geq k+2$, of (\ref{typeSeven}) are complete.

For the proof of (\ref{typeEight}), let $P(x)=\begin{pmatrix}
A&B
\end{pmatrix}\El{Z_2}\begin{pmatrix}
Z_1e_n\\e_n
\end{pmatrix}$. For $1\leq r\leq n$, we compute
$$ \Deriv{z}{1,in}\Deriv{z}{2,jr} P(x) = \begin{pmatrix}
A&B
\end{pmatrix}\begin{pmatrix}
0&0\\ \tilde{E}_{jr}&0 
\end{pmatrix}\begin{pmatrix}
e_i\\0
\end{pmatrix} = \delta_{ir}Be_j.$$
Hence the conditions of Lemma \ref{lemma:classifyCompleteFields} are met and the fields $\partial_x^K$, $K\geq 2$, of (\ref{typeEight}) are complete.
\end{proof}

In the following, we find an example of an incomplete vector field $\partial_x^2$. As a direct consequence of this, we don't know how large the collection $\mathcal{V}_K$ of principal vector fields is. We shall see later, however, that it is powerful enough to construct stratified sprays. 

\begin{Exa}\label{example:incomplete}
Consider the tupel $x=(z_{1,n1},...,z_{1,nn},z_{2,n1})$ and the mapping $ P(x) = e_n + Z_2Z_1e_n.$ The Jacobian $JP$ is given by $\begin{pmatrix}
Z_2 & z_{1,nn}e_1+z_{1,n1}e_n
\end{pmatrix}$ and we get the vector field
\begin{align*}
\partial_x^2 = \det \begin{pmatrix}
\partial / \partial z_{1,n1} & \cdots & \partial/\partial z_{1,nn} & \partial/\partial z_{2,n1} \\
z_{2,11} & \cdots & z_{2,1n} & z_{1,nn}\\
z_{2,21} & \cdots & z_{2,2n} & 0 \\
\vdots & &\vdots & \vdots \\
z_{2,n1}&\cdots & z_{2,nn} & z_{1,n1}
\end{pmatrix}.
\end{align*}
Therefore $ \partial_x^2(z_{2,n1}) = \pm \det(Z_2) = \pm(\alpha_1z_{2,n1}^2+\alpha_2z_{2,n1}+\alpha_3)$ for some $\alpha_1,\alpha_2,\alpha_3\in \mathbb{C}\cap \ker(\partial_x^2)$. In fact, $\alpha_1$ is the principal minor of order $n-2$ obtained by removing the first and last rows and columns from $Z_2$. Hence $\alpha_1\not\equiv 0$ on $(\Cm)^2$, which means that the variable $z_{2,n1}$ occurs quadratically. We conclude that $\partial_x^2$ is incomplete.
\end{Exa}

% ---------------------------------------
% Topological Analysis of the Fibers
% ----------------------------------------
\subsection{Topological analysis of the fibers}\label{subsection:topologicalAnalysis}
So far we haven't learned anything about the fibers $\mathcal{F}^K_y$  from a topological perspective. In this subsection we will show that all fibers are connected for $K\geq 3$. In fact, all fibers are irreducible, except the singular fibers $\mathcal{F}^3_{a,e_n^T}$ and $\mathcal{F}^4_{0,e_n^T}$ which consist of two irreducible components, with the smooth part breaking down in two connected components. 
\begin{lemma}\label{lemma:3fibersAreConnected}
The fibers $\mathcal{F}^3_{a,b}$ are connected. The singular fibers, i.e. $\mathcal{F}^3_{a,e_n^T}$ consists of two irreducible components.
\end{lemma}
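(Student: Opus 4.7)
The plan is to unpack the defining equations of $\mathcal{F}^3_{a,b}$ explicitly and then project onto the first coordinate $Z_1 \in \mathrm{Sym}_n(\mathbb{C})$; the analysis then reduces to studying a family of affine subspaces whose structure is controlled by Lemma \ref{lemma:symmetricSurjective}.

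A direct matrix computation, applying $M_1, M_2, M_3$ successively to $e_{2n}$, yields
\[
\Phi_3(Z_1,Z_2,Z_3) = \begin{pmatrix} Z_1 e_n + Z_3(Z_2 Z_1 e_n + e_n) \\ Z_2 Z_1 e_n + e_n \end{pmatrix},
\]
so $\mathcal{F}^3_{a,b}$ is cut out by the two vector equations $Z_1 e_n + Z_3 b = a$ and $Z_2 Z_1 e_n = b - e_n$.

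\textbf{Non-singular case $b \neq e_n$.} Since $b - e_n \neq 0$, the second equation forces $Z_1 e_n \neq 0$ on the entire fibre. Hence the projection $\rho : \mathcal{F}^3_{a,b} \to \mathrm{Sym}_n(\mathbb{C})$, $(Z_1,Z_2,Z_3) \mapsto Z_1$, lands in the connected, irreducible open set $U := \{Z_1 \in \mathrm{Sym}_n(\mathbb{C}) : Z_1 e_n \neq 0\}$. For each $Z_1 \in U$, Lemma \ref{lemma:symmetricSurjective} produces non-empty affine subspaces of admissible $Z_2$ and $Z_3$ (when $b = 0$ the equation $Z_1 e_n = a$ already fixes $Z_1$ on an affine hyperplane and $Z_3$ is free, but one still obtains a product of affine spaces). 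Thus $\mathcal{F}^3_{a,b}$ is an affine bundle over $U$, hence irreducible and connected.

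\textbf{Singular case $b = e_n$.} The equations now read $Z_2 Z_1 e_n = 0$ and $Z_1 e_n + Z_3 e_n = a$. Set
\[
V_1 := \{(Z_1,Z_2,Z_3) \in \mathcal{F}^3_{a,e_n} : Z_1 e_n = 0\},\quad V_2 := \overline{\{(Z_1,Z_2,Z_3) \in \mathcal{F}^3_{a,e_n} : Z_1 e_n \neq 0\}}.
\]
On $V_1$ the bilinear equation is automatic, $Z_2$ is free, and $Z_3 e_n = a$; so $V_1$ is a product of affine spaces and hence irreducible. The open part of $V_2$ is an affine bundle over the irreducible set $U$, with fibre over $Z_1$ equal to $\{Z_2 : Z_2 Z_1 e_n = 0\} \times \{Z_3 : Z_3 e_n = a - Z_1 e_n\}$, so $V_2$ is irreducible as well. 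A dimension count gives $\dim V_1 = \dim V_2 = \tfrac{3n(n+1)}{2} - 2n$, and the two are distinct irreducible components because a generic point of $V_2$ satisfies $Z_1 e_n \neq 0$ while every point of $V_1$ satisfies $Z_1 e_n = 0$. Finally $V_1 \cap V_2 \neq \emptyset$ (take e.g.\ $Z_1 = Z_2 = 0$ and $Z_3 e_n = a$), so $\mathcal{F}^3_{a,e_n} = V_1 \cup V_2$ is connected.

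\textbf{Main obstacle.} The delicate point is verifying $V_1 \not\subseteq V_2$, i.e.\ that the closure $V_2$ genuinely drops dimension on the stratum $\{Z_1 e_n = 0\}$. This reduces to the classical observation that in the incidence variety $\{(w,Z) \in \mathbb{C}^n \times \mathrm{Sym}_n(\mathbb{C}) : Zw = 0\}$, a point $(0,Z)$ with $\det Z \neq 0$ cannot be a limit of points with $w \neq 0$: invertibility of $Z$ forces $w = 0$ in a neighbourhood, so such $(0,Z)$ lies in $V_1 \setminus V_2$. Everything else is a routine bookkeeping of dimensions via Lemma \ref{lemma:symmetricSurjective}.
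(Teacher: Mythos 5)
Your proof is correct, and the route is genuinely different from the paper's. The paper first invokes Lemma~\ref{lemma:genericFibers} to eliminate the $Z_3$-variables, reducing to the variety $\mathcal{G}_{\pi_3(a,b)}$, which it then covers by the $n$ coordinate charts $C_j=\{z_{1,nj}\neq 0\}\cong\mathbb{C}^*\times\C{N}$ and checks the common intersection is nonempty; you instead project directly onto the $Z_1$-factor and exhibit the fibre as an affine torsor over $U=\{Z_1e_n\neq 0\}$ (or, when $b=0$, over the affine slice $\{Z_1e_n=a\}$), using Lemma~\ref{lemma:symmetricSurjective} to see the fibres of this projection are nonempty affine spaces of constant dimension. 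Your version buys irreducibility essentially for free and avoids the explicit chart bookkeeping; the paper's version stays closer to the coordinate machinery used throughout Section~3. For the singular fibres the paper names the two components as $A_1=\{Z_1e_n=0\}$ and $A_2=\{\det Z_2=0\}$ but simply asserts these are irreducible; your $V_1,V_2$ coincide with $A_1,A_2$ (after the product decomposition), and your characterization of $V_2$ as $\overline{\{Z_1e_n\neq 0\}}$ makes its irreducibility immediate while shifting the burden onto proving $V_1\not\subseteq V_2$ --- which you handle correctly, and which is exactly the observation ($Z_1e_n\neq 0$ forces $\det Z_2=0$ on the fibre) that motivates the paper's closed-form description of $A_2$. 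Two small imprecisions, neither fatal: the sentence ``$\mathcal{F}^3_{a,b}$ is an affine bundle over $U$'' is not literally right in the $b=0$ subcase (the base shrinks to $\{Z_1e_n=a\}$, as your parenthetical notes but the main clause contradicts), and the torsor/local-triviality assertion is taken on faith --- it does hold because the relevant linear maps $Z_2\mapsto Z_2(Z_1e_n)$ and $Z_3\mapsto Z_3b$ have constant rank on $U$, but stating that explicitly would make the argument airtight.
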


\begin{proof}
We start with the non-generic fibers, i.e. we assume $b=0$. In this case, we have $\mathcal{F}^3_{a,0}=\mathcal{F}^2_{a,0}\times \Cm$ with $a\neq 0$. The defining equations of $\mathcal{F}^2_{a,0}$ are given by
$$ \begin{pmatrix}
a\\ 0 
\end{pmatrix} = \Eu{Z_2}\begin{pmatrix}
Z_1e_n\\ e_n
\end{pmatrix} = \begin{pmatrix}
Z_1e_n \\ (I_n+Z_2Z_1)e_n
\end{pmatrix},$$
and therefore $\mathcal{F}^2_{a,0} \cong \C{\frac{n(n-1)}{2}}\times \{ Z\in \Cm: e_n + Za=0\} \cong \C{n(n-1)}$. In conclusion, the non-generic fiber $\mathcal{F}^3_{a,0}$ is biholomorphic to some $\C{N}$ and hence connected.

We continue with the generic fibers $\mathcal{F}^3_{a,b}$, i.e. $b\neq 0$. In the first step, we consider the smooth fibers, that is, we assume $b\neq e_n$ in addition. Due to Lemma \ref{lemma:genericFibers}, the fiber
 $\mathcal{F}^3_{a,b}$ is biholomorphic to $\mathcal{G}_{\pi_3(a,b)}\times \C{\frac{n(n-1)}{2}}$, where
 $$ \mathcal{G}_{\pi_3(a,b)} = \{\vec{Z}_2\in (\Cm)^2: b=P_s(\vec{Z}_2)=e_n+Z_2Z_1e_n\}.$$
Define $ C_j :=\{\vec{Z}_2\in \mathcal{G}_{\pi_3(a,b)}: z_{1,nj}\neq 0\},\  1\leq j\leq n.$ Similar as in Lemma \ref{lemma:genericFibers}, we can express the variables $z_{2,j1},...,z_{2,jn}$, which proves that $C_j$ is biholomorphic to $\mathbb{C}^*\times \C{N}$ for some natural number $N$ and therefore connected. Since we assume $b\neq e_n$, $Z_1e_n=0$ is not possible in $\mathcal{G}_{\pi_3(a,b)}$, which means that $\mathcal{G}_{\pi_3(a,b)}$ is covered by $\cup_{j=1}^n C_j$. It remains to show, that the intersection $\cap_{j=1}^n C_j \neq \emptyset$ is non-empty. Choose a symmetric matrix $Z_1^*$ with $Z_1^*e_n=(1,...,1)^T$. By Lemma \ref{lemma:symmetricSurjective}, there is a symmetric matrix $Z_2$ with $b-e_n = Z_2(Z_1^*e_n)$. This shows that the intersection is indeed non-empty. In conclusion, $\mathcal{G}_{\pi_3(a,b)}$ and $\mathcal{F}^3_{a,b}$ are connected.

Finally, let's consider the singular fibers $\mathcal{F}^3_{a,e_n^T}$. By Lemma \ref{lemma:genericFibers}, such fibers are biholomorphic to $\mathcal{G}_{\pi_3(a,b)}\times \C{\frac{n(n-1)}{2}}$ where
$$ \mathcal{G}_{\pi_3(a,b)} = \{\vec{Z}_2\in (\Cm)^2: Z_2Z_1e_n=0\}.$$
Observe that this variety has two irreducible components $ A_1 = \{\vec{Z}_2\in \mathcal{G}_{\pi_3(a,e_n^T)}: Z_1e_n=0\}$ and 
$ A_2 = \{\vec{Z}_2\in \mathcal{G}_{\pi_3(a,e_n^T)}: \det(Z_2)=0\}.$ This proves that singular fibers $\mathcal{F}^3_{a,e_n^T}$ have two irreducible components. Since the intersection of these components equals the singularity set $S_3$, $\mathcal{F}^3_{a,e_n^T}$ is connected.
\end{proof}

\begin{theorem}\label{theorem:connectedFibers}
The fibers $\mathcal{F}^K_y$ are connected for $K\geq 3$. Moreover, the smooth part of the singular fibers is connected for $K\geq 5$.
\end{theorem}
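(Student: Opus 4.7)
The plan is to proceed by induction on $K$, proving both assertions in parallel. The base case $K=3$ is Lemma \ref{lemma:3fibersAreConnected}, and the smooth-part statement is vacuous at $K=3,4$. The central tool at the inductive step will be the projection $\pi:\mathcal{F}^K_y\to\Cm$ sending $\vec{Z}_K\mapsto Z_K$, whose fibers $\pi^{-1}(Z_K^*)=\mathcal{F}^{K-1}_{M_K(-Z_K^*)y}$ are nonempty by Theorem \ref{theorem:PhiSurjective} (since $K-1\geq 3$) and connected by the inductive hypothesis.

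I will define the \emph{discriminant locus} $B\subset\Cm$ as the set of $Z_K^*$ for which $\mathcal{F}^{K-1}_{M_K(-Z_K^*)y}$ contains singularities. A direct computation from Lemma \ref{lemma:singularFibers} and the formula for $M_K$ shows that either $y$ is non-generic for $K$ in the sense of Lemma \ref{lemma:nonGenericFibers} (in which case $\mathcal{F}^K_y$ splits as $\mathcal{F}^{K-1}_y\times\Cm$ and both claims descend directly from the inductive hypothesis; this covers the only singular even case $y=(0,e_n^T)$), or $B$ is empty or an affine subspace of $\Cm$ of codimension $n$, so that $\Cm\setminus B$ is path-connected. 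Setting $U=\pi^{-1}(\Cm\setminus B)$, Lemma \ref{lemma:propertyOne} combined with Lemma \ref{lemma:singularitySet} implies that any point of $\mathcal{F}^K_y\cap S_K$ forces $\Phi_{K-1}(\vec{Z}_{K-1})=e_{2n}$ and hence $Z_K\in B$; therefore $U\subset\mathcal{F}^K_y\setminus S_K$, and the fibers of $\pi|_U$ are smooth $(K-1)$-fibers, connected by Part 1 of the inductive hypothesis.

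To show that $U$ is path-connected I will connect any two of its points by a three-segment path: first move within the fiber over $Z_K^{(0)}$ to a convenient representative, then vary $Z_K$ continuously along a segment in $\Cm\setminus B$ while lifting $M_K(-Z_K(t))y$ to $\mathcal{F}^{K-1}$ via the explicit construction from the proof of Theorem \ref{theorem:PhiSurjective}, namely $\vec{Z}_{K-1}(t)=(\vec{Z}_3(t),0,\ldots,0)$ with $\vec{Z}_3(t)\in\mathcal{F}^3_{M_K(-Z_K(t))y}$, and finally move within the fiber over $Z_K^{(1)}$ to the target point. A dimension count using $\dim B=n(n-1)/2$ and $\dim\mathcal{F}^{K-1}_{(0,e_n^T)}=(K-1)n(n+1)/2-2n$ yields $\dim\pi^{-1}(B)\leq\dim\mathcal{F}^K_y-n$, so $U$ will be open and dense in $\mathcal{F}^K_y$, and hence also in the smooth part $\mathcal{F}^K_y\setminus S_K$.

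From this, Part 1 follows because $\mathcal{F}^K_y=\overline{U}$ is the closure of a connected set, hence connected. For Part 2 with $K\geq 5$, the smooth part is a complex manifold, therefore locally path-connected, with connected components open; the connected dense subset $U$ must lie in a single component, which then exhausts $\mathcal{F}^K_y\setminus S_K$. The principal obstacle will be constructing the continuous lift in the middle segment of the path, since $\Phi_{K-1}$ is only a submersion on $\mathcal{W}_{K-1}$ rather than a global fibration; the lift will have to be assembled from local submersion-theoretic sections, using the inductive connectedness of $(K-1)$-fibers (and ultimately of the $3$-fibers via Lemma \ref{lemma:3fibersAreConnected}) to reconcile the choices across overlaps.
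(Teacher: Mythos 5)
Your proposal starts from the same projection as the paper (there called $\rho$), and your computation of the discriminant locus $B$ is correct: for generic $y$, the $(K-1)$-subfiber $\mathcal{F}^{K-1}_{M_K(-Z_K)y}$ is singular only when $M_K(-Z_K)y$ equals $(0,e_n^T)$ (if $K-1$ is even) or lies in $\{(\,\cdot\,,e_n^T)\}$ (if $K-1$ is odd), which pins $B$ down as $\emptyset$ or a codimension-$n$ affine subspace of $\Cm$. The real problem is the claim that $U=\pi^{-1}(\Cm\setminus B)$ is path-connected. You yourself flag this as the ``principal obstacle,'' and it is genuinely one: $\Phi_{K-1}$ is not a fibration (it is only a non-proper submersion on $\mathcal{W}_{K-1}$, and the base even includes points where the fibers change topology), so there is no intrinsic continuous lift of a path $t\mapsto M_K(-Z_K(t))y$. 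The explicit ``$(\vec Z_3(t),0,\ldots,0)$'' lift from the surjectivity proof requires making the choices in Lemma \ref{lemma:symmetricSurjective} (and especially the auxiliary choice ensuring $a-Z_3b\neq 0$) depend continuously on the target; that is not proved and is not automatic. The closing sentence about ``reconcil[ing] the choices across overlaps'' describes a hope, not an argument. Since Part~1 is then supposed to follow by $\mathcal{F}^K_y=\overline{U}$ (itself requiring a pure-dimensionality/irreducibility input so that the dimension count really gives density), and Part~2 is built on top of Part~1, the whole chain is unsupported at its first step.

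By contrast, the paper avoids any global lifting. Its proof of connectedness works directly with the partition of $\mathcal{F}^K_y$ into connected components $A_i$: each $A_i$ is $\rho$-saturated because the $\rho$-fibers are $(K-1)$-fibers, connected by the induction hypothesis; each $\rho(A_i)$ is open because one can find a regular point in every $\rho$-fiber; and $\rho$ is surjective. Hence the $\rho(A_i)$ form a disjoint open cover of the connected space $\Cm$, forcing a single component. This is a purely local/topological argument and entirely sidesteps the path-lifting problem you ran into. For Part~2 the paper again does not need a lift: for $K=2k+1$ it identifies the singularities inside the codimension-$n$ subset $\mathcal{F}^{2k}_{0,e_n^T}\subset\mathcal{F}^{2k+1}_{a,e_n^T}$, shows its complement is connected, and then approaches the remaining smooth points; for $K=2k$ it uses the product structure $\mathcal{F}^{2k}_{0,e_n^T}\setminus\mathrm{Sing}=(\mathcal{F}^{2k-1}_{0,e_n^T}\setminus\mathrm{Sing})\times\Cm$. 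If you want to keep a ``dense connected $U$'' strategy, the fix is to replace the explicit three-segment path with exactly the paper's open-cover argument (which never needs to exhibit an actual path), or at minimum to restrict attention to $\mathcal{W}_{K-1}$-lifts and run the argument with the saturation/openness facts rather than an explicit section.
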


\begin{proof}
We prove this theorem by induction on $K$. Note that we've shown the base case $K=3$ in the previous lemma. Let $K\geq 4$ and assume that the $(K-1)$ fibers $\mathcal{F}^{K-1}_y$ are connected. Recall that
\begin{equation}\label{equation:Fibration}
\mathcal{F}^K_y = \bigcup_{Z\in \Cm} \mathcal{F}^{K-1}_{M_K(Z)y}.
\end{equation}
We will now introduce the following auxiliary function. Let $\rho:\mathcal{F}^K_y \to \Cm$ denote the restriction of the projection $(\vec{Z}_{K-1},Z_K)\mapsto Z_K$ to the fiber $\mathcal{F}^K_y$. We'll show some useful facts.
\begin{enumerate}[label=(\roman*)]
\item $\rho$ is surjective: Observe that $\vec{Z}_K\in \mathcal{F}^K_y$ if and only if $\vec{Z}_{K-1}\in \mathcal{F}^{K-1}_{M_K(-Z_K)y}$, by (\ref{equation:Fibration}).
Hence the $\rho$-fibers are given by $\rho^{-1}(Z_K^*) = \mathcal{F}^{K-1}_{M_K(-Z_K^*)y}$. The $(K-1)$-fibers $\mathcal{F}^{K-1}_{M_K(Z)y}$ are non-empty for all $Z\in \Cm$, by Theorem \ref{theorem:PhiSurjective}.
\item $\rho$ is submersive in $\vec{Z}_K$ if $\vec{Z}_{K-1}\not\in S_{K-1}$: Observe that $T_{\vec{Z}_K}(\mathcal{F}^K_y\cap S_K^c) = \ker J\Phi_K(\vec{Z}_K)$. By assumption, the Jacobian $J\Phi_{K-1}(\vec{Z}_{K-1})$ is surjective. Given $W_K\in \Cm$, we find $\vec{W}_{K-1}\in (\Cm)^{K-1}$ such that $J\Phi_{K-1}(\vec{Z}_{K-1})\vec{W}_{K-1} = -M_K(-Z_K)A_K(\Phi_{K-1}(\vec{Z}_{K-1}))W_K$. By the recursive formula of the Jacobian (see Corollary \ref{cor:JacobianRecursive}), we get
$$ J\Phi_K(\vec{Z}_K)\begin{pmatrix}
\vec{W}_{K-1}\\ W_K
\end{pmatrix} = M_K(Z_K)J\Phi_{K-1}(\vec{Z}_{K-1})\vec{W}_{K-1} + A_K(\Phi_{K-1}(\vec{Z}_{K-1}))W_K = 0$$
and $d\rho_{\vec{Z}_K}(\vec{W}_{K-1},W_K)=W_K$. 
\item Each connected component $A\subset \mathcal{F}^K_y$ is $\rho$-saturated, that is, $A=\rho^{-1}(\rho(A))$: It is enough to show ``$\supset$'', by definition of the preimage. Each $\rho$-fiber is connected and therefore we have either $\rho^{-1}(b)\subset A $ or $\rho^{-1}(b)\cap A=\emptyset$. 
\item $\rho(A)$ is open for each connected component $A\subset \mathcal{F}^K_y$: Given a point $b\in \rho(A)$ we find a regular point $a\in \rho^{-1}(b)$. To see this, notice that in every $(K-1)$ fiber we find points with $Z_1e_n\neq 0$, by the previous lemma and by (\ref{equation:Fibration}). Submersivity is a local property, hence there exists an open neighborhood $U\subset A$ of $a$ in which $\rho$ is submersive. Furthermore, $U$ is mapped openly, that is, $\rho(U)$ is an open neighborhood of $b$ in $\rho(A)$.
\end{enumerate}

\noindent We can write $\mathcal{F}^K_y$ as a disjoint union of connected components $\dot{\bigcup}_{i\in I} A_i$. Then
\begin{align*}
\Cm \underset{(i)}{=} \rho(\mathcal{F}^K_y) \underset{(iii)}{=} \dot{\bigcup}_{i\in I} \rho(A_i)
\end{align*}
can be written as the disjoint union of open sets. Since $\Cm$ is connected, we conclude that $\mathcal{F}^K_y$ has to be connected too.

It remains to show, that the smooth part of the fibers $\kfiber$ is connected for $K\geq 5$. Let's start with the case $K=2k+1$. Then the singular fibers are $\mathcal{F}^K_{a,e_n^T}$, by Lemma \ref{lemma:singularFibers}. If we now also note (\ref{equation:Fibration}), then the singularities are all in the subfiber $\mathcal{F}^{2k}_{0,e_n^T}$. Since $\mathcal{F}^{2k}_{0,e_n^T}\subset \mathcal{F}^{2k+1}_{a,e_n^T}$ has codimension $n$, the complement $U:= \mathcal{F}^{2k+1}_{a,e_n^T}\setminus \mathcal{F}^{2k}_{0,e_n^T}$ is connected. If we consider a smooth point $p\in \mathcal{F}^{2k}_{0,e_n^T}$, then each open neighborhood of $p$ intersects with $U$. Hence $p$ has to be in the same connected component as $U$. This proves that the smooth part $\mathcal{F}^K_{a,e_n^T}\setminus \mathrm{Sing}(\mathcal{F}^K_{a,e_n^T})$ is connected.

In the case $K=2k$, observe that $\mathcal{F}^{2k}_{0,e_n^T}$ is the only singular fiber. Furthermore, there holds
$$ \mathcal{F}^{2k}_{0,e_n^T}\setminus \mathrm{Sing}(\mathcal{F}^{2k}_{0,e_n^T}) = \left(\mathcal{F}^{2k-1}_{0,e_n^T}\setminus \mathrm{Sing}(\mathcal{F}^{2k-1}_{0,e_n^T})\right)\times \Cm,$$
and this is connected by the induction hypothesis. This completes the proof of the theorem.
\end{proof}

\begin{theorem}\label{theorem:connectedFibersSecond}
The fibers of the submersion $\Phi_K:\mathcal{W}_K\to \CwO$ are connected for $K\geq 3$.
\end{theorem}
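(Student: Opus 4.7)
The strategy is to argue that for each $y\in\CwO$, removing the closed set $\mathcal{W}_K^c$ from the already connected fiber $\mathcal{F}^K_y$ does not disconnect it.

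First I would identify precisely when $\mathcal{F}^K_y\cap\mathcal{W}_K^c$ is non-empty. Combining Lemma \ref{lemma:propertyOne} with the recursive formula (\ref{RecursiveFormula}) gives that on $\mathcal{W}_K^c$ one has $\Phi_K(\vec{Z}_K)=e_{2n}$ when $K$ is even and $\Phi_K(\vec{Z}_K)=(Z_Ke_n,e_n)^T$ when $K$ is odd. Consequently $\mathcal{F}^K_y\cap\mathcal{W}_K^c$ is empty unless $y$ is \emph{exceptional}, namely $y=e_{2n}$ for even $K$ or $y=(a,e_n)^T$ for some $a\in\Cn$ when $K$ is odd. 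In every other case $\mathcal{F}^K_y\cap\mathcal{W}_K=\mathcal{F}^K_y$ is connected by Theorem \ref{theorem:connectedFibers} and there is nothing to prove.

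For exceptional $y$ with $K\geq 5$, Theorem \ref{theorem:connectedFibers} guarantees that the smooth part $\mathcal{F}^K_y\setminus S_K$ is connected. As a connected smooth complex analytic variety it is locally irreducible, hence irreducible. Since $S_K\subset\mathcal{W}_K^c$, the subset $\mathcal{F}^K_y\cap\mathcal{W}_K$ is an open subset of the irreducible variety $\mathcal{F}^K_y\setminus S_K$, and it is non-empty by Theorem \ref{theorem:PhiSurjective}. A non-empty open subset of an irreducible analytic variety is dense and connected, completing this case.

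The main obstacle is the remaining case $K\in\{3,4\}$ with $y$ exceptional: here $\mathcal{F}^K_y$ has two irreducible components $A_1,A_2$ (intersecting in $S_K$) and its smooth part is already disconnected, so the previous argument fails. The key observation to resolve this is that one entire component $A_1$ is contained in $\mathcal{W}_K^c$. For $K=3$, Lemma \ref{lemma:3fibersAreConnected} identifies $A_1=\mathcal{F}^3_{a,e_n}\cap\{Z_1e_n=0\}$, and $\mathcal{W}_3^c=\{Z_1e_n=0\}$, so $A_1=\mathcal{F}^3_{a,e_n}\cap\mathcal{W}_3^c$ directly. For $K=4$, the non-generic decomposition of Lemma \ref{lemma:nonGenericFibers} yields $\mathcal{F}^4_{e_{2n}}=\mathcal{F}^3_{0,e_n}\times\Cm$; the defining equation $Z_3e_n=-Z_1e_n$ of $\mathcal{F}^3_{0,e_n}$ forces $\mathcal{W}_4^c\cap\mathcal{F}^4_{e_{2n}}=\{Z_1e_n=0\}\cap\mathcal{F}^4_{e_{2n}}$, which is precisely the lift $A_1\times\Cm$. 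In both cases $\mathcal{F}^K_y\cap\mathcal{W}_K$ equals $A_2\setminus(A_1\cap A_2)$ (up to the product with $\Cm$ for $K=4$), a non-empty open subset of the irreducible component $A_2$, hence connected.
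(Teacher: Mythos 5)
Your strategy is genuinely different from the paper's. The paper treats $K=3$ directly by covering $\mathcal{F}^3_{a,e_n^T}\cap\mathcal{W}_3$ with $n$ intersecting charts, and then runs an induction: for even $K=2k$ it uses a decomposition $\mathcal{W}_{2k}=(\mathcal{W}_{2k-1}\times\Cm)\,\dot\cup\,(\mathcal{R}\times\Cm)$ together with $\mathcal{F}^{2k-1}_{0,e_n^T}\cap\mathcal{R}=\emptyset$, and for odd $K$ it reuses the projection $\rho$ argument from Theorem~\ref{theorem:connectedFibers}. You instead isolate exactly which fibers meet $\mathcal{W}_K^c$, invoke for $K\geq5$ that the smooth part is connected hence irreducible, and for $K\in\{3,4\}$ observe that one whole irreducible component of the singular fiber is swallowed by $\mathcal{W}_K^c$. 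Those structural observations — in particular that $\mathcal{W}_K^c\cap\mathcal{F}^4_{e_{2n}}$ collapses to $\{Z_1e_n=0\}\cap\mathcal{F}^4_{e_{2n}}=A_1\times\Cm$ via the relation $Z_3e_n=-Z_1e_n$ on $\mathcal{F}^3_{0,e_n^T}$ — are correct and make for a cleaner-looking reduction than the paper's.

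There is, however, a real flaw in the justification of the final step. The assertion ``a non-empty open subset of an irreducible analytic variety is dense and connected'' is false in the Euclidean topology: two disjoint small discs in $\mathbb{C}$ form a non-empty open subset of the irreducible variety $\mathbb{C}$ that is neither dense nor connected. (You are likely thinking of the Zariski topology, where the statement does hold.) What actually saves your argument is that the open sets you consider are never arbitrary: they are always complements of proper closed \emph{analytic} subsets. For $K\geq5$ you remove $\mathcal{W}_K^c\cap(\mathcal{F}^K_y\setminus S_K)$, which is cut out by the linear equations $Z_1e_n=\cdots=Z_{2\lceil(K-1)/2\rceil-1}e_n=0$, from the connected complex manifold $\mathcal{F}^K_y\setminus S_K$; for $K\in\{3,4\}$ you remove the analytic set $A_1\cap A_2$ from the irreducible $A_2$. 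The lemma you need is that the complement of a proper closed analytic subset of an irreducible complex-analytic variety is connected (and dense); this follows because on the smooth locus an analytic subset has real codimension at least two. With that substitution the proof is correct; as written it invokes a false general principle.
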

\begin{proof}
We want to show that the intersection $\mathcal{F}^K_y\cap \mathcal{W}_K$ is connected. Smooth fibers $\mathcal{F}^K_y$ are contained in $\mathcal{W}_K$, hence we only need to consider the case when $\mathcal{F}^K_y$ is a singular fiber. For $K=3$ we can apply the strategy from Lemma \ref{lemma:3fibersAreConnected} for smooth fibers and cover $\mathcal{F}^3_{a,e_n^T}\cap \mathcal{W}_3$ by $n$ intersection connected charts.

Next, assume the claim to be true for $K-1=2k-1$. For
$$ \mathcal{R}:=\{\vec{Z}_{2k-1}\in (\Cm)^{2k-1}: Z_1e_n=...=Z_{2k-3}e_n=0, Z_{2k-1}e_n\neq 0\},$$
we can rewrite
$$ \mathcal{W}_{2k} = \mathcal{W}_{2k-1}\times \Cm \dot{\cup} \mathcal{R}\times \Cm.$$
Observe that $\mathcal{F}^{2k-1}_{0,e_n^T}\cap \mathcal{R}=\emptyset$, by definition. Therefore, the non-generic singular fiber $\mathcal{F}^{2k}_{0,e_n^T}$ satisfies
$$ \mathcal{F}^{2k}\cap \mathcal{W}_{2k} = \left( \mathcal{F}^{2k-1}_{0,e_n^T}\cap \mathcal{W}_{2k-1}\right) \times \Cm,$$
which is connected by the induction hypothesis.

Finally, we assume the claim to be true for $K-1=2k$. Let $\rho:\mathcal{F}^K_{a,e_n^T}\cap \mathcal{W}_K\to \Cm$ be the restriction of the projection $(\vec{Z}_{K-1},Z_K)\mapsto Z_K$ to $\mathcal{F}^K_{a,e_n^T}\cap \mathcal{W}_K$. The fiber $\rho^{-1}(Z_K)$ is given by $\mathcal{F}^{K-1}_{M_K(-Z_K)y}\cap \mathcal{W}_{K-1}$. From here we can argue as in the proof of Theorem \ref{theorem:connectedFibers}.
\end{proof}

\begin{lemma}
Each generic fiber $\mathcal{F}^3_y$ containes points $\vec{Z}_3$ with $Q_f^1(\vec{Z}_3)=z_{1,n1}\cdots z_{1,nn}\neq 0$. Moreover, each generic fiber $\mathcal{F}^K_y$ containes points $\vec{Z}_K$ with
$${Q_f^{K-2}(\vec{Z}_K)Q_s^{K-2}(\vec{Z}_K)= P^{K-2}_1(\vec{Z}_{K-2})\cdots P^{K-2}_n(\vec{Z}_{K-2})P^{K-2}_{n+1}(\vec{Z}_{K-2})\cdots P^{K-2}_{2n}(\vec{Z}_{K-2})\neq 0}.$$
\end{lemma}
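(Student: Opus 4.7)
The plan is to split into the two cases $K=3$ and $K\geq 4$ and to build a point of the fiber, in each case, by a short sequence of applications of Lemma~\ref{lemma:symmetricSurjective}.

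For $K=3$, given generic $y=(a,b)$ with $b\neq 0$, I would first choose any $u\in\C{n}$ whose entries are all nonzero and then apply Lemma~\ref{lemma:symmetricSurjective} three times in succession to produce symmetric matrices $Z_1,Z_2,Z_3$ solving
\[
Z_1 e_n = u,\qquad Z_2 u = b-e_n,\qquad Z_3 b = a-u;
\]
these solutions exist because $e_n$, $u$, and $b$ are all nonzero. The resulting triple $\vec{Z}_3$ lies in $\mathcal{F}^3_{a,b}$ by the explicit formula for $\Phi_3$, and since the $n$-th column of $Z_1$ is $u$ we get $z_{1,nj}=u_j\neq 0$ for every $j$, which is exactly the $K=3$ claim.

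For $K\geq 4$, I would exploit the factorization
\[
\Phi_K(\vec{Z}_K)=M_K(Z_K)\,M_{K-1}(Z_{K-1})\,\Phi_{K-2}(\vec{Z}_{K-2}).
\]
Fix any $w=(\alpha,\beta)^T\in\CwO$ with all $2n$ entries nonzero. A short parity case analysis reduces solving $M_K(Z_K)\,M_{K-1}(Z_{K-1})\,w=y$ to two applications of Lemma~\ref{lemma:symmetricSurjective}: if $K$ is even one solves $Z_{K-1}\beta=a-\alpha$ and $Z_K a=b-\beta$, which is possible because $\beta\neq 0$ and because genericity gives $a=\pi_K(y)\neq 0$; the case $K$ odd is completely symmetric with the roles of $\alpha$ and $\beta$ interchanged. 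It remains to realize $w$ as $\Phi_{K-2}(\vec{Z}_{K-2})$. For $K\geq 5$ this is immediate from Theorem~\ref{theorem:PhiSurjective}, since then $K-2\geq 3$. For $K=4$ I would repeat the $K=3$ construction one level down: pick $Z_1$ with $Z_1 e_n=\alpha$ and $Z_2$ with $Z_2\alpha=\beta-e_n$, both solvable by Lemma~\ref{lemma:symmetricSurjective} because $e_n$ and $\alpha$ are nonzero. Setting $\vec{Z}_{K-2}$ equal to the matrices just produced and combining with $Z_{K-1},Z_K$ gives a point $\vec{Z}_K\in\mathcal{F}^K_y$ with $\Phi_{K-2}(\vec{Z}_{K-2})=w$, hence with all entries $P^{K-2}_j(\vec{Z}_{K-2})\neq 0$.

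There is no real technical obstacle here; the whole argument is a triangular solve, one variable pair at a time, each step handed to Lemma~\ref{lemma:symmetricSurjective}. The only point that requires any care is that $\Phi_2$ is not surjective onto $\CwO$ (its image essentially misses points with vanishing top half), which is why $K=4$ has to be handled separately rather than by a direct appeal to Theorem~\ref{theorem:PhiSurjective}; but since we can \emph{choose} $w$ with both halves nonzero, the explicit construction carried over from $K=3$ covers this case without difficulty.
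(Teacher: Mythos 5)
Your proof is correct and follows essentially the same route as the paper: pick a target $w\in\CwO$ with all entries nonzero (the paper fixes $w=(1,\dots,1,1,\dots,1)^T$), then climb from level $K-2$ to level $K$ by solving two linear systems of the form $Zx=y$ with Lemma~\ref{lemma:symmetricSurjective}, and realize $w$ as a value of $\Phi_{K-2}$. Your parity case analysis reproduces exactly the paper's system $a-Z_K b=v$, $Z_{K-1}v=b-v$ (there done only for $K$ odd, with the even case declared symmetric), and the $K=3$ case is handled in the paper by the same triangular-solve idea, phrased via the choice of $Z_3$ and the identity $P_f^2=P_f^1$. The one place where you are in fact more careful than the paper: in both the $K=3$ and $K=4$ arguments the paper invokes Theorem~\ref{theorem:PhiSurjective} to get non-emptiness of a $\Phi_2$-fiber, but that theorem is only stated for $K\ge 3$; you correctly observe that $\Phi_2$ is not surjective onto $\CwO$ and construct the required $\Phi_2$-preimage by hand, which patches this small imprecision. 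There is no gap in your argument.
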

\begin{proof}
Let's start with the case $K=3$. We consider a generic fiber $\mathcal{F}^3_{a,b}$, $b\neq 0$. Recall that $\vec{Z}_3\in \mathcal{F}^3_{a,b}$ if and only if $\vec{Z}_2\in \mathcal{F}^2_{a-bZ_3,b}$ by the recursive formula (\ref{RecursiveFormula}). Since we assume $b\neq 0$, there is a symmetric matrix $Z_3\in \Cm$ with $a-Z_3b=(1,...,1)^T$, by Lemma \ref{lemma:symmetricSurjective}. The fiber $\mathcal{F}^2_{(1,...,1),b}\neq \emptyset$ is non-empty by Theorem \ref{theorem:PhiSurjective}. Furthermore, there holds $P_f^2\equiv P_f^1$, by the recursive formula (\ref{RecursiveFormula}). This implies $Q_f^1\equiv 1$ on the fiber $\mathcal{F}^2_{(1,...,1),b}\subset \mathcal{F}^3_{a,b}$ and we're done with the case $K=3$. 

We now consider the case $K\geq 4$. The fiber $\mathcal{F}^{K-2}_{(1,...,1),(1,...,1)}\neq \emptyset$ is non-empty, by Theorem \ref{theorem:PhiSurjective}, and there clearly holds $Q_f^{K-2}Q_s^{K-2}\equiv 1$ on the whole fiber. Therefore it is enough to show, that this fiber sits inside each generic $K$-fiber, that is, $\mathcal{F}^{K-2}_{(1,...,1),(1,...,1)}\subset\mathcal{F}^K_y$.
We prove this claim only for $K=2k+1$, since the proof is symmetric for $K=2k$. In this case, a point $y=(a,b)\in Y_g^K$ in the generic stratum satisfies $b\neq 0$. Let's write $v:=(1,...,1)^T$. Then the fiber $\mathcal{F}^{K-2}_{v,v}$ sits inside $\kfiber$ if and only if we find symmetric matrices $Z_{K-1},Z_K\in \Cm$ with
$$ \begin{pmatrix}
a\\ b 
\end{pmatrix} = \Eu{Z_K}\El{Z_{K-1}}\begin{pmatrix}
v\\ v
\end{pmatrix},$$
or equivalently,
$$ \begin{pmatrix}
a - Z_Kb\\ b
\end{pmatrix} = \begin{pmatrix}
v\\ Z_{K-1}v+v
\end{pmatrix}.$$
We can split up this system of equations into two independent systems $a-Z_Kb=v$ and $b=Z_{K-1}v+v$. An application of Lemma \ref{lemma:symmetricSurjective} to both systems yields the existence of such matrices $Z_{K-1}$ and $Z_K$, since $b\neq 0$ and $v\neq 0$. This completes the proof.
\end{proof}

\subsection{Construction of stratified sprays}
The most convenient way to construct a dominating spray associated to a submersion is to define some finite composition of flow maps of complete fiber-preserving vector fields. The collection of complete vector fields in Proposition \ref{lemma:completeFields} does not span the tangent space of the fibers $\fiber$ in every point. This is the reason why these are a priori not sufficient to define a dominating spray. However, we can enlarge this collection until it spans the tangent space in a \textit{sufficiently large} set of points. In the following section we discuss the meaning of 'sufficiently large' and the mathematical details for the enlargement. 
\subsubsection{Strategy for the construction}
Let $M$ be a Stein manifold and let $\complete$ denote the set of $\mathbb{C}$-complete holomorphic vector fields on $M$. For a vector field $V\in \complete$ its corresponding flow $\alpha^V_t$, $t\in \mathbb{C}$, is a one-parameter subgroup of $\mathrm{Aut}_{hol}(M)$, the holomorphic automorphism group on $M$. For a set $A\subset \complete$ of complete holomorphic vector fields on $M$  we define
$$ S_A:= \bigcup_{V\in A} \{\alpha^V_t:t\in \mathbb{C}\}\subset \mathrm{Aut}_{hol}(M).$$
Let $G_A:=\langle S_A \rangle$ denote the subgroup generated by $S_A$. Furthermore, we denote the pull-back of an automorphism $\alpha$ by $\alpha^*$. 
\begin{defi}
For a set $A\subset \complete$ of complete holomorphic vector fields on $M$, define
$$ \Gamma(A):=\{\alpha^*X: \alpha \in G_A, X\in A\}$$
the \textit{collection of complete holomorphic vector fields generated by $A$,}
and for an open set $Y\subset M$ define
$$ C_A(Y):=\{\alpha(y): \alpha \in G_A, y\in Y\}$$
the $G_A$-closure of $Y$.
\end{defi}
Some basic properties follow directly from the definition. Let $Y\subset M$ be an open set. Then $C_A(Y)$ is the smallest set containing $Y$, which is invariant under $G_A$. Moreover, $C_A(Y)$ is open in $M$, hence, for a fixed collection $A\subset \complete$, $C_A$ can be interpreted as a map $C_A:\mathcal{T}_M\to \mathcal{T}_M$, where $\mathcal{T}_M$ denotes the natural topology on $M$. In particular, $C_A$ satisfies the conditions of a topological closure operator.

The following lemma describes another basic property. 
\begin{lemma}\label{lemma:aboutTheClosure}
Let $A,B\subset \complete$ be finite collections of complete holomorphic vector fields on $M$ with $A\subset B\subset \Gamma(A)$. Then $C_A(X) = C_B(X)$ for all open subsets $X\subset M$.
\end{lemma}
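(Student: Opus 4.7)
My plan is to reduce the statement to the equality of the subgroups $G_A$ and $G_B$ of $\mathrm{Aut}_{hol}(M)$, since by definition $C_A(X)$ and $C_B(X)$ depend only on these subgroups via the orbit construction $C_\bullet(X) = \{\alpha(y) : \alpha \in G_\bullet, y \in X\}$. Once $G_A = G_B$ is established, the equality $C_A(X) = C_B(X)$ for every open $X \subset M$ is immediate.

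The inclusion $G_A \subset G_B$ is trivial: since $A \subset B$, every generator $\alpha^V_t$ of $G_A$ (with $V \in A$) also belongs to $S_B$ and hence to $G_B$. The heart of the proof is the reverse inclusion $G_B \subset G_A$. Take any generator $\alpha^Y_t \in S_B$ with $Y \in B$. By the assumption $B \subset \Gamma(A)$, we can write $Y = \alpha^* X$ for some $\alpha \in G_A$ and some $X \in A$. The key step is then the standard conjugation formula for flows under pullback: if $\psi^X_t$ denotes the flow of $X$, then the flow of $\alpha^* X$ is given by
\[
\alpha^Y_t \;=\; \alpha^{-1} \circ \psi^X_t \circ \alpha.
\]
This is verified by differentiating the right-hand side at $t = 0$ and recognizing the definition $(\alpha^* X)_p = d\alpha^{-1}_{\alpha(p)}\,X_{\alpha(p)}$. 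Since $\alpha \in G_A$ and $\psi^X_t \in S_A \subset G_A$, the composition $\alpha^{-1} \circ \psi^X_t \circ \alpha$ lies in $G_A$. Thus every generator of $G_B$ belongs to $G_A$, which gives $G_B \subset G_A$.

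Combining both inclusions yields $G_A = G_B$, and then the definition of $C_\bullet(X)$ as the union of $G_\bullet$-orbits of points of $X$ immediately delivers $C_A(X) = C_B(X)$ for every open $X \subset M$. The main (and only) obstacle is the conjugation identity for flows of pullback vector fields; everything else is a routine book-keeping of group generators. No completeness issue arises: each element of $B$ is complete by hypothesis, and the identity above is a statement about global flows that holds wherever both sides are defined, which is all of $M \times \mathbb{C}$ by completeness of $X$ and the fact that $\alpha$ is a global automorphism.
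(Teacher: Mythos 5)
Your proof is correct and follows essentially the same route as the paper: reduce to showing $G_A = G_B$, observe $G_A\subset G_B$ is trivial, and get $G_B\subset G_A$ by noting that the flow of a pullback $\alpha^*X$ is the conjugate of the flow of $X$ by $\alpha\in G_A$, hence lies in $G_A$. (The paper writes the conjugation as $\beta\circ\alpha_t^W\circ\beta^{-1}$ rather than your $\alpha^{-1}\circ\psi_t^X\circ\alpha$ — a harmless discrepancy of convention, since $G_A$ is a group.)
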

\begin{proof}
We're going to prove that $G_A=G_B$. To do this, it suffices to show that $G_B\subset G_A$, since the reverse inclusion trivially holds by assumption $A\subset B$.

At first, we consider $V\in B$. There is $\beta\in G_A$ and $W\in A$ with $V=\beta^*W$, since $B\subset \Gamma(A)$. The flow of $V$ satisfies
$$ \alpha_t^V = \alpha_t^{\beta^*W} = \beta \circ \alpha_t^W\circ \beta^{-1} \in G_A,$$
since $\beta, \alpha_t^W\in G_A$. 

In a next step, let $\beta\in G_B$ be any automorphism. By definition of $G_B$, there are vector fields  $V_{i_1},...,V_{i_m}\in B$ and times $t_1,...,t_m\in \mathbb{C}$ with $\beta = \alpha_{t_1}^{V_{i_1}}\circ \cdots \circ \alpha_{t_m}^{V_{i_m}}$. From the previous step we know that each $\alpha_{t_j}^{V_{i_j}}\in G_A$ is an automorphism in $G_A$ and hence so is $\beta$, i.e. $\beta\in G_A$ and this proves the claim. 
\end{proof}

Here comes the main step for the enlargement of the collection of complete fields. 
\begin{lemma}\label{lemma:noInvariantSetInClosure}
Let $M$ be a Stein manifold, $X\subset M$ an open subset and $A\subset \mathcal{VC}_{hol}(M)$ a finite set of $\mathbb{C}$-complete holomorphic vector fields on $M$ which spans the tangent bundle $TX$. Then there is a finite subset $A\subset B\subset \Gamma(A)$ which spans the tangent bundle $TC_A(X)$.
\end{lemma}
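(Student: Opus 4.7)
My plan is to prove the lemma in two parts: first, I establish that the full collection $\Gamma(A)$ already pointwise spans $TC_A(X)$ using the group structure of $G_A$; second, I extract a finite spanning subcollection using coherent sheaf theory on the Stein manifold $C_A(X)$.

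For the first part, take any $y \in C_A(X)$; by definition of the closure, there exist $\alpha \in G_A$ and $x \in X$ with $y = \alpha(x)$. Since $G_A$ is a group, $\alpha^{-1} \in G_A$, and for every $V \in A$ the pullback $W_V := (\alpha^{-1})^{*}V$ lies in $\Gamma(A)$. A direct computation yields
\[ (W_V)_y \;=\; (d\alpha)_x\, V_x . \]
Since $\{V_x : V \in A\}$ spans $T_xM$ by hypothesis and $(d\alpha)_x \colon T_xM \to T_yM$ is a linear isomorphism, the vectors $(W_V)_y$, $V \in A$, span $T_yM$. Hence $\Gamma(A)$ spans the tangent space at every point of $C_A(X)$.

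For the second part, note that $C_A(X) = \bigcup_{\alpha \in G_A} \alpha(X)$ is an open, $G_A$-invariant subset of $M$, hence itself a complex manifold of dimension $n := \dim M$, and $TC_A(X)$ is a coherent analytic sheaf of rank $n$. The first part shows that $\Gamma(A)$ generates this sheaf stalkwise over $\mathcal{O}_{C_A(X)}$. Working on the Stein manifold $C_A(X)$, Forster's finite-generation theorem provides finitely many global sections $\sigma_1, \dots, \sigma_N$ of $TC_A(X)$ that span it pointwise, and Cartan's Theorems~A and~B allow each $\sigma_i$ to be written as a finite $\mathcal{O}(C_A(X))$-linear combination of elements of $\Gamma(A)$. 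Collecting all $\Gamma(A)$-elements occurring in these finitely many combinations, together with the original set $A$, yields the desired finite subset $B$ with $A \subset B \subset \Gamma(A)$: the $\sigma_i$ lie in the $\mathcal{O}$-span of $B$ and already span pointwise, so $B$ itself spans $TC_A(X)$ pointwise.

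The main obstacle is this finite extraction step. Pointwise spanning at every point of a non-compact manifold does not yield a finite spanning subset via a naive compactness argument, since $C_A(X)$ need not be relatively compact in $M$. The essential mechanism is the coherence of the tangent sheaf combined with the global-sections theory on Stein manifolds, namely Cartan's Theorems~A and~B together with Forster's generation bound, which together guarantee that a coherent sheaf stalkwise generated by some collection of global sections admits a finite generating subset drawn from that collection.
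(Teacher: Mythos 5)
Your first step is correct: for $y=\alpha(x)$ with $\alpha\in G_A$, $x\in X$, the pullbacks $(\alpha^{-1})^*V\in\Gamma(A)$, $V\in A$, span $T_yM$, so the (typically infinite) collection $\Gamma(A)$ spans the tangent bundle over $C_A(X)$ pointwise. However, the finite extraction step does not go through as written, for two independent reasons. First, $C_A(X)$ is merely an open, $G_A$-invariant subset of $M$; open subsets of Stein manifolds need not be Stein (e.g.\ $\mathbb{C}^2\setminus\{0\}$), and nothing in the setup guarantees that $C_A(X)$ is Stein, so the appeal to Cartan's Theorems~A, B and Forster's generation bound is not justified. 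Second, and more fundamentally, even on a genuine Stein manifold one cannot extract a finite pointwise-spanning subfamily from an arbitrary infinite pointwise-spanning family of global sections of a coherent sheaf. A concrete counterexample: on $\mathbb{C}$, the family $f_n(z)=\sin(\pi z)/(z-n)$, $n\in\mathbb{Z}$, generates $\mathcal{O}_z$ at every $z\in\mathbb{C}$ (at $z=m\in\mathbb{Z}$ the element $f_m$ is a unit), yet every finite subfamily has common zeros at all but finitely many integers. Thus pointwise spanning by $\Gamma(A)$ does not, by coherence and Stein theory alone, yield a finite pointwise-spanning subset. The free sheaf $\mathcal{O}^{(\Gamma(A))}$ indexed by an infinite set is also not coherent, so Theorem~B cannot be applied to produce a finite presentation of the $\sigma_i$ over the family $\Gamma(A)$.

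The paper's proof exploits structure that your sheaf-theoretic route discards: it uses that the non-spanning locus $N_0\subset C_A(X)\setminus X$ of $A$ itself is analytic, and that the flows of the fields in $A$ cannot leave it invariant (else $N_0$ would be disjoint from $X$ and $G_A$-invariant, contradicting $C_A(X)$ being the $G_A$-saturation of $X$). It then runs a finite descent on the dimension of successive ``invariant'' subvarieties $N_0\supset N_1\supset\cdots$, choosing at stage $k$ a single time $t_V^k$ per field $V\in A$, valid simultaneously for all relevant points via a Baire-category argument (the set of bad times is meagre). Finiteness of $B$ comes from the finite dimension of $M$, not from coherence. If you want to salvage a sheaf-theoretic version you would have to encode this escape-from-the-bad-locus mechanism; the coherence of $TC_A(X)$ by itself does not provide it.
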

\begin{proof}
For each field $V\in A$ let $\alpha^V_t,t\in \mathbb{C},$ be the corresponding vector flow. Let $N_0$ be the set of points $x\in C_A(X)$ where the fields of $A$ don't span the tangent space $T_xM$. This is an analytic subset $N_0\subset C_A(X)\setminus X$. Further, we define
$$ N_k:=\{x\in N_{k-1}:\alpha^V_t(x)\in N_{k-1},\forall V\in A, \forall t\in \mathbb{C}\}, \quad k\geq 1.$$
Let $k\geq 0$ be arbitrary but fixed. Then $N_k$ has at most countably many connected components. Denote by $A_i^k, i\in I_k$, those connected components of $N_k$ which aren't entirely contained in $N_{k+1}$ and let $a_k:=\max_{i\in I_k} \dim A_i^k$ be the maximal dimension of them.  Choose a point $x_i^k\in A_i^k\cap N_k\setminus N_{k+1},i\in I_k,$ of each such component. By definition of the sets $N_k$ and $N_{k+1}$, there is a field $V\in A$ for each point $x$ in the sequence $\{x_i^k\}_{i\in I}$, such that $\alpha^V_t(x)\not\in N_k$ for some $t\in \mathbb{C}$. For $V\in A$ define
$$ u_V^k:=\{x\in \{x_i^k\}_{i\in I_k}: \alpha^V_t(x)\not\in N_k,\text{ for some } t\in \mathbb{C}\}.$$
Then this yields 
$$ \{x_i^k\}_{i\in I_k}= \bigcup_{V\in A} u_V^k.$$
For each point $x\in u_V^k$ the set $\{t\in \mathbb{C}: \alpha^V_t(x)\in N_k\}$ is discrete and hence
$$\bigcup_{x\in u_V^k} \{t\in \mathbb{C}: \alpha^V_t(x)\in N_k\}$$
is a meagre set in $\mathbb{C}$. This implies the existence of a time $t_V^k\in \mathbb{C}$ such that $\alpha^V_{t_V^k}(x)\not\in N_k$ for all $x\in u_V^k$.
Define
$$ \tilde{N}_{k+1}:=\{x\in N_k: \alpha^V_{t_V^k}(x)\in N_k,\forall V\in A\}.$$
Clearly, there holds
$$ N_{k+1}\subset \tilde{N}_{k+1}\subset N_k.$$
The set $\tilde{N}_{k+1}$ has at most countably many connected components. Let $\tilde{a}_{k+1}$ denote the maximal dimension of them. There holds $a_{k+1}\leq \tilde{a}_{k+1}<a_k$ by construction. Since $M$ is finite dimensional, this implies that there is $L\in \mathbb{N}$ such that $N_k=\emptyset$ for all $k>L$.

Let $B_k$ be the set of pullbacks $(\alpha_{t_V^k}^V)^*(W)$ for  $V,W\in A$ and set
$$ B:=\bigcup_{k\geq 0}^L A\cup B_k.$$
Again by construction, the collection $B\subset \Gamma(A)$ is a finite set of $\mathbb{C}$-complete holomorphic vector fields on $M$ that spans the tangent bundle $TC_A(X)$. Moreover, $C_A(X)=C_B(X)$ by the previous lemma, which implies that $C_A(X)$ is invariant under the flows of $B$. This finishes the proof.
\end{proof}

In a next step, we want to adapt this argument to our setting so that we can apply it to every fiber simultaneously, so to speak. Moreover, we are ready to define a dominating spray.
\begin{lemma}\label{cor:spanningFields}
Let $M$ be a Stein manifold, $\pi:M\to Y$ a holomorphic mapping and $X\subset M$ a (connected) open subset such that the restriction $\pi|_{X}:X\to Y$ is a surjective submersion with connected fibers. %Furthermore, assume that there are meromorphic mappings $\psi_1,...,\psi_k:M\to M$ and a variety $\mathcal{G}\subset M$ such that each fiber $M_y$ is biholomorphic to $\mathcal{G}$ via $\psi_j$ for some $1\leq j\leq k$.

Suppose that there is a finite set $A\subset \mathcal{VC}_{hol}(M)$ of $\mathbb{C}$-complete fiber-preserving holomorphic vector fields on $M$ which spans the tangent bundle $T(M_y\cap X)$ of each fiber $M_y:=\pi^{-1}(y)$. Then there is a finite set $B\subset \Gamma(A)$ of $\mathbb{C}$-complete fiber-preserving holomorphic vector fields which spans the tangent bundle $T(C_A(M_y\cap X))$ of each fiber $M_y$. In particular, the surjective submersion $\pi|_{C_A(X)}$ admits a spray. 
\end{lemma}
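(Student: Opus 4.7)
The plan is to emulate the proof of Lemma \ref{lemma:noInvariantSetInClosure}, but carried out globally on the total space $M$ with respect to the \emph{vertical} tangent bundle, so that a single finite enlargement $B$ of $A$ serves uniformly for every fiber. First I observe that since every field $V\in A$ is fiber-preserving, i.e.\ $d\pi\cdot V\equiv 0$, its flow satisfies $\pi\circ\alpha_t^V=\pi$, so each automorphism in $G_A$ maps fibers bijectively to themselves. Consequently $C_A(X)\cap M_y=C_A(M_y\cap X)$ for every $y\in Y$, the restriction $\pi|_{C_A(X)}$ remains a surjective submersion (submersivity spreads along the flows of $G_A$, which are diffeomorphisms of $M$), and each fiber $C_A(M_y\cap X)$ is path-connected because any $\alpha\in G_A$ can be linked to the identity by a flow path. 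The spanning hypothesis on $A$ thus becomes, equivalently, that $A$ spans the vertical tangent space $VT_xM=\ker d\pi_x$ at every point $x\in X$.

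Next I consider the analytic subvariety $N_0\subset C_A(X)\setminus X$ of points at which $A$ fails to span the vertical tangent space; locally this is cut out by the vanishing of maximal minors of the matrix whose columns are the coefficients of the fields in $A$ written in a holomorphic frame of $VTM$. I then define the descending chain $N_1\supset N_2\supset\cdots$ exactly as in Lemma \ref{lemma:noInvariantSetInClosure}, using the flows of elements of $A$. The meagre-time argument of that lemma applies verbatim: at each stage I can pick finitely many times $t_V^k\in\mathbb{C}$, one per $V\in A$, such that the pullbacks $(\alpha_{t_V^k}^V)^*W$ for $W\in A$ move every top-dimensional stratum of $N_k$ strictly off $N_k$. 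Because the pullback of a fiber-preserving field by a fiber-preserving diffeomorphism is again fiber-preserving, and because the composition of complete flows is a biholomorphism of $M$, all these pullbacks are complete, fiber-preserving, and lie in $\Gamma(A)$. Since $\dim N_k$ strictly decreases while non-empty and $M$ is finite-dimensional, the chain terminates at $N_L=\emptyset$ after finitely many steps. Let $B\subset \Gamma(A)$ be the finite union of $A$ with all these pullbacks; by Lemma \ref{lemma:aboutTheClosure}, $C_B(X)=C_A(X)$, and by construction $B$ spans $VT_xM$ at every point of $C_A(X)$, hence spans $T(C_A(M_y\cap X))$ for every $y$.

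To produce a spray associated to $\pi|_{C_A(X)}$ in the sense of Definition \ref{definspray}, I apply Gromov's construction: enumerate $B=\{V_1,\dots,V_N\}$, take the trivial bundle $E:=C_A(X)\times\mathbb{C}^N$, and define
$$ s(z,t_1,\dots,t_N):=\alpha_{t_1}^{V_1}\circ\cdots\circ\alpha_{t_N}^{V_N}(z). $$
Fiber-preservation of each $V_j$ yields condition (i), $s(0_z)=z$ is condition (ii), and $Ds(0_z)e_j=V_j(z)$ combined with the spanning property of $B$ yields condition (iii). The main obstacle is producing a \emph{single} finite $B$ that works for all fibers $M_y$ at once; this is resolved precisely by performing the descent on the analytic set $N_0$ sitting in the total space $M$ rather than fiber by fiber, and by observing that every operation involved (flows, pullbacks, finite unions) preserves the class of complete fiber-preserving vector fields, so the enlargement does not break either property.
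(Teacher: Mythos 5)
Your argument is correct and follows the same route as the paper's proof, which simply invokes ``proceed similarly as in the previous lemma'' and then writes down the composed-flow spray. You have usefully spelled out what that shorthand means: running the descent of Lemma \ref{lemma:noInvariantSetInClosure} globally on $C_A(X)$ with respect to the vertical tangent bundle, using that fiber-preservation is closed under pullbacks by elements of $G_A$, and checking the identification $C_A(X)\cap M_y=C_A(M_y\cap X)$.
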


\begin{proof}
We can proceed similarly as in the previous lemma to obtain a finite collection $B\subset \Gamma(A)$ which spans the tangent bundle $T(C_A(M_y\cap X))$ of each fiber $M_y$. This follows from the assumption that all fields in $A$ are fiber-preserving.

Moreover, the map $\pi|_X:X\to Y$ is a surjective submersion, hence $\pi|_{C_A(X)}:C_A(X)\to Y$ is also a surjective submersion. Write $B=\{W_1,...,W_L\}$. Then the map $s:C_A(X)\times \C{L}\to C_A(X)$ given by
$$ s(z,t_1,...,t_L) = \alpha_{t_1}^{W_1}\circ \cdots \circ \alpha_{t_L}^{W_L}(z)$$
is a dominating spray associated to $\pi|_{C_A(X)}$, since $C_A(X)$ is invariant with respect to the flows of $W_1,...,W_L$ by Lemma \ref{lemma:aboutTheClosure}. 
\end{proof}

This corollary shows that we can relax the assumptions of the previous lemma and we will apply it in this form later.
\begin{cor}\label{cor:shortCutSpanningTheorem} Let $M$ be a Stein manifold, $\pi:M\to Y$ a holomorphic map and $X\subset M$ a connected open set such that the restriction $\pi|_X$ is a surjective submersion with connected fibers. Furthermore, we are given an open subset $W\subset X$ and a finite collection $A\subset \complete$ of complete fiber-preserving holomorphic vector fields on $M$ which spans the tangent bundle $T(M_y\cap W)$ of each fiber $M_y$.

Suppose that there is a finite collection $B\subset \complete$ of complete fiber-preserving holomorphic vector fields on $M$ such that $X\subset C_B(W)$ and $A\subset B$. Then $\pi|_{C_{B}(W)}$ admits a spray.
\end{cor}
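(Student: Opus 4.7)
The plan is to reduce to Lemma~\ref{cor:spanningFields} by verifying that $\pi|_{C_B(W)}$ inherits the hypotheses needed to extract a spray. First I would check that $\pi|_{C_B(W)}\colon C_B(W) \to Y$ is a surjective submersion with connected fibers. Surjectivity is immediate from $X \subset C_B(W)$ and the surjectivity of $\pi|_X$. For submersivity at a point $p \in C_B(W)$, write $p = \alpha(w)$ with $\alpha \in G_B$ and $w \in W \subset X$; since the fields of $B$ are fiber-preserving, $\pi \circ \alpha = \pi$, so the chain rule transfers the submersivity of $\pi|_X$ at $w$ to $p$. For connectedness of $M_y \cap C_B(W)$, the concatenated flow curves that define $\alpha$ remain in $M_y$, so every point of $M_y \cap C_B(W)$ is path-connected within $M_y \cap C_B(W)$ to a point of $M_y \cap W \subset M_y \cap X$, and $M_y \cap X$ is connected by hypothesis.

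Next I would extend the spanning from $T(M_y \cap W)$ to $T(M_y \cap C_B(W))$. Since $A \subset B$, the collection $B$ itself already spans $T(M_y \cap W)$ for every $y$. The fiberwise dimension-lowering argument of Lemma~\ref{lemma:noInvariantSetInClosure}, carried out simultaneously over all fibers as in the proof of Lemma~\ref{cor:spanningFields} (where the fiber-preserving property of $B$ is exactly what permits a single finite enlargement to handle every fiber at once), then produces a finite collection $B' \subset \Gamma(B)$ of $\mathbb{C}$-complete fiber-preserving holomorphic vector fields spanning $T(M_y \cap C_B(W))$ for every $y \in Y$. Crucially, this step uses only the spanning and the fiber-preservation, not any submersion or connectedness hypothesis on $\pi$; so it applies even though $\pi|_W$ itself is not assumed to be a surjective submersion with connected fibers. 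Moreover, Lemma~\ref{lemma:aboutTheClosure} applied to $B \subset B' \subset \Gamma(B)$ yields $C_B(W) = C_{B'}(W)$, so $C_B(W)$ is invariant under the flows of every field in $B'$.

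Writing $B' = \{W_1, \ldots, W_L\}$, the map
$$ s(z, t_1, \ldots, t_L) = \alpha_{t_1}^{W_1} \circ \cdots \circ \alpha_{t_L}^{W_L}(z) $$
then defines a dominating spray $s\colon C_B(W) \times \mathbb{C}^L \to C_B(W)$ associated to the submersion $\pi|_{C_B(W)}$, which completes the argument. The main obstacle I expect is precisely the decoupling performed in the first two steps: one must recognize that the construction of the enlarged spanning collection in Lemma~\ref{cor:spanningFields} only requires the spanning hypothesis and fiber-preservation, whereas the submersion and connectedness of $\pi$ are used only afterwards to interpret the resulting data as a spray. Once both ingredients are established on $C_B(W)$, the construction of $s$ itself is routine.
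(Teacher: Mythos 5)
Your proof is correct and follows essentially the same route as the paper: enlarge $B$ inside $\Gamma(B)$ to a collection spanning $T(M_y\cap C_B(W))$, then invoke Lemma~\ref{cor:spanningFields} (or its proof) to produce the spray. The paper's version is terser—it routes the final application of Lemma~\ref{cor:spanningFields} through $X$ and leaves the identification $C_{\tilde{B}}(X)=C_B(W)$ implicit—whereas you verify the submersion/connectedness hypotheses directly on $C_B(W)$ and correctly flag that the enlargement step only uses spanning and fiber-preservation, not any submersion hypothesis on $W$; both are sound.
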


\begin{proof}
Since $B$ contains $A$ by assumption, $B$ spans the tangent bundle $T(M_y\cap W)$ of each fiber, hence there is a finite collection $\tilde{B}\subset \Gamma(B)$ which spans the tangent bundle $T(M_y\cap C_B(W))$ of each fiber. Since $X\subset C_B(W)$ by assumption, $\tilde{B}$ spans the tangent bundle $T(M_y\cap X)$ for each fiber. Now, we apply Lemma \ref{cor:spanningFields} to finish the proof. 
\end{proof}

\begin{Rem}
Given a finite collection $A\subset \complete$ and two open sets $X,W\subset M$ with $W\subset X$, there holds $X\subset C_A(W)$ if and only if $X\setminus W \subset C_A(W)$. This follows from the fact that $C_A$ is extensive.
\end{Rem}

This lemma will help us decide if we have $X\subset C_A(X)$ for a suitable finite collection $A$. 
\begin{lemma}\label{lemma:leavingASubset}
Let $M$ be a Stein manifold and $N\subset M$ an analytic subvariety given by
$$ N:=\{x\in M: f(x)=0\}$$
for some holomorphic mapping $f:M\to \mathbb{C}$. Assume that there are complete holomorphic vector fields $V_1,...,V_k$ on $M$ (and we denote the respective flows by $\alpha^1_t,...,\alpha^k_t$) such that $$ V_{i_n}\circ \cdots \circ V_{i_1}(f(x))\neq 0, \forall x\in N,$$
for some finite sequence $\{i_1,...,i_n\}\subset \{1,...,k\}$. Then there is a composition of the flows $\alpha^1_t,...,\alpha^k_t$ which leaves the subvariety $N$. More precisely, there holds
$$\{\alpha^{i_n}_{t_n}\circ \cdots \circ \alpha^{i_1}_{t_1}(x):t_1,...,t_n\in \mathbb{C}\}\not\subset N,\quad \forall x\in N.$$
\end{lemma}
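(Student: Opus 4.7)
The plan is to fix an arbitrary $x\in N$ and reduce the claim to the non-vanishing of a single holomorphic function on $\mathbb{C}^n$. Because every $V_{i_k}$ is $\mathbb{C}$-complete, each flow $\alpha^{i_k}_t$ is a globally defined holomorphic automorphism of $M$ depending holomorphically on $t\in\mathbb{C}$, so the map
\[
(t_1,\ldots,t_n)\;\longmapsto\; \alpha^{i_n}_{t_n}\circ \alpha^{i_{n-1}}_{t_{n-1}}\circ \cdots \circ \alpha^{i_1}_{t_1}(x)
\]
is a holomorphic map $\mathbb{C}^n\to M$, and consequently
\[
F(t_1,\ldots,t_n)\;:=\; f\bigl(\alpha^{i_n}_{t_n}\circ \cdots \circ \alpha^{i_1}_{t_1}(x)\bigr)
\]
is a holomorphic function on all of $\mathbb{C}^n$. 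The conclusion to be proved is equivalent to $F\not\equiv 0$, so I would argue by contradiction: assuming $F\equiv 0$, every Taylor coefficient of $F$ at the origin vanishes, and it suffices to exhibit one that is forced by the hypothesis to be nonzero.

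The whole argument rests on the basic identity
\[
\frac{\partial}{\partial t}\bigg|_{t=0} g\bigl(\alpha^j_t(y)\bigr)\;=\;(V_jg)(y),
\]
valid for any holomorphic $g\colon M\to\mathbb{C}$ and any complete vector field $V_j$ with flow $\alpha^j_t$. Applied to $F$ and iterated, one differentiates successively in $t_n$ (at $t_n=0$), then in $t_{n-1}$ (at $t_{n-1}=0$), and so on down to $t_1$: each differentiation peels off the outermost flow while promoting the active holomorphic function by one Lie derivative. A straightforward induction on the number of derivatives then identifies the mixed partial
\[
\frac{\partial^n F}{\partial t_1\,\partial t_2\,\cdots \,\partial t_n}(0,\ldots,0)
\]
with the iterated Lie derivative $V_{i_n}\circ \cdots \circ V_{i_1}(f)(x)$ of the hypothesis, which is nonzero for every $x\in N$. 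This contradicts $F\equiv 0$ and proves the claim.

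The main technical obstacle is the bookkeeping in the iterated chain rule. I would organise it as an induction on $k$, proving that after $k$ successive differentiations at the appropriate $t_j=0$ the result reads
\[
\bigl(V_{i_n}\circ\cdots\circ V_{i_{n-k+1}}\bigr)(f)\bigl(\alpha^{i_{n-k}}_{t_{n-k}}\circ\cdots\circ\alpha^{i_1}_{t_1}(x)\bigr),
\]
the inductive step being a direct application of the single-variable identity above together with the fact that differentiation in $t_j$ commutes with a Lie derivative acting on the $x$-variable (which is Schwarz's equality of mixed partials for the holomorphic function of the combined $(t,x)$-variables). Everything else -- global holomorphy of $F$ on $\mathbb{C}^n$ from completeness of the flows, and the elementary fact that a holomorphic function on $\mathbb{C}^n$ with a nonzero Taylor coefficient at the origin cannot vanish identically -- is routine.
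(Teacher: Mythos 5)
Your approach is a genuinely different route from the paper's and, modulo the slip below, a cleaner one. The paper proceeds by nesting subvarieties $N_l := \{x\in N_{l-1}: V_{i_l}\circ\cdots\circ V_{i_1}(f)(x)=0\}$ and arguing that the flow of $\alpha^{i_l}$ pushes points of $N_{l-1}\setminus N_l$ off $N_{l-1}$, concluding from $N_n=\emptyset$ that $N$ contains no flow-invariant subset. Your argument instead reduces the conclusion to the nonvanishing of a single entire function $F$ on $\mathbb{C}^n$ and exhibits a nonzero Taylor coefficient, which is more transparent and avoids the slightly informal final step of the paper's proof.

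However, the bookkeeping reverses the order of the iterated Lie derivative, and this matters because the $V_{i_j}$ do not commute as derivations. With $F(t_1,\ldots,t_n)=f(\alpha^{i_n}_{t_n}\circ\cdots\circ\alpha^{i_1}_{t_1}(x))$, differentiating first in $t_n$ at $t_n=0$ yields $(V_{i_n}f)(\alpha^{i_{n-1}}_{t_{n-1}}\circ\cdots\circ\alpha^{i_1}_{t_1}(x))$, then in $t_{n-1}$ yields $(V_{i_{n-1}}(V_{i_n}f))(\cdots)$: the outermost flow contributes the \emph{innermost} Lie derivative. After $n$ steps you obtain
\[
\frac{\partial^n F}{\partial t_1\cdots\partial t_n}(0)=V_{i_1}\circ V_{i_2}\circ\cdots\circ V_{i_n}(f)(x),
\]
which is the hypothesis with the indices in reversed order, not $V_{i_n}\circ\cdots\circ V_{i_1}(f)(x)$. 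A concrete example where the two differ: on $M=\mathbb{C}^2$ take $f=z_2$, $V_1=\partial_{z_1}$, $V_2=z_1\partial_{z_2}$, $i_1=2$, $i_2=1$. Then $V_{i_2}\circ V_{i_1}(f)=V_1(V_2(f))\equiv 1$, but $V_{i_1}\circ V_{i_2}(f)=V_2(V_1(f))\equiv 0$, and indeed $f(\alpha^{1}_{t_2}\circ\alpha^{2}_{t_1}(0,0))\equiv 0$, so the orbit you analyse never leaves $N$. To match the hypothesis you should instead define $F(t_1,\ldots,t_n)=f(\alpha^{i_1}_{t_1}\circ\cdots\circ\alpha^{i_n}_{t_n}(x))$; then $\partial^n F/\partial t_1\cdots\partial t_n(0)=V_{i_n}\circ\cdots\circ V_{i_1}(f)(x)\neq 0$ as desired. (Your computation in fact reveals that the composition displayed in the lemma's conclusion should read $\alpha^{i_1}_{t_1}\circ\cdots\circ\alpha^{i_n}_{t_n}$ to be consistent with the hypothesis; the paper only ever invokes the lemma for ``some suitable composition of flows'', so the discrepancy is harmless downstream, but with the orders as currently written the argument does not close.)
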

\begin{proof}
Define the subvariety $ N_1:=\{x\in N: V_{i_1}(f(x))=0\}$. The orbit of $\alpha_t^{i_1}$ through points of $N\setminus N_1$ is leaving $N$. Next, define the subvariety $N_2:=\{x\in N_1: V_{i_2}(V_{i_1}(f(x)))=0\}$. Then the orbit of $\alpha_t^{i_2}$ through points of $N_1\setminus N_2$ is leaving $N_1$. We proceed inductively and set $N_l:=\{x\in N_{l-1}: V_{i_l}\circ \cdots \circ V_{i_1}(f(x))=0\}$. Then the orbit of $\alpha_t^{i_l}$ through points of $N_{l-1}\setminus N_l$ is leaving $N_{l-1}$. This is true for all $1\leq l \leq n-1$, which implies that an invariant set with respect to the fields $V_1,...,V_k$ has to be in the set $N_n$. There holds $V_{i_n}\circ\cdots \circ V_{i_1}(f(x))\neq 0$, by assumption. Hence the set $N_n$ is empty and there is no invariant set in $N$ with respect to the fields $V_1,...,V_k$. This proves the claim.
\end{proof}

\subsubsection{Construction of a spray over the generic stratum}
We begin this subsection with a definition. Recall the collection $\mathcal{V}_K$ of $\mathbb{C}$-complete fiber-preserving holomorphic vector fields from Definition \ref{def:liftedVectorFields}. These collections are defined for $K\geq 3$. According to Lemma \ref{lemma:genericFibers}, there are $n$ meromorphic mappings $\psi_1,...,\psi_n$ such that each generic fiber $\mathcal{F}^2_y$ is biholomorphic to $\mathcal{G}_{\pi_2(y)}\cong \C{n(n-1)}$ via $\psi_j$ for some $1\leq j\leq n$. Let $\deriv{x_i}$, $1\leq i\leq n(n-1)$ denote the standard vector fields on $\C{n(n-1)}$. Then
$$\mathcal{V}_2=\bigcup_{j=1}^n \left\lbrace (\psi_j)_*\left(\deriv{x_i}\right): 1\leq i\leq n(n-1) \right\rbrace$$
is the collection obtained by pushing forward the standard vector fields via $\psi_1,...,\psi_n$. 
\begin{defi}
For $K\geq 2$, we define $\mathcal{Q}_K:=\Gamma(\mathcal{V}_K)$ the collection of $\mathbb{C}$-complete $\fiber$-fiber-preserving holomorphic vector fields generated by $\mathcal{V}_K$.
\end{defi}

Recall the set $\mathcal{W}_K$ which is given by
$$ \mathcal{W}_K := \left\lbrace\vec{Z}_K\in (\Cm)^K: Z_{2i-1}e_n\neq 0,\text{ for some } 1\leq i \leq \big\lceil \tfrac{K-1}{2} \big\rceil\right\rbrace,$$
where $\lceil x \rceil$ is the \textit{ceiling function}, which maps $x$ to the least integer greater than or equal to $x$. The set $\mathcal{W}_K$ is open and connected. 

The following result, from now on we will call it \textit{Spanning theorem}, is cruical for the construction of a spray over the generic stratum. 
\begin{theorem}[Spanning theorem]\label{theorem:spanTangentSpace} Let $K\geq 2$. Then there is a finite set $A_K\subset\mathcal{Q}_K$ of $\mathbb{C}$-complete fiber-preserving holomorphic vector fields on $(\Cm)^K$ spanning the tangent bundle $T(\fiber\cap \mathcal{W}_K)$ of every generic fiber $\fiber$. In particular, there holds $A_{K}\subset A_{K+1}$, when considering $A_{K}$ as a subset of $\mathcal{Q}_{K+1}$.
\end{theorem}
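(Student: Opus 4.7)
The proof proceeds by induction on $K$. For the base case $K=2$, take $A_2 := \mathcal{V}_2$. By Lemma~\ref{lemma:genericFibers}, every generic fiber $\mathcal{F}^2_y$ is biholomorphic to $\mathcal{G}_{\pi_2(y)} \cong \mathbb{C}^{n(n-1)}$ via $\psi_j$ for any index $j$ with $\pi_2(y)_j \neq 0$, and the push-forwards of the standard vector fields on $\mathbb{C}^{n(n-1)}$ under the biholomorphism $\psi_j$ manifestly span the tangent bundle of its image; since $\mathcal{V}_2$ contains these push-forwards for $j=1,\dots,n$, the claim follows at every generic fiber.

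For the inductive step $K \geq 3$, fix a generic $y$ and an index $j^*$ with $\pi_K(y)_{j^*} \neq 0$. On the whole fiber $\mathcal{F}^K_y$ the function $\tilde{P}^{K-1}_{j^*}$ is the nonzero constant $\pi_K(y)_{j^*}$. At a point $\vec{Z}_K = (\vec{Z}_{K-1}, Z_K) \in \mathcal{F}^K_y \cap \mathcal{W}_K$ the tangent space $T\mathcal{F}^K_y$ splits dimensionally into a ``horizontal'' part (the kernel of the projection $\rho:(\vec{Z}_{K-1},Z_K) \mapsto Z_K$, which equals $T\mathcal{F}^{K-1}_{y'}$ with $y' := M_K(-Z_K)y$) and a complementary ``vertical'' part of dimension $n(n+1)/2$ on which $\rho_*$ surjects. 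Pullbacks $f_{K,K-1}^* V$ of $V \in A_{K-1}$ belong to $\mathcal{Q}_K$ and, whenever $y'$ is generic and $\vec{Z}_{K-1} \in \mathcal{W}_{K-1}$, span the horizontal part by the induction hypothesis.

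For the vertical part, Lemma~\ref{lemma:liftingVectorFields} supplies the candidates: the purely vertical fields $\gamma^K_{rs,j^*}$ with $r,s \neq j^*$ contribute the $n(n-1)/2$ directions $(\tilde{P}^{K-1}_{j^*})^2 \tilde{E}_{rs}$ (modulo $\tilde{E}_{j^* \cdot}$-terms), while the fields $\varphi^K_{x,j^*}$ with $x \in \mathcal{T}_{K-1}^C$ have $W_K$-component in $\mathrm{span}\{\tilde{E}_{j^*k}\}_{k=1}^n$ with coefficients $u_k = \partial_x^{K-1}(\tilde{P}^{K-2}_k)$. The combinatorial heart of the step is to select from the explicit list in Proposition~\ref{lemma:completeFields} finitely many tuples $x \in \mathcal{T}_{K-1}^C$ for which the associated $u$-vectors span $\mathbb{C}^n$, so that together with the $\gamma$-fields the collective $W_K$-images cover all of $\mathbb{C}^{n(n+1)/2}$. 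Letting $j^*$ range over $\{1,\dots,n\}$ and taking $A_K$ to be the union of (i) the pullbacks of $A_{K-1}$, (ii) the fields $\gamma^K_{rs,j^*}$, and (iii) the chosen $\varphi^K_{x,j^*}$, we obtain a finite subset of $\mathcal{V}_K$ visibly containing $A_{K-1}$ and spanning $T(\mathcal{F}^K_y \cap \mathcal{W}_K)$ at every point with $\vec{Z}_{K-1} \in \mathcal{W}_{K-1}$ and $y'$ generic.

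The last task is to extend spanning over the remaining points (where $\vec{Z}_{K-1}$ leaves $\mathcal{W}_{K-1}$ or $y'$ degenerates), which form a proper analytic subset of $\mathcal{F}^K_y \cap \mathcal{W}_K$. For this I would invoke Corollary~\ref{cor:shortCutSpanningTheorem} with $W$ equal to the ``good'' open subset above and $X = \mathcal{F}^K_y \cap \mathcal{W}_K$, in combination with Lemma~\ref{lemma:leavingASubset}: for each defining equation of the exceptional locus one exhibits a composition of vector fields drawn from Proposition~\ref{lemma:completeFields} whose iterated Lie derivative of that equation is nowhere vanishing, guaranteeing that no bad point is invariant under the flows in $A_K$. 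The pullbacks of the spanning fields by these automorphisms remain in $\Gamma(\mathcal{V}_K) = \mathcal{Q}_K$ and extend the spanning property everywhere in $\mathcal{F}^K_y \cap \mathcal{W}_K$. The principal obstacle, and the step demanding the most delicate bookkeeping, is to verify that the eight types of complete fields in Proposition~\ref{lemma:completeFields} are simultaneously rich enough to (a) yield $u$-vectors spanning $\mathbb{C}^n$ in the vertical analysis and (b) reach every exceptional point from the good set via compositions of their flows; this is precisely what drives the lengthy case analysis underlying Lemma~\ref{lemma:liftingVectorFields} and the subsequent subsections.
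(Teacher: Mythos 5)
Your proposal follows the paper's strategy in outline: the base case $K=2$ via Lemma~\ref{lemma:genericFibers} is exactly Lemma~\ref{lemma:spanningTheoremForCaseTwo}; the split at $\vec{Z}_K=(\vec{Z}_{K-1},Z_K)$ into a ``horizontal'' part spanned by pullbacks of $A_{K-1}$ and a ``vertical'' part spanned by the $\varphi^K_{x,j^*}$ and $\gamma^K_{rs,j^*}$ fields is exactly what Lemma~\ref{proposition:spanningNewDirecitons} and the subsequent Lemma in 3.7.1 establish; and the idea of flowing out of the exceptional locus via Lemma~\ref{lemma:leavingASubset} together with Corollary~\ref{cor:shortCutSpanningTheorem} is precisely the plan laid out in Remark~\ref{remark:strategySpanningTheorem}. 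You have also correctly identified that the ``good set'' is where some product of components $\tilde P^{K-2}$, $\tilde P^{K-3}$ is nonvanishing (the paper calls it $\mathcal{U}_K$), and that the $u$-vectors from Type~1 tuples give a diagonal regular matrix there.

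However, the last step — extending spanning to points of $\fiber\cap\Wk{K}$ outside $\mathcal{U}_K$ — is where the proof essentially lives, and you assert rather than demonstrate it. ``For each defining equation of the exceptional locus one exhibits a composition of vector fields... whose iterated Lie derivative of that equation is nowhere vanishing'' is not a one-line verification: the exceptional set $\Wk{K}\setminus\mathcal{U}_K$ decomposes into pieces (e.g.\ $\mathcal{N}_1$ where $\tilde P^{K-2}$ is nonzero but some product vanishes, and $\mathcal{N}_2$ where $\tilde P^{K-2}\equiv 0$), the latter of which is a genuine codimension-$n$ subvariety, and the paper needs a rank stratification of $Z_{K-1}$, the complementary bases theorem of Dopico--Johnson, the auxiliary vector fields $\gamma^{K-2}_{jj,i^*}$, and a determinantal product-rule computation to show that each rank stratum $\mathcal{B}_k$ is swept into $\mathcal{B}_{k+1}$; moreover the arguments differ in an essential way for $K=3$, $K=4$, and $K\geq 5$. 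Invoking Corollary~\ref{cor:shortCutSpanningTheorem} presupposes you already know $X\subset C_B(W)$, which is exactly the statement you are trying to prove. Your proposal, by your own admission, defers this ``delicate bookkeeping'' — but that bookkeeping is the theorem; without it the proof has a gap at the central claim $\fiber\cap\Wk{K}\subset C_A(\fiber\cap\mathcal{U}_K)$.
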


Let $K\geq 3$. Then the mapping $\Phi_K|_{\Wk{K}}:\Wk{K}\to \CwO$ is a surjective submersion with connected fibers $\fiber$ (see Theorem \ref{theorem:PhiSurjective} and Theorem \ref{theorem:connectedFibersSecond}). By the \spanning, the conditions of Lemma \ref{cor:spanningFields} are satisfied. Hence the submersion
$$ \Phi_K:C_{A_K}(\mathcal{W}_K)|_{Y_{g}^K}\to Y_{g}^K$$
over the generic stratum $Y_{g}^K\subset \CwO$ admits a spray.

\begin{Rem}[Application of the \spanning]
Recall that, by Lemma \ref{lemma:singularFibers}, a fiber $\fiber$ contains singularities if and only if $y=(y_1,...,y_n,0,...,0,1)$ for $y_1,...,y_n\in \mathbb{C}$. In particular, the intersection of singular fibers and $\Wk{K}^c$, the complement of $\Wk{K}$, is non-empty. We do not know, whether the collection $A_K$ from the \spanning\ can be supplemented by finitely many fields in $\Qk{K}$ so that $A_K$ spans the tangent space $T_{\vec{Z}_K}\fiber$ for a smooth point $\vec{Z}_K$ in $\Wk{K}^c$. However, since smooth fibers $\fiber$ are completely contained in $\Wk{K}$, the collection $A_K$ spans the tangent bundle $T\fiber$ of each smooth generic fiber $\fiber$. 
\end{Rem}

\subsubsection{Construction of a spray over the non-generic stratum}
In this subsection, we show that the submersion
$$ \Phi_K:C_{A_K}(\mathcal{W}_K)|_{Y_{ng}^K}\to Y_{ng}^K$$
over the non-generic stratum $Y_{ng}^K\subset \CwO$ admits a spray.

We need the following result. 
\begin{lemma}\label{lemma:closureNonGeneric}
Let $K\geq 3$ and let $A\subset \mathcal{Q}_K$ be a finite collection of complete holomorphic fiber preserving vector fields. Then there holds
$$ C_A(\mathcal{W}_K\cap \fiber) = C_A(\mathcal{W}_{K-1}\cap \mathcal{F}^{K-1}_y)\times \Cm$$
for each non-generic fiber $\fiber$.
\end{lemma}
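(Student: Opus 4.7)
The plan is to exploit the observation, already used in the proof of Lemma \ref{lemma:liftingVectorFields}, that $\tilde{P}^{K-1} \equiv 0$ on every non-generic fibre $\fiber$. Indeed, by the recursive formula (\ref{RecursiveFormula}) together with Lemma \ref{lemma:nonGenericFibers}, any $\vec{Z}_K \in \fiber$ satisfies $\Phi_{K-1}(\vec{Z}_{K-1}) = M_K(-Z_K)y = y$, and therefore $\tilde{P}^{K-1}(\vec{Z}_K) = \pi_K(\Phi_{K-1}(\vec{Z}_{K-1})) = \pi_K(y) = 0$. Since the level-$K$ principal fields $\varphi^K_{x,j^*}$ and $\gamma^K_{ij,j^*}$ are, by their explicit formulas in Lemma \ref{lemma:liftingVectorFields}, multiples of the components of $\tilde{P}^{K-1}$, they vanish identically on $\fiber$; the remaining generators of $\mathcal{V}_K$ are pullbacks under $f_{K,L}$ of vector fields on $(\Cm)^L$ with $L<K$, so they carry no derivative with respect to $Z_K$ and their coefficients are independent of $Z_K$.

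Combining these two facts, on the product structure $\fiber \cong \mathcal{F}^{K-1}_y \times \Cm$ every $V \in \mathcal{V}_K$ restricts to a vector field of the form $(\tilde{V}, 0)$, with $\tilde{V}$ depending only on $\vec{Z}_{K-1}$. Its flow is $(\tilde{\alpha}_t, \mathrm{id}_{\Cm})$, and this block-product form is stable under composition and under pullback by such flows, so every element of $G_{\mathcal{V}_K}$ and, more generally, of $\mathcal{Q}_K = \Gamma(\mathcal{V}_K)$ preserves the decomposition and fixes the $Z_K$-coordinate on $\fiber$. For any finite $A \subset \mathcal{Q}_K$ the induced action of $G_A$ on $\fiber$ therefore factors as $\tilde{G} \times \{\mathrm{id}_{\Cm}\}$ for a suitable group $\tilde{G}$ on $\mathcal{F}^{K-1}_y$, and the $G_A$-closure respects the product: $C_A(S \times \Cm) = C_A(S) \times \Cm$ for every $S \subset \mathcal{F}^{K-1}_y$, where $C_A(S)$ on the right is understood as the orbit of $S$ inside $\mathcal{F}^{K-1}_y$ under the induced group $\tilde{G}$.

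It remains to identify the base of this product. Since the defining condition of $\mathcal{W}_K$ involves only the odd-indexed components $Z_1, Z_3, \ldots, Z_{2\lceil (K-1)/2\rceil - 1}$, and $Z_K$ is never among these, $\mathcal{W}_K \cap \fiber$ already has the form $T \times \Cm$, where $T$ denotes its projection onto the first $K-1$ factors. For odd $K = 2k+1$ the index sets of $\mathcal{W}_K$ and $\mathcal{W}_{K-1}$ coincide, giving $T = \mathcal{W}_{K-1} \cap \mathcal{F}^{K-1}_y$ tautologically. In the even case $K = 2k$ the set $\mathcal{W}_K$ additionally admits $Z_{2k-1} e_n \neq 0$; however, if $\vec{Z}_{K-1} \in \mathcal{F}^{K-1}_y$ has $Z_1 e_n = \cdots = Z_{2k-3} e_n = 0$, Lemma \ref{lemma:propertyOne} forces $\Phi_{2k-2}(\vec{Z}_{2k-2}) = e_{2n}$ and hence $\Phi_{K-1}(\vec{Z}_{K-1}) = (Z_{2k-1} e_n, e_n)$; matching this with the non-generic value $y = (0, b)$ forces $Z_{2k-1} e_n = 0$, so the additional points are absent from the fibre and $T = \mathcal{W}_{K-1} \cap \mathcal{F}^{K-1}_y$ in the even case as well. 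Combining this identification with the product form of $C_A$ from the previous paragraph yields the lemma. The main obstacle along the way is precisely this even-case bookkeeping; once it is settled, the rest is the structural fact that $G_A$ acts on $\fiber$ as $\tilde{G} \times \mathrm{id}_{\Cm}$.
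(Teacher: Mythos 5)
Your proof is correct and follows essentially the same route as the paper's: the same set-theoretic identity $\mathcal{W}_K\cap\fiber=(\mathcal{W}_{K-1}\cap\mathcal{F}^{K-1}_y)\times\Cm$ (with the identical even-$K$ argument via Lemma \ref{lemma:propertyOne} and matching $\Phi_{K-1}(\vec{Z}_{K-1})$ against $y=(0,b)$), and the same dynamical observation that the level-$K$ generators of $\mathcal{V}_K$ vanish on non-generic fibers because $\tilde P^{K-1}\equiv 0$ there while the lower-level generators are pullbacks independent of $Z_K$, so that $G_A$ acts on $\fiber$ as $\tilde G\times\mathrm{id}_{\Cm}$. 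The only difference from the paper is the order in which you present the two steps.
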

\begin{proof}
In a first step, we prove that
\begin{align}\label{equation:nongeneric}
\mathcal{W}_K\cap \fiber = (\mathcal{W}_{K-1}\cap \mathcal{F}^{K-1}_y)\times \Cm
\end{align}
for each non-generic fiber $\fiber$ and $K\geq 3$. From the definition of the set $\mathcal{W}_K$ we directly conclude $\mathcal{W}_{2k+1}=\mathcal{W}_{2k}\times \Cm$. Since each non-generic fiber satisfies $\fiber = \mathcal{F}^{K-1}_y \times \Cm$ by Lemma \ref{lemma:nonGenericFibers}, equation (\ref{equation:nongeneric}) follows for $K=2k+1$. 

Consider now $K=2k$ even and a non-generic fiber $\fiber$, that is, $y=(0,b)^T$ for some non-zero $b \in \C{n}$. 
Further, observe that $\mathcal{W}_{2k} = \mathcal{W}_{2k-1}\times \Cm \cup \mathcal{R},$
where
$$ \mathcal{R}:= \{\vec{Z}_{2k}\in (\Cm)^{2k}: \pi_n(Z_{2i-1})=0, 1\leq i\leq k-1, \pi_n(Z_{2k-1})\neq 0\}.$$
It suffices to show that $\fiber\cap \mathcal{R}=\emptyset$, in order to prove (\ref{equation:nongeneric}). Assume for contradiction there is $\vec{Z}_{2k}\in \fiber\cap \mathcal{R}$. Lemma \ref{lemma:propertyOne} implies $\Phi_{2k-2}(Z_{2k-2})=e_{2n}$ and $\Phi_{2k-1}(\vec{Z}_{2k-1}) = \begin{pmatrix}
0&b
\end{pmatrix}^T$ follows by Lemma \ref{lemma:nonGenericFibers}. According to the recursive formula (\ref{RecursiveFormula}), we obtain
$$ \begin{pmatrix}
0\\ b
\end{pmatrix} = \Eu{Z_{2k-1}}e_{2n} = \begin{pmatrix}
\pi_n(Z_{2k-1})\\ e_n
\end{pmatrix}$$
contradicting assumption $\pi_n(Z_{2k-1})\neq 0$. This proves equation (\ref{equation:nongeneric}).

In a second step, we show that, over the non-generic stratum, none of the vector fields $V\in \mathcal{Q}_K$ flows in a new direction. It suffices to prove the claim for the generating set $\mathcal{V}_K$. Note that there is nothing to show for vector fields in $\mathcal{V}_{K-1}$. And fields in $\mathcal{V}_K\setminus \mathcal{V}_{K-1}$ vanish over the non-generic stratum, by Lemma \ref{lemma:liftingVectorFields}. This proves the claim. In particular, the vector flow $\alpha_t^V$ of $V\in \mathcal{Q}_K$ fixes the new directions, i.e. $\alpha_t^V(\vec{Z}_{K-1},Z_K) = (f(\vec{Z}_{K-1},Z_K), Z_K)$ for some suitable holomorphic function $f$.

In the last step, we apply equation (\ref{equation:nongeneric}) and step two. We get
\begin{align*}
C_A(\mathcal{W}_K\cap \fiber) = C_A(\mathcal{W}_{K-1}\cap \mathcal{F}^{K-1}_y\times \Cm) = C_A(\mathcal{W}_{K-1}\cap \mathcal{F}^{K-1}_y)\times \Cm.
\end{align*}
This finishes the proof.
\end{proof}

\begin{lemma}
Let $K\geq 3$ and $A_K\subset \mathcal{Q}_K$ be the finite collection provided by the \spanning such that the tangent bundle $T(\fiber \cap \mathcal{W}_K)$ of every generic fiber $\fiber$ is spanned by $A_K$. Then the restricted submersion
$$ \Phi_K:C_{A_K}(\mathcal{W}_K)|_{Y_{ng}^K}\to Y_{ng}^K$$
over the non-generic stratum $Y_{ng}^K\subset \CwO$ admits a spray. 
\end{lemma}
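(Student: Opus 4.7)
The plan is to exploit the product decomposition of non-generic fibers established in Lemma \ref{lemma:closureNonGeneric} and to assemble the spray from two independent families of $\mathbb{C}$-complete fiber-preserving vector fields, one handling each factor of the product.

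First, for any $y\in Y^K_{ng}$, I would observe that $\pi_K(y)=0$ together with $y\neq 0$ forces $\pi_{K-1}(y)\neq 0$, so $y$ lies in the generic stratum for $\Phi_{K-1}$ and $\mathcal{F}^{K-1}_y$ is a \emph{generic} $(K-1)$-fiber. Invoking the \spanning\ at level $K-1$, the collection $A_{K-1}\subset A_K$ spans $T(\mathcal{F}^{K-1}_y\cap \Wk{K-1})$, and Lemma \ref{cor:spanningFields} upgrades this to a finite family $\tilde B\subset \Gamma(A_K)$ of $\mathbb{C}$-complete fiber-preserving holomorphic vector fields whose values span the tangent bundle of the first factor $C_{A_K}(\Wk{K-1}\cap \mathcal{F}^{K-1}_y)$ in the product
$$ C_{A_K}(\Wk{K})\cap \fiber \;=\; C_{A_K}(\Wk{K-1}\cap \mathcal{F}^{K-1}_y)\times \Cm.$$

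To handle the second $\Cm$ factor, I would adjoin the standard coordinate fields $\partial/\partial z_{K,ij}$ for $1\leq i\leq j\leq n$. These are obviously $\mathbb{C}$-complete, and they are fiber-preserving for $\Phi_K$ over the non-generic stratum: flowing along them changes only $Z_K$ while leaving $\Phi_{K-1}$ untouched, and $M_K(Z_K')y=y$ for any $y\in Y^K_{ng}$ since $\pi_K(y)=0$. The product decomposition displayed above also ensures that $C_{A_K}(\Wk{K})|_{Y^K_{ng}}$ is invariant under these flows (the second factor is all of $\Cm$). The combined finite collection $\tilde B\cup\{\partial/\partial z_{K,ij}:1\leq i\leq j\leq n\}$ then consists of $\mathbb{C}$-complete, fiber-preserving vector fields on $C_{A_K}(\Wk{K})|_{Y^K_{ng}}$ whose values span the tangent space of every non-generic fiber, so composing their flows
$$ s(z,t_1,\dots,t_N)=\alpha_{t_1}^{W_1}\circ\cdots\circ \alpha_{t_N}^{W_N}(z)$$
defines a dominating spray for $\Phi_K|_{C_{A_K}(\Wk{K})|_{Y^K_{ng}}}$ exactly as in the proof of Lemma \ref{cor:spanningFields}.

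The main subtlety is that the coordinate fields $\partial/\partial z_{K,ij}$ are \emph{not} globally fiber-preserving for $\Phi_K$ -- they fail over the generic stratum -- so the key verification in writing up the argument will be that restricting the base to $Y^K_{ng}$ genuinely neutralises this issue and that the total space $C_{A_K}(\Wk{K})|_{Y^K_{ng}}$ remains invariant under their flows. Once that sanity check is in place, the rest is a direct reassembly of the machinery already developed in the preceding subsections.
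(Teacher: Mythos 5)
Your proposal is correct and follows essentially the same route as the paper's own proof: decompose the non-generic fiber as a product via Lemma \ref{lemma:closureNonGeneric}, invoke the Spanning theorem at level $K-1$ together with Lemma \ref{cor:spanningFields} to get a finite collection spanning the first factor, adjoin the coordinate fields $\partial/\partial z_{K,ij}$ to cover the $\Cm$ direction, and compose flows. Your explicit observation that the coordinate fields are fiber-preserving only after restricting the base to $Y^K_{ng}$ (precisely because $M_K(Z_K)y=y$ when $\pi_K(y)=0$) makes the subtle point the paper leaves tacit, and the rest matches the paper step for step.
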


\begin{proof}
Let $A_{K-1}$ and $A_K$ be the collections from the \spanning\ and let $\fiber$ be a non-generic fiber. 
There holds $\fiber = \mathcal{F}^{K-1}_y\times \Cm$ by Lemma \ref{lemma:nonGenericFibers}, where $\mathcal{F}^{K-1}_y$ is a generic $(K-1)$-fiber. According to Lemma \ref{lemma:nonGenericFibers}, the vector fields from $A_K\setminus A_{K-1}$ vanish over $\fiber$ and we get
$$ C_{A_K}(\mathcal{W}_K\cap \fiber) = C_{A_{K-1}}(\mathcal{W}_{K-1}\cap \mathcal{F}^{K-1}_y)\times \Cm.$$
Collection $A_{K-1}$ spans the tangent bundle $T(\mathcal{W}_{K-1}\cap \mathcal{F}^{K-1}_y)$ for every generic fiber $\mathcal{F}^{K-1}_y$, by the \spanning. By Theorem \ref{cor:spanningFields}, there is a finite collection $B\subset \Gamma(A_{K-1})$ spanning the tangent bundle $T(C_{A_{K-1}}(\mathcal{W}_{K-1}\cap \mathcal{F}^{K-1}_y))$ for every generic fiber $\mathcal{F}^{K-1}_y$. We add the vector fields
$$ \left( \frac{\partial}{\partial z_{K,ij}}\right)_{1\leq i\leq j\leq n}$$
which span $\Cm$ to the collection $B$. This new collection, let's call it $\tilde{B}$, spans the tangent bundle ${T(C_{A_K}(\mathcal{W}_K\cap \fiber))}$ for every non-generic fiber $\fiber$. Similar as in Theorem \ref{cor:spanningFields}, we can use the vector flows $\alpha_t^V$, $V\in \tilde{B}$, to construct a dominating spray associated to the submersion $\Phi_K:C_{A_K}(\mathcal{W}_K)|_{Y_{ng}^K}\to Y_{ng}^K$.
\end{proof}
\subsection{The Spanning theorem}
In this subsection we prove the \spanning, which we will do by induction on the number of factors $K$. As it turns out, for various reasons, it requires several base steps before we get to the actual induction step. It makes sense to explain the proof strategy continuously. So let's first introduce or recall some notations and then we start with the first base step, $K=2$. 

We write $\Phi_K=(P_1^K,...,P_{2n}^K)^T$ as well as ${P_f^K=(P_1^K,...,P_n^K)^T}$ and ${P_s^K=(P_{n+1}^K,...,P_{2n}^K)^T}$. Similarly, we write ${Q_f^K=P_1^K\cdots P_n^K}$ and ${Q_s^K=P_{n+1}^K\cdots P_{2n}^K}$. For $y\in \CwO$ we sometimes write $y=(a,b)$ with $a=(a_1,...,a_n)$ and $b=(b_1,...,b_n)$.

\begin{lemma}[Spanning theorem for $K=2$]\label{lemma:spanningTheoremForCaseTwo}
Let $\mathcal{F}^2_y$ be a generic fiber, i.e. $\pi_2(y)_{j^*}=P^1_{j^*}\neq 0$ for some $1\leq j^*\leq n$. Then the collection
$$ A_2 = \left\lbrace (P^1_{j^*})^2\Deriv{z}{1,ij}: 1\leq i\leq j <n\right\rbrace \cup \left\lbrace \gamma^2_{ij,j^*}:{1\leq i\leq j\leq n}, { i\neq j^*}, {j\neq j^*} \right\rbrace  $$
consist of complete holomorphic vector fields which are fiber-preserving for $\Phi_2$. Moreover, $A_2$ spans the tangent bundle $T\mathcal{F}^2_y$.
\end{lemma}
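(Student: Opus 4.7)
My plan is to verify, in order, completeness/holomorphy, fiber-preservation for $\Phi_2$, and spanning of the tangent bundle, with essentially all of the substance sitting in the third step.

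For completeness and holomorphy of the first set: the coefficient $(P^1_{j^*})^2$ depends only on the $(j^*,n)$- (or $(n,n)$-) entry of $Z_1$, and since $j<n$ neither $(i,j)$ nor $(j,i)$ lies in the last column, the variable $z_{1,ij}$ does not occur in that coefficient. Hence $(P^1_{j^*})^2\Deriv{z}{1,ij}$ is globally holomorphic on $(\Cm)^2$, and its flow is an affine translation of $z_{1,ij}$ with speed $(P^1_{j^*})^2$, so the field is $\mathbb{C}$-complete. The fields $\gamma^2_{ij,j^*}$ are $\mathbb{C}$-complete and holomorphic on all of $(\Cm)^2$ by Lemma \ref{lemma:liftingVectorFields}.

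For fiber-preservation: $\mathcal{F}^2_y$ is cut out by the $2n$ equations $Z_1 e_n = a$ and $e_n + Z_2 Z_1 e_n = b$. For $j<n$ the field $\Deriv{z}{1,ij}$ annihilates every entry of $Z_1 e_n$, and hence every entry of $Z_2 Z_1 e_n$; multiplication by the holomorphic function $(P^1_{j^*})^2$ keeps the field tangent to the fiber. Lemma \ref{lemma:liftingVectorFields} provides fiber-preservation for the fields $\gamma^2_{ij,j^*}$ directly.

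The spanning step is the crux. At any point $\vec{Z}_2\in \mathcal{F}^2_y$ the tangent space decomposes as the direct sum of a $Z_1$-part $\{\delta Z_1\in \mathrm{Sym}_n(\mathbb{C}):\delta Z_1 e_n = 0\}$ and a $Z_2$-part $\{\delta Z_2\in \mathrm{Sym}_n(\mathbb{C}):\delta Z_2\cdot a = 0\}$, both of dimension $n(n-1)/2$ (the latter because $a\neq 0$). The $Z_1$-part is manifestly spanned by $\{\Deriv{z}{1,ij}:1\leq i\leq j<n\}$, and rescaling by the nonzero scalar $a_{j^*}^2$ preserves spanning. For the $Z_2$-part, since $a_{j^*}\neq 0$, the $n$ scalar equations $\delta Z_2\cdot a = 0$ uniquely solve for the $n$ entries of the $j^*$-th row/column of $\delta Z_2$ in terms of the $n(n-1)/2$ ``off-$j^*$'' entries $\{\delta z_{2,ij}:i\leq j,\ i\neq j^*,\ j\neq j^*\}$. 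Inspection of the explicit formula for $\gamma^2_{ij,j^*}$ from Lemma \ref{lemma:liftingVectorFields} shows that its leading term is $a_{j^*}^2\Deriv{z}{2,ij}$ at the off-$j^*$ position $(i,j)$, while every other nonzero component lies at one of the on-$j^*$ positions $(j^*,j^*)$, $(j^*,j)$ or $(j^*,i)$. Projecting each $\gamma^2_{ij,j^*}$ onto the off-$j^*$ coordinates therefore yields $a_{j^*}^2$ times the standard basis vector indexed by $(i,j)$; these $n(n-1)/2$ projections are visibly linearly independent, so by dimension count the $\gamma$'s span the $Z_2$-part.

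The main obstacle is isolating this ``off-$j^*$ projection'' argument; once that observation is made, the $\gamma$'s automatically form a basis of the $Z_2$-kernel and all other steps reduce either to a direct check using the $j<n$ condition or to an application of Lemma \ref{lemma:liftingVectorFields}.
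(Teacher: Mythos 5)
The proposal is correct and takes essentially the same approach as the paper. The paper's one-paragraph proof identifies $A_2$ as the (rescaled) push-forwards under the biholomorphism $\psi_{j^*}$ of Lemma \ref{lemma:genericFibers} of the coordinate vector fields on $\mathcal{G}_{\pi_2(y)}\cong \C{n(n-1)}$; your direct computation of $T\mathcal{F}^2_y$, its direct-sum decomposition into the $Z_1$-kernel $\{\delta Z_1:\delta Z_1 e_n=0\}$ and $Z_2$-kernel $\{\delta Z_2:\delta Z_2 a=0\}$, and the ``off-$j^*$ projection'' dimension count for the $\gamma^2_{ij,j^*}$ are exactly that biholomorphism unpacked in coordinates.
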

\begin{proof}
According to Lemma \ref{lemma:genericFibers}, we are able to express the variables $z_{2,1j^*},...,z_{2,nj^*}$. This gives us a meromorphic mapping $\psi_{j^*}$ which maps $\mathcal{G}_{\pi_2(y)}\cong \C{n(n-1)}$ biholomorphic to $\mathcal{F}^2_y$. In particular, the vector fields $\Deriv{z}{1,ij}, 1\leq i\leq j<n$ and $\Deriv{z}{2,ij}, {1\leq i\leq j\leq n}, {i\neq j^*},{j\neq j^*}$ are complete holomorphic and tangential to $\mathcal{G}_{\pi_2(y)}$. Moreover, they span the tangent bundle $T\mathcal{G}_{\pi_2(y)}$. The collection $A_2$ is obtained by computing the push-forwards with respect to the mapping $\psi_{j^*}$. 
\end{proof}
\subsubsection{Preparation and explanation of the induction step}
Recall that each $K$-fiber $\mathcal{F}^K_y$ can be written as a union of $(K-1)$-fibers, that is,
$$ \mathcal{F}^K_y = \bigcup_{Z\in \Cm}\mathcal{F}^{K-1}_{M_K(Z)y}.$$
Equivalently, ${\vec{Z}_K=(\vec{Z}_{K-1},Z_K)\in \mathcal{F}^K_y}$ if and only if $\vec{Z}_{K-1}\in \mathcal{F}^{K-1}_{M_K(-Z_K)y}$. The directions\newline $\left(\Deriv{z}{K,ij}\right)_{1\leq i\leq j\leq n}$ are somehow transversal to the 'fibration' and we use to speak of \textit{the new directions}. The following statement is very useful for the induction step.

\begin{lemma}\label{proposition:spanningNewDirecitons}
For $K=3$, there are finitely many vector fields from $\mathcal{V}_3$ spanning the new directions in a generic fiber $\threefiber$ in points with
$$ Q_f^1\neq 0.$$

For $K\geq 4$, there are finitely many vector fields from $\mathcal{V}_K$ spanning the new directions in a generic fiber $\fiber$ in points with
$$ Q_f^{K-2}\neq 0\quad \text{and} \quad Q_s^{K-2}\neq 0.$$

\end{lemma}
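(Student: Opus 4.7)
The plan is to exhibit explicit spanning collections from $\mathcal{V}_K$ using the vector fields $\gamma^K_{ij, j^*}$ and $\varphi^K_{x, j^*}$ introduced in Lemma \ref{lemma:liftingVectorFields}. First I would establish the key observation that on any fiber $\mathcal{F}^K_y$ the polynomial map $\tilde{P}^{K-1}$ is constantly equal to $\pi_K(y)$. This is because multiplication by $M_K(Z_K)$ -- either $\Eu{Z_K}$ if $K$ is odd or $\El{Z_K}$ if $K$ is even -- preserves exactly the block selected by $\pi_K$, so the components of $\Phi_{K-1}$ read off by $\pi_K$ coincide with those of $\Phi_K = y$. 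Genericity of the fiber forces $\pi_K(y) \neq 0$, so at least one index $j^*$ satisfies $\tilde{P}^{K-1}_{j^*} \neq 0$ everywhere on the fiber.

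Second, for such a $j^*$, the formula from Lemma \ref{lemma:liftingVectorFields} shows that $\gamma^K_{ij, j^*}$ (for $i, j \neq j^*$) has leading term $(\tilde{P}^{K-1}_{j^*})^2 \Deriv{z}{K,ij}$ with corrections only in the $j^*$-row $\{\Deriv{z}{K,j^*l}: 1 \leq l \leq n\}$. Hence this subcollection spans every new direction modulo the $j^*$-row. To cover the $j^*$-row I would use the fields $\varphi^K_{x, j^*}$: modulo the old directions, the new-direction contribution of such a field is $-\tilde{P}^{K-1}_{j^*}\sum_{i \neq j^*} u_i \Deriv{z}{K,j^*i} + (\star)\Deriv{z}{K,j^*j^*}$, where $u_i = \partial_x^{K-1}(\tilde{P}^{K-2}_i)$. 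The problem therefore reduces to exhibiting $n$ complete tuples $x^{(1)}, \ldots, x^{(n)} \in \mathcal{T}^C_{K-1}$ whose associated $u$-vectors span $\C{n}$.

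Third, I would carry out this selection separately in the two cases. For $K = 3$ we have $\tilde{P}^{K-2} = P_f^1 = Z_1 e_n$, so $\tilde{P}^1_i = z_{1, in}$, and the hypothesis $Q_f^1 = z_{1, 1n} \cdots z_{1, nn} \neq 0$ is tailored to this setting: taking the tuples $x^{(l)} = (z_{1, ln}, z_{2, 11}, \ldots, z_{2, nn})$ of \ref{typeFour} in Proposition \ref{lemma:completeFields} and expanding $D_{x^{(l)}}(\tilde{P}^2)(z_{1, in})$ produces an $n \times n$ matrix of $u$-components whose determinant is, up to sign, a multiple of $z_{1, 1n} \cdots z_{1, nn}$, hence nonzero. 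For $K \geq 4$ the analogous scheme uses $\tilde{P}^{K-2}$ in place of $P_f^1$ and draws the $x^{(l)}$'s from \ref{typeOne}--\ref{typeTwo} (or \ref{typeFive}); here $\tilde{P}^{K-2}$ mixes both upper- and lower-block data of $\Phi_{K-2}$, and the corresponding determinant factors through $P_f^{K-2}$ and $P_s^{K-2}$, explaining why both $Q_f^{K-2} \neq 0$ and $Q_s^{K-2} \neq 0$ are needed.

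The main obstacle is precisely this last linear-algebra step. Although the overall strategy is clean, checking that $n$ appropriately chosen complete tuples from Proposition \ref{lemma:completeFields} yield an invertible matrix of $u$-components requires a case-by-case computation matching each complete type against the structure of $\tilde{P}^{K-2}$, and it is at this exact point that the vanishing hypotheses on $Q_f$ (and, for $K \geq 4$, on $Q_s$) enter essentially. Once this is settled, the union of the $\gamma^K_{ij, j^*}$'s from the second step with the $\varphi^K_{x^{(l)}, j^*}$'s from the third step is a finite subcollection of $\mathcal{V}_K$ whose projections to the new directions span $\Cm$ at every point satisfying the hypothesis, as required.
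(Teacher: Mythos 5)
Your proposal follows the paper's proof essentially verbatim: reduce the claim to showing that the $n\times n$ matrix $U=(u_1\mid\cdots\mid u_n)$, with $u_m:=(\partial_{x_m}^{K-1}(\tilde{P}^{K-2}_i))_i$, is regular (the regular matrix relating the $\alpha$'s to the $u$'s is $F_{j^*}(\tilde{P}^{K-1})^{-1}$, as in the proof of Lemma \ref{lemma:liftingVectorFields}), and then produce $n$ complete tuples making it so. The step you flag as the ``main obstacle'' is settled exactly as you predict: for $K\geq 4$ the paper takes the (\ref{typeOne}) tuples $x_m=(z_{K-2,mm},z_{K-1,11},\ldots,z_{K-1,nn})$ and computes $u_{ij}=\delta_{ij}P_{n+i}^{K-2}Q_f^{K-2}$ (taking $K$ odd, say), so $U$ is diagonal with determinant $(Q_f^{K-2})^nQ_s^{K-2}$; for $K=3$ the (\ref{typeFour}) tuples you name give $U=Q_f^1 I_n$. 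So there is no genuine gap, only an explicit computation left to be carried out, and your choice of tuple types and your reading of why the $Q_f,Q_s$ hypotheses enter are both correct.
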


Before we prove this lemma, let's apply it. We therefore define the sets
$$\mathcal{U}_3:=\{\vec{Z}_3\in \Wk{3}: Q_f^1(\vec{Z}_3)\neq 0\}$$
and for $K\geq 4$,
$$\mathcal{U}_K:= \{\vec{Z}_K\in \Wk{K}: Q_f^{K-2}(\vec{Z}_K)Q_s^{K-2}(\vec{Z}_K)\neq 0\}.$$
What now follows is one argument of the induction step. 
\begin{lemma}
Let $K\geq 3$ and suppose that the \spanning\ is true for $K-1$. Then there is a finite collection $A\subset \Qk{K}$ spanning the tangent bundle $T(\fiber\cap \mathcal{U}_K)$ for every generic fiber $\fiber$. 
\end{lemma}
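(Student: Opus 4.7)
The plan is to assemble $A$ from two pieces already in hand. On the one hand, the inductive hypothesis applied to $K-1$ gives a finite collection $A_{K-1}\subset\mathcal{Q}_{K-1}$ spanning $T(\mathcal{F}^{K-1}_{y'}\cap\mathcal{W}_{K-1})$ over every generic $(K-1)$-fiber; interpreted via the pullback $f_{K,K-1}^{*}$ (the convention introduced just before Definition~\ref{def:liftedVectorFields}), these fields remain $\mathbb{C}$-complete and $\Phi_K$-fiber-preserving on $(\Cm)^K$, so they sit in $\mathcal{Q}_K$. On the other hand, the preceding Lemma~\ref{proposition:spanningNewDirecitons} furnishes a finite collection $B\subset\mathcal{V}_K\subset\mathcal{Q}_K$ spanning the new directions $\bigl(\tfrac{\partial}{\partial z_{K,ij}}\bigr)_{1\leq i\leq j\leq n}$ at every point of $\mathcal{U}_K$. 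I would then set $A:=A_{K-1}\cup B$.

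To verify the spanning, fix $\vec{Z}_K=(\vec{Z}_{K-1},Z_K)\in\mathcal{F}^K_y\cap\mathcal{U}_K$. Since $\mathcal{U}_K\subset\mathcal{W}_K$, Lemma~\ref{lemma:phisASubmersion} guarantees that $\vec{Z}_K$ is a smooth point of $\mathcal{F}^K_y$, and by the recursive formula~(\ref{RecursiveFormula}) the point $\vec{Z}_{K-1}$ lies in the subfiber $\mathcal{F}^{K-1}_{y'}$ with $y':=M_K(-Z_K)y$. The lifted fields from $A_{K-1}$ have zero last-block component and, via Corollary~\ref{cor:JacobianRecursive}, span precisely the kernel of the last-block projection $T_{\vec{Z}_K}\mathcal{F}^K_y\to\Cm$, namely the lifted $T_{\vec{Z}_{K-1}}\mathcal{F}^{K-1}_{y'}$. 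Modulo this kernel the fields in $B$ surject onto $\Cm$. Hence $A=A_{K-1}\cup B$ spans $T_{\vec{Z}_K}(\mathcal{F}^K_y\cap\mathcal{U}_K)$, provided the inductive hypothesis can actually be invoked at $\vec{Z}_{K-1}$, i.e.\ provided (i) $y'\in Y^{K-1}_g$ and (ii) $\vec{Z}_{K-1}\in\mathcal{W}_{K-1}$.

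Both prerequisites come out of the defining inequalities of $\mathcal{U}_K$. For (i), a direct parity computation with~(\ref{RecursiveFormula}) yields $\pi_{K-1}(y')=P_f^{K-2}(\vec{Z}_{K-2})$ when $K$ is odd and $\pi_{K-1}(y')=P_s^{K-2}(\vec{Z}_{K-2})$ when $K$ is even; in either case the hypothesis $Q_f^{K-2}Q_s^{K-2}\neq 0$ (or $Q_f^1\neq 0$ when $K=3$) implies $\pi_{K-1}(y')\neq 0$. For (ii), when $K=2k+1$ the defining inequalities of $\mathcal{W}_K$ and $\mathcal{W}_{K-1}$ coincide and there is nothing to check; when $K=2k$, failure of $\vec{Z}_{K-1}\in\mathcal{W}_{K-1}$ would mean $Z_{2i-1}e_n=0$ for every $i\leq k-1$, which by Lemma~\ref{lemma:propertyOne} would force $\Phi_{K-2}(\vec{Z}_{K-2})=e_{2n}$, contradicting $Q_s^{K-2}(\vec{Z}_K)\neq 0$. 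The only real obstacle is this careful parity bookkeeping; once (i) and (ii) are verified, the two collections combine to span $T(\mathcal{F}^K_y\cap\mathcal{U}_K)$ as claimed.
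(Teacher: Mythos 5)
Your proposal is correct and follows essentially the same route as the paper's proof: assemble the collection from the inductive $A_{K-1}$ (handling the lifted $T_{\vec Z_{K-1}}\mathcal{F}^{K-1}_{y'}$ directions) and from Lemma~\ref{proposition:spanningNewDirecitons} (handling the new $Z_K$-block directions), after verifying that $y'$ is generic and $\vec Z_{K-1}\in\mathcal{W}_{K-1}$ using the definition of $\mathcal{U}_K$ together with Lemma~\ref{lemma:propertyOne}. Your remark identifying the lifted tangent space with the kernel of the last-block projection via Corollary~\ref{cor:JacobianRecursive} makes explicit what the paper compresses into ``the new directions are complementary,'' but the argument is the same.
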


\begin{proof}
In a first step, assume $K=3$ and consider a point $\vec{Z}_3\in \mathcal{U}_3$ such that $y:=\Phi_3(\vec{Z}_3)\in Y^3_g$ is in the generic stratum, i.e. $b=P_s^3(\vec{Z}_3)\neq 0$. By definition of the set $\mathcal{U}_3$, there holds $Q_f^1(\vec{Z}_3)\neq 0$. According to Lemma \ref{proposition:spanningNewDirecitons}, there is a finite collection $A\subset \Qk{3}$ which spans the new directions in points with $Q_f^1\neq 0$. Hence $\vec{Z}_3$ is a point, where the new directions are spanned.

By the recursive formula (\ref{RecursiveFormula}), there holds $\Phi_2(\vec{Z}_2)=M_3(-Z_3)y$. Moreover, 
$$Z_1e_n = \pi_2\circ \Phi_2(\vec{Z}_2) = \pi_2\circ M_3(-Z_3)y = a - Z_3b$$
implies that $\vec{Z}_2 \in \mathcal{F}^2_{M_3(-Z_3)y}$ is contained a generic $2$-fiber, since $\vec{Z}_3=(\vec{Z}_2,Z_3)\in \mathcal{U}_3$. 
Therefore the tangent space ${T_{\vec{Z}_3}\mathcal{F}^2_{M_3(-Z_3)y}\subset T_{\vec{Z}_3}\threefiber}$ is spanned by $A_2\subset \Qk{2}$ according to Lemma \ref{lemma:spanningTheoremForCaseTwo}. Since the new directions are complementary, the collection $A\cup A_2$ spans the tangent space $T_{\vec{Z}_3}\threefiber$. This holds for every generic fiber $\threefiber$, hence there is a finite collection in $\Qk{3}$ which spans the tangent bundle $T(\threefiber\cap \mathcal{U}_3)$ for every generic fiber.

In the case $K\geq 4$ we'll argue similarly. Let $\vec{Z}_K\in \mathcal{U}_K$ be a point such that $y:=\Phi_K(\vec{Z}_K)\in Y_g^K$ is in the generic stratum, i.e. $\pi_K(y)\neq 0$. Write $\vec{Z}_K=(\vec{Z}_{K-1},Z_K)$ and $\vec{Z}_{K-1}=(\vec{Z}_{K-2},Z_{K-1})$. Note that
$$\Phi_{K-1}(\vec{Z}_{K-1})=M_K(-Z_K)y \quad\text{ and } \quad\Phi_{K-1}(\vec{Z}_{K-1}) = M_{K-1}(Z_{K-1})\Phi_{K-2}(\vec{Z}_{K-2})$$
by the recursive formula (\ref{RecursiveFormula}). By defintion of $\pi_K$ and $M_K$, there holds $\pi_K\circ M_K=\pi_K$, hence $\pi_{K-1}\circ \Phi_{K-1} = \pi_{K-1}\circ \Phi_{K-2}$. Moreover, by definition of $\mathcal{U}_K$, we have $Q_f^{K-2}Q_s^{K-2}(\vec{Z}_K)\neq 0$, which implies $\pi_{K-1}\circ \Phi_{K-1}(\vec{Z}_{K-1})\neq 0$, that is, $\vec{Z}_{K-1}$ is contained in the generic fiber $\mathcal{F}^{K-1}_{M_K(-Z_K)y}$. In addition, we have $\vec{Z}_{K-1}\in \Wk{K-1}$, since otherwise this would contradict $Q_f^{K-2}Q_s^{K-2}\neq 0$ by Lemma \ref{lemma:propertyOne}. Therefore the tangent space $T_{\vec{Z}_{K-1}}\mathcal{F}^{K-1}_{M_K(-Z_K)y}$ is spanned by the finite collection in $\Qk{K-1}$ provided from the \spanning\ for $K-1$. Similarly as in the previous case, $\vec{Z}_K$ is a point where the new directions are spanned, according to Lemma \ref{proposition:spanningNewDirecitons}. Again, the new directions are complementary and we've shown that there is a finite collection in $\Qk{K}$ spanning the tangent space $T_{\vec{Z}_K}\fiber$ for each generic fiber. And this proves the lemma.
\end{proof}

\begin{Rem}\label{remark:strategySpanningTheorem}
An application of Corollary \ref{cor:shortCutSpanningTheorem} and the former lemma leads us to the following observation. 
To complete the induction step, it suffices to find a finite collection $A\subset \Qk{K}$ satisfying
\begin{align}\label{equation:inductionStep}
\fiber \cap \Wk{K}\subset C_A(\fiber \cap \mathcal{U}_K), \quad \text{for each generic fiber }\fiber.
\end{align}
The idea of the proof is to stratify $\mathcal{W}_K$ suitably, i.e. find a finite descending chain of subspaces
$$\mathcal{W}_K =: X_N\supset ... \supset X_0 = \emptyset,$$ where the spaces $X_0,...,X_{N-1}$ are closed. In a first step, we'll find a finite collection $A_N\subset \Qk{K}$ such that the stratum $S_N:=X_N\setminus X_{N-1}$ satisfies
$$ \fiber \cap S_N \subset C_{A_N}(\fiber \cap \mathcal{U}_K)$$
for each generic fiber $\fiber$. Then, we find a finite collection $A_k\subset \Qk{K}$, for each stratum $S_k:=X_k\setminus X_{k-1}$, $1\leq k\leq N-1$, such that
$$ \fiber \cap S_k\subset C_{A_k}(\fiber \cap S_{k+1})$$
for each generic fiber $\fiber$. Define the finite collection $ A:= \bigcup_{k=1}^N A_k \subset \Qk{K}.$ 
Then we get
$$ \fiber \cap S_1 \subset C_A(\fiber \cap S_2) \subset \cdots \subset C_A(\fiber \cap S_N) \subset C_A(\fiber \cap \mathcal{U}_K)$$
for each generic fiber $\fiber$.
As it turns out, the stratification of $\mathcal{W}_K$ in the cases $K=3$, $K=4$ and $K\geq 5$ differ in some elementary ways. 
\end{Rem}

\begin{proof}[Proof of Lemma \ref{proposition:spanningNewDirecitons}]
Let $\fiber$ be a generic fiber, that is, there holds $\tilde{P}^K_{j^*}\neq 0$ for some $1\leq j^*\leq n$. Recall that this fiber is biholomorphic to $\mathcal{G}_{\pi_K(y)}\times \C{\frac{n(n-1)}{2}}$ with the biholomorphism obtained by expressing the $n$ variables $z_{K,j^*1},...,z_{K,j^*n}$ in terms of the remaining variables $\left(z_{K,ij}\right)_{\substack{1\leq i \leq j \leq n; i,j\neq j^*}}$. Clearly, the fields $\left(\Deriv{z}{K,ij} \right)_{\substack{1\leq i \leq j \leq n; i,j\neq j^*}}$ are tangential to $\mathcal{G}_{\pi_K(y)}\times \C{\frac{n(n-1)}{2}}$ and moreover, they span $\{0\}\times \C{\frac{n(n-1)}{2}}$. Hence the corresponding lifts span the new directions $\left(\Deriv{z}{K,ij} \right)_{\substack{1\leq i \leq j \leq n; i,j\neq j^*}}$ in $\mathcal{F}^K_y$. It remains to find $n$ vector fields $\partial_{x_1}^{K-1},...,\partial_{x_n}^{K-1}$ tangent to $\mathcal{G}_{\pi_K(y)}\times \{0\}$ such that the projection of the corresponding lifts $\varphi_{x_1,j^*}^K,...,\varphi_{x_n,j^*}^K$ to the new directions $\left( \Deriv{z}{K,j^*j}\right)_{1\leq j\leq n}$ are linearly independent. Let $\alpha_{j^*,1},...,\alpha_{j^*,n}$ denote the component vectors of such projections in the frame $\left( \Deriv{z}{K,j^*j}\right)_{1\leq j\leq n}$. We need to show that the matrix $A:=(\alpha_{j^*,1}|\cdots | \alpha_{j^*,n})$ is regular. Set $u_m:=(\partial_{x_m}^{K-1}(\tilde{P}^{K-2}_1),...,\partial_{x_m}^{K-1}(\tilde{P}^{K-2}_n))^T$ and recall from the proof of Lemma \ref{lemma:liftingVectorFields}, that there is a regular matrix $B:=F_{j^*}(\tilde{P}^K)^{-1}$ with $\alpha_{j^*,m}=Bu_m$. Therefore $A$ is regular if and only if $U:=(u_1|\cdots | u_n)$ is regular.

In a first step, we assume $K\geq 4$ and consider the tupels of (\ref{typeOne}) $$x_m:=(z_{K-2,mm},z_{K-1,11},...,z_{K-1,nn}), 1\leq m\leq n.$$ Without loss of generality, let $K=2k+1$. The entries of $U$ are given by $u_{ij}:=\partial_{x_i}^{2k}(P_{j}^{2k})$. Let's compute the following derivatives
$$ \Deriv{z}{K-2,ii}P_s^{2k} = \begin{pmatrix}
Z_{K-1} & I_n
\end{pmatrix} \begin{pmatrix}
0 & \tilde{E}_{ii}\\ 0&0
\end{pmatrix}\begin{pmatrix}
P_f^{2k-2}\\ P_s^{2k-2}
\end{pmatrix}=P_{n+i}^{2k-2} Z_{K-1}e_i, $$
and
$$ \Deriv{z}{K-1,mm}P_s^{2k} = \begin{pmatrix}
\tilde{E}_{mm}&0 
\end{pmatrix} \begin{pmatrix}
P_f^{2k-1}\\ P_s^{2k-1}
\end{pmatrix} = P_m^{2k-1}e_m. $$
Furthermore, we have $\Deriv{z}{K-1,mm}P_f^{2k}\equiv 0$ and
$$ \Deriv{z}{K-2,ii} P_f^{2k} = \tilde{E}_{ii}P_s^{2k-1} = P_{n+i}^{2k-1}e_i.$$
We obtain
\begin{align*}
u_{ij}= \partial_{x_i}^{2k}(P_j^{2k}) &= \det \begin{pmatrix}
\delta_{ij}P_{n+i}^{2k-1} & 0 & \cdots & 0\\
P_{n+i}^{2k-1}Z_{K-1}e_i & P_1^{2k-1}e_1 & \cdots & P_n^{2k-1}e_n
\end{pmatrix} = \delta_{ij}P_{n+i}^{2k-1}Q_f^{2k-1},
\end{align*}
and hence the matrix 
$$ U = \begin{pmatrix}
P_{n+1}^{2k-1}Q_f^{2k-1} & & \\ & \ddots & \\
& & P_{2n}^{2k-1}Q_f^{2k-1}
\end{pmatrix}$$
is regular, provided $Q_f^{2k-1}Q_s^{2k-1}\neq 0$.

In a second step, let $K=3$ and consider the tupels of (\ref{typeFour})
$$ x_j:=(z_{1,nj},z_{2,11},...,z_{2,nn}), 1\leq j \leq n.$$
We obtain derivatives $\Deriv{z}{1,nj}P_s^2 = Z_2e_j, \Deriv{z}{2,ii}P_s^2=z_{1,ni}e_i$ and $\Deriv{z}{1,nj}P_f^2 = e_j, \Deriv{z}{2,ii}P_f^2\equiv 0$, for $1\leq i,j\leq n$. Hence we get
$$ u_{ij}=\det \begin{pmatrix}
\delta_{ij} & 0 & \cdots & 0\\ Z_2e_j & z_{1,n1}e_1 & \cdots & z_{1,nn}e_n
\end{pmatrix} = \delta_{ij}Q_f^1$$
and $U=Q_f^1I_n$. This completes the proof.
\end{proof}

\subsubsection{Spanning theorem for \texorpdfstring{$K=3$}{}}
We want to find a finite collection $A\subset \Qk{3}$ such that (\ref{equation:inductionStep}) is fulfilled for $K=3$. 

Let $1\leq i, j\leq n$, $i\neq j$, and consider the tupels of (\ref{typeEight})
$$ {x_{i,j}:= (z_{1,ni}, z_{2,j1},...,z_{2,jn}).}$$
The corresponding vector fields $\partial_{x_{i,j}}^2$ are of the form
$$ \partial^2_{x_{i,j}} = \det \begin{bmatrix} \partial / \partial z_{1,ni} & \partial / \partial z_{2,j1} & \cdots & \partial / \partial z_{2,jj} & \cdots & \partial/\partial z_{2,jn} \\
z_{2,1i} & z_{1,nj} & &0 & & 0\\
\vdots &  & \ddots & & \\
z_{2,ji} & z_{1,n1} & \cdots & z_{1,nj} & \cdots & z_{1,nn} \\
\vdots &  & & & \ddots & & \\
z_{2,ni} & 0 & & 0& & z_{1,nj}
\end{bmatrix}  $$
and they satisfy $$ \partial_{x_{i,j}}^2(z_{1,kn}) = \delta_{ik}(z_{1,nj})^n,$$for $1\leq i,j,k\leq n$. 

By definition, points in the set $\mathcal{N}:=\Wk{3}\setminus \mathcal{U}_3$ satisfy $Q_f^1=z_{1,1n}\cdots z_{1,nn} = 0$ and $P_f^1=Z_1e_n\neq 0$. Hence there are indices $ 1\leq i_1<...<i_k\leq n$ and $1\leq j_1<...<j_{n-k}\leq n$ satisfying
\begin{enumerate}[label=(\roman*)]
\item $\{1,...,n\}=\{i_1,...,i_k\}\dot{\cup}\{j_1,...,j_{n-k}\}$,
\item $z_{1,in}=0$, for all $i\in \{i_1,...,i_k\}$,
\item $z_{1,jn}\neq 0$, for all $j\in \{j_1,...,j_{n-k}\}$.
\end{enumerate}
Fix an index $j\in \{j_1,...,j_{n-k}\}$. Then observe
$$ \partial_{x_{i_1,j}}^2\circ \cdots \circ \partial_{x_{i_k,j}}^2(z_{1,i_1n}\cdots z_{1,i_kn}) = (z_{1,jn})^n \partial_{x_{i_1,j}}^2\circ \cdots \circ \partial_{x_{i_{k-1},j}}^2(z_{1,i_1n}\cdots z_{1,i_{k-1}n}),$$
which inductively implies
$$ \partial_{x_{i_1,j}}^2\circ \cdots \circ \partial_{x_{i_k,j}}^2(z_{1,i_1n}\cdots z_{1,i_kn}) = (z_{1,jn})^{kn} \neq 0.$$
Let $\vec{Z}_3\in \mathcal{N}$ be a point over the generic stratum, that is, $\Phi_3(\vec{Z}_3)\in Y_g^3$. According to Lemma \ref{lemma:leavingASubset}, we find finitely many of the above fields $\partial_{x_{i,j}}^2$ such that a suitable finite composition of the respective flows moves $\vec{Z}_3$ away from $\mathcal{N}$. More precisely, we have a finite collection $A\subset \Qk{3}$ and an automorphism $\alpha\in G_A$ such that $\alpha(\vec{Z}_3)\in C_A(\mathcal{U}_3)$. That's exactly what we need to get (\ref{equation:inductionStep}). 

\subsubsection{The Spanning theorem for \texorpdfstring{$K=4$}{}}
As in the previous step, we want to find a suitable finite collection $A$ such that $(\ref{equation:inductionStep})$ is satisfied. In order to do this, we'll do a \textit{divide and conquer} with the set $ \mathcal{N}_4:=\Wk{4}\setminus \mathcal{U}_4.$
\begin{lemma}\label{lemma:case4firstStep}
Define the set
$$ \mathcal{N}_1:=\{\vec{Z}_4\in \Wk{4}: Q_f^2(\vec{Z}_4)Q_s^2(\vec{Z}_4)=0, P_s^2(\vec{Z}_4)\neq 0\}.$$
Then there is a finite collection $A\subset \Qk{4}$ such that
$$ \mathcal{F}^4_y\cap \mathcal{N}_1\subset C_A(\mathcal{F}^4_y\cap \mathcal{U}_4)$$
for each generic fiber $\mathcal{F}^4_y$.
\end{lemma}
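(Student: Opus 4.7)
The plan is to apply Lemma \ref{lemma:leavingASubset} with the defining function $f := Q_f^2 Q_s^2$ of $\mathcal{N}_1 \subset \Wk{4} \cap \{P_s^2 \neq 0\}$: it suffices to exhibit a finite collection $A \subset \Qk{4}$ of complete fiber-preserving vector fields for $\Phi_4$ whose iterated derivative of $f$ does not vanish identically on $\mathcal{N}_1 \cap \mathcal{F}^4_y$ for any generic fiber $\mathcal{F}^4_y$.

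Two families of complete lifts from Lemma \ref{lemma:liftingVectorFields} will suffice, both complete by Proposition \ref{lemma:completeFields}. First, for each triple $(i, j, j^*_1)$ with $i \neq j$, let $\varphi^3_{x_{i,j}, j^*_1} \in \mathcal{V}_3$ be the lift attached to the type (\ref{typeEight}) tupel $x_{i,j} = (z_{1,ni}, z_{2,j1}, \dots, z_{2,jn})$. Being fiber-preserving for $\Phi_3$, it preserves $P_s^2 = P_s^3$, and a direct determinant computation gives $\varphi^3_{x_{i,j}, j^*_1}((P_f^2)_k) = (\tilde{P}^2_{j^*_1})^2 (z_{1,nj})^n \delta_{ik}$, so this field behaves as a \emph{coordinate-like} field on $(P_f^2)_i$. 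Second, for each triple $(m, l, j^*_2)$ with $l \neq m$, let $\varphi^4_{y_{m,l}, j^*_2} \in \mathcal{V}_4$ be the lift attached to the type (\ref{typeTwo}) tupel $y_{m,l} = (z_{2,mm}, z_{3,l1}, \dots, z_{3,ln})$. Since $\partial_{y_{m,l}}^3$ involves no $Z_1$ variable and the $Z_4$-correction only perturbs $Z_4$, this field preserves $P_f^2 = Z_1 e_n$; writing $P_s^2 = Z_2 P_f^2 + e_n$ yields (up to sign) $\varphi^4_{y_{m,l}, j^*_2}((P_s^2)_k) = (\tilde{P}^3_{j^*_2})^2 (P_s^2)_l^n (P_f^2)_m \delta_{km}$, again a coordinate-like field on $(P_s^2)_m$, with $\tilde{P}^3_{j^*_2} = a_{j^*_2}$ constant on $\mathcal{F}^4_y$.

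Following Remark \ref{remark:strategySpanningTheorem}, stratify $\mathcal{N}_1 \cap \mathcal{F}^4_y$ into $S_2 := \{Q_f^2 \neq 0\}$ (forcing $Q_s^2 = 0$) and $S_1 := \{Q_f^2 = 0\}$. On $S_2$, every $(P_f^2)_m \neq 0$, some $(P_s^2)_l \neq 0$ since $P_s^2 \neq 0$, and some $a_{j^*_2} \neq 0$ since $a \neq 0$ by genericity; hence for each vanishing component $(P_s^2)_m$ the field $\varphi^4_{y_{m,l}, j^*_2}$ has nonvanishing coefficient. A chain of such fields, one per vanishing factor of $Q_s^2$, then satisfies the hypothesis of Lemma \ref{lemma:leavingASubset} applied to $Q_s^2$ on $S_2$, yielding $S_2 \subset C_{A^{(B)}}(\mathcal{F}^4_y \cap \mathcal{U}_4)$ for a suitable finite $A^{(B)}$. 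On $S_1$, the analogous argument works with the fields $\varphi^3_{x_{i,j}, j^*_1}$: $\tilde{P}^2_{j^*_1} \neq 0$ for some $j^*_1$ because $P_s^2 \neq 0$, and $z_{1, nj} \neq 0$ for some $j$ on every component of $S_1 \cap \Wk{4}$ since $Z_1 e_n \neq 0$ on the relevant chart. This gives $S_1 \subset C_{A^{(A)}}(S_2)$; setting $A := A^{(A)} \cup A^{(B)}$ and using extensivity of $C_A$ finishes the proof.

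The \textbf{main obstacle} is the explicit determinant calculation verifying the stated formulas for $\varphi^3_{x_{i,j}, j^*_1}((P_f^2)_k)$ and $\varphi^4_{y_{m,l}, j^*_2}((P_s^2)_k)$, in the same spirit as the computations carried out in the proof of Proposition \ref{proposition:spanningNewDirecitons}. A secondary subtlety lies in obtaining a single chain that works uniformly over an entire stratum: one must subdivide each stratum according to which auxiliary indices supply nonzero coefficients and enlarge $A$ accordingly, and must separately handle points of $\Wk{4}$ at which $Z_1 e_n = 0$ (using analogous lifted fields built from $Z_3$-indexed tupels of Proposition \ref{lemma:completeFields}).
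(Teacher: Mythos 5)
Your stratification of $\mathcal{N}_1$ into $S_2 = \{Q_f^2 \neq 0\}$ and $S_1 = \{Q_f^2 = 0\}$ and your two families of fields match the paper's steps 1 and 2 (the type~2 lifts $\varphi^4_{y_{m,l},j^*}$ address $Q_s^2 = 0$, and the type~8 lifts $\varphi^3_{x_{i,j},j^*}$ unpack the $K=3$ induction the paper invokes for its $X_2$), and the claimed derivative formulas check out. However, there is a genuine gap. Your assertion that ``$z_{1,nj}\neq 0$ for some $j$ on every component of $S_1\cap \Wk{4}$'' is false: membership in $\Wk{4}$ requires $Z_1 e_n \neq 0$ \emph{or} $Z_3 e_n\neq 0$, so the locus $\{Z_1 e_n = 0,\ Z_3 e_n\neq 0\}$ sits inside $S_1\cap\Wk{4}$ and there every component of $Z_1 e_n$ vanishes. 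On that locus both of your families become useless: $\partial^2_{x_{i,j}}(z_{1,kn})=\delta_{ik}(z_{1,nj})^n = 0$ and $\varphi^4_{y_{m,l},j^*}((P_s^2)_m) = \pm (a_{j^*})^2((P_s^2)_l)^n z_{1,mn} = 0$.

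You acknowledge this case at the end as a ``secondary subtlety'' to be handled by ``analogous lifted fields built from $Z_3$-indexed tupels,'' but that substantially understates the missing step. The paper devotes its entire step~3 (the stratum $X_3 = \{P_f^1 = 0\}$) to it, and the argument is structurally different from the type~8 computation: it uses the type~7 tupel $x = (z_{1,1n},\dots,z_{1,nn},z_{3,nn})$ whose field satisfies
$\partial_x^3(z_{1,nn}) = \det\left(e_1 + Z_3 Z_2 e_1,\dots,e_{n-1}+Z_3 Z_2 e_{n-1}, e_n\right)$,
and then combines type~2 and type~3 fields with the product rule for determinants to show, via Lemma \ref{lemma:leavingASubset}, that a composition of flows leaves the locus where this determinant vanishes and reaches $\{P_f^1\neq 0, P_s^2\neq 0\}$, after which the other two strata take over. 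Without this step the inclusion $\mathcal{F}^4_y \cap \mathcal{N}_1 \subset C_A(\mathcal{F}^4_y\cap\mathcal{U}_4)$ is unproven on the locus $\{Z_1 e_n = 0\}$, so the proof as written does not close.
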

\begin{proof}
In a first step, recall that $P_f^2=P_f^1$ by the recursive formula (\ref{RecursiveFormula}). Consider a point $\vec{Z}_4$ in the set
$$ X_1 = \{\vec{Z}_4\in \mathcal{N}_1: Q_f^1\neq 0\}.$$ By definition of this set, we find indices $1\leq i^*,i\leq n$ with $P_{n+i^*}^2(\vec{Z}_4)\neq 0$ and $P_{n+i}^2(\vec{Z}_4)=0$. Pick the vector field $\partial_{x_i}^3$ corresponding to the tupel $x_{i}=(z_{2,ii},z_{3,i^*1},...,z_{3,i^*n})$ of (\ref{typeTwo}). It is of the form
$$ \pm (P_{n+i^*}^2)^n\Deriv{z}{2,ii} + \sum_{l=1}^n \alpha_l \Deriv{z}{3,i^*l}$$
for some suitable holomorphic functions $\alpha_1,...,\alpha_n$. Then
$$ \partial_{x_i}^3(P_{n+i}^2) = \partial_{x_i}^3(\delta_{in} + e_i^TZ_2Z_1e_n) = z_{1,in}(P_{n+i^*}^2)^n \neq 0,$$
and this applies for every $1\leq i\leq n$ with $P_{n+i}^2(\vec{Z}_4)=0$. Hence there is a finite collection $A\subset \Qk{4}$ such that
$$ \mathcal{F}^4_y \cap X_1 \subset C_A(\mathcal{F}^4_y\cap \mathcal{U}_4)$$
for each generic fiber $\mathcal{F}^4_y$. 

In a second step, let's consider a point $\vec{Z}_4=(\vec{Z}_3, Z_4)$ in the set
$$ X_2 = \{\vec{Z}_4: Q_f^1=0, P_f^1\neq 0, P_s^2\neq 0\} \subset \mathcal{N}_1.$$
Observe that the projection of $\vec{Z}_4$ to the first component $\vec{Z}_3$ is contained in a generic $3$-fiber $\mathcal{F}^3_{\tilde{y}}$ and in $\Wk{3}$. From the case $K=3$, we know the existence of a finite collection $B\subset \Qk{3}$ such that $\vec{Z}_3\in C_B(\mathcal{F}^3_{\tilde{y}}\cap \mathcal{U}_3)$. Put the corresponding pullbacks into the collection $A\subset \Qk{4}$. Then we've found a finite collection $A$ such that
$$\mathcal{F}^4_y\cap X_2\subset C_A(\mathcal{F}_y^4\cap X_1) \subset C_A(\mathcal{F}^4_y \cap  \mathcal{U}_4)$$
for each generic fiber $\mathcal{F}^4_y$. 

In a third step, let's consider a point $\vec{Z}_4$ in the set
$$ X_3 = \{\vec{Z}_4: P_f^1=0\} \subset \mathcal{N}_1,$$
that is, $Z_1e_n=0$. The vector field $\partial_x^3$ corresponding to the tupel $x=(z_{1,1n},...,z_{1,nn},z_{3,nn})$ of (\ref{typeSeven}), satisfies
$$ \partial_x^3(z_{1,nn}) = \det \left( e_1+ Z_3Z_2e_1,...,e_{n-1}+ Z_3Z_2e_{n-1},e_n\right)$$
in $\vec{Z}_4$. Hence there is a finite collection $A\subset \Qk{4}$ such that
$$\mathcal{F}^4_y\cap \underbrace{\{\vec{Z}_4: P_f^1(\vec{Z}_4)=0,  \partial_x^3(z_{1,nn})\neq 0\}}_{=:\tilde{N}}\subset C_A(\mathcal{F}^4_y\cap \{\vec{Z}_4: P_f^1\neq 0, P_s^2\neq 0\})$$
for each generic fiber $\mathcal{F}^4_y$.

Next, we consider the vector fields $\partial_{x_i}^3$ and $\partial_{y_i}^3$, $i=1,...,n-1$, corresponding to the tupels ${x_i=(z_{2,ii},z_{3,1n},...,z_{3,nn})}$ and ${y_i=(z_{3,ii},z_{3,1n},...,z_{nn})}$ of  (\ref{typeTwo}) and (\ref{typeThree}), respectively. They are of the form $\partial_{x_i}^3= \Deriv{z}{2,ii}$ and $\partial_{y_i}^3=\Deriv{z}{3,ii}$ on $\Oset{4}{P_f^2}$. The formula
\begin{align}\label{equation:productFormula}
\frac{d}{dt} \det (A_1(t),...,A_n(t)) = \sum_{l=1}^n \det ( A_1(t),...,A_l'(t),...,A_n(t))
\end{align}
is sort of a product rule, where $A_1(t),...,A_n(t)$ denote columns of a $n\times n$-matrix $A(t)$ depending on $t$. In our case, we consider a matrix $A$, where the first $(n-1)$ columns are given by 
$$ A_i= e_i + \sum_{l=1}^n z_{2,li}Z_3e_l, \quad 1\leq i\leq n-1,$$
and the $n$-th column $A_n=e_n$. Observe that
$$\Deriv{z}{2,ii}A_j=\Deriv{z}{2,ii}(e_j+ \sum_{l=1}^n z_{2,lj}Z_3e_l) = \delta_{ij}Z_3e_i.$$
Hence
$$ \Deriv{z}{2,11}\cdots \Deriv{z}{2,(n-1)(n-1)}\det(A) = \det(Z_3e_1,...,Z_3e_{n-1},e_n).$$
Furthermore, we obtain
$$ \Deriv{z}{3,11}\cdots \Deriv{z}{3,(n-1)(n-1)}\det(Z_3e_1,...,Z_3e_{n-1},e_n) = \det(e_1,...,e_n)=1.$$
By Lemma \ref{lemma:leavingASubset} and the previous step, we conclude the existence of a finite collection $A\subset \Qk{4}$ such that
$$\mathcal{F}^4_y\cap \{\vec{Z}_4\in \mathcal{N}_1: P_f^1(\vec{Z}_4)=0, \partial_x^3(z_{1,nn})= 0\}\subset C_A(\mathcal{F}^4_y\cap \tilde{N})$$
and
$$ \mathcal{F}^4_y\cap \tilde{N}\subset C_A(\mathcal{F}^4_y\cap \{\vec{Z}_4: Q_f^1\neq 0, P_s^2\neq 0\})$$
for each generic fiber $\mathcal{F}^4_y$. In particular, this implies
$$\mathcal{F}^4_y\cap X_3\subset C_A(\mathcal{F}^4_y\cap X_2) \subset C_A(\mathcal{F}^4_y \cap  \mathcal{U}_4)$$
for each generic fiber $\mathcal{F}^4_y$.

For the final step, observe that $\mathcal{N}_1 = X_1\cup X_2 \cup X_3$. We put all of the involved vector fields above together and obtain a finite collection $A\subset \Qk{4}$ satisfying
$$ \mathcal{F}^4_y \cap\mathcal{N}_1 \subset C_A(\mathcal{F}^4_y \cap \mathcal{U}_4)$$
for each generic fiber $\mathcal{F}^4_y$. This finishes the proof. 
\end{proof}

\begin{lemma}\label{lemma:nonGeneric3fiberInClosure}
Define the set
$$ \mathcal{N}_2:=\{\vec{Z}_4: P_s^2(\vec{Z}_4)=0\}.$$
Then there is a finite collection $A\subset \Qk{4}$ such that
$$\mathcal{F}^4_y\cap \mathcal{N}_2\subset C_A(\mathcal{F}^4_y\cap \mathcal{U}_4)$$
for each generic fiber $\mathcal{F}^4_y$.
\end{lemma}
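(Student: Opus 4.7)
The argument combines this lemma with Lemma \ref{lemma:case4firstStep} and reduces everything to an application of Lemma \ref{lemma:leavingASubset}. Write $A' \subset \Qk{4}$ for the finite collection produced by Lemma \ref{lemma:case4firstStep}, which satisfies $\mathcal{F}^4_y \cap \mathcal{N}_1 \subset C_{A'}(\mathcal{F}^4_y \cap \mathcal{U}_4)$ for every generic fiber. Since $\{P_s^2 \neq 0\} \cap \Wk{4} = \mathcal{N}_1 \cup \mathcal{U}_4$, it is enough to produce a further finite collection $A'' \subset \Qk{4}$ with $\mathcal{F}^4_y \cap \mathcal{N}_2 \subset C_{A''}(\mathcal{F}^4_y \cap \{P_s^2 \neq 0\})$; then $A := A' \cup A''$ proves the lemma. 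Fix a generic $y = (a,b)$, so $a \neq 0$. On $\mathcal{N}_2 \cap \mathcal{F}^4_y$ one has $Z_1 e_n = a$, $Z_2 a = -e_n$, $Z_4 a = b$, and $Z_3$ is unconstrained; in particular $\tilde{P}^3 = P_f^3 = a$ there, and for any $j^*$ with $a_{j^*} \neq 0$ the factor $(\tilde{P}^3_{j^*})^2 = a_{j^*}^2$ appearing in $\varphi^4_{x,j^*}$ does not vanish.

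The plan is now to find $x \in \mathcal{T}_3^C$ for which $\varphi^4_{x,j^*}$ acts non-trivially on some component $P^2_{n+r}$ along $\mathcal{N}_2$. Since $P_s^2$ is independent of $Z_4$, the $Z_4$-correction terms in the formula for $\varphi^4_{x,j^*}$ given by Lemma \ref{lemma:liftingVectorFields} contribute nothing to the Lie derivative and one obtains
\[
 \varphi^4_{x,j^*}(P^2_{n+r})\big|_{\mathcal{N}_2} \;=\; a_{j^*}^2 \, \partial_x^3(P^2_{n+r}), \qquad 1 \leq r \leq n.
\]
Taking $x = (z_{1,ni^*}, z_{2,11}, \ldots, z_{2,nn})$ of type (\ref{typeFour}) (so that $\partial_x^3$ is $\mathbb{C}$-complete), a direct expansion of $\partial_x^3 = D_x(P_f^3)$ on $\mathcal{N}_2$ shows that $\partial_x^3(P^2_{2n})$ equals, up to sign, $\tfrac{Q_f^2}{2}$ times an expression in $Z_2, Z_3$ involving the $(i^*,n)$-minor of $Z_3$, which is not identically zero on $\mathcal{N}_2 \cap \mathcal{F}^4_y$ whenever $Q_f^2 \neq 0$ and $a_{i^*} \neq 0$. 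Tuples of type (\ref{typeEight}) built from mixed $Z_1,Z_2$-variables cover the proper subvarieties of $\mathcal{N}_2$ along which some components of $a$ vanish.

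Combine these escape fields with the fiber-preserving fields of $\mathcal{V}_2$, viewed in $\Qk{4}$ via pullback: they are tangent to every generic $\mathcal{F}^2_{(a,\cdot)}$, hence preserve $P_s^2$ and allow moving freely inside $\mathcal{N}_2 \cap \mathcal{F}^4_y$. Applying Lemma \ref{lemma:leavingASubset} iteratively to $f = P^2_{2n}$ produces a strictly decreasing chain of analytic subvarieties $\mathcal{N}_2 \cap \mathcal{F}^4_y = N_0 \supsetneq N_1 \supsetneq \cdots$ that terminates in $\emptyset$ by the Noetherian property; the associated finite composition of flows carries every starting point of $\mathcal{N}_2 \cap \mathcal{F}^4_y$ into $\{P_s^2 \neq 0\}$ and yields the desired $A''$. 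The main obstacle in making this precise is the severe degeneracy of $\partial_x^3$ on $\mathcal{N}_2$: because $\tilde{P}^3 = P_f^3$ coincides with $P_f^2$ there, every $Z_3$-derivative column of the defining Jacobian vanishes, collapsing most natural determinants to zero and leaving only the mixed types (\ref{typeFour}) and (\ref{typeEight}) of Proposition \ref{lemma:completeFields} as genuine candidates for the escape fields; one must additionally check that iterated Lie derivatives $V_{i_n} \cdots V_{i_1}(P^2_{2n})$ along these escape fields remain non-identically zero on each intermediate $N_l$, which is where the freedom of varying $i^*$ and $j^*$ is essential.
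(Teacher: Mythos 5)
The high-level strategy is right and matches the paper: escape from $\mathcal{N}_2$ into $\{P_s^2 \neq 0\}\cap\Wk{4} = \mathcal{N}_1\cup\mathcal{U}_4$ and invoke Lemma \ref{lemma:case4firstStep}, with Lemma \ref{lemma:leavingASubset} as the escape mechanism. But the proposal stops short of the step that actually carries the proof, and you acknowledge this yourself at the end.

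The concrete gap: the escape fields you consider (type 4 and type 8) all have coefficients proportional to $\det(Z_3)$ or to maximal minors of $Z_3$, since on $\mathcal{N}_2$ the columns $\Deriv{z}{2,jj}\tilde{P}^3 = z_{1,jn}Z_3 e_j$ all land in the column span of $Z_3$. Hence these fields provide no escape whenever $Z_3$ is rank-deficient, and this is the real obstruction — not, as you suggest, the vanishing of individual $a_i$, which is a secondary issue handled by varying $i^*$. The paper's proof meets this head-on by stratifying $\mathcal{N}_2$ by $\mathrm{rank}(Z_3)$ and, on each rank-$k$ stratum, using fields of types 5 and 7 (together with the $\gamma^2_{jj,i^*}$ fields) to first raise the rank of $Z_3$, invoking the complementary bases theorem to guarantee the needed non-vanishing minors. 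You explicitly rule out types 5 and 7 as ``not genuine candidates'' because their $Z_3$-derivative columns vanish on $\mathcal{N}_2$, but that vanishing is exactly what makes them single out the $\Deriv{z}{3,ii}$ direction and increase $\mathrm{rank}(Z_3)$ without leaving $\mathcal{N}_2$. Without this rank-climbing machinery, the composition of Lie derivatives required by Lemma \ref{lemma:leavingASubset} cannot be verified non-vanishing, and the ``strictly decreasing Noetherian chain'' you invoke has no justification for strict decrease. The unverified formula for $\partial_x^3(P^2_{2n})$ as $\tfrac{Q_f^2}{2}$ times an $(i^*,n)$-minor also does not match a direct expansion, which produces $\pm Q_f^1\det(Z_3)\,z_{2,ni^*}$ plus a term involving a size-$n$ minor built from $(I_n+Z_3Z_2)e_{i^*}$ and columns $z_{1,jn}Z_3e_j$.
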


In order to prove this lemma, the symplectic nature of the elementary matrices $M_i(Z)$ comes into play. The following result, also called \textit{complementary bases theorem}, is proved by Dopico and Johnson \cite{Dopico:2006vy}.
\begin{theorem}[Complementary bases theorem]
Let $$ M=\begin{pmatrix}
A&B\\ C&D
\end{pmatrix}\in \SpC$$ be a symplectic $2n\times 2n$-matrix and let $k:=\mathrm{rank}(B)$, i.e. there are $k$ indices $j_1,...,j_k$ such that the vectors $Be_{j_1},...,Be_{j_k}$ form a basis of the image $\mathrm{Im}(B)$. Let $ i_1,...,i_{n-k}$ denote the complementary indices in $\{1,...,n\}$, that is, $ \{1,...,n\} = \{i_1,...,i_{n-k}\}\dot{\cup}\{j_1,...,j_{k}\}.$

Then the $n\times n$ matrix
$$ X=\begin{pmatrix}
Ae_{i_1}&\cdots & Ae_{i_{n-k}} & Be_{j_1} & \cdots & Be_{j_{k}}
\end{pmatrix}$$
is regular.
\end{theorem}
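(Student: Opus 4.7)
The plan is to prove $X$ has trivial kernel by exploiting the explicit formula for $M^{-1}$ coming from the symplectic relations, namely $M^{-1}=\begin{pmatrix} D^T & -B^T \\ -C^T & A^T \end{pmatrix}$, which follows from $M^TJM = J$ (or, equivalently, $MJM^T=J$). The support restriction on the columns of $X$ will then interact with $B^T$ in a way that forces the kernel to be trivial.

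Suppose $Xv=0$ for some $v\in\mathbb{C}^n$. I would split $v$ into the entries belonging to the complementary index sets and assemble them into vectors $u,w\in\mathbb{C}^n$, where $u$ is supported on $\{i_1,\dots,i_{n-k}\}$ and $w$ is supported on $\{j_1,\dots,j_k\}$, so that $Xv=Au+Bw=0$. Setting $y:=Cu+Dw$, this is precisely the statement
$$M\begin{pmatrix} u\\ w\end{pmatrix}=\begin{pmatrix} 0\\ y\end{pmatrix}.$$
Inverting $M$ with the formula above yields $u=-B^Ty$ and $w=A^Ty$.

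Now I would use the support condition on $u$. For each $\ell=1,\dots,k$ one has $0=e_{j_\ell}^Tu=-(Be_{j_\ell})^Ty$, so $y$ is orthogonal to every $Be_{j_\ell}$. Since these vectors were chosen to span $\operatorname{Im}(B)$, we conclude $y\in\operatorname{Im}(B)^\perp=\ker B^T$. Hence $u=-B^Ty=0$. Feeding this back into $Au+Bw=0$ gives $Bw=0$; writing $w=\sum_{\ell=1}^k w_{j_\ell}e_{j_\ell}$, linear independence of $Be_{j_1},\dots,Be_{j_k}$ forces every $w_{j_\ell}=0$, hence $w=0$ and therefore $v=0$.

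The only subtle point is recognising that the symplectic inverse formula is the right tool: once one writes $M^{-1}=\begin{pmatrix} D^T & -B^T\\ -C^T & A^T\end{pmatrix}$, the support of $u$ immediately translates into orthogonality of $y$ against $\operatorname{Im}(B)$, which kills $u$ and reduces the problem to the (trivial) fact that the chosen columns of $B$ are independent. No direct manipulation of the symplectic identities $A^TC=C^TA$, $B^TD=D^TB$, $A^TD-C^TB=I_n$ is required beyond what is encoded in $M^{-1}$.
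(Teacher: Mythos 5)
The paper does not include a proof of this theorem; it is quoted verbatim from Dopico and Johnson \cite{Dopico:2006vy}, so there is no in-paper argument to compare against. Your proof, however, is correct and self-contained: the inverse formula $M^{-1}=\begin{pmatrix} D^T & -B^T\\ -C^T & A^T\end{pmatrix}$ does follow directly from $M^TJM=J$ via $M^{-1}=J^{-1}M^TJ$; writing $M\begin{pmatrix}u\\w\end{pmatrix}=\begin{pmatrix}0\\y\end{pmatrix}$ and reading off $u=-B^Ty$ is valid; and the support condition $e_{j_\ell}^Tu=0$ does translate to $(Be_{j_\ell})^Ty=0$, so $y\in\ker B^T$ under the bilinear pairing, forcing $u=0$. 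The reduction to $Bw=0$ and linear independence of $Be_{j_1},\dots,Be_{j_k}$ then cleanly finishes the argument, and the boundary cases $k=0$ (where $A^TD=I_n$) and $k=n$ are automatically covered. This is a tidier route than the original Dopico--Johnson argument (which works through the symplectic identities and a rank count more directly) and would serve well as a self-contained replacement for the citation.
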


\begin{Exa}\label{lemma:principalMinor}
Consider the symplectic elementary matrix $$ \Eu{Z},$$ and assume the symmetric matrix $Z\in \Cm$ to be of rank $k$. We find complementary indices as in the theorem and obtain that the matrix
$$ X = \begin{pmatrix}
e_{i_1}&\cdots & e_{i_{n-k}} & Ze_{j_1} & \cdots & Ze_{j_k}
\end{pmatrix}$$
is regular. By applying some Gauss-elimination to $X$, we conclude that $Z$ has a non-vanishing principal minor of order $k$. More precisely, let $(Z)_{i_1,...,i_{n-k}}$ be the symmetric $k\times k$-matrix, obtained by removing columns and rows $i_1,...,i_{n-k}$ from $Z$. Then $\det((Z)_{i_1,...,i_{n-k}})=\pm \det(X)\neq 0$. 
\end{Exa}

\begin{proof}[Proof of Lemma \ref{lemma:nonGeneric3fiberInClosure}]
Let's stratify $\mathcal{N}_2$ in the following way. For $0\leq k \leq n$, define 
$$ \mathcal{A}_k:=\{\vec{Z}_4\in \mathcal{N}_2: \mathrm{rank}(Z_3)\leq k\}$$
and as a convention let $\mathcal{A}_{-1}:=\emptyset$. Then each stratum
$$\mathcal{B}_k := \mathcal{A}_k\setminus \mathcal{A}_{k-1}, \quad 0\leq k\leq n,$$
consists of those points $\vec{Z}_4\in \mathcal{N}_2$ with $\mathrm{rank}(Z_3)=k$. 

Now we want to proceed as in Remark \ref{remark:strategySpanningTheorem}. We start by assuming $A$ to be the finite collection of Lemma \ref{lemma:case4firstStep}, and then successively add matching fields to $A$.

For each point in $\mathcal{N}_2$, there is an index $1\leq i*\leq n$ with $z_{1,i^*n}\neq 0$, since $P_f^1=Z_1e_n$ and $P_s^2=0$ don't vanish simultaneously by the recursive formula (\ref{RecursiveFormula}). So this is especially true for points in the stratum $\mathcal{B}_n$. The tupel ${x=(z_{1,nj},z_{2,i^*1},...,z_{2,i^*n})}$, $j\neq i^*$, is of (\ref{typeEight}) and we claim that the corresponding vector field $\partial_x^3$ satisfies $ \partial_x^3(z_{1,nj}) = (z_{1,ni^*})^n \det(Z_3) \neq 0$ and $\partial_x^3(z_{3,ij})=0$ for all $1\leq i\leq j\leq n$. By construction, there holds $\partial_x^3(P_i^3)=0$ for $1\leq i\leq n$. The recursive formula (\ref{RecursiveFormula}) implies
$$ P_i^3 = z_{1,ni} + \sum_{l=1}^n z_{3,il}P_{n+l}^2.$$
Hence
\begin{align*}
0 = \partial_x^3(P_i^3) = \partial_x^3(z_{1,ni})+ \sum_{l=1}^n z_{3,il}\partial_x^3(P_{n+l}^2), \quad 1\leq i\leq n.
\end{align*}
Suppose that $\partial_x^3(P_{n+l}^2)=0$, for all $1\leq l\leq n$. Then we obtain a contradiction
$$0=\partial_x^3(P_j^3) = \partial_x^3(z_{1,nj}) \neq 0.$$
Therefore, the flow of $\partial_x^3$ through points of $\mathcal{B}_n$ leaves $\mathcal{N}_2$. We add $\partial_x^3$ (actually its $n$ push-forwards with respect to the biholomorphisms from Lemma \ref{lemma:genericFibers}) to the collection $A$ and by Lemma \ref{lemma:case4firstStep}, we get
$$ \mathcal{F}^4_y\cap \mathcal{B}_n\subset C_A(\mathcal{F}^4_y\cap \mathcal{N}_1) \subset C_A(\mathcal{F}^4_y\cap \mathcal{U}_4).$$

Now, let's prove the claim. The vector field $\partial_x^3$ is constant in the directions $\tfrac{\partial}{\partial z_{3,ij}}$, by definition. So we only need to check $\partial_x^3(z_{1,jn})\neq 0$. By definition, there holds $\partial_x^3(z_{1,jn})=\det(A)$ for $ A:=\begin{pmatrix}
\Deriv{z}{2,i^*1}P_f^3 & \cdots & \Deriv{z}{2,i^*n}P_f^3 
\end{pmatrix}$. Note that
$$ \Deriv{z}{2,ij} P_f^3 = \begin{pmatrix}
I_n & Z_3
\end{pmatrix} \begin{pmatrix}
0 & 0 \\ \tilde{E}_{ij} & 0
\end{pmatrix} \begin{pmatrix}
Z_1e_n\\ e_n
\end{pmatrix} = Z_3\tilde{E}_{ij}Z_1e_n.$$
Therefore, by definition of $F_{i^*}$ (see paragraph before Corollary \ref{cor:JacobianElementaryMatrix}),
$$ A=\begin{pmatrix}
Z_3\tilde{E}_{i^*1}Z_1e_n&\cdots & Z_3\tilde{E}_{i^*n}Z_1e_n
\end{pmatrix} = Z_3 F_{i^*}(Z_1e_n) = Z_3 \begin{pmatrix}
z_{1,i^*n} & && & \\ & \ddots & && \\z_{1,1n} & \cdots & z_{1,i^*n} & \cdots & z_{1,nn}\\
&&& \ddots & \\ &&&& z_{1,i^*n}
\end{pmatrix}$$
and hence
$$ \partial_x^3(z_{1,jn}) = \det(A) = (z_{1,i^*n})^n\det(Z_3)\neq 0.$$

In a next step, we assume $1\leq k\leq n-1$ and we divide the stratum $\mathcal{B}_k$ into two more strata. Define
$$ \mathcal{C}_k := \{\vec{Z}_4\in \mathcal{B}_k: Z_3e_i\neq 0\Rightarrow z_{1,ni}=0, \forall 1\leq i\leq n\}$$
and consider a point $\vec{Z}_4\in \mathcal{B}_k\setminus \mathcal{C}_k$. By definition, there is an index $1\leq j^*\leq n$ such that $Z_3e_{j^*}\neq 0$ and $z_{1,j^*n}\neq 0$. Furthermore, again by definition, we assume that the matrix $Z_3$ has rank $k$. By the complementary bases theorem, we can choose complementary indices $\{i_1,...,i_{n-k}\}\dot{\cup}\{j_1,...,j_k\} = \{1,...,n\}$ such that $j^*\in \{j_1,...,j_k\}$ and such that the vectors $Z_3e_{j_1},...,Z_3e_{j_k}$ form a basis of the image $\mathrm{Im}(Z_3)$. Choose $i\in \{i_1,...,i_{n-k}\}$, then the tupel $x=(z_{1,i_1n},...,z_{1,i_{n-k}n},z_{2,j^*j_1},...,z_{2,j^*j_k},z_{3,ii})$ is of {(\ref{typeSeven})}. Since $\Deriv{z}{3,ii}P_f^3 = \tilde{E}_{ii}P_s^2=0$ on $\mathcal{N}_2$, the vector field $\partial_x^3$ is given by $\pm\det(B)\Deriv{z}{3,ii}$ on $\mathcal{N}_2$, where
\begin{align*}
B &= \begin{pmatrix}
\Deriv{z}{1,i_1n}P_f^3 & \cdots & \Deriv{z}{1,i_{n-k}n}P_f^3 & \Deriv{z}{2,j^*j_1}P_f^3 & \cdots & \Deriv{z}{2,j^*j_k}P_f^3
\end{pmatrix}\\ &= \begin{pmatrix}
(I_n+Z_3Z_2)e_{i_1} & \cdots & (I_n+Z_3Z_2)e_{i_{n-k}} & Z_3\tilde{E}_{j^*j_1}Z_1e_n & \cdots & Z_3\tilde{E}_{j^*j_k}Z_1e_n
\end{pmatrix}.
\end{align*}
We're going to show that $B$ is regular. At first, observe that by some basic linear algebra arguments, we get
$$ \mathrm{span}\{\tilde{E}_{j^*j_1}Z_1e_n,...,\tilde{E}_{j^*j_k}Z_1e_n\} = \mathrm{span}\{z_{1,j^*n}e_{j_1},...,z_{1,j^*n}e_{j_k}\},$$
if $j^*\in \{j_1,...,j_k\}$ and $z_{1,j^*n}\neq 0$. By multi-linearity of the determinant operator, this implies
$$ \det (B) = \pm (z_{1,j^*n})^k \det \begin{pmatrix}
(I_n+Z_3Z_2)e_{i_1}& \cdots & (I_n+Z_3Z_2)e_{i_{n-k}} & Z_3e_{j_1} & \cdots & Z_3e_{j_k}
\end{pmatrix}$$
and by the complementary bases theorem, applied to the matrix
$$ \Eu{Z_3}\El{Z_2} = \begin{pmatrix}
I_n+Z_3Z_2 & Z_3\\ Z_2 & I_n
\end{pmatrix},$$
this is non-vanishing, i.e. $\det(B)\neq 0$ in the given point $\vec{Z}_4\in \mathcal{B}_k\setminus \mathcal{C}_k$. Next consider the $(k+1)\times (k+1)$ submatrix of $Z_3$ 
$$ Z:=\begin{pmatrix}
z_{3,ii} & z_{3,ij_1} & \cdots & z_{3,ij_k}\\ z_{3,j_1i} & z_{3,j_1j_1} & \cdots & z_{3,j_1j_k}\\ \vdots & \vdots & \ddots & \vdots \\ z_{3,j_ki} & z_{3,j_kj_1} & \cdots & z_{3,j_kj_k}
\end{pmatrix},$$
for $i\in \{i_1,...,i_{n-k}\}$. Its determinant $\det(Z)$, written as a function in $z_{3,ii}$, is given by
$$ \det(Z) = \det((Z_3)_{i_1,...,i_{n-k}}) z_{3,ii} + \alpha,$$
where $\alpha\in \mathbb{C}$ is constant in $z_{3,ii}$. There holds $\det(Z)=0$ on $\mathcal{B}_k$, since the rank of $Z_3$ is $k$ on this stratum. Apply the vector field $\partial_x^3$ to the equation $\det(Z)=0$. Using the same notation as in Example \ref{lemma:principalMinor}, this gives us
$$ \partial_x^3(\det(Z)) = \det(B) \Deriv{z}{3,ii} \det(Z) = \det(B)\det((Z_3)_{i_1,...,i_{n-k}}) \neq 0.$$
Hence the flow of $\partial_x^3$ through the given point $\vec{Z}_4\in\mathcal{B}_k\setminus \mathcal{C}_k$ leaves the set $\mathcal{A}_k$ and intersects the stratum $\mathcal{B}_{k+1}$.
Hence there exists a finite collection $A\subset \Qk{4}$ of complete fiber-preserving holomorphic vector fields such that
$$ \mathcal{F}^4_y\cap \mathcal{B}_k\setminus \mathcal{C}_k\subset C_A(\mathcal{F}^4_y\cap \mathcal{B}_{k+1})$$
for each generic fiber $\mathcal{F}^4_y$.

Now, consider a point $\vec{Z}_4\in \mathcal{C}_k$. Choose again a complementary set of indices \newline $\{i_1,...,i_{n-k}\}\dot{\cup}\{j_1,...,j_k\}=\{1,...,n\}$ such that the vectors $Z_3e_{j_1},...,Z_3e_{j_k}$ form a basis of the image $\mathrm{Im}(Z_3)$. By definition of $\mathcal{C}_k$, there holds $z_{1,j_1n} = ... = z_{1,j_kn} = 0$. Recall that by definition of $\mathcal{N}_2$, there is an index $1\leq i^*\leq n$ with $z_{1,i^*n}\neq 0$. In particular, there holds $Z_3e_{i^*}=0$, again by definition of $\mathcal{C}_k$. The vector fields $\gamma^2_{jj,i^*}$, $j\in \{j_1,...,j_k\}$, from Lemma \ref{lemma:fiberPreservingVectorFields} are given by
$$ \gamma^2_{jj,i^*} = (z_{1,i^*n})^2\Deriv{z}{2,jj}, \quad j\in \{j_1,...,j_k\},$$
in the given point $\vec{Z}_4\in \mathcal{C}_k$. Consider the tupel ${x=(z_{1,n1},...,z_{1,nn},z_{3,i^*i^*})}$ of (\ref{typeFive}). Its corresponding vector field $\partial_x^3$ is of the form $\pm\det ( I_n+Z_3Z_2)\Deriv{z}{3,i^*i^*}$ in $\vec{Z}_4$. Apply a suitable composition to the equation $z_{3,i^*i^*}=0$, namely
\begin{align*}
\gamma^2_{j_kj_k,i^*}\circ \cdots \circ \gamma^2_{j_1j_1,i^*} \circ \partial_x^3(z_{3,i^*i^*}) &=\pm \gamma^2_{j_kj_k,i^*}\circ \cdots \circ \gamma^2_{j_1j_1,i^*}(\det(I_n+Z_3Z_2)) \\ &=\pm (z_{1,i^*n})^{2k} \Deriv{z}{2,j_kj_k}\circ\cdots \circ \Deriv{z}{2,j_1j_1}(\det(I_n+Z_3Z_2)).
\end{align*}
Note that $\Deriv{z}{2,jj}(I_n+Z_3Z_2) = Z_3\tilde{E}_{jj}$ which means that the $j$-th column $(I_n+Z_3Z_2)e_j$ is the only column in $I_n+Z_3Z_2$ depending on the variable $z_{2,jj}$. In particular, the product formula for determinants (\ref{equation:productFormula}) and a suitable rearrangement of the columns yields
$$ \Deriv{z}{2,j_kj_k}\circ \cdots \circ \Deriv{z}{2,j_1j_1}(\det(I_n+Z_3Z_2)) = \pm \det((I_n+Z_3Z_2)e_{i_1},...,(I_n+Z_3Z_2)e_{i_{n-k}},Z_3e_{j_1},...,Z_3e_{j_k}).$$
Another application of the complementary bases theorem implies
$$ \gamma^2_{j_kj_k,i^*}\circ \cdots \circ \gamma^2_{j_1j_1,i^*} \circ \partial_x^3(z_{3,i^*i^*})\neq 0.$$
By Lemma \ref{lemma:leavingASubset}, we have found a suitable finite collection $A\subset \Qk{4}$ such that
$$ \mathcal{F}^4_y\cap \mathcal{C}_k \subset C_A(\mathcal{F}^4_y\cap \mathcal{B}_k\setminus \mathcal{C}_k)$$
for all generic fibers $\mathcal{F}^4_y$.

In summary, we have found a finite collection $A\subset \Qk{4}$ satisfying
$$ \mathcal{F}^4_y\cap \mathcal{B}_k \subset C_A(\mathcal{F}^4_y\cap \mathcal{B}_{k+1}),$$
for each $1\leq k\leq n-1$ and each generic fiber $\mathcal{F}^4_y$.

In a last step, consider a point $\vec{Z}_4\in \mathcal{B}_0$. Then the field $\partial_x^3$, corresponding to the tupel $x=(z_{1,1n},...,z_{1,nn},z_{3,nn})$ of (\ref{typeSeven}), is given by
$$ \partial_x^3 = \pm \det(I_n+Z_3Z_2)\Deriv{z}{3,nn} = \pm \Deriv{z}{3,nn}.$$
Therefore, there is a finite $A\subset \Qk{4}$ with $\mathcal{F}^4_y\cap \mathcal{B}_0\subset C_A(\mathcal{F}^4_y\cap \mathcal{B}_1)$ for each generic fiber.

It remains to put everything together and apply the properties of the closure operator $C_A$. More precisely, we have found a finite $A\subset \Qk{4}$ such that
$$ C_A(\mathcal{F}^4_y\cap \mathcal{B}_k)\subset C_A(\mathcal{F}^4_y\cap \mathcal{B}_{k+1})$$
for each $0\leq k\leq n-1$ and each generic fiber. And since we already know
$$ \mathcal{F}^4_y\cap \mathcal{B}_n\subset C_A(\mathcal{F}^4_y\cap \mathcal{U}_4)$$
for each generic fiber, the proof is complete.
\end{proof}

\subsubsection{The Spanning theorem for \texorpdfstring{$K\geq 5$}{}}
The strategy for $K\geq 5$ is very similar to that for $K=4$. Let's write the function $Q_f^{K-2}Q_s^{K-2}$ in terms of the notation $\tilde{P}^K$ (see Lemma \ref{lemma:genericFibers}). Recall
$$ \tilde{P}^K = \begin{cases}
P_f^K &\text{if } K=2k+1\\ P_s^K & \text{if } K=2k
\end{cases}$$
and the recursive formula (\ref{RecursiveFormulaGeneralized})
$$ \PK = \PKMM + Z_K\PKM.$$
Then we get,
$$ Q_f^{K-2}Q_s^{K-2} = \PKMM_1\cdots \PKMM_n \PKMMM_1\cdots \PKMMM_n$$
for all $K\geq 5$. 

\begin{lemma}
Let $K\geq 5$ and define
$$ \mathcal{N}_1:=\{\vec{Z}_K\in \Wk{K}: \PKMM_1\cdots \PKMM_n \PKMMM_1\cdots \PKMMM_n=0,\PKMM\neq 0\}.$$
Then there is a finite collection $A\subset \Qk{K}$ such that
$$ \fiber \cap \mathcal{N}_1\subset C_A(\fiber \cap \mathcal{U}_K)$$
for each generic fibers $\fiber$. 
\end{lemma}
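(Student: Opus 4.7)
The plan is to adapt the proof of Lemma \ref{lemma:case4firstStep} to general $K\geq 5$ by stratifying $\mathcal{N}_1$ according to the vanishing pattern of $\PKMMM$. Write $\mathcal{N}_1=X_1\cup X_2\cup X_3$ where
$$X_1=\{\vec Z_K\in\mathcal N_1:\PKMMM_1\cdots\PKMMM_n\neq 0\},\ X_2=\{\vec Z_K\in\mathcal N_1:\PKMMM_1\cdots\PKMMM_n=0,\,\PKMMM\neq 0\},\ X_3=\{\vec Z_K\in\mathcal N_1:\PKMMM=0\},$$
and build a single finite $A\subset\Qk{K}$ realising the chain of closures $\fiber\cap X_3\subset C_A(\fiber\cap X_2)\subset C_A(\fiber\cap X_1)\subset C_A(\fiber\cap\mathcal{U}_K)$ for every generic $\fiber$; the three contributions are concatenated at the end.

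On $X_1$, pick $i^*$ with $\PKMM_{i^*}\neq 0$ (available since $\PKMM\neq 0$) and, for each $i$ with $\PKMM_i=0$, use the (\ref{typeTwo}) tuple $x_i=(z_{K-2,ii},z_{K-1,i^*1},\ldots,z_{K-1,i^*n})$. The recursion $\PKMM=\PKMMMM+Z_{K-2}\PKMMM$ gives $\Deriv{z}{K-2,ii}\PKMM_i=\PKMMM_i$, while $\PKMM_i$ is independent of $Z_{K-1}$; expanding $D_{x_i}(\PKM)(\PKMM_i)$ along its first row produces the minor $\det F_{i^*}(\PKMM)=\pm(\PKMM_{i^*})^n/2^{n-1}$, so
$$\partial_{x_i}^{K-1}(\PKMM_i)=\pm\PKMMM_i\,(\PKMM_{i^*})^n/2^{n-1},$$
which is nonzero on $X_1$ by hypothesis. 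Lemma \ref{lemma:leavingASubset} applied to the finitely many bad indices $i$ then yields the desired closure statement.

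For $X_2$, the condition $\PKMM\neq 0$ reads $\pi_{K-1}\circ\Phi_{K-1}(\vec Z_{K-1})\neq 0$, so $\vec Z_{K-1}$ lies in a generic $(K-1)$-fiber; moreover $\vec Z_{K-1}\in\Wk{K-1}$, because the only way this could fail (for $K$ even, via the edge case $Z_1e_n=\cdots=Z_{K-3}e_n=0$) would force $\PKMMM=0$ by Lemma \ref{lemma:propertyOne}, contradicting $\vec Z_K\in X_2$. The induction hypothesis---namely the conclusion of Remark \ref{remark:strategySpanningTheorem} at level $K-1$---then supplies a finite $B\subset\Qk{K-1}\subset\Qk{K}$ (via pullback along $f_{K,K-1}$) with $\vec Z_{K-1}\in C_B(\mathcal F^{K-1}_{\tilde y}\cap\mathcal U_{K-1})$. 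Because $\mathcal U_{K-1}\subset\{\prod_i\PKMMM_i\neq 0\}$ and the pullback fields fix $Z_K$ while preserving $\Phi_K$, composing these flows drives $\vec Z_K$ into $X_1\cup\mathcal U_K$, reducing to the previous stage.

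The stratum $X_3$ is the most delicate and is the main obstacle; it is handled in direct parallel with the $P_f^1=0$ step of Lemma \ref{lemma:case4firstStep}, now shifted to levels $K-3,K-2,K-1$. Take the (\ref{typeSeven}) tuple $x=(z_{K-3,n1},\ldots,z_{K-3,nn},z_{K-1,nn})$; on $\{\PKMMM=0\}$ the recursion gives $\PKMM=\PKMMMM$, and expanding $\partial_x^{K-1}(z_{K-3,nn})$ as in Lemma \ref{lemma:case4firstStep} produces a determinant in the columns $(I_n+Z_{K-1}Z_{K-2})\tilde E_{nl}\PKMMMM$ for $l=1,\ldots,n-1$ together with $\PKMMMM_n e_n$; on the further vanishing sublocus one invokes the complete (\ref{typeTwo}) and (\ref{typeThree}) fields reducing to $\Deriv{z}{K-2,ii}$ and $\Deriv{z}{K-1,ii}$ on $\Oset{K}{\PKMM}$, and uses the product rule \eqref{equation:productFormula} to collapse the expression step by step down to $\det(e_1,\ldots,e_n)=1$. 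The hard part is precisely that, unlike in the $K=4$ case where $\PKMMMM=e_n$ simplifies everything, for $K\geq 5$ the vector $\PKMMMM$ is a genuine polynomial in $Z_1,\ldots,Z_{K-4}$, so the resulting determinantal expressions are considerably more intricate; handling them requires careful use of Lemma \ref{lemma:propertyOne} and the recursive formula to identify the correct sequence of flows and to verify that each step actually exits the relevant subvariety. Concatenating the three finite subcollections yields the required $A\subset\Qk{K}$.
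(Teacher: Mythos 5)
Your decomposition $\mathcal{N}_1 = X_1 \cup X_2 \cup X_3$ and the arguments for $X_1$ and $X_2$ essentially match the paper (the extra $2^{n-1}$ scalar on $X_1$ is harmless; for $X_2$ the paper flows to $\{\PKMMM_1\cdots\PKMMM_n\neq 0\}$ rather than to $\mathcal{U}_{K-1}$, which is what lands you in $X_1$ — a small but worth-noting difference in phrasing). The verification that $\vec Z_{K-1}\in\Wk{K-1}$ on $X_2$ via Lemma \ref{lemma:propertyOne} is correct.

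The gap is in $X_3$, and you have flagged it yourself but in the wrong direction. You propose to push the $K=4$ computation through verbatim, accepting that since $\PKMMMM$ is a "genuine polynomial" the determinantal expressions become "considerably more intricate" and then asserting that careful bookkeeping will close the argument. That is not a proof, and more importantly it is not the right fix: there is no reason the Type 5/Type 7 computation should produce a nonvanishing expression on arbitrary points of $\{\PKMMM=0\}$, precisely because $\PKMMMM$ is uncontrolled there. The move the paper makes, which you are missing, is to split $X_3$ into $\tilde X_3:=\{(\vec Z_{K-1},Z_K)\in X_3: \vec Z_{K-1}\in\Wk{K-1}\}$ and its complement. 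On $\tilde X_3$, the projection $\vec Z_{K-1}$ lies in a generic $(K-1)$-fiber inside $\Wk{K-1}$ with $\PKMMM=0$, i.e.\ in the $\mathcal{N}_2$-analogue one level down; the induction hypothesis (whose base at level $4$ is exactly Lemma \ref{lemma:nonGeneric3fiberInClosure}) dispatches it with no computation. The remaining piece $X_3\setminus\tilde X_3$ is empty for $K$ odd, and for $K=2k$ even it forces $Z_1e_n=\cdots=Z_{2k-3}e_n=0$, whence Lemma \ref{lemma:propertyOne} gives $\Phi_{K-4}=e_{2n}$ and hence $\PKMMMM=e_n$ — so the determinantal expressions collapse exactly as they did in the $K=4$ case, and the $K=4$ argument transfers verbatim. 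Without this split your computational route does not go through, and you should not present the concatenation at the end as completing the proof.
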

\begin{proof}
In a first step, consider a point $\vec{Z}_K$ in the set
$$ X_1 = \{\vec{Z}_K\in \mathcal{N}_1: \PKMMM_1\cdots \PKMMM_n \neq 0\}.$$
By definition of this set, we find indices $1\leq i,i^*\leq n$ with $\PKMM_i(\vec{Z}_K)=0$ and $\PKMM_{i^*}(\vec{Z}_K)\neq 0$. Pick the vector field $\partial_{x_i}^{K-1}$ corresponding to the tupel
$x_i=(z_{K-2,ii},z_{K-1,i^*1},...,z_{K-1,i^*n})$ of (\ref{typeTwo}). It is of the form
$$ \pm (\PKMM_{i^*})^n \Deriv{z}{K-2,ii}+ \sum_{l=1}^n \alpha_l \Deriv{z}{K-1,i^*l}$$
for some suitable holomorphic functions $\alpha_1,...,\alpha_n$. By the recursive formula (\ref{RecursiveFormulaGeneralized}), we get
$$ \PKMM_i = \tilde{P}^{K-4}_i+e_i^TZ_{K-2}\PKMMM$$
and therefore
$$\partial_{x_i}^{K-1}(\PKMM_i) = \pm (\PKMM_{i^*})^n \PKMMM_i\neq 0.$$
This applies for every $1\leq i\leq n$ with $\PKMM_i=0$. Hence there is a finite collection $A\subset \Qk{K}$ such that
$$ \fiber \cap X_1 \subset C_A(\fiber \cap \mathcal{U}_K)$$
for each generic fiber $\fiber$.

In a second step, consider a point $\vec{Z}_K=(\vec{Z}_{K-1},Z_K)$ in the set
$$ X_2 = \{\vec{Z}_K\in \mathcal{N}_1: \PKMMM_1\cdots \PKMMM_n=0, \PKMMM\neq 0\}.$$
Observe that the projection of $\vec{Z}_K$ to the first component $\vec{Z}_{K-1}$ is contained in a generic $(K-1)$-fiber $\mathcal{F}^{K-1}_{\tilde{y}}$ and in $\Wk{K-1}$. By the induction hypothesis, there is a finite collection $B\subset \Qk{K-1}$ such that
$$ \vec{Z}_{K-1}\in C_B(\mathcal{F}^{K-1}_{\tilde{y}}\cap \{\vec{Z}_{K-1}: \PKMMM_1\cdots \PKMMM_n\neq 0\}).$$
Put the corresponding pullbacks into the collection $A$. This gives us a finite collection $A\subset \Qk{K}$ such that
$$ \fiber \cap X_2 \subset C_A(\fiber \cap X_1) \subset C_A(\fiber \cap \mathcal{U}_K)$$
for each generic fiber $\fiber$. 

In a third step, consider a point $\vec{Z}_K=(\vec{Z}_{K-1},Z_K)$ in the set
$$ X_3 = \{\vec{Z}_K\in \mathcal{N}_1: \PKMMM=0\}.$$
Define the subset
$$\tilde{X}_3 = \{(\vec{Z}_{K-1},Z_K)\in X_3: \vec{Z}_{K-1}\in \Wk{K-1}\}.$$
By Lemma \ref{lemma:nonGeneric3fiberInClosure}, we can apply the induction hypothesis, to obtain a finite collection $A\subset \Qk{K}$ such that
$$\fiber\cap \tilde{X}_3\subset C_A(\fiber \cap X_2)$$
for each generic fiber $\fiber$. Observe that $\Wk{2k+1}=\Wk{2k}\times \Cm$, hence the two sets $X_3$ and $\tilde{X}_3$ coincide for $K=2k+1$. Now, let us assume $K=2k$ and consider a point $\vec{Z}_K\in X_3\setminus \tilde{X}_3$. This implies $Z_1e_n=Z_3e_n=...=Z_{2k-3}e_n=0$. Pick the vector field $\partial_x^{K-1}$ corresponding to the tupel $x=(z_{K-3,1n},...,z_{K-3,nn},z_{K-1,nn})$ of (\ref{typeFive}). It satisfies
$$ \partial_x^{K-1}(z_{K-3,nn}) = \det(e_1+Z_{K-1}Z_{K-2}e_1,...,e_{n-1}+Z_{K-1}Z_{K-2}e_{n-1}, e_n)$$ in $\vec{Z}_K$. Also consider the fields $\partial_{x_i}^{K-1}$ and $\partial_{y_i}^{K-1}$, $i=1,...,n-1$, corresponding to the tupels $x_i=(z_{K-2,ii},z_{K-1,1n},...,z_{K-1,nn})$ and $y_i=(z_{K-1,ii},z_{K-1,1n},...,z_{K-1,nn})$ of (\ref{typeTwo}) and {(\ref{typeThree})}, respectively. They are of the form $\partial_{x_i}^{K-1}=\Deriv{z}{K-2,ii}$ and $\partial_{y_i}^{K-1}=\Deriv{z}{K-1,ii}$ on $X_3\setminus \tilde{X}_3$. With the very same reasoning as in the third step of Lemma \ref{lemma:case4firstStep}, we obtain a finite collection $A\subset \Qk{K}$ such that
$$\fiber \cap X_3\setminus \tilde{X}_3 \subset C_A(\fiber \cap X_2)$$
for each generic fiber $\fiber$.
In summary, we obtain a finite collection $A\subset \Qk{K}$ such that
$$\fiber \cap X_3 \subset C_A(\fiber \cap X_2 ) \subset C_A(\fiber\cap \mathcal{U}_K)$$
for each generic fiber $\fiber$ and for each $K\geq 5$.

In a last step, observe that $\mathcal{N}_1=X_1\cup X_2 \cup X_3$. We put all the involved vector fields above together and obtain a finite collection $A\subset \Qk{K}$ satisfying
$$ \fiber \cap \mathcal{N}_1\subset C_A(\fiber\cap \mathcal{U}_K)$$
for each generic fiber $\fiber$. This finishes the proof.
\end{proof}

\begin{lemma}
Let $K\geq 5$ and define
$$\mathcal{N}_2:=\{\vec{Z}_K\in \Wk{K}: \PKMM=0\}.$$
Then there is a finite collection $A\subset \Qk{K}$ such that
$$ \fiber \cap \mathcal{N}_2\subset C_A(\fiber \cap \mathcal{U}_K)$$
for each generic fibers $\fiber$. 
\end{lemma}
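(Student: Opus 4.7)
The plan is to adapt the proof of Lemma \ref{lemma:nonGeneric3fiberInClosure} from the case $K=4$ to general $K\geq 5$, with the matrix $Z_{K-1}$ now playing the role of $Z_3$ and the vector $\PKMM$ the role of $P_s^2$. A key preliminary observation is that on $\mathcal{N}_2$ we automatically have $\PKMMM\neq 0$: the block $M_{K-2}$ leaves the half of $\Phi_{K-2}$ corresponding to $\PKMMM$ untouched, so $\Phi_{K-2}\in\CwO$ together with $\PKMM=0$ forces $\PKMMM\neq 0$. Following Remark \ref{remark:strategySpanningTheorem}, we stratify
\[ \mathcal{A}_k:=\{\vec{Z}_K\in\mathcal{N}_2:\mathrm{rank}(Z_{K-1})\leq k\}, \quad \mathcal{B}_k:=\mathcal{A}_k\setminus\mathcal{A}_{k-1}, \]
with $\mathcal{A}_{-1}=\emptyset$, and process the strata from $k=n$ down to $k=0$, starting from the finite collection already produced by the preceding $\mathcal{N}_1$-lemma.

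For the top stratum $\mathcal{B}_n$ (where $Z_{K-1}$ is invertible) we fix $i^*$ with $\PKMMM_{i^*}\neq 0$ and choose a complete fiber-preserving field $\partial_x^{K-1}$ arising from a tuple in variables of $Z_{K-3}$ together with the $i^*$-th row of $Z_{K-2}$---a shifted analogue of the Type 2 or Type 8 tuple used in the $K=4$ argument. The same linear-algebra identity as for $K=4$ yields a non-vanishing coefficient of the form $\pm\det(Z_{K-1})(\PKMMM_{i^*})^n$, and the tangency identity $\partial_x^{K-1}(\PKM)=\partial_x^{K-1}(\PKMMM)+Z_{K-1}\partial_x^{K-1}(\PKMM)=0$ (valid because the tuple does not involve any variable of $Z_{K-1}$) together with invertibility of $Z_{K-1}$ on $\mathcal{B}_n$ forces $\partial_x^{K-1}(\PKMM_l)\neq 0$ for some $l$. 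The corresponding flow therefore escapes $\mathcal{N}_2$ into $\mathcal{N}_1\cup\mathcal{U}_K$, after which the preceding lemma carries us into $\mathcal{U}_K$.

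For the middle strata $\mathcal{B}_k$, $1\leq k\leq n-1$, we subdivide
\[ \mathcal{C}_k:=\{\vec{Z}_K\in\mathcal{B}_k:Z_{K-1}e_i\neq 0\Rightarrow\PKMMM_i=0\text{ for all }i\}. \]
Points of $\mathcal{B}_k\setminus\mathcal{C}_k$ are moved into $\mathcal{B}_{k+1}$ by a Type 7 tuple combining $n-k$ coordinates in $Z_{K-3}$, $k$ coordinates in the $i^*$-th row of $Z_{K-2}$, and one diagonal coordinate of $Z_{K-1}$; the complementary bases theorem applied to $\Eu{Z_{K-1}}\El{Z_{K-2}}$, used as in Example \ref{lemma:principalMinor}, produces a non-vanishing principal minor of $Z_{K-1}$ along the flow and strictly increases the rank. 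Points of $\mathcal{C}_k$ are carried into $\mathcal{B}_k\setminus\mathcal{C}_k$ by composing a Type 5 field---which reduces at the given point to $\pm\det(I_n+Z_{K-1}Z_{K-2})\,\partial/\partial z_{K-1,i^*i^*}$---with the $\gamma^{K-1}_{j_lj_l,i^*}$ fields from Lemma \ref{lemma:liftingVectorFields} and a further appeal to the complementary bases theorem. Finally, the bottom stratum $\mathcal{B}_0$ (where $Z_{K-1}=0$) is handled by the Type 7 tuple $(z_{K-3,1n},\dots,z_{K-3,nn},z_{K-1,nn})$, whose field reduces to $\pm\partial/\partial z_{K-1,nn}$ and moves points directly into $\mathcal{B}_1$.

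The main obstacle is bookkeeping rather than conceptual: one must verify that each shifted tuple really lies in one of the eight patterns of Proposition \ref{lemma:completeFields}, so that the resulting field is $\mathbb{C}$-complete and belongs to $\Qk{K}$, and track how the identity $\PKM=\PKMMM+Z_{K-1}\PKMM$ plays the role that $P_f^3=Z_1e_n+Z_3P_s^2$ played in the $K=4$ proof (since $\PKMMM$ is no longer simply a column of an elementary matrix, edge cases such as $Z_{K-2}\PKMMM=0$ may need to be separated out by further subdividing $\mathcal{B}_n$). Once these subtleties are handled, chaining the inclusions $\fiber\cap\mathcal{B}_k\subset C_A(\fiber\cap\mathcal{B}_{k+1})$ and combining with the preceding $\mathcal{N}_1$-lemma yields the required finite collection $A\subset\Qk{K}$ with $\fiber\cap\mathcal{N}_2\subset C_A(\fiber\cap\mathcal{U}_K)$ for every generic fiber $\fiber$.
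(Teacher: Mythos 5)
Your proposal follows the paper's architecture exactly — the same stratification of $\mathcal{N}_2$ by $\mathrm{rank}(Z_{K-1})$, the same top-down processing of $\mathcal{B}_n, \mathcal{B}_k, \mathcal{B}_0$, the same subdivision into $\mathcal{C}_k$, and essentially the same tuples — but it omits a step that the paper treats as a genuine prerequisite, not bookkeeping. Before touching any stratum, the paper first proves that for any $\vec{Z}_K\in\mathcal{N}_2$ the truncation $\vec{Z}_{K-2}$ lies in a generic $(K-2)$-fiber inside $\Wk{K-2}$, and then invokes the induction hypothesis (the Spanning theorem for $K-2$, together with fiber connectedness and the topological lemma at the end of subsection 3.5) to flow $\vec{Z}_{K-2}$ within its $(K-2)$-fiber, without changing $\PKMM$, to a point where $\tilde{P}^{K-4}_1\cdots\tilde{P}^{K-4}_n\neq 0$. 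This WLOG is load-bearing throughout: in the $\mathcal{B}_n$ argument the computed derivative is $\partial_x^{K-1}(\PKMMM)=\alpha\,\tilde{E}_{ii}\tilde{P}^{K-4}=\alpha\,\tilde{P}^{K-4}_i e_i$, which vanishes whenever $\tilde{P}^{K-4}_i=0$ regardless of how nonzero $\alpha=\pm(\PKMMM_j)^n\det(Z_{K-1})$ is; and in the $\mathcal{C}_k$ and $\mathcal{B}_0$ arguments the Type 5 field carries an explicit factor $(\tilde{P}^{K-4}_n)^n$. Your remark that ``edge cases such as $Z_{K-2}\PKMMM=0$ may need to be separated out by further subdividing $\mathcal{B}_n$'' both understates the problem (any single $\tilde{P}^{K-4}_i=0$ is already fatal, not only the full-vector condition $Z_{K-2}\PKMMM=0$, which is equivalent to $\tilde{P}^{K-4}=0$ on $\mathcal{N}_2$) and proposes the wrong remedy: you cannot escape this bad locus by fields in $Z_{K-3},Z_{K-2},Z_{K-1}$ alone, because $\tilde{P}^{K-4}$ is a function of $\vec{Z}_{K-4}$; what is needed is precisely the recursive appeal to the Spanning theorem at level $K-2$, which you do not make.

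There is also a concrete error in the $\mathcal{C}_k$ step: you use the fields $\gamma^{K-1}_{j_lj_l,i^*}$, but from the formula in Lemma \ref{lemma:liftingVectorFields} every coefficient of $\gamma^{K-1}_{ij,j^*}$ is a polynomial in the entries of $\tilde{P}^{K-2}=\PKMM$, which vanishes identically on $\mathcal{N}_2$. So $\gamma^{K-1}\equiv 0$ there and carries nothing anywhere. The paper uses $\gamma^{K-2}_{jj,i^*}$, which on $\mathcal{C}_k$ reduces to $(\PKMMM_{i^*})^2\,\partial/\partial z_{K-2,jj}$ and is nonzero because $\PKMMM_{i^*}\neq 0$; this is the field that, composed with the Type 5 field in the $z_{K-1,i^*i^*}$ direction, differentiates $\det(I_n+Z_{K-1}Z_{K-2})$ through the $z_{K-2,jj}$-columns and makes the complementary-bases argument close. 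The choice of Type 7 rather than Type 5 at $\mathcal{B}_0$ is inconsequential, but the two items above are real gaps in the argument.
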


\begin{proof}
The proof of this lemma follows more or less the strategy of Lemma \ref{lemma:nonGeneric3fiberInClosure}. We stratify $\mathcal{N}_2$ in the following way. For $0\leq k\leq n$, define
$$ \mathcal{A}_k = \{\vec{Z}_K\in \mathcal{N}_2: \mathrm{rank}(Z_{K-1})\leq k\}$$
and as a convention let $\mathcal{A}_{-1}:=\emptyset$. Then each stratum
$$ \mathcal{B}_k:=\mathcal{A}_k\setminus \mathcal{A}_{k-1},\quad 0\leq k\leq n,$$
consists of those points $\vec{Z}_K\in \mathcal{N}_2$ with $\mathrm{rank}(Z_{K-1})=k$.

In a first step, we prove the following claim: for $\vec{Z}_K=(\vec{Z}_{K-2},Z_{K-1},Z_K)\in \mathcal{N}_2$, the projection to the first component $\vec{Z}_{K-2}$ is contained in a generic $(K-2)$-fiber and in $\Wk{K-2}$. The first part of the claim follows directly from the fact that $\PKMMM$ and $\PKMM$ don't vanish simultaneously, by the recursive formula (\ref{RecursiveFormula}). For the second part of the claim, recall that $\vec{Z}_K\in \Wk{K}$ by definition of $\mathcal{N}_2$. Assume that $\vec{Z}_{K-2}\not\in \Wk{K-2}$ and observe $$\lceil \tfrac{K-3}{2}\rceil = \begin{cases} k-1 & \text{if } K=2k+1\\ k-1 & \text{if } K=2k.
\end{cases}$$
Then there holds $Z_1e_n=Z_3e_n = ... =Z_{2k-3}e_n=0$ and by Lemma \ref{lemma:propertyOne}, we conclude $\Phi_{2k-2}=e_{2n}$. In the case $K=2k$, this contradicts $0=\PKMM = P_s^{2k-2}=e_n$. And in the case $K=2k+1$, this leads to $$0=\PKMM=P_f^{2k-1} = Z_{2k-1}e_n,$$ which contradicts $\vec{Z}_K\in \Wk{K}$. This proves the claim. We are now able to apply the induction hypothesis (the \spanning\ for $K-2$) to $\vec{Z}_{K-2}$ and assume without loss of generality that $\tilde{P}^{K-4}_1\cdots \tilde{P}^{K-4}_n\neq 0$. 

For the second step, observe that for each point in $\mathcal{N}_2$, there is an index $1\leq j\leq n$ with $\PKMMM_{j}\neq 0$, since $\PKMMM$ and $\PKMM$ don't vanish simultaneously. So this is especially true for points $\vec{Z}_K$ in the stratum $\mathcal{B}_n$. Consider the tupel $x=(z_{K-3,ii},z_{K-2,j1},...,z_{K-2,jn})$, $i\neq j$, of (\ref{typeTwo}). The corresponding vector field $\partial_x^{K-1}$ is of the form
$$ \alpha\Deriv{z}{K-3,ii}+\sum_{r=1}^n \beta_r \Deriv{z}{K-2,jr}$$
for some suitable holomorphic functions $\alpha,\beta_1,...,\beta_n$. By construction there holds ${\partial_x^{K-1}(\tilde{P}^{K-1})=0}$. Furthermore, there holds $\tilde{P}^{K-1}=\tilde{P}^{K-3}+Z_{K-1}\tilde{P}^{K-2}$ by the recursive formula (\ref{RecursiveFormulaGeneralized}). Hence
$$ 0 = \partial_x^{K-1}(\tilde{P}^{K-1}) = \partial_x^{K-1}(\tilde{P}^{K-3}) + Z_{K-1}\partial_x^{K-1}(\tilde{P}^{K-2}),$$ 
which implies that if $\PKMM$ is in the kernel of $\partial_x^{K-1}$, then $\tilde{P}^{K-3}$ is in that kernel too. 

Observe that $\tilde{P}^{K-4}$ and $\tilde{P}^{K-5}$ are in the kernel of $\partial_x^{K-1}$, since they don't depend on the matrices $Z_{K-3}$ and $Z_{K-2}$. Therefore
$$ \partial_x^{K-1}(\tilde{P}^{K-3}) = \partial_x^{K-1}(Z_{K-3})\tilde{P}^{K-4} = \alpha \tilde{E}_{ii}\tilde{P}^{K-4} = \alpha \tilde{P}^{K-4}_i,$$
and by the previous paragraph, we may assume $\tilde{P}^{K-4}_i\neq 0$. It remains to compute $\alpha$. First, let's compute the derivatives
$$ \Deriv{z}{K-2,jr} \tilde{P}^{K-1} =Z_{K-1}\Deriv{z}{K-2,jr}\tilde{P}^{K-2}= Z_{K-1}\tilde{E}_{jr}\tilde{P}^{K-3}, \quad 1\leq r\leq n.$$
This gives us
\begin{align*}
\alpha &= \det(Z_{K-1}\tilde{E}_{j1}\tilde{P}^{K-3},...,Z_{K-1}\tilde{E}_{jn}\tilde{P}^{K-3}) \\
&= \det(Z_{K-1})\det(\tilde{E}_{j1}\tilde{P}^{K-3},...,\tilde{E}_{jn}\tilde{P}^{K-3})\\
&=\det(Z_{K-1})\det\begin{pmatrix} \PKMMM_j&&&&\\ & \ddots &&& \\ \PKMMM_1&\cdots & \PKMMM_j & \cdots & \PKMMM_n\\ &&&\ddots & \\ &&&&\PKMMM_j
\end{pmatrix}\\
&= (\tilde{P}^{K-3}_j)^n\det(Z_{K-1})\neq 0.
\end{align*}
Hence $\partial_x^{K-1}(\tilde{P}^{K-3})\neq 0$ and therefore also $\partial_x^{K-1}(\tilde{P}^{K-2})\neq 0$. We conclude that there is a finite $A\subset \Qk{K}$ such that
$$\fiber \cap\mathcal{B}_n \subset C_A(\fiber\cap \mathcal{N}_1)\subset C_A(\fiber\cap \mathcal{U}_K)$$
for each generic fiber $\fiber$.

In a third step, let's assume $1\leq k\leq n-1$ and we divide the stratum $\mathcal{B}_k$ into two more strata. Define
$$ \mathcal{C}_k:=\{\vec{Z}_K\in \mathcal{B}_k: Z_{K-1}e_i\neq 0\Rightarrow \PKMMM_i=0, \forall 1\leq i\leq n\}$$
and consider a point $\vec{Z}_K\in \mathcal{B}_k\setminus \mathcal{C}_k$. By definition, there is an index $1\leq j^*\leq n$ such that $Z_{K-1}e_{j^*}\neq 0$ and $\PKMMM_{j^*}\neq 0$. Moreover, also by definition of the stratum $\mathcal{B}_k$, we assume that the matrix $Z_{K-1}$ has rank $k$. By the complementary bases theorem, we can choose complementary indices $\{i_1,...,i_{n-k}\}\dot{\cup} \{j_1,...,j_k\}=\{1,...,n\}$ such that $j^*\in \{j_1,...,j_k\}$ and such that the vectors $Z_{K-1}e_{j_1},...,Z_{K-1}e_{j_k}$ form a basis of the image $\mathrm{Im}(Z_{K-1})$. Choose $i,i^*\in \{i_1,...,i_{n-k}\}$, then the tupel $x=(z_{K-3,i^*i_1},...,z_{K-3,i^*i_{n-k}},z_{K-2,j^*j_1},...,z_{K-2,j^*j_k},z_{K-1,ii})$ is of (\ref{typeSeven}). Since $\Deriv{z}{K-1,ii}\PKM = \tilde{E}_{ii}\PKMM =0$ on $\mathcal{N}_2$, the vector field $\partial_x^{K-1}$ is given $\pm\det(B)\Deriv{z}{K-1,ii}$ on $\mathcal{N}_2$, where 
\begin{align*}
B&= \begin{pmatrix}
\Deriv{z}{K-3,i^*i_1}\PKM &\cdots & \Deriv{z}{K-3,i^*i_{n-k}}\PKM &\Deriv{z}{K-2,j^*j_1}\PKM & \cdots & \Deriv{z}{K-2,j^*j_k}\PKM \end{pmatrix} \\
&= \begin{pmatrix}
(I_n+Z_{K-1}Z_{K-2})e_{i_1} & \cdots & (I_n+Z_{K-1}Z_{K-2})e_{i_{n-k}} & Z_{K-1}\tilde{E}_{j^*j_1}\PKMMM & \cdots & Z_{K-1}\tilde{E}_{j^*j_k}\PKMMM 
\end{pmatrix}.
\end{align*}
A similar argument as in Lemma \ref{lemma:nonGeneric3fiberInClosure} implies that $B$ is regular in $\vec{Z}_K$. Moreover, we apply the same strategy from the mentioned lemma, to conclude that the flow of $\partial_x^{K-1}$ through $\vec{Z}_K$ leaves the set $\mathcal{A}_k$ and intersects the stratum $\mathcal{B}_{k+1}$. Hence there exists a finite collection $A\subset \Qk{K}$ such that
$$ \fiber \cap \mathcal{B}_k\setminus \mathcal{C}_k \subset C_A(\fiber\cap \mathcal{B}_{k+1})$$
for each generic fiber $\fiber$. 

Next, consider a point $\vec{Z}_K\in \mathcal{C}_k$. Choose again a complementary set of indices\newline $\{i_1,...,i_{n-k}\}\dot{\cup}\{j_1,...,j_k\}=\{1,...,n\}$ such that the vectors $Z_{K-1}e_{j_1},...,Z_{K-1}e_{j_k}$ form a basis of the image $\mathrm{Im}(Z_{K-1})$. By definition of $\mathcal{C}_k$, there holds $\PKMMM_{j_1}=...=\PKMMM_{j_k}=0$. Recall that there is an index $1\leq i^*\leq n$ with $\PKMMM_{i^*}\neq 0$, since $\PKMMM$ and $\PKMM$ don't vanish simultaneously. The vector fields $\gamma^{K-2}_{jj,i^*}, j\in \{j_1,...,j_k\}$, from Lemma \ref{lemma:fiberPreservingVectorFields}, are given by
$$ \gamma^{K-2}_{jj,i^*} = (\PKMMM_{i^*})^2 \Deriv{z}{K-2,jj}, \quad j\in \{j_1,...,j_k\},$$
in $\vec{Z}_K$. Now consider the tupel $x=(z_{K-3,n1},...,z_{K-3,nn},z_{K-1,i^*i^*})$ of (\ref{typeFive}). Its corresponding vector field $\partial_x^{K-1}$ is of the form $\pm(\tilde{P}^{K-4}_n)^n\det(I_n+Z_{K-1}Z_{K-2})\Deriv{z}{K-1,i^*i^*}$ in $\vec{Z}_K$. As in Lemma \ref{lemma:nonGeneric3fiberInClosure}, we have
$$ \gamma^{K-2}_{j_kj_k,i^*}\circ \cdots \circ \gamma^{K-2}_{j_1j_1,i^*}\circ \partial_x^{K-1}(z_{K-1,i^*i^*})\neq 0.$$
This implies the existence of a finite collection $A\subset \Qk{K}$ such that
$$ \fiber\cap \mathcal{C}_k\subset C_A(\fiber\cap \mathcal{B}_k\setminus \mathcal{C}_k)$$
for each generic fiber $\fiber$. In summary, this leads to
$$ \fiber\cap \mathcal{B}_k\subset C_A(\fiber\cap \mathcal{B}_{k+1})$$
for each generic fiber $\fiber$ and $1\leq k\leq n-1$.

It remains to consider a point $\vec{Z}_K$ in the stratum $\mathcal{B}_0$, that is, we assume $Z_{K-1}=0$. Choose again the vector field $\partial_x^{K-1}$ corresponding to the tupel $x=(z_{K-3,n1},...,z_{K-3,nn},z_{K-1,i^*i^*})$ of (\ref{typeFive}). Then
$$ \partial_x^{K-1}(z_{K-1,i^*i^*}) = \pm (\tilde{P}^{K-4}_n)^n\underbrace{\det(I_n+Z_{K-1}Z_{K-2})}_{=1} \neq 0,$$
and therefore there is a finite collection $A\subset \Qk{K}$ such that
$$ \fiber \cap \mathcal{B}_0\subset C_A(\fiber\cap \mathcal{B}_1)$$
for each generic fiber $\fiber$.

For the final step, we only need to put everything together. We have
$$ C_A(\fiber \cap \mathcal{B}_0)\subset C_A(\fiber\cap \mathcal{B}_1) \subset \cdots \subset C_A(\fiber \cap \mathcal{B}_n)\subset C_A(\fiber \cap \mathcal{U}_K)$$
for each generic fiber $\fiber$, and since $\mathcal{N}_2=\bigcup_{k=0}^n \mathcal{B}_k$, we obtain 
$$\fiber \cap \mathcal{N}_2\subset C_A(\fiber \cap \mathcal{U}_K)$$
for each generic fiber $\fiber$. This finishes the proof.
\end{proof}
%
% --- Bibliography ---
%


\begin{thebibliography}{99}
  
 \bibitem[Co66]{Cohn:1966tz} Cohn, P. M.,
\emph{On the structure of the $GL_2$ of a ring.} 
Inst. Hautes \'Etudes Sci. Publ. Math. No. 30 (1966), 5–-53.

\bibitem[DJ06]{Dopico:2006vy} Dopico, F. M.; Johnson, C. R., \emph{Complementary bases in symplectic matrices and a proof that their determinant is one.} Linear Algebra and Its Applications {\bf 419} (2006), 772-778.

\bibitem[For10]{Forstneric:2010aa}
Forstneri{\v{c}}, F., \emph{The {O}ka principle for sections of stratified fiber bundles.}, Pure Appl. Math. Q. \textbf{6} (2010), 843--874.

\bibitem[For17]{Forstneric:2011wk}
Forstneri{\v{c}}, F., \emph{Stein manifolds and holomorphic mappings.}, Second edition, Springer-Verlag, 2017

\iffalse\bibitem[FP01]{Forstneric:2001}
Forstneri{\v{c}},F.; Prezelj, J., \emph{Extending holomorphic sections from complex subvarieties.}, Math. Z. \textbf{236} (2001), 43--68. \fi

\iffalse\bibitem[FP02]{Forstneric:2002}
Forstneri{\v{c}},F.; Prezelj, J.,\emph{Oka's principle for holomorphic submersions with sprays.}, Math. Ann. \textbf{322} (2002), 633--666.\fi

\bibitem[Gro89]{Gromov:1989aa}
Gromov, M., \emph{Oka's principle for holomorphic sections of elliptic bundles.}, J. Amer. Math. Soc. \textbf{2} (1989), 851--897.

\bibitem[GMV91]{Grunewald:1991ww}
Grunewald, F.; Mennicke, J.; Vaserstein, L., \emph{On symplectic groups over polynomial rings.}, Math. Z. \textbf{206} (1991), 35--56.

\bibitem[IK12]{Ivarsson:2012aa}
Ivarsson, B.; Kutzschebauch, F., \emph{Holomorphic factorization of
  mappings into $\mbox{SL}_n(\mathbb{C})$.}, Ann. of Math. (2)
\textbf{175}
  (2012), 45--69.
  
\bibitem[IKL20a]{Bjorn-Ivarson:2020aa}
Ivarsson, B.; Kutzschebauch, F.; L{\o}w, E., \emph{Factorization of symplectic matrices into elementary factors.}  
Proc. Amer. Math. Soc. {\bf 148} (2020), no. 5, 1963–-1970.

\bibitem[IKL20b]{Ivarson:2020aa} Ivarsson, B.; Kutzschebauch, F.; L{\o}w, E., \emph{Holomorphic Factorization of Mappings into $\mbox{Sp}_4(\mathbb{C})$.} (2020) arXiv:2005.07454.

\iffalse\bibitem[KR88]{Klein:1988}
Klein, M.; Ramspott, K. J., \emph{Ein Transformationssatz f{\"u}r
Idealbasen holomorpher Funktionen.},
Bayer. Akad. Wiss. Math.-Natur. Kl. Sitzungsber. \textbf{1987} (1988), 93--100.\fi

\iffalse\bibitem[KaKu11]{KaKu}
Kaliman, S.; Kutzschebauch, F., \emph{On the present state of the Anders\'en-Lempert theory.} Affine algebraic geometry, 85–-122, CRM Proc. Lecture Notes, 54, Amer. Math. Soc., Providence, RI, 2011. \fi

\iffalse\bibitem[KaKu16]{KaKu1}
Kaliman, S.; Kutzschebauch, F.,  \emph{On algebraic volume density property.} Transform. Groups {\bf 21} (2016), no. 2, 451–-478.\fi

\iffalse\bibitem[KKL20]{KKL}
Kaliman, S.; Kutzschebauch, F.; Leuenberger, M. , \emph{Complete algebraic vector fields on affine surfaces.} Internat. J. Math. {\bf 31} (2020), no. 3, 2050018, 50 pp.\fi

\bibitem[Kop78]{Kopeiko:1978ub} Kope\u{\i}ko, V. I., \emph{Stabilization of symplectic groups over a ring of polynomials.} (Russian) Mat. Sb. (N.S.) 106(148) (1978), no. 1, 94–-107, 144.

\bibitem[Kus21]{Kusakabe:2021tv}
Kusakabe, Y., \emph{Elliptic characterization and unification of Oka maps}, Math. Z. \textbf{298} (2021), 1735--1750. 

\iffalse\bibitem[Ku14]{Kutzschebauch}
 Kutzschebauch, F., \emph{Flexibility properties in complex analysis and affine algebraic geometry.} Automorphisms in birational and affine geometry, 387–-405, Springer Proc. Math. Stat., 79, Springer, Cham, 2014. \fi
 
\bibitem[Qu76]{Quillen:1976vz} Quillen, D., \emph{Projective modules over polynomial rings.} Inventiones math. {\bf 36} (1976), 167--171.
  
\bibitem[Sus77]{Suslin:1977us}
Suslin, A.\ A.\,\emph{The structure of the special linear group over rings of polynomials.} {\it Izv.\ Akad.\ Nauk SSSR Ser.\ Mat.\ }{\bf 41} (1977), no.\ 2, 235--252, 477, (English translation, {\it Math.\ USSR Izv.\ }{\bf 11} (1977), 221--238.) 
 
 \bibitem[Vas88]{Vaserstein:1988td}
Vaserstein, L., \emph{ Reduction of a matrix depending on parameters to a diagonal form by addition operations.}  Proc.\ Amer.\ Math.\ Soc.\ {\bf 103} (1988), no.\ 3, 741--746.

\bibitem[VS13]{Vavilov:2013tq}
Vavilov, N. A.; Stepanov, A. V., \emph{Linear groups over general rings. I. Generalities.} (Russian) Zap. Nauchn. Sem. S.-Peterburg. Otdel. Mat. Inst. Steklov. (POMI) 394 (2011), Voprosy Teorii Predstavleniĭ Algebr i Grupp. 22, 33–-139, 295; (English translation in J. Math. Sci. (N.Y.) {\bf 188} (2013), no. 5, 490–-550).  
\end{thebibliography}
\end{document}